\numberwithin{equation}{section}
\newtheorem{theorem}{Theorem}[section]
\newtheorem{lemma}[theorem]{Lemma}
\newtheorem{corollary}[theorem]{Corollary}
\theoremstyle{definition}
\newtheorem{problem}[theorem]{Problem}
\newtheorem*{ack}{Acknowledgement}
\newtheorem{exampleqqq}[theorem]{Example}
\newenvironment{example}{\begin{exampleqqq}}
  {\hfill\qedsymbol\end{exampleqqq}}
\newtheorem{remarkqqq}[theorem]{Remark}
\newenvironment{remark}{\begin{remarkqqq}}
  {\hfill\qedsymbol\end{remarkqqq}}
\theoremstyle{remark}
\newenvironment{romenumerate}[1][-10pt]{
\addtolength{\leftmargini}{#1}\begin{enumerate}
 \renewcommand{\labelenumi}{\textup{(\roman{enumi})}}%
 \renewcommand{\theenumi}{\textup{(\roman{enumi})}}%
 }{\end{enumerate}}
\newcounter{oldenumi}
\newenvironment{romenumerateq}
{\setcounter{oldenumi}{\value{enumi}}
\begin{romenumerate} \setcounter{enumi}{\value{oldenumi}}}
{\end{romenumerate}}
\newenvironment{alphenumerate}[1][-20pt]{
\addtolength{\leftmargini}{#1}\begin{enumerate}
 \renewcommand{\labelenumi}{\textup{(\alph{enumi})}}%
 \renewcommand{\theenumi}{\textup{\labelenumi}}%
 }{\end{enumerate}}
\newcounter{thmenumerate}
\newcommand\pfitemx[1]{\par#1:}
\newcommand\pfitemref[1]{\pfitemx{\ref{#1}}}
\newcounter{stepp}
\newcommand\stepp[1]{\refstepcounter{stepp}\smallskip\noindent
  \emph{Step \arabic{stepp}: #1}\noindent}
\newcommand\pfcase[1]{\refstepcounter{stepp}\smallskip\noindent
  \emph{Case \arabic{stepp}: #1}\noindent}
\newcommand{\refT}[1]{Theorem~\ref{#1}}
\newcommand{\refTs}[1]{Theorems~\ref{#1}}
\newcommand{\refC}[1]{Corollary~\ref{#1}}
\newcommand{\refL}[1]{Lemma~\ref{#1}}
\newcommand{\refLs}[1]{Lemmas~\ref{#1}}
\newcommand{\refR}[1]{Remark~\ref{#1}}
\newcommand{\refS}[1]{Section~\ref{#1}}
\newcommand{\refSs}[1]{Sections~\ref{#1}}
\newcommand{\refSS}[1]{Section~\ref{#1}}
\newcommand{\refE}[1]{Example~\ref{#1}}
\newcommand{\refApp}[1]{Appendix~\ref{#1}}
\newcommand{\refTab}[1]{Table~\ref{#1}}
\newcommand{\refCase}[1]{Case~\ref{#1}}
\xdef\klockan{\the\count1.0\the\count255}
\xdef\klockan{\the\count1.\the\count255}\fi
\newcommand\nopf{\qed}   
\DeclareMathOperator*{\sumx}{\sum\nolimits^{*}}
\DeclareMathOperator*{\sumax}{\sumx_{\bga}}
\DeclareMathOperator*{\sumaxx}{\sum^*_{\bga}} 
\newcommand{\sumj}{\sum_{j=0}^\infty}
\newcommand{\sumk}{\sum_{k=0}^\infty}
\newcommand{\sumki}{\sum_{k=1}^\infty}
\newcommand{\summ}{\sum_{m=0}^\infty}
\newcommand{\sumni}{\sum_{n=1}^\infty}
\newcommand{\sumaa}{\sum_{\aaa\in\cAx}}
\newcommand{\sumaay}{\sum_{|\aaa|\ge1}}
\newcommand{\suma}{\sum_{\ga\in\cA}}
\newcommand{\sumkK}{\sum_{k=1}^K}
\newcommand{\sumklK}{\sum_{k,\ell=1}^K}
\newcommand{\sumin}{\sum_{i=1}^n}
\newcommand{\prodim}{\prod_{i=1}^m}
\newcommand{\sumjoooo}{\sum_{j=-\infty}^\infty}
\newcommand{\sumkoooo}{\sum_{k=-\infty}^\infty}
\newcommand{\summoooo}{\sum_{m=-\infty}^\infty}
\newcommand\set[1]{\ensuremath{\{#1\}}}
\newcommand\bigset[1]{\ensuremath{\bigl\{#1\bigr\}}}
\newcommand\xpar[1]{(#1)}
\newcommand\bigpar[1]{\bigl(#1\bigr)}
\newcommand\Bigpar[1]{\Bigl(#1\Bigr)}
\newcommand\lrpar[1]{\left(#1\right)}
\newcommand\bigsqpar[1]{\bigl[#1\bigr]}
\newcommand\Bigsqpar[1]{\Bigl[#1\Bigr]}
\newcommand\abs[1]{|#1|}
\newcommand\bigabs[1]{\bigl|#1\bigr|}
\newcommand\Bigabs[1]{\Bigl|#1\Bigr|}
\def\rompar(#1){\textup(#1\textup)}    
\newcommand\xfrac[2]{#1/#2}
\newcommand\xqfrac[2]{#1/(#2)}
\newcommand\parfrac[2]{\lrpar{\frac{#1}{#2}}}
\newcommand\Bigparfrac[2]{\Bigpar{\frac{#1}{#2}}}
\def\xexp(#1){e^{#1}}
\newcommand\floor[1]{\lfloor#1\rfloor}
\newcommand\ntoo{\ensuremath{{n\to\infty}}}
\newcommand\ktoo{\ensuremath{{k\to\infty}}}
\newcommand\jtoo{\ensuremath{{j\to\infty}}}
\newcommand\gltoo{\ensuremath{{\gl\to\infty}}}
\newcommand\rtoo{\ensuremath{{r\to\infty}}}
\newcommand\xtoo{\ensuremath{{x\to\infty}}}
\newcommand\norm[1]{\|#1\|}
\newcommand\bignorm[1]{\bigl\|#1\bigr\|}
\newcommand\Bignorm[1]{\Bigl\|#1\Bigr\|}
\newcommand\punkt[1]{\if.#1\else.\spacefactor1000\fi{#1}}
\newcommand\iid{i.i.d\punkt}    
\newcommand\ie{i.e\punkt}
\newcommand\eg{e.g\punkt}
\newcommand\cf{cf\punkt}
\newcommand{\as}{a.s\punkt}
\newcommand{\aex}{a.e\punkt}
\newcommand\ii{\mathrm{i}}
\newcommand{\tend}{\longrightarrow}
\newcommand\dto{\overset{\mathrm{d}}{\tend}}
\newcommand\pto{\overset{\mathrm{p}}{\tend}}
\newcommand\eqd{\overset{\mathrm{d}}{=}}
\newcommand\op{o_{\mathrm p}}
\newcommand\Op{O_{\mathrm p}}
\newcommand\bbR{\mathbb R}
\newcommand\bbC{\mathbb C}
\newcommand\bbN{\mathbb N}  
\newcommand\bbZ{\mathbb Z}
\newcommand\bbQ{\mathbb Q}
\newcounter{CC}
\newcommand{\CC}{\stepcounter{CC}\CCx} 
\newcommand{\CCx}{C_{\arabic{CC}}}     
\newcommand{\CCdef}[1]{\xdef#1{\CCx}}     
\newcommand{\CCname}[1]{\CC\CCdef{#1}}    
\newcommand{\CCreset}{\setcounter{CC}0} 
\newcounter{cc}
\renewcommand\Re{\operatorname{Re}}
\renewcommand\Im{\operatorname{Im}}
\newcommand\E{\operatorname{\mathbb E{}}}
\renewcommand\P{\operatorname{\mathbb P{}}}
\newcommand\Var{\operatorname{Var}}
\newcommand\Cov{\operatorname{Cov}}
\newcommand\Po{\operatorname{Po}}
\newcommand\rank{\operatorname{rank}}
\newcommand\sign{\operatorname{sign}}
\newcommand\ga{\alpha}
\newcommand\gb{\beta}
\newcommand\gd{\delta}
\newcommand\gam{\gamma}
\newcommand\gG{\Gamma}
\newcommand\gl{\lambda}
\newcommand\gL{\Lambda}
\newcommand\gs{\sigma}
\newcommand\gS{\Sigma}
\newcommand\gss{\sigma^2}
\newcommand\gth{\theta}
\newcommand\eps{\varepsilon}
\newcommand\cA{\mathcal A}
\newcommand\cE{\mathcal E}
\newcommand\cS{{\mathcal S}}
\newcommand\cT{{\mathcal T}}
\newcommand\ett[1]{\boldsymbol1\set{#1}}
\newcommand\indic[1]{\boldsymbol1\set{#1}} 
\def\[#1]{[\![#1]\!]}
\newcommand\smatrixx[1]{\left(\begin{smallmatrix}#1\end{smallmatrix}\right)}
\newcommand\qq{^{1/2}}
\newcommand\qqw{^{-1/2}}
\newcommand\qw{^{-1}}
\newcommand\qww{^{-2}}
\renewcommand{\=}{:=}
\newcommand\intoo{\int_0^\infty}
\newcommand\oi{[0,1]}
\newcommand\setoi{\set{0,1}}
\newcommand\dd{\,\ddx}
\newcommand\ddx{\mathrm{d}}
\newcommand{\ui}{uniformly integrable}
\newcommand\lhs{left-hand side}
\newcommand\rhs{right-hand side}
\newcommand\sss[1]{^{(#1)}}
\newcommand\sssk{\sss k}
\newcommand\hpsi{\widehat\psi}
\newcommand\kk{\kappa}
\newcommand\tgs{\widetilde\sigma}
\newcommand\tgss{\tgs^2}
\newcommand\Tq[1]{\cT_{#1}}
\newcommand\Tn{\Tq{n}}
\newcommand\Tb{\Tq{b}}
\newcommand\ellb{b}
\newcommand\xT{\widetilde\cT}
\newcommand\Tqq[1]{\xT_{#1}}
\newcommand\Tgl{\Tqq{\gl}}
\newcommand\Tgn{\Tqq{n}}
\newcommand{\Too}{T_\infty}
\newcommand{\Ti}{\bullet}
\newcommand\cAx{\cA^*}
\newcommand\aaa{{\boldsymbol{\alpha}}}
\newcommand\bbb{\boldsymbol{\beta}}
\newcommand\bga{{\boldsymbol{\alpha}}}
\newcommand\bgb{{\boldsymbol{\beta}}}
\newcommand\bgg{{\boldsymbol{\gamma}}}
\newcommand\ntt{n_{T'}(T)}
\newcommand\Tx{T^*}
\newcommand\Tglga{\Tgl^{\ga}}
\newcommand\Tglgax{\Tgl^{\ga+}}
\newcommand\Tglbga{\Tgl^{\bga}}
\newcommand\Tglbgb{\Tgl^{\bgb}}
\newcommand\Tglbgax{\Tgl^{\bga+}}
 \newcommand\Tglxxx[1]{\Tgl^{#1+}}
 \newcommand\Xglga{X_{\gl,\ga}}
 \newcommand\Xglbga{X_{\gl,\bga}}
 \newcommand\Xbga{X_{\bga}}
 \newcommand\Xbgb{X_{\bgb}}
  \newcommand\Nglx[1]{N_{\gl,#1}}
  \newcommand\Nglga{\Nglx{\ga}}
   \newcommand\Nglbga{\Nglx{\bga}}
\newcommand\approxd{\overset{\mathrm{d}}{\approx}}
\newcommand\xgb{\hat\gb}
\newcommand\hgs{\widehat\gs}
\newcommand\hgss{\hgs^2}
\newcommand\hgS{\widehat\gS}
\newcommand\gln{n}
\newcommand\pmin{p_{\mathrm{min}}}
\newcommand\pmax{p_{\mathrm{max}}}
\newcommand\Aary{$|\cA|$-ary}
\newcommand\bfx{\mathbf{x}}
\newcommand\gf{\varphi}
\newcommand\GF{\Phi}
\newcommand\glbga{\gl P(\bga)}
\newcommand\sumbga{\sum_{\bga\in\cAx}}
\newcommand\vp{\mathbf{p}}
\newcommand\dA{d_{\vp}}
\newcommand\sxx[1]{\mathsf{#1}}
\newcommand\sE{\sxx{E}}
\newcommand\sV{\sxx{V}}
\newcommand\sC{\sxx{C}}
\newcommand\sX{\sxx{X}}
\newcommand\fE{f_{\sE}}
\newcommand\fV{f_{\sV}}
\newcommand\fC{f_{\sC}}
\newcommand\fX{f_{\sX}}
\newcommand\sjw{*}
\newcommand\Mx[1]{#1^*}
\newcommand\Mf{\Mx{f}}
\newcommand\MfE{\Mx{\fE}}
\newcommand\MfV{\Mx{\fV}}
\newcommand\MfC{\Mx{\fC}}
\newcommand\MfX{\Mx{\fX}}
\newcommand\psiE{\psi_{\sE}}
\newcommand\psiV{\psi_{\sV}}
\newcommand\psiC{\psi_{\sC}}
\newcommand\psiX{\psi_{\sX}}
\newcommand\fEq[1]{f_{\sE,#1}}
\newcommand\fVq[1]{f_{\sV,#1}}
\newcommand\fCq[1]{f_{\sC,#1}}
\newcommand\fXq[1]{f_{\sX,#1}}
\newcommand\fEp{\fEq{\sjw}}
\newcommand\fVp{\fVq{\sjw}}
\newcommand\fCp{\fCq{\sjw}}
\newcommand\fXp{\fXq{\sjw}}
\newcommand\MfEq[1]{\Mx{\fEq{#1}}}
\newcommand\MfVq[1]{\Mx{\fVq{#1}}}
\newcommand\MfCq[1]{\Mx{\fCq{#1}}}
\newcommand\psiEq[1]{\psi_{\sE,#1}}
\newcommand\psiVq[1]{\psi_{\sV,#1}}
\newcommand\psiCq[1]{\psi_{\sC,#1}}
\newcommand\psiXq[1]{\psi_{\sX,#1}}
\newcommand\psiEz{\psiEq{+}}
\newcommand\psiEx{\psiEq{\sjw}}
\newcommand\psiVx{\psiVq{\sjw}}
\newcommand\psiCx{\psiCq{\sjw}}
\newcommand\psiXx{\psiXq{\sjw}}
\newcommand\xoo{_1^\infty}
\newcommand\gfb{\gf_{b*}}
\newcommand\GFb{\GF_{b*}}
\newcommand\sta{\overline{\mathfrak{T}}}
\newcommand\stt{\mathfrak{T}}
\newcommand\stap{\sta_+}
\newcommand\sttp{\stt_+}
\newcommand\Pbga{P(\bga)}
\newcommand\ch{{\mathsf {ch}}}
\newcommand\emptystring{\boldsymbol{\epsilon}}
\newcommand\tauk{k}
\newcommand\gfo{\gf_\Ti}
\newcommand\GFo{\GF_\Ti}
\newcommand\gff{\gf_\sjw}
\newcommand\GFF{\GF_\sjw}
\newcommand\nx[1]{\nu_{#1}}
\newcommand\nbga{\nx{\bga}}
\newcommand\xprot[1]{\textup{$#1$-prot}}
\newcommand\kprot{\xprot{k}}
\newcommand\gfkprot{\gf_\kprot}
\newcommand\GFkprot{\GF_\kprot}
\newcommand\rot{\emptystring}
\newcommand\dds{\frac{\ddx}{\dd s}}
\newcommand\multim{\mathbf{m}}
\newcommand\absmoment{[absolute] moment}
\newcommand\absmoments{[absolute] moments}
\newcommand\gQ[1]{\rho(#1)}
\newcommand\gLa{\gL_<}
\newcommand\gLb{\gL_>}
\newcommand\ha{h_<}
\newcommand\hb{h_>}
\newcommand\hh{\eps}
\newcommand\si{\mathsf i}
\newcommand\se{\mathsf e}
\newcommand\absi[1]{|#1|_{\si}}
\newcommand\abse[1]{|#1|_{\se}}
\newcommand\rf{^*}
\newcommand\aTn{a_{T,n}}
\renewcommand\ln{\log}
\newcommand\bb{^{(b)}}
\newcommand\Tch{{T_\textrm{ch}}}
\newcommand\qxx{{**}}
\newcommand\gfxx{\gf_{\qxx}}
\newcommand\GFxx{\GF_{\qxx}}
\newcommand{\Holder}{H\"older}
\newcommand\CS{Cauchy--Schwarz}
\newcommand\CSineq{\CS{} inequality}
\newcommand\REM[1]{{\raggedright\texttt{[#1]}\par\marginal{XXX}}}
\newenvironment{comment}{\setbox0=\vbox\bgroup}{\egroup} 
\newcommand\urladdrx[1]{{\urladdr{\def~{{\tiny$\sim$}}#1}}}
\begin{document}
\title[Central limit theorems 
  in tries]
{Central limit theorems for additive functionals and fringe trees in tries}

\date{5 March 2020}

\author{Svante Janson}
\address{Department of Mathematics, Uppsala University, PO Box 480,
SE-751~06 Uppsala, Sweden}
\thanks{Partly supported by the Knut and Alice Wallenberg Foundation}
\email{svante.janson@math.uu.se}
\urladdrx{http://www.math.uu.se/~svante/}

\subjclass[2000]{60C05, 05C05, 68P05} 

\begin{comment}  

60 Probability theory and stochastic processes [For additional applications, see 11Kxx, 62-XX, 90-XX, 91-XX, 92-XX, 93-XX, 94-XX]
60C Combinatorial probability
60C05 Combinatorial probability

60K Special processes
60K05 Renewal theory
60K10 Applications (reliability, demand theory, etc.)
60K15 Markov renewal processes, semi-Markov processes
60K20 Applications of Markov renewal processes (reliability, queueing
      networks, etc. ) 
60K35 Interacting random processes; statistical mechanics type models;
      percolation theory [See also 82B43, 82C43] 

68 Computer science
68P Theory of data
68P05 Data structures
68P10 Searching and sorting

\end{comment}

\begin{abstract} 
We give general theorems on asymptotic normality for additive functionals of
random tries generated by a sequence of independent strings. 
These theorems are applied to show asymptotic normality of the
distribution of random fringe trees in a random trie. 
Formulas for asymptotic mean and variance are given. In particular,
the proportion of fringe trees of size $k$ (defined as number of keys)
is asymptotically, ignoring
oscillations, $c/(k(k-1))$ for $k\ge2$, where $c=1/(1+H)$ with $H$ the
entropy of the letters. Another application gives asymptotic normality of the
number of $k$-protected nodes in a random trie. 
For symmetric tries, it is shown that the asymptotic proportion of
$k$-protected nodes (ignoring oscillations) decreases geometrically as
$k\to\infty$. 
\end{abstract}

\maketitle

\section{Introduction}\label{S:intro}

We consider random tries constructed from a number of random (infinite) strings
with letters in a fixed finite alphabet $\cA$.
(The most important case is $\cA=\setoi$, and the reader may
for simplicity assume this without essential loss.)
See \refS{Sprel} for the definition of tries 
and other definitions of terms used here in the introduction.

We assume throughout the paper that the strings are \iid, and moreover, that
the individual letters in the strings are \iid.
The number of strings will be either fixed, or a Poisson variable; we refer
to these as the \emph{fixed $n$ model} (where $n$ is the number of strings)
and the \emph{Poisson model}.

As has been well-known since at least \cite{JR88,JR89}, 
for 
some sets of letter probabilities (in particular, for the symmetric case
with equal probabilities),
there are 
typically
(numerically small) oscillations in the asymptotics of both mean and
variance for functionals of random tries; nevertheless asymptotic normality
holds with suitable normalizations. The cases where oscillations occur are
well understood, either from the location of poles of Mellin transforms,
see \eg{} \cite{FlajoletRV,JS-Analytic}, or from (the arithmetic case of)
renewal theory, see \cite{MRobert:prob, SJ242}.

One of our main results is a central limit theorem
(i.e., asymptotic normality) of this type, including possible oscillations,
for additive functionals
of tries under rather weak conditions,
for both the fixed $n$ model and the Poisson model (\refT{TT}).
This theorem assumes that the toll function is bounded (together with
another technical condition).
We give, as a corollary, a law of large numbers (\refT{TLLN}).

In \refS{Sfringe}, several applications of these theorems are given.
In particular, we study random fringe trees of tries, and show  
central limit theorems for the distribution of them.
We study also the number of $k$-protected nodes in tries, $k\ge2$,
and prove a central limit theorem. We show also that
for symmetric tries, ignoring oscillations, the expected number of
$k$-protected nodes decreases geometrically as \ktoo.
We give also a couple of other applications.

Our method of proof consists of the following three separate parts:

{\addtolength{\leftmargini}{-20pt}
\begin{enumerate} 
\renewcommand{\labelenumi}{\theenumi}%
 \renewcommand{\theenumi}{(\arabic{enumi})}%
\item \label{p1}
To prove asymptotic normality for the Poisson model, we use 
the independence of different branches in the trie and the classical central
limit theorem for sums of independent random variables. The proof requires
several estimates, including a moment estimate that is proved by induction
using a less common version of Rosenthal's inequality (\refL{LPinelis}).

\item \label{p2}
To depoissonize, \ie, transfer results to the fixed $n$ model, we use here a
novel approach, using a conditional limit theorem by \citet{Nerman}.
The main condition for this theorem is that the functionals we consider are
increasing, or at least the difference of two increasing functionals.

\item \label{p3}
To find asymptotic means and variances, we use results from \cite{SJ242}
based on renewal theory, see also \cite{MRobert:prob}. 
\end{enumerate}}

Note that there are several earlier papers on asymptotic normality for
tries, where all three steps have been
proved by detailed analyses of generating functions. That is a wonderful
method, but the method used here avoids the necessity to estimate the generating
functions in the complex plane; this may be useful or convenient in some
applications. Furthermore, our method is easily adapted to more general
sources of random strings, see \refR{Rother}. 
The reader is encouraged to compare, and perhaps combine, the methods for
future work.

We state the results of steps \ref{p1} and \ref{p2} above as
general central limit theorems, in several versions
(\refTs{T1}--\ref{Tknabc}, with proofs in \refS{Spfcentral}),
where the toll function may be unbounded
but we assume some technical conditions on moments of the additive
functional and its toll functional.
Then, as step \ref{p3}, we prove separately (in \refS{Smean})
\refT{TEVC} on mean and variance of additive functionals.
This is based on a theorem from
\cite{SJ242}, which for convenience is stated, and somewhat extended, in
\refApp{A242}. 
Finally, \refT{TT} follows by combining \refT{TEVC} and the general central
limit theorems. (This proof is in \refS{SpfTT}.)

One reason for this organization is that the central limit theorems and the
moment asymptotics are proved by quite different methods, and we find it
instructive 
to present them separately,
and not only their combination \refT{TT}.
This also enables us to present somewhat more general results, as said above.

  \begin{remark}
    \label{Rother}
  The method of proof of normality (steps \ref{p1} and \ref{p2} above)
applies,
under suitable conditions, also to random strings where the letters
are not independent, for example strings from a Markov source, or the bit
expansions of random numbers with a non-uniform distribution on $(0,1)$.
(We still assume that different strings are \iid.)
This will be studied elsewhere.
  \end{remark}

\section{Preliminaries}\label{Sprel}

\subsection{Some general notation}
We use $\pto$ and $\dto$ to denote convergence in probability and
distribution,
respectively, of random variables. 
$\eqd$ denotes equality in distribution.

$(X\mid\cE)$ denotes the random variable $X$ conditioned on the event $\cE$.

For a random variable $X$ and $r>0$,
$\norm{X}_r:=(\E |X|^r)^{1/r}$, the $L^r$ norm.

$C$ denotes various unimportant constants, possibly different at different
occurences. We sometimes for clarity write
$C_1,C_2,\dots$, and we use $C_\gl$ for a ``constant'' that depends on $\gl$.

We use standard $o$ and $O$ notation, for sequences and functions of a real
variable; note that $O$ is used both in a global and an asymptotic sense:
for example,
$f(x)=O(g(x))$ for $x\in S$ means that $|f(x)|\le Cg(x)$ for all $x\in S$
(equivalently, if $g(x)>0$ in $S$, $f(x)/g(x)$ is bounded in $S$),
while
$f(x)=O(g(x))$ as \xtoo{} means that $|f(x)|\le Cg(x)$ for large $x$.
For positive functions or sequences we also use the notations
$\Omega$ and $\Theta$:
$f(x)=\Omega(g(x))$ as \xtoo{} means that $f(x)\ge c g(x)$ for some $c>0$ and
large $x$, or, equivalently, $g(x)=O(f(x))$ as \xtoo;
$f(x)=\Theta(g(x))$ means
$f(x)=O(g(x))$ and $f(x)=\Omega(g(x))$, and similarly for sequences.

For $x\in\bbR^d$, $|x|$ denotes the usual Euclidean norm. (Any other norm
would do as well.)

For $x\in\bbR$, $\floor x$ is the largest integer $\le x$.


$\ln$ denotes the natural logarithm.

\subsection{Strings}\label{SSstrings}

We consider strings with letters in a finite alphabet $\cA$.
($\cA$ is fixed throughout the paper.)
Let $\cAx:=\bigcup_{n=0}^\infty\cA^n$, the set of finite strings  from
$\cA$.
The empty string is denoted by $\emptystring$.

We write $\aaa\preceq\bbb$ if $\aaa$ and $\bbb$ are two strings and $\aaa$ is
a prefix of $\bbb$.

The tries will be constructed from $n$
random infinite strings
$\Xi\sss1,\Xi\sss2,\dots,\allowbreak\Xi\sss{n}$,
where
$\Xi\sssk=\xi_1\sssk\xi_2\sssk\dotsm$
with letters $\xi_i\sssk\in\cA$.
(We may drop the superscript  and write
$\Xi=\xi_1\xi_2\dotsm$ for a generic string in the sequence.)
We suppose that the strings $\Xi\sssk$ are independent,
and furthermore  that the individual letters $\xi_i\sssk$ are \iid.
We thus assume throughout the paper that we are given a probability
distribution $\vp=(p_\ga)_{\ga\in\cA}$, and that
\begin{align}\label{pa}
  \P(\xi_i\sssk=\ga)=p_\ga,
  \qquad \ga\in\cA.
\end{align}
To avoid trivialities, we assume that each $p_\ga>0$ (otherwise we may reduce
$\cA$), and that $|\cA|>1$, and thus each $p_\ga<1$.
We let $\pmin:=\min_\ga p_\ga$ and $\pmax:=\max_\ga p_\ga$, and note that
$0<\pmin\le\pmax<1$.

The \emph{entropy} $H$ is defined by
\begin{align}\label{entropy}
  H:= -\sum_{\ga\in\cA} p_\ga\ln p_\ga >0.
\end{align}

Given a finite string $\ga_1\dotsm\ga_m\in\cAx$, let
$P(\ga_1\dotsm\ga_m)$ be the probability that the random string $\Xi$
has prefix $\ga_1\dotsm\ga_m$, \ie, that $\xi_i=\ga_i$ for $i\le m$.
In particular,
for a single letter, $P(\ga)=p_\ga$, and in general
\begin{equation}\label{paaa}
  P(\ga_1\dotsm\ga_m)
  =\prodim p_{\ga_i}.
\end{equation}

For later use we define, for complex $s\in\bbC$,
\begin{align}\label{Qks}
  \gQ s:=\sum_{\ga\in\cA}p_\ga^s
\end{align}
and note that \eqref{paaa} implies that, for any $m\ge0$,
\begin{align}\label{Qkm}
 \sum_{|\bga|=m}P(\bga)^s  
=\sum_{\ga_1,\dots,\ga_m\in\cAx}\prodim p_{\ga_i}^s
= \gQ s^m.
\end{align}
For any real $r>1$, we have
\begin{align}\label{Qk}
  \gQ r=\sum_{\ga\in\cA}p_\ga^r
< \sum_{\ga\in\cA}p_\ga
=1,
\end{align}
and thus by \eqref{Qkm}
\begin{align}\label{Qrr}
 \sum_{\bga\in\cAx}P(\bga)^r  
=\summ \sum_{|\bga|=m}P(\bga)^r  
=\summ \gQ r^m<\infty.
\end{align}
Furthermore, we note that
\begin{align}
  \label{QH}
\dds \gQ s\big|_{s=1}=\sum_{\ga\in\cA} p_\ga\ln p_\ga = -H.
\end{align}

\subsection{Trees}\label{SStrees}
A \emph{leaf} in a rooted
tree is a node without children; leaves are also called
\emph{external nodes}, while the remaining nodes are called \emph{internal
  nodes}. 

Let $\Too$ be the infinite \Aary{} tree where the nodes are 
the finite strings $\bga\in\cAx$; the root
is the empty string $\emptystring$,
and the children of a node $\bga$ are the nodes $\bga \gam$ with $\gam\in\cA$.
Hence $\bga$ is a (strict) ancestor of $\bgb$ if and only if $\bga\prec\bgb$
(i.e., $\bga$ is a strict prefix of $\bgb$).

A finite \Aary{} tree is  a finite subtree of $\Too$ containing its
root $\emptystring$;
for convenience we regard also the empty tree $\emptyset$ with no
nodes 
as a finite \Aary{} tree.
Let $\sta$ be the countable set of all finite $|\cA|$-ary trees,
and let $\stap:=\sta\setminus\set{\emptyset}$, the subset of nonempty trees.

We may identify trees in $\sta$ with their sets of nodes,
and we write $\abs{T}$ for the number of nodes in $T$;
we denote the numbers of internal and external nodes (= leaves)
by $\absi{T}$ and
$\abse T$, respectively; thus $\abs{T}=\absi{T}+\abse T$.
Let $\sta_n:=\set{T\in\sta:\abse{T}=n}$, the set of finite \Aary{} trees with
exactly $n$ leaves.
(Note that we in the present paper thus count the size by the number of
leaves; this is natural in the context of tries.)

Let $\Ti\in\sta$ denote the tree consisting of only the root
$\emptystring$.
Thus $\abs{\Ti}=\abse{\Ti}=1$ and $\Ti\in\sta_1$.

\subsection{Tries}\label{SStries}

A \emph{trie} (for a given alphabet $\cA$)
is an \Aary{} tree that is constructed in the following way
from a set of $n\ge0$ distinct
strings in $\cA^\infty$,
see \eg{} \cite[Section 6.3]{KnuthIII}
and \cite[Section 7.1]{Drmota}.
If $n=0$,  the trie is defined to be
the empty tree $\emptyset$.
Otherwise,
we begin with a root, and put every string in the root. 
If $n=1$, then we stop there, so the trie has just one node.
Otherwise, \ie, if $n\ge2$, we pass all strings
to new nodes; for each letter $\ga\in\cA$, we pass all strings beginning
with $\ga$, if any, to a new node labelled $\ga$.
We continue recursively, the next time partitioning the strings according to
the second letter, and so on, always looking at the first letter not yet
inspected;
hence, the strings passed to a node $\bga\in\cAx$, if any, are the strings with
prefix $\bga$, and if there are at least two such strings, then they are all
passed further to children of $\bga$.
At the end there is a tree with $n$ leaves, each containing one string. 

Given a set of infinite strings,
let  $\nbga$ be the
number of these strings that have $\bga$ as a prefix, for $\bga\in\cAx$,
and note that the trie $T$ just constructed can be defined as
the subtree of $\Too$ consisting of all nodes $\bga$ such that
one of the following holds:
\begin{itemize}
\item $\nbga\ge2$ (then $\bga$ is an internal node in $T$),
\item  $\nbga=1$ and either $\bga=\emptystring$ or
the parent of $\bga$ is an internal node
(then $\bga$ is an external node in $T$).
\end{itemize}

We are mainly interested in random tries, see below, but we say also that a
deterministic \Aary{} tree is a trie if it can be generated in this way from
some set of strings.
(It is easily seen that a finite \Aary{} tree is a trie if and only if
there is no leaf with a parent that has only one child.)
Denote the set of all tries by $\stt\subset\sta$.
Let $\stt_n:=\sta_n\cap\stt$, the set of tries with $n$ leaves,
and $\sttp:=\bigcup_1^\infty\stt_n=\stt\setminus\set\emptyset$.

Note that adding a new string to the ones generating a trie $T$ means either
adding a new leaf to an internal node of $T$, or converting a leaf to a
path of $k\ge1$ additional internal nodes, and
adding two new leaves to the last node in this path.
We call this \emph{adding a new string to $T$}.

A \emph{functional} of tries
is a function $\gf:\stt\to\bbR$
such that (to avoid uninteresting complications) $\gf(\emptyset)=0$.

We say that a functional $\GF$
of tries
is \emph{increasing} if $\GF(T_1)\le \GF(T_2)$
whenever $T_1$ is a subtree of $T_2$.
It is easily seen that it suffices to consider the case
when $T_2$ is obtained from
$T_1$ by adding a new string.

\subsection{Random tries}\label{SSRtries}

Let $\Tn$ denote the random trie generated by the $n$ \iid{} random
infinite strings
$\Xi\sss1,\Xi\sss2,\dots,\allowbreak\Xi\sss{n}$ (see \refSS{SSstrings}).
Note that $\Tn$ has $n$ leaves, so $\Tn\in\stt_n$.

In the trivial case $n=1$, we see that $\Tq1=\Ti$ is non-random.
(This is the only trie in $\stt_1$.)

We consider also the Poisson version.
In general, for any random variable
$N\in\bbN_0$, independent of the strings $\Xi\sssk$, $k\ge1$,
we may consider the
tree $T_N$ constructed from the $N$ strings $\Xi\sss1,\dots,\Xi\sss{N}$.
We will only consider the case $N=N_\gl\sim\Po(\gl)$ for some $\gl>0$, and
we then use the notation
\begin{equation}
  \label{Tgl}
\Tgl:=T_{N_\gl}.
\end{equation}
In the Poisson case,  we use the notation $\Nglbga$ 
for   the (random) number $\nbga$ of strings with prefix $\bga$,
i.e.,
\begin{equation}
\Nglbga
:=\bigabs{\set{k\le N_\gl:\Xi\sssk\succeq\bga}}.
\end{equation}
By standard properties of the Poisson distribution,
for any $\bga\in\cAx$,
  \begin{equation}\label{pax}
    \Nglbga\sim\Po\bigpar{\gl P(\bga)}
 . \end{equation}
Furthermore, for any finite strings $\bga_1,\dots,\bga_\ell$ such that none of
them is a prefix of another, the random variables
$\Nglx{\bga_1},\dots,\Nglx{\bga_\ell}$
are independent.

\subsection{Bucket tries}\label{SSbucket}
A \emph{bucket trie} (or \emph{$b$-trie}) 
is a generalization of tries; it is constructed from a number
of strings recursively in the same way as a trie, see \refSS{SStries},
but stopping when the number of strings in a node is at most some given
number $b$, known as the \emph{bucket size}.
Thus ordinary tries is the case $b=1$. In general, a leaf (external node)
will contain from 1 to $b$ strings. (The leafs are also called \emph{buckets}.)
In the notation above for random tries, the internal
nodes are  $\set{\bga\in\cAx:\nbga\ge b+1}$.

Note that, for any given bucket size $b\ge2$,
we can construct the trie $T$ based on a set of strings by
first constructing the bucket trie $T'$ with bucket size $b$,
and then letting a small trie grow from each bucket.
Moreover, for \iid{} random strings $\Xi\sss1,\dots,\Xi\sss n$ as above,
conditioned on the bucket trie,
these small tries are independent, and the small trie grown from a bucket
that contains $k$ strings is a copy of $\Tq k$.

We use bucket tries as a tool in some proofs.

\subsection{Fringe trees}\label{SSfringe}

Given a rooted tree $T$ and a node $v$ in $T$, let $T^v$ be the subtree of
$T$  consisting of $v$ and all its
descendents (with $v$ as the root of $T^v$).
Such subtrees are called \emph{fringe subtrees}, 
or just \emph{fringe  trees}, of $T$.
For convenience, we also define $T^v:=\emptyset$, the empty tree,
if $v\notin T$.
We consider in the present paper only trees $T\in\sta$, \ie, finite \Aary{}
trees; we then also regard the fringe trees $T^v$ as elements of $\sta$ in
the obvious way.
(Recall that we have defined trees in $\sta$ as subtrees 
of $\Too$ with root $\emptystring$, the empty string.) Thus, formally,
\begin{align}\label{fringe}
  T^\bga=\set{\bgb\in\cAx:\bga\bgb\in T}
\end{align}
Note that the fringe trees of a trie are tries.
Furthermore,
for a trie $T$ generated as in \refSS{SStries} from a set of strings, and any
$\ga\in T$,
\begin{align}
  \label{bamse}
\abse{T^\bga}=\nbga,
\end{align}
the number of generating strings with prefix $\bga$.

The  \emph{random fringe subtree}  $\Tx$
is the random rooted tree obtained by taking the subtree $T^v$ 
at a uniformly random node $v$ in $T$;
see \cite{Aldous-fringe}. (We assume $T\neq\emptyset$.)
 Let, for \Aary{} trees $T,T'\in\sta$, 
\begin{equation}\label{ntt}
  n_{T'}(T):=\bigabs{\set{v\in T':T^v=T'}},
\end{equation}
\ie, the number 
of subtrees of $T$ that are equal 
to $T'$. Then the distribution of $\Tx$ is
given by
\begin{equation}\label{ptx}
  \P(\Tx=T')=n_{T'}(T)/\abs{T}, \qquad T'\in\sta.
\end{equation}

When $T$ is a random tree, as in \cite{Aldous-fringe}
as well as in the present paper where we consider $\Tn$ and $\Tgl$, 
$\ntt$ is a random variable for each $T'\in\sta$,
and \eqref{ptx} holds for the conditional probability
$ \P\bigpar{\Tx=T'\mid T}$.


\subsection{Additive functionals}

Let $\gf$ be a functional of tries,
and 
consider the functional $\GF$ defined
for a trie $T\in\stt$ 
by the sum
\begin{equation}\label{GF}
  \GF(T)=\GF(T;\gf)\=\sum_{v\in T} \gf(T^v).
\end{equation}
(Thus, $\GF(\emptyset)=0$.)
Recall that we assume $\gf(\emptyset)=0$.
Hence, \eqref{GF} can be written as the formally infinite sum
\begin{align}\label{GFbga}
  \GF(T)=\sum_{\bga\in\cAx} \gf(T^\bga).
\end{align}
Moreover,
  the definition \eqref{GF} can also be written recusively as
  \begin{equation}\label{toll}
	\GF(T)=\gf(T)+\sum_{\ga\in\cA}\GF(T^\ga),
  \end{equation}
where $T^\ga$, $\ga\in\cA$, are the principal branches 
of $T$,
\ie, the fringe subtrees rooted at the children of the root.

A functional $\GF$ that can be written as \eqref{GF}--\eqref{toll} is often
called 
an \emph{additive functional} with
\emph{toll function} $\gf$.
(Any functional can be written in this form for
some $\gf$, so the important part of this terminology is the relation
between $\GF$ and $\gf$.)

\begin{example}
  \label{Eleaves}
  A simple example, which will be important in the sequel, 
  is the toll function
  \begin{align}\label{gf0}
    \gfo(T):=\indic{\abs{T}=1}=\indic{T=\Ti};
  \end{align}
  then \eqref{GF} shows that the corresponding additive functional $\GFo$
  counts the number of leaves in $T$.
  In particular, a random trie $\Tn$ has always
  $n$ leaves, and thus $\GFo(\Tn)=n$ is non-random.
  Similarly, by \eqref{Tgl},
  \begin{align}\label{gF00}
  \GFo(\Tgl)=N_\gl
\sim \Po(\gl).    
  \end{align}
\end{example}

\begin{example}
A more general  example  is to take $\gf(T)=\ett{T=T'}$, the indicator
that $T$ equals
some given tree $T'\in\sta$; then $\GF(T)=n_{T'}(T)$ defined in \eqref{ntt}.
Conversely, for any $\gf$, \eqref{GF} can be written
\begin{equation}\label{GFnt}
  \GF(T)=\sum_{T'\in\sttp}\gf(T') n_{T'}(T);
\end{equation}
hence any additive functional can be written as a
(potentially infinite) linear combination of the 
subtree counts $\ntt$,
where it suffices to consider (nonempty) tries $T'$.
\end{example}

\subsection{Fringe trees of tries}\label{SSfringe2}
For the random trie $\Tgl$ and any
string $\bga\in\cAx$,
we have by
the recursive construction of tries
that the fringe tree
$\Tgl^\bga$ is a trie constructed from $\Nglbga$ strings, except in the
case $\Nglbga=1$, when it is also possible that $\Tgl^\bga=\emptyset$ because
$\bga\notin\Tgl$ 
(when  $\bga$ has a parent that is not an internal node).
We therefore define
\begin{align}\label{byx}
  \Tglbgax:=
  \begin{cases}
    \Tgl^\bga, & \Nglbga\neq1,\\
    \Ti, & \Nglbga=1.
  \end{cases}
\end{align}
Then,
$\Tglbgax$ is always a trie constructed from $\Nglbga$ strings, and thus, 
by \eqref{pax}, for any (fixed) $\bga\in\cAx$,
\begin{align}
  \label{tpax}
  \Tglbgax\eqd \Tqq{\gl P(\bga)}.
\end{align}
Furthermore, $\Tglbga$ and $\Tglbgax$ differ by \eqref{byx} only in the case  
$\Tglbga=\emptyset$ and $\Tglbgax=\Ti$; hence, for any functional $\gf$ on
$\sta$,  
\begin{align}
  \label{tpax1}
  \gf(\Tglbga)=\gf(\Tglbgax)+O(1).
\end{align}
Moreover, if $\gf:\stt\to\bbR$ is a functional such that
$
\gf(\Ti)=0$, then
\begin{align}
  \label{tpaxx}
  \gf(\Tglbga)=
\gf(\Tglbgax)\eqd \gf\bigpar{\Tqq{\gl P(\bga)}}.
\end{align}

For any finite strings $\bga_1,\dots,\bga_\ell$ such that none of
them is a prefix of another, the random tries
$\Tglxxx{\bga_1},\dots,\Tglxxx{\bga_\ell}$
are independent, since this holds for
$\Nglx{\bga_1},\dots,\Nglx{\bga_\ell}$ as pointed out above.
Note that this does not hold for the fringe tries
$\Tgl^{\bga_1},\dots,\Tgl^{\bga_\ell}$ in general, again because of the
special case $\Nglx{\bga}=1$.

For these reasons, we will often
as a technical tool
use $\Tglbgax$
instead of $\Tglbga$.

\begin{remark}\label{Ralt}
  For any additive functional $\GF$ with toll function $\gf$, 
$\GF(\Ti)=\gf(\Ti)$ by \eqref{GF}, and thus it follows from \eqref{byx}
that
  $\GF(\Tglbga)-\gf(\Tglbga)=\GF(\Tglbgax)-\gf(\Tglbgax)$.
Hence, for any $\bga_1,\dots,\bga_\ell$ such that none is a prefix of
another, by the comments just made, 
the random variables  $\GF(\Tglbga)-\gf(\Tglbga)$
are independent.
This could be used in the proofs below as an alternative to using the
modified fringe tree $\Tglbgax$; it seems that the choice is mainly a matter
of taste, but we invite the reader to explore this further.
\end{remark}

%

\subsection{Greatest common divisor}\label{SSgcd}

Given a set $S$ of real numbers, we define $\gcd(S)$ to be the largest
positive real number $d$ such that $S\subseteq d\bbZ$ (equivalently:
$x/d\in\bbZ$ for every $x\in S$), provided that some such $d>0$ exists; if
no such $d$ exists, we define $\gcd(S):=0$.
(We assume that $S$ contains some non-zero element; otherwise this
definition would give $\infty$.)
We will only use this in the case $S:=\set{-\ln p_\ga:\ga\in\cA}$, and we
then use the special notation $\dA:=d(S)$ for this $S$.
We say that $\vp$ is \emph{periodic} if $\dA>0$. (This is when periodic
oscillations typically occur in the results below.)

In particular, if $x,y\neq0$,
then
$
  \gcd(x,y)=0
  \iff x/y\notin\bbQ
$.
Hence, if $\cA=\setoi$, then 
\begin{align}\label{gcd0}
  \dA=0
  \iff \frac{\ln p_1}{\ln p_0}\notin\bbQ.
\end{align}

\subsection{Mellin transform}\label{SSmellin}
If $f$ is a (measurable) function on $(0,\infty)$, its \emph{Mellin  transform}
is defined by
\begin{align}\label{mellin}
  \Mf(s):=\intoo f(s) x^{s-1}\dd x,
\end{align}
for all complex $s$ such that the integral converges absolutely.
(This domain is always a
vertical strip in the complex plane, which may be infinite, finite, or empty.
For simplicity we consider only absolute convergence which suffices for us;
for other purposes one might also consider conditionally convergent
integrals \eqref{mellin}.)
See further \eg{} \cite[Appendix B.7]{FlajoletS}.

\subsection{Convergence and approximation in distribution}
\label{SSapprox}
As said above, we use  $\dto$ to denote convergence in
distribution of random variables; these may take values in some metric space
$\cS$, see \eg{} \cite{Billingsley}.
(We will only use $\cS=\bbR^d$ for some $d$.)
Recall that by definition
\cite{Billingsley},
$X_n\dto Y$ if and only if 
$\E f(X_n)\to\E f(Y)$ as \ntoo{}
for every bounded continuous function
$f:\cS\to\bbR$.  
We extend this notion as follows.
 
Let $(X_n)\xoo$ and $(Y_n)\xoo$
be two sequences of random variables with values in
a metric space $\cS$.
We write $  X_n\approxd Y_n$ if, for every bounded continuous function
$f:\cS\to\bbR$, 
\begin{align}\label{approx}
 \E f(X_n)=\E f(Y_n)+o(1)
\qquad \text{as \ntoo}.
\end{align}

If $\cS=\bbR$,
we say that   \emph{$X_n\approxd Y_n$ with moments of order $s$}
(where $s\in\bbN$)
if \eqref{approx} holds and also
\begin{align}\label{momapprox}
 \E X_n^s=\E Y_n^s+o(1)
\end{align}
with both sides finite.
More generally,
if $\cS=\bbR^d$,
we say that   \emph{$X_n\approxd Y_n$ with moments of order $s$}
if \eqref{approx} holds and also,
for every multi-index $\multim$ with $|\multim|=s$,
\begin{align}\label{momapproxd}
 \E X_n^\multim=\E Y_n^\multim+o(1)
\end{align}
with both sides finite.
Similarly, still for $\cS=\bbR^d$, 
we say that   \emph{$X_n\approxd Y_n$ with absolute moment of order $s$}
(where  $s\in\bbR_+$)
if \eqref{approx} holds and also
\begin{align}\label{absmomapprox}
 \E |X_n|^s=\E |Y_n|^s+o(1)
\end{align}
with both sides finite.

For applications, ordinary moments are usually more interesting, but we use
absolute moments in at least one proof; 
we therefore give statements including both.
For brevity we will write 
``with  \absmoments{} of order $s$'', meaning
with absolute moments of order $s$ 
and, provided $s$ is an integer,
also with moments of order $s$.
(For the relation between these, see \refApp{Aapprox}.)

We use the same notation for variables $X_\gl$ and $Y_\gl$ depending on a
continuous parameter.

\begin{remark}
  \label{Rapprox}
If $Y_n=Y$ for all $n$, then $X_n\approxd Y_n$ is equivalent to $X_n\dto Y$,
by the definitions above.
More generally, the same holds if we assume $Y_n\dto Y$.
\end{remark}

\begin{remark}\label{Rsubsub}
The standard \emph{subsequence principle}
says that a sequence in a metric space converges to a limit $x$ if and only
if every subsequence has a subsubsequence that converges to $x$.
It is well known that this holds also for convergence in distribution, in
any metric space.  (Cf.\ \cite[Section 5.7]{Gut}).
It   holds also for $\approxd$ (and any metric space $\cS$):
  If every subsequence $(n_k)$ has a subsubsequence along which
  $X_n\approxd Y_n$, then $X_n\approxd Y_n$ along the full sequence.
  (This follows by fixing $f:\cS\to\bbR$:
 each subsequence then has a subsubsequence such that
 \eqref{approx} holds, and thus \eqref{approx} holds for the full sequence.)
 The same holds with a continuous parameter.
\end{remark}

 We use the subsequence principle several times in our proofs, often
 omitting some details. Here follows one example, extending 
 to $\approxd$
the standard
result that if $X_n\dto Y$, then 
uniform integrability of $|X_n|^s$ implies
convergence of \absmoment{s} of order $s$,
see \eg{} \cite[Theorem 5.5.9]{Gut}.

 \begin{lemma}
   \label{Lapprox0}
 Let $(X_n)\xoo$  and $(Y_n)\xoo$  be random vectors in $\bbR^d$
such that $X_n\approxd Y_n$.
Let further
  $s>0$, 
and suppose that
the sequence $(|X_n|^s)$ and  $\xpar{|Y_n|^s}$ are uniformly integrable.
Then, 
$X_n\approxd Y_n$ with  \absmoments{} of order $s$.
 \end{lemma}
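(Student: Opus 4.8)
The plan is to use the standard truncation argument for passing from convergence in distribution plus uniform integrability to convergence of moments, but carried out on both sequences simultaneously and phrased in terms of the relation $\approxd$. First I would reduce to the case $d=1$ and $s$ fixed: by $|X_n|^s \le \sum_{i=1}^d |X_n^{(i)}|^s$ (up to a constant depending on $d$ and $s$) the uniform integrability of $(|X_n|^s)$ follows from that of the coordinate sequences, and since $\approxd$ is preserved under composition with bounded continuous functions it suffices to prove the one-dimensional statement $\E|X_n|^s = \E|Y_n|^s + o(1)$ given $X_n\approxd Y_n$, $(|X_n|^s)$ and $(|Y_n|^s)$ uniformly integrable. (Strictly, $|X_n|$ is a continuous function of the vector $X_n$, so $|X_n| \approxd |Y_n|$ as real random variables; then apply the scalar case to these.)

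Next I would invoke the subsequence principle (\refR{Rsubsub}): it suffices to show that every subsequence has a subsubsequence along which $\E|X_n|^s \to$ and $\E|Y_n|^s\to$ the same limit. Along such a subsequence, uniform integrability of $(|X_n|^s)$ implies $\sup_n \E|X_n|^s < \infty$, hence the laws of $X_n$ (and of $Y_n$) are tight, so we may pass to a further subsequence along which $X_n \dto Z$ for some real random variable $Z$. Because $X_n\approxd Y_n$, the relation \eqref{approx} shows $\E f(Y_n) = \E f(X_n) + o(1) \to \E f(Z)$ for every bounded continuous $f$, so also $Y_n\dto Z$ along this subsubsequence. Now the classical result (\cite[Theorem 5.5.9]{Gut}), applied separately to $(X_n)$ and to $(Y_n)$ — each converging in distribution to $Z$, each with $(|\cdot|^s)$ uniformly integrable — gives $\E|X_n|^s \to \E|Z|^s$ and $\E|Y_n|^s \to \E|Z|^s$. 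Both limits finite (finiteness of $\E|Z|^s$ follows from Fatou applied to the uniformly integrable — hence bounded in $L^1$ — sequence), and they agree, which is exactly what the subsubsequence argument requires.

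The main obstacle, such as it is, is bookkeeping rather than mathematics: one must be careful that the limiting variable $Z$ extracted from $(X_n)$ is the \emph{same} as the one governing $(Y_n)$, which is where the hypothesis $X_n\approxd Y_n$ (as opposed to merely ``both sequences are uniformly integrable'') is used — it forces the two sequences to share every subsequential distributional limit. Everything else is a direct citation of the scalar theorem plus the two routine reductions (multi-index to scalar, full sequence to subsequence) already licensed by the remarks in the paper.
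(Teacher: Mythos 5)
Your argument is correct and is essentially the paper's own proof: uniform integrability gives tightness, the subsequence principle lets you pass to a subsubsequence with a distributional limit, the hypothesis $X_n\approxd Y_n$ forces $X_n$ and $Y_n$ to share that limit $Z$, and then \cite[Theorem 5.5.9]{Gut} applied to each sequence separately gives $\E|X_n|^s\to\E|Z|^s\leftarrow\E|Y_n|^s$, so the full-sequence claim follows by the subsequence principle. The only point you leave out is the bracketed part of the conclusion: ``with \absmoments{} of order $s$'' also requires, when $s$ is an integer, the multi-index moment approximation $\E X_n^{\multim}=\E Y_n^{\multim}+o(1)$ for $|\multim|=s$, and this is genuinely multivariate, so it is not captured by your reduction to the scalar variables $|X_n|\approxd|Y_n|$. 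The repair is immediate and is exactly how the paper handles it: run the same subsequence argument on the vectors themselves (tightness in $\bbR^d$, common limit $Z$, and uniform integrability of $|X_n^{\multim}|\le|X_n|^s$), which yields $\E X_n^{\multim}\to\E Z^{\multim}$ and likewise for $Y_n$; so you should either skip the reduction to $d=1$ or state it only for the absolute-moment half.
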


We give a detailed proof in \refApp{Aapprox}, together with a converse and
some further comments.

\section{A central limit theorem}\label{Smain}
We begin with our main results.
Proofs are given in \refSs{Smean} and~\ref{SpfTT}.

The first theorem is a preliminary result
giving asymptotics for mean and variance of additive
functionals
in the Poisson model under rather weak conditions (implying a linear growth),
including the case of bounded toll functions; it also introduces some
notation that will be used  in the sequel.
Corresponding results for the fixed $n$ model (under stronger conditions)
are included in \refT{TT}.

Recall the definition of
the entropy $H$ in \eqref{entropy},
the greatest common divisor $\dA:=\gcd\set{-\ln p_\ga:\ga\in\cA}$ in
\refSS{SSgcd},
and the Mellin transform $\Mf$ in \eqref{mellin}. 

\begin{theorem}\label{TEVC}
 Let $\gf$ be a toll function and
 let $\GF$ be the corresponding additive functional given by \eqref{GF}.
Suppose that, 
  for some $\eps>0$,
as \gltoo,
\begin{align}
  \E \gf(\Tgl) &= O\bigpar{\gl^{1-\eps}},\label{bze}
                 \\
    \Var \gf(\Tgl) &= O\bigpar{\gl^{1-\eps}}.\label{bzv}
  \end{align}
Let
  \begin{align}
\chi&:=\gf(\Ti), \label{chi}
\\
\fE(\gl)&:=\E\gf(\Tgl)-\chi \gl e^{-\gl},\label{fEx}
    \\
    \fV(\gl)&:=2\Cov\bigpar{\gf(\Tgl),\GF(\Tgl)}-\Var\gf(\Tgl)
\notag\\&\qquad
+2\chi \gl e^{-\gl}\bigpar{\E\GF(\Tgl)-\E\gf(\Tgl)}
-\chi^2 \gl e^{-\gl}\bigpar{1-\gl e^{-\gl}}
,\label{fVx}
              \\
    \fC(\gl)&:=\Cov\bigpar{\gf(\Tgl),N_\gl}
+\chi\gl(\gl-1) e^{-\gl}
.\label{fCx}
  \end{align}
Then the following hold.  
  \begin{romenumerate}
\item\label{TEVC0}
  If\/ $\dA=0$, then, as $\gl\to\infty$,
  \begin{align}
\frac{\E\GF(\Tgl)}{\gl}
    &\to \chi+
\frac1 H \Mx{\fE}(-1)
=
\chi+
\frac1 H \intoo \fE(x)x\qww\dd x\label{uE0},	
    \\
\frac{\Var\GF(\Tgl)}{\gl}
    &\to 
\chi^2+
\frac1 H \Mx{\fV}(-1)
=
\chi^2+
\frac1 H \intoo \fV(x)x\qww\dd x, \label{uV0}	
    \\
\frac{\Cov\bigpar{\GF(\Tgl),N_\gl}}{\gl}
    &\to 
\chi+
\frac1 H \Mx{\fC}(-1)
=
\chi+
\frac1 H \intoo \fC(x)x\qww\dd x. \label{uC0}
  \end{align}
\item\label{TEVCd} 
More generally, for any $\dA$,
as $\gl\to\infty$,
  \begin{align}
\frac{\E\GF(\Tgl)}{\gl}
&= \chi+\frac1 H \psiE(\ln\gl) + o(1),\label{uE+}
    \\
\frac{\Var\GF(\Tgl)}{\gl}
&= \chi^2+\frac1 H \psiV(\ln\gl) + o(1),\label{uV+}
    \\
\frac{\Cov\bigpar{\GF(\Tgl),N_\gl}}{\gl}
&= 
\chi+
\frac1 H \psiC(\ln\gl) + o(1),\label{uC+}
  \end{align}
where $\psiX$, 
for $\sX=\sE,\sV,\sC$,
are bounded continuous functions defined as follows:
\begin{enumerate}
\item 
If $\dA=0$ then $\psiX$ is constant: for all $t$,
\begin{align}\label{psiX0}
\psiX(t):=\Mx{\fX}(-1).  
\end{align}
\item 
If $d=\dA>0$, then $\psiX$ is a continuous
$d$-periodic function having the Fourier series
\begin{equation}\label{tevcfou}
  \psiX(t)\sim\summoooo  \Mx{\fX}\Bigpar{-1-\frac{2\pi m}{d}\ii}e^{2\pi\ii  mt/d}.
\end{equation}
Furthermore, 
\begin{equation}
  \label{tevcsum}
\psiX(t)=d\sumkoooo e^{kd-t}\fX\bigpar{e^{t-kd}}.
\end{equation}
Moreover, if
$\sX=\sE$, or if
$\fX'(\gl)=O(\gl^{-\eps_1})$ as \gltoo{} for some $\eps_1>0$, 
then 
the Fourier series \eqref{tevcfou} converges absolutely, and thus $\sim$ may
be replaced by $=$ in \eqref{tevcfou}.
\end{enumerate}

\item\label{TEVC+}
 If\/ $\gf(T)\ge0$ for every trie $T$
and $\gf(T')>0$ for some trie $T'$,
then
$\inf_t\bigpar{H\qw\psiE(t)+\chi}>0$, and thus
  $\E\GF(\Tgl)=\Theta(\gl)$ as \gltoo.
\end{romenumerate}
\end{theorem}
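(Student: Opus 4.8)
The plan is to deduce the last statement from the relation \eqref{uE+}, which we may assume already established, together with an elementary linear lower bound on $\E\GF(\Tgl)$. Since $\psiE$ is bounded, \eqref{uE+} immediately gives $\E\GF(\Tgl)=O(\gl)$, so the whole task is to show that $\liminf_{\gl\to\infty}\E\GF(\Tgl)/\gl>0$. Once this is done, \eqref{uE+} yields $\liminf_{\gl\to\infty}\bigpar{\chi+H\qw\psiE(\ln\gl)}>0$, and because $t\mapsto\chi+H\qw\psiE(t)$ is constant when $\dA=0$ and continuous and $d$-periodic when $\dA=d>0$, it attains each of its values for arbitrarily large $t=\ln\gl$; hence this $\liminf$ equals $\inf_t\bigpar{\chi+H\qw\psiE(t)}$, and combined with the upper bound this also gives $\E\GF(\Tgl)=\Theta(\gl)$.

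For the linear lower bound I would fix a trie $T'\in\sttp$ with $\gf(T')>0$ (one exists by hypothesis). As $\gf\ge0$ on tries and every fringe tree $\Tgl^v$ is a trie, \eqref{GF} gives $\GF(\Tgl)\ge\gf(T')\,n_{T'}(\Tgl)$, hence $\E\GF(\Tgl)\ge\gf(T')\,\E n_{T'}(\Tgl)$, and it suffices to show $\E n_{T'}(\Tgl)=\Omega(\gl)$. If $\chi=\gf(\Ti)>0$ one can simply take $T'=\Ti$: then $n_{\Ti}(\Tgl)$ is the number of leaves of $\Tgl$, so $\E n_{\Ti}(\Tgl)=\E N_\gl=\gl$. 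Otherwise $\chi=0$, so necessarily $T'\ne\Ti$ and $T'\ne\emptyset$; write $m:=\abse{T'}\ge2$. Since $\Tgl^\bga$ and $\Tglbgax$ disagree only at singleton nodes (where one is $\emptyset$ and the other $\Ti$) and $T'$ is neither, one has $\set{\Tgl^\bga=T'}=\set{\Tglbgax=T'}$; combining this with \eqref{tpax} and the fact that $\Tqq{\gl P(\bga)}$ can equal $T'$ only when it is built from exactly $m$ strings gives
\begin{equation*}
  \E n_{T'}(\Tgl)=\sumbga\P\bigpar{\Tgl^\bga=T'}
  =c_{T'}\sumbga e^{-\gl P(\bga)}\,\frac{\bigpar{\gl P(\bga)}^{m}}{m!},
  \qquad c_{T'}:=\P\bigpar{\Tq m=T'}.
\end{equation*}
Here $c_{T'}>0$, since by definition $T'$ is the trie generated by some $m$ distinct strings, and prescribing sufficiently long initial segments of those strings determines a positive-probability event (all $p_\ga>0$) on which $\Tq m=T'$.

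It then remains to bound the last sum below by $\Omega(\gl)$. I would fix any constant $b>0$ and, for $\gl>b$, let $A_\gl\subseteq\cAx$ be the prefix-free set of strings $\bga$ that are minimal with $P(\bga)\le b/\gl$. Then $\sum_{\bga\in A_\gl}P(\bga)=1$ with every term $\le b/\gl$, so $|A_\gl|\ge\gl/b$; moreover $P(\bga)>\pmin b/\gl$ for $\bga\in A_\gl$, so $\gl P(\bga)\in[\pmin b,b]$, a fixed compact subinterval of $(0,\infty)$ on which the continuous function $x\mapsto x^{m}e^{-x}/m!$ has a positive minimum $\gd_m$. Hence $\E n_{T'}(\Tgl)\ge c_{T'}\gd_m|A_\gl|\ge (c_{T'}\gd_m/b)\,\gl$ for all $\gl>b$, which is the required bound, and the proof is finished by the reduction of the first paragraph.

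The one step I expect to require real care is the displayed identity for $\E n_{T'}(\Tgl)$ and the positivity $c_{T'}>0$: these hinge on correctly handling the exceptional singleton nodes where $\Tgl^\bga$ and $\Tglbgax$ differ, which is precisely why the case $\chi>0$ is peeled off first and why the argument is cleanest for $T'\notin\set{\emptyset,\Ti}$. The remaining ingredients — the counting bound $|A_\gl|\ge\gl/b$, the positivity $\gd_m>0$, and the passage from a $\liminf$ in $\gl$ to an $\inf$ in $t$ — are routine.
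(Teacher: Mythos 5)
There is a genuine gap: your argument addresses only part \ref{TEVC+} of the theorem and explicitly assumes \eqref{uE+} ``already established''. But \eqref{uE+} — together with \eqref{uE0}--\eqref{uC0}, \eqref{uV+}, \eqref{uC+} and the description of $\psiE,\psiV,\psiC$ via the Mellin transforms of the specific functions \eqref{fEx}--\eqref{fCx} — \emph{is} the main content of Theorem \ref{TEVC}, and proving it is where essentially all of the work lies. The paper's proof first reduces to $\chi=0$ by replacing $\gf$ with $\gff=\gf-\chi\gfo$, then derives the exact sum representations $\E\GF(\Tgl)=\sum_{\bga}\fE(\gl P(\bga))$, $\Var\GF(\Tgl)=\sum_{\bga}\fV(\gl P(\bga))$, $\Cov(\GF(\Tgl),N_\gl)=\sum_{\bga}\fC(\gl P(\bga))$ (Lemmas \ref{LM1} and \ref{LV}, which need the independence structure of the modified fringe tries $\Tglbgax$ and a careful $L^2$ argument via the level sums $Y_k$), establishes $\Var\GF(\Tgl)=O(\gl)$ (Lemma \ref{LA}) so that $\fV,\fC$ satisfy the growth condition \eqref{b2}, and only then invokes the renewal-theoretic Theorem \ref{Tsum} to obtain the periodic/constant limits; finally one must check that after the $\chi$-reduction the functions $\fVp+2\chi\fCp$ etc.\ really coincide with the stated \eqref{fEx}--\eqref{fCx}. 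None of this appears in your proposal, so as a proof of the stated theorem it is fundamentally incomplete.

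Within its limited scope, your argument for part \ref{TEVC+} is correct and is a legitimate alternative to the paper's: you lower-bound $\E\GF(\Tgl)\ge\gf(T')\E n_{T'}(\Tgl)$, compute $\E n_{T'}(\Tgl)=c_{T'}\sum_\bga e^{-\gl P(\bga)}(\gl P(\bga))^m/m!$ using that $\Tgl^\bga$ and $\Tglbgax$ agree on the event $\{\Tgl^\bga=T'\}$ when $T'\notin\{\emptyset,\Ti\}$, and get $\Omega(\gl)$ from the prefix-free cut $A_\gl$ with $\gl P(\bga)\in(\pmin b,b]$; the passage from a liminf in $\gl$ to an inf in $t$ via periodicity and continuity of $\psiE$ is also fine. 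The paper's route is shorter: for $\chi=0$ it notes $\fE(\gl)=\E\gf(\Tgl)>0$ for all $\gl>0$ and applies Theorem \ref{Tsum}\ref{TsumTheta} directly, and for $\chi>0$ it uses $\GF(\Tgl)\ge\chi N_\gl$ (or $\psiE\ge0$ from \eqref{tevcsum}). So keep your part-(iii) argument if you like, but you still owe a proof of parts \ref{TEVC0} and \ref{TEVCd}.
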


\begin{remark}\label{REVC}
  When $\dA>0$, the constant term in \eqref{tevcfou} is $\MfX(-1)$.
Thus we may regard the \rhs{s} of \eqref{uE0}--\eqref{uC0} as
``average asymptotic values'' of the \lhs{s} also when $\dA>0$, remembering
that then
the asymptotics really also include oscillations around these values.
As is well known, the oscillation are numerically small in typical examples.
\end{remark}

\begin{remark}\label{REVC0}
It can be seen above, and in more detail later in the proof, that
fringe subtrees $\Ti$ (leaves) play a special role; see also
\refSS{SSfringe2}.
The formulas in \refT{TEVC} simplify somewhat
in the case $\gf(\Ti)=0$, where such fringe subtrees are ignored. 
(This case is very common in applications, 
see \refS{Sfringe} for examples.)
In particular, if $\gf(\Ti)=0$, then \eqref{fEx}--\eqref{fCx} simplify to
  \begin{align}
    \fE(\gl)&:=\E\gf(\Tgl),\label{fE0}
    \\
    \fV(\gl)&:=2\Cov\bigpar{\gf(\Tgl),\GF(\Tgl)}-\Var\gf(\Tgl),\label{fV0}
              \\
    \fC(\gl)&:=
\Cov\bigpar{\gf(\Tgl),N_\gl}
\label{fC0}
.  \end{align}
\end{remark}

\begin{remark}\label{Rexists}
  It follows from the proof that
  $\fE(\gl),\fV(\gl),\fC(\gl)$ are finite for every $\gl>0$,
and extend to entire functions,
  and that
  the Mellin transforms
  $\MfE(s),\allowbreak\MfV(s),\MfC(s)$ exist at least in the strip 
$-2<\Re s<-1+\eps/2$, so the values in \eqref{uE0}--\eqref{uC0} and
  \eqref{tevcfou} are well defined.
In fact, at least
  $\MfE(s)$ exists in the strip $-2<\Re s<-1+\eps$, and \eqref{tex*} below
  shows that $\MfC$ extends analytically to the same strip, but we do not
  know whether \eqref{mellin} always converges absolutely there for $\fC$. (The
  integral converges at least conditionally there by the proof of \refL{LEVCx}.)
\end{remark}

\begin{remark}\label{Rcov}
The results \eqref{uV0} and \eqref{uV+} in  \refT{TEVC} extend
immediately to the covariance $\Cov\bigpar{\GF_1(\Tgl),\GF_2(\Tgl)} $ for
two additive functionals with toll functions $\gf_1,\gf_2$ satisfying
\eqref{bze}--\eqref{bzv}; the function $\fV$ in \eqref{fVx} is replaced by,
with $\chi_j:=\gf_j(\Ti)$,
\begin{align}
\fVq{12}(\gl):=
  &\Cov\bigpar{\gf_1(\Tgl),\GF_2(\Tgl)}
  +\Cov\bigpar{\gf_2(\Tgl),\GF_1(\Tgl)}-\Cov\bigpar{\gf_1(\Tgl),\gf_2(\Tgl)}
\notag\\&\quad
+\chi_1\bigpar{\E\GF_2(\Tgl)-\gf_2(\Tgl))}\gl e^{-\gl} 
+ \chi_2\bigpar{\E\GF_1(\Tgl)-\gf_1(\Tgl)}\gl e^{-\gl} 
\notag\\&\quad
-\chi_1\chi_2\bigpar{1-\gl e^{-\gl}}\gl e^{-\gl} 
 .\label{fCov}
\end{align}
This follows by polarization, \ie, by considering $\gf_1\pm\gf_2$.

Note that taking $\gf_2=\gfo$ yields $\Cov\bigpar{\GF_1(\Tgl),N_\gl}$, so we
can regard \eqref{uC0} and \eqref{uC+} as special cases of the bilinear
versions of \eqref{uV0} and \eqref{uV+}.
Indeed, it is easily verified that if $\gf_1=\gf$ and $\gf_2=\gfo$, then
\eqref{fCov} reduces to \eqref{fCx}.
\end{remark}

We use in the sequel frequently 
the number $\chi$, 
the functions $\fE,\fV,\fC$,
their Mellin transforms $\MfE,\MfV,\MfC$, and
the periodic functions $\psiE,\psiV,\psiC$ 
defined  in \refT{TEVC};
these have always the meanings above,
for some given $\gf$. (We  say this explicitly sometimes, for emphasis, but
not always.)
We note a relation between $\fE$ and $\fC$.

\begin{lemma}\label{LEVCx}
  Let $\gf$ be as in \refT{TEVC}. Then,
  for all $\gl$ and $t$, and at least for
  $\Re s\in(-2,-1+\eps/2)$,
  \begin{align}
    \fC(\gl)&=\gl\fE'(\gl),
                                \label{tex}\\
    \MfC(s)&=-s\MfE(s),
               \label{tex*}\\
    \psiC(t)&=\psiE(t)+\psiE'(t),
         \label{texpsi}
  \end{align}
In particular, 
\begin{align}
  \label{texas}
\MfC(-1)=\MfE(-1).
\end{align}
\end{lemma}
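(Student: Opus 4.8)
The plan is to establish the pointwise identity \eqref{tex} first; the Mellin identity \eqref{tex*}, the relation \eqref{texpsi}, and the special case \eqref{texas} then all follow from it by routine manipulations. For \eqref{tex} I would use the elementary observation that for \emph{any} functional $\psi$ of tries for which $g(\gl):=\E\psi(\Tgl)$ is finite and differentiable, one has $\Cov\bigpar{\psi(\Tgl),N_\gl}=\gl g'(\gl)$. Indeed, writing $g(\gl)=\sum_{n\ge0}e^{-\gl}\frac{\gl^n}{n!}a_n$ with $a_n:=\E\psi(\Tn)$, we get $\E\bigsqpar{\psi(\Tgl)N_\gl}=\gl\sum_{m\ge0}e^{-\gl}\frac{\gl^m}{m!}a_{m+1}$, so $\Cov\bigpar{\psi(\Tgl),N_\gl}=\gl\sum_{m\ge0}e^{-\gl}\frac{\gl^m}{m!}(a_{m+1}-a_m)$, while term-by-term differentiation of the series for $g$ gives $g'(\gl)=\sum_{m\ge0}e^{-\gl}\frac{\gl^m}{m!}(a_{m+1}-a_m)$ as well. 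By \refR{Rexists} the function $\E\gf(\Tgl)=\fE(\gl)+\chi\gl e^{-\gl}$ is entire, so this is legitimate for $\psi=\gf$; then, with $g(\gl):=\E\gf(\Tgl)$ and using $(\gl e^{-\gl})'=(1-\gl)e^{-\gl}$ together with the definitions \eqref{fEx} and \eqref{fCx}, we obtain $\gl\fE'(\gl)=\gl g'(\gl)-\chi\gl(1-\gl)e^{-\gl}=\Cov\bigpar{\gf(\Tgl),N_\gl}+\chi\gl(\gl-1)e^{-\gl}=\fC(\gl)$, which is \eqref{tex}.

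Given \eqref{tex}, for \eqref{tex*} I would write $\MfC(s)=\intoo\fC(x)x^{s-1}\dd x=\intoo\fE'(x)x^s\dd x$, integrate by parts on a finite interval, and let the endpoints tend to $0$ and to $\infty$. Since $a_0=0$ and $a_1=\gf(\Ti)=\chi$, the power series above gives $\fE(\gl)=e^{-\gl}\sum_{n\ge2}\frac{a_n}{n!}\gl^n=O\bigpar{\gl^2}$ as $\gl\downto0$, while $\fE(\gl)=O\bigpar{\gl^{1-\eps}}$ as \gltoo{} by \eqref{bze}; hence the boundary term $\Bigsqpar{\fE(x)x^s}$ tends to $0$ at both endpoints whenever $-2<\Re s<-1+\eps$, leaving $\MfC(s)=-s\MfE(s)$ on the strip where these Mellin transforms converge, which by \refR{Rexists} contains $-2<\Re s<-1+\eps/2$. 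Putting $s=-1$ yields \eqref{texas}, $\MfC(-1)=\MfE(-1)$.

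For \eqref{texpsi}: when $\dA=0$ the functions $\psiE$ and $\psiC$ are the constants $\MfE(-1)$ and $\MfC(-1)$ by \eqref{psiX0}, and $\psiE'\equiv0$, so $\psiC=\MfC(-1)=\MfE(-1)=\psiE+\psiE'$ by \eqref{texas}. When $d=\dA>0$ I would work from the series representation \eqref{tevcsum}: setting $h_X(t):=e^{-t}\fX(e^t)$, that identity reads $\psiX(t)=d\sumkoooo h_X(t-kd)$, and the chain rule together with \eqref{tex} gives $h_E'(t)=-h_E(t)+\fE'(e^t)=h_C(t)-h_E(t)$, i.e.\ $h_C=h_E+h_E'$. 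The bounds on $\fE$ above, and the companion bounds $\fC(\gl)=\gl\fE'(\gl)=O\bigpar{\gl^2}$ as $\gl\downto0$ and $\fC(\gl)=O\bigpar{\gl^{1-\eps/2}}$ as \gltoo{} (the latter by the \CSineq{}, using \eqref{bzv} and $\Var N_\gl=\gl$), make $h_E(t-kd)$, $h_C(t-kd)$, and hence $h_E'(t-kd)=h_C(t-kd)-h_E(t-kd)$, decay geometrically in $|k|$ uniformly for $t$ in compact sets; so the series for $\psiE$ and $\psiC$ and the termwise differentiated series all converge uniformly on compacts, differentiation passes through the sum, and $\psiC(t)=d\sumkoooo h_C(t-kd)=d\sumkoooo\bigpar{h_E(t-kd)+h_E'(t-kd)}=\psiE(t)+\psiE'(t)$.

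The only points that need care are these interchanges of limits --- differentiating the power series for $\E\gf(\Tgl)$ term by term, differentiating the periodic series \eqref{tevcsum} term by term, and passing the endpoint limits through the integration by parts --- and each is controlled by the two elementary estimates $\fE(\gl)=O\bigpar{\gl^2}$ as $\gl\downto0$ and $\fE(\gl)=O\bigpar{\gl^{1-\eps}}$ as \gltoo{} that already underlie \refR{Rexists}; I do not anticipate a genuine obstacle.
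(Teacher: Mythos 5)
Your proposal is correct and follows essentially the same route as the paper: termwise differentiation of the Poisson series for $\E\gf(\Tgl)$ to get $\gl\fE'(\gl)=\fC(\gl)$, integration by parts with the bounds $\fE(\gl)=O(\gl^2)$ at $0$ and $O(\gl^{1-\eps})$ at $\infty$ for \eqref{tex*}, and termwise manipulation of \eqref{tevcsum} for \eqref{texpsi}. The only (harmless) differences are that you keep $\chi$ general and track the explicit $\chi\gl e^{-\gl}$ corrections, whereas the paper first reduces to $\chi=0$ via $\fE=\fEp$, $\fC=\fCp$, and that you spell out the uniform-convergence justification for \eqref{texpsi}, whose details the paper omits.
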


\begin{remark}
  The argument in the proof of \eqref{tex} shows also that
  \begin{align}\label{kub}
    \gl\frac{\ddx}{\dd\gl}\GF(\Tgl)=\Cov\bigpar{\GF(\Tgl,N_\gl}.
  \end{align}
This derivative appears in the formula for the asymptotic variance of
$\GF(\Tn)$ already in \cite{JR88};
we regard \eqref{kub} as an explanation of this appearance.

Note also that \eqref{kub} and \eqref{texpsi} imply that
\eqref{uC+} can be regarded as a formal derivative of \eqref{uE+}.
\end{remark}

The next theorem might be regarded as our main result.
It gives asymptotic normality of additive functionals of tries
for both the Poisson and the fixed $n$ model.
The theorem is easy to apply but
still quite general; we will use it to show the results on fringe trees in
\refS{Sfringe}.
We have chosen to state this theorem here, because of its central role in
the paper. However, as said above, we also later
give some more general (and somewhat more technical)
central limit theorems in \refS{Sgeneral};
the proof of \refT{TT}  combines
some of these results from \refSs{Sgeneral} and \refT{TEVC}.
For simplicity, and convenience in many applications, we 
consider in the remainder of this section only toll
function that are bounded. 

\begin{remark}\label{RTT}
The proof of \refT{TT} shows that the assumption on boundedness 
can be relaxed to the moment conditions \eqref{bze}, \eqref{bzv} and
\eqref{Efr} (for any $r>2$) for $\gf$ and $\gf_\pm$.
The same applies to \refT{TLLN}.
\end{remark}

\begin{theorem}\label{TT}
  Let $\gf$ be a bounded toll function 
and
let $\GF$ be the corresponding additive functional given by \eqref{GF}.
Suppose further that $\gf=\gf_{+}-\gf_{-}$ for some bounded toll
functions $\gf_{\pm}$
such that the corresponding functionals $\GF_{\pm}$ are increasing.
Then, with notation as in \refT{T1}, \eqref{chi}--\eqref{fCx} and
\eqref{psiX0}--\eqref{tevcsum}:
\begin{romenumerate}
\item\label{TT0}
  If\/ $\dA=0$, then, as $\gl\to\infty$ and \ntoo,
\begin{align}\label{tt0gl}
{\frac{\GF(\Tgl)-\E \GF(\Tgl)}{\sqrt{\gl}}}
  &\dto N\bigpar{0,\gss},
  \\
{\frac{\GF(\Tn)-\E \GF(\Tn)}{\sqrt{n}}}
  &\dto N\bigpar{0,\hgss},\label{tt0n}
\end{align}
with all \absmoment{s},
where
\begin{align}
  \gss&=\chi^2+H^{-1}\Mx{\fV}(-1),\label{tt0gs}
  \\
    \hgss&=H^{-1}\Mx{\fV}(-1)-H^{-2}\Mx{\fC}(-1)^2
-2\chi H\qw\MfC(-1).\label{tt0hgs}
\end{align}

\item\label{TT+}
For any $\dA\ge0$, 
as $\gl\to\infty$ and \ntoo,
\begin{align}\label{tt+gl}
{\frac{\GF(\Tgl)-\E \GF(\Tgl)}{\sqrt{\gl}}}
  &\approxd N\bigpar{0,\gss(\gl)},
  \\
{\frac{\GF(\Tn)-\E \GF(\Tn)}{\sqrt{n}}}
  &\approxd N\bigpar{0,\hgss(n)},\label{tt+n}
\end{align}
with all \absmoment{s}, where
\begin{align}
  \gss(\gl)&=\chi^2+H^{-1}\psiV(\ln \gl),\label{tt+gs}
  \\
    \hgss(n)&=H^{-1}\psiV(\ln n)-H^{-2}\psiC(\ln n)^2
-2\chi H\qw\psiC(\ln n),\label{tt+hgs}
\end{align}
with 
continuous $d$-periodic functions
$\psiV,\psiC$.  

\item \label{TTE}
 We have
 \begin{align}
\label{tte}
\E\GF(\Tn)=\E\GF(\Tqq{n})+o\bigpar{\sqrt n}
 \end{align}
and may thus replace $\E\GF(\Tn)$ by $\E\GF(\Tqq{n})$ in \eqref{tt0n}
and \eqref{tt+n}.

\item\label{TTconv}
If\/  $\liminf_\ntoo\Var\GF(\Tn)/n>0$, then, for any $\dA\ge0$,
\begin{align}\label{ttgl}
{\frac{\GF(\Tgl)-\E \GF(\Tgl)}{\sqrt{\Var\GF(\Tgl)}}}
  &\dto N\xpar{0,1},
  \\
{\frac{\GF(\Tn)-\E \GF(\Tn)}{\sqrt{\Var\GF(\Tn)}}}
  &\dto N\xpar{0,1},\label{ttn}
\end{align}
 with convergence of  all \absmoment{s}.
\item \label{TTmean}
The means $\E\GF(\Tgl)$ and $\E\GF(\Tn)$ satisfy
\begin{align}
  \label{ttmean}
\E\GF(\Tgl)&=\gl\bigpar{\chi+ H\qw\psiE(\ln\gl)}+o(\gl),
\\   \label{ttmeann}
\E\GF(\Tn)&=n\bigpar{\chi+ H\qw\psiE(\ln n)}+o(n).
\end{align}
\end{romenumerate}
\end{theorem}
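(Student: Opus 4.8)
\emph{Strategy.} The plan is to assemble \refT{TT} from two separate inputs. The first is the family of general central limit theorems of \refS{Sgeneral} (\refTs{T1}--\ref{Tknabc}): for $\Tgl$ these rest on the independence of the branches and the classical CLT, together with a Rosenthal-type moment estimate (\refL{LPinelis}), and give $\bigpar{\GF(\Tgl)-\E\GF(\Tgl)}/\sqrt\gl\approxd N\bigpar{0,\Var\GF(\Tgl)/\gl}$ with convergence of all \absmoment{s}; the depoissonization to $\Tn$ goes through \citet{Nerman}'s conditional limit theorem, whose hypothesis is precisely the monotonicity $\gf=\gf_+-\gf_-$ with $\GF_\pm$ increasing, and which delivers \eqref{tte} together with $\bigpar{\GF(\Tn)-\E\GF(\Tn)}/\sqrt n\approxd N\bigpar{0,\Var\GF(\Tn)/n}$, again with all \absmoment{s}. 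The second input is \refT{TEVC}: a bounded toll function trivially satisfies \eqref{bze}--\eqref{bzv} (and the extra moment condition \eqref{Efr} of \refR{RTT} holds automatically), so that theorem provides the asymptotics of $\E\GF(\Tgl)$, $\Var\GF(\Tgl)$ and $\Cov\bigpar{\GF(\Tgl),N_\gl}$. Finally, part \ref{TT0} will be recovered from part \ref{TT+} by setting $\dA=0$: then $\psiV,\psiC$ reduce to the constants $\MfV(-1),\MfC(-1)$, so $\gss(\gl)\equiv\gss$ and $\hgss(n)\equiv\hgss$, and by \refR{Rapprox} the relations $\approxd N(0,\gss)$, $\approxd N(0,\hgss)$ become $\dto N(0,\gss)$, $\dto N(0,\hgss)$.

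\emph{Poisson model.} First I would read off from \refT{TEVC}\ref{TEVCd} that $\E\GF(\Tgl)=\gl\bigpar{\chi+H\qw\psiE(\ln\gl)}+o(\gl)$, which is \eqref{ttmean}, and that $\Var\GF(\Tgl)/\gl=\chi^2+H\qw\psiV(\ln\gl)+o(1)=\gss(\gl)+o(1)$. Combining the latter with the general CLT above and replacing $\Var\GF(\Tgl)/\gl$ by its limit $\gss(\gl)$ --- legitimate because $\gss(\gl)$ is bounded and one can couple $N(0,v)$ and $N(0,v')$ with $\norm{N(0,v)-N(0,v')}_2\le\sqrt{|v-v'|}$ --- gives \eqref{tt+gl}, hence also \eqref{tt0gl} with $\gss=\chi^2+H\qw\MfV(-1)$.

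\emph{Fixed $n$ model.} The depoissonization step provides \eqref{tte}, so \eqref{ttmeann} follows from \eqref{ttmean} on putting $\gl=n$, and it provides $\bigpar{\GF(\Tn)-\E\GF(\Tn)}/\sqrt n\approxd N\bigpar{0,\Var\GF(\Tn)/n}$ with all \absmoment{s}. It remains to identify $\Var\GF(\Tn)/n$, which is governed by the classical depoissonization correction --- conditioning the asymptotically jointly Gaussian pair $\bigpar{\GF(\Tgl),N_\gl}$ on $N_\gl=n$, with $\Var N_\gl=\gl$:
\begin{align*}
\frac{\Var\GF(\Tn)}{n}
&=\frac{\Var\GF(\Tgl)}{\gl}\Big|_{\gl=n}-\Bigpar{\frac{\Cov\bigpar{\GF(\Tgl),N_\gl}}{\gl}\Big|_{\gl=n}}^{2}+o(1)
\\&=\chi^{2}+H\qw\psiV(\ln n)-\bigpar{\chi+H\qw\psiC(\ln n)}^{2}+o(1)=\hgss(n)+o(1),
\end{align*}
where the second step uses \eqref{uV+} and \eqref{uC+} and the third is the elementary identity $\chi^{2}+H\qw\psiV-\bigpar{\chi+H\qw\psiC}^{2}=H\qw\psiV-H\qww\psiC^{2}-2\chi H\qw\psiC$. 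Replacing $\Var\GF(\Tn)/n$ by $\hgss(n)$ exactly as in the Poisson case yields \eqref{tt+n}, hence \eqref{tt0n} with $\hgss=H\qw\MfV(-1)-H\qww\MfC(-1)^2-2\chi H\qw\MfC(-1)$.

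\emph{Normalization by the true variance; the main obstacle.} For part \ref{TTconv}, the hypothesis $\liminf_\ntoo\Var\GF(\Tn)/n>0$ is, by the variance asymptotics just obtained, equivalent to $\inf_t\hgss(t)>0$, and since $\gss(t)=\hgss(t)+\bigpar{\chi+H\qw\psiC(t)}^{2}\ge\hgss(t)$ this forces $\inf_\gl\gss(\gl)>0$ as well. Dividing \eqref{tt+gl} and \eqref{tt+n} by $\sqrt{\gss(\gl)}$ and $\sqrt{\hgss(n)}$ respectively --- now bounded away from $0$, and differing from $\sqrt{\Var\GF(\Tgl)/\gl}$ and $\sqrt{\Var\GF(\Tn)/n}$ only by a factor $1+o(1)$ --- then yields \eqref{ttgl} and \eqref{ttn} with convergence of all \absmoment{s}; the $\gl$- and $n$-dependent normalizations are handled by the subsequence principle (\refR{Rsubsub}). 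I expect the main obstacle to be the identification of the fixed $n$ variance: making the depoissonization correction quantitative enough that the full oscillating function $\hgss(\cdot)$ is recovered, not merely its average, and uniform enough that part \ref{TT+} survives in the possibly degenerate regime where $\hgss(n)$ is not bounded away from $0$ --- in other words, reconciling the output of \citet{Nerman}'s conditional limit theorem with the Poisson second moments supplied by \refT{TEVC}.
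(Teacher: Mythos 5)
Your proposal is correct and follows essentially the same route as the paper: boundedness of $\gf,\gf_\pm$ makes the moment hypotheses of the general central limit theorems (\refTs{T1} and \ref{T1n}) trivial or verifiable via \refT{TEVC}, the functions $\gss(\gl)\gl$ and the depoissonization correction $b(n)-c(n)^2/n$ play exactly the roles of the paper's $a,b,c$, and parts \ref{TT0}, \ref{TTE}, \ref{TTconv}, \ref{TTmean} are deduced just as in the paper (including the periodicity/continuity argument showing $\liminf\Var\GF(\Tn)/n>0$ forces $\inf_t\gss$ and $\inf_t\hgss$ positive). The "main obstacle" you flag is already absorbed into the statement of \refT{T1n}, so no additional work is needed beyond what you describe.
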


\begin{remark}\label{Rmulti}
  \refT{TT}\ref{TT0}--\ref{TTE} 
extend in an obvious way to multivariate limits for several
  functionals $\GF_k$; this follows by the Cram\'er--Wold device
  (or by modifying the proof).
\end{remark}

\begin{remark}\label{Rlower}
  We can in \eqref{tt0gl}, \eqref{tt+gl} and \eqref{ttgl}
\emph{not} replace $\E\GF(\Tgl)$ by its asymptotic value
$\gl\bigpar{\chi+ H\qw\psiE(\ln\gl)}$ in
\eqref{ttmean}.
The reason is that that 
when $\dA=0$, the $o(1)$ error term in \eqref{uE+} typically 
is larger than $\gl\qqw$; in fact, this error term
is in general not $O(\gl^{-\eps})$ for any $\eps>0$. 
When $\dA>0$, the error is $O(\gl^{-\eps})$ for some $\eps>0$
depending on the probabilities $\vp$,
but this $\eps$ may be arbitrarily small; in particular,
also in the case $\dA>0$, 
the error is in general not $o(\gl\qqw)$.
Thus the error term in \eqref{ttmean} is in general not $o(\gl\qq)$.
These error estimates is implicit in
\citet{FlajoletRV}; see \refApp{Alower} for details.

The same holds for $\Tn$ and \eqref{tt0n}, \eqref{tt+n}, \eqref{ttn},
as a consequence of these results for $\Tgl$ and \eqref{tte}.
\end{remark}

As a corollary we obtain a weak law of large numbers.
This is much weaker than the central limit theorem in \refT{TT}, and
presumably holds under weaker conditions (with a more direct proof),
but we do not pursue this here.

\begin{theorem}\label{TLLN}
  Let $\gf$ be a toll function satisfying the conditions of \refT{TT}.
 Let  $\MfE(s)$, $\psiE(t)$  and $\chi$ be as in \refT{TEVC}.
\begin{romenumerate}
\item \label{TLLNa}
Then, as \gltoo{} and \ntoo,
\begin{align}\label{tllna1}
  \frac{\GF(\Tgl)}{\gl}
  -
  H\qw\psiE(\ln\gl) 
-\chi&\pto0,
  \\\label{tllna2}
  \frac{\GF(\Tn)}{n}
  -
  H\qw\psiE(\ln n) 
-\chi&\pto0.
\end{align}

In particular, if $\dA=0$, then, as \ntoo,
\begin{align}\label{tllna3}
  \frac{\GF(\Tn)}{n} 
\pto H\qw\Mx{\fE}(-1)+\chi
  = H\qw \intoo {\E[\gf(\Tgl)]}\gl^{-2}\dd \gl+\chi.
\end{align}
\item \label{TLLNb}
If furthermore $\gf\ge0$ and $\P(\gf(\Tn)>0)>0$ for some $n\ge1$, then
$\inf_t\bigpar{H\qw\psiE(t)+\chi}>0$, and thus, for some $c>0$,
 as \ntoo,
\begin{align}\label{stalis}
\P\bigpar{\GF(\Tn)\ge cn}\to1. 
\end{align}
\end{romenumerate}
\end{theorem}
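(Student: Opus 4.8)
The plan is to deduce everything from \refT{TT} and \refT{TEVC}; the weak law is genuinely much weaker, so the argument is short bookkeeping. Since $\gf$ satisfies the hypotheses of \refT{TT} it is in particular bounded, whence the moment conditions \eqref{bze}--\eqref{bzv} hold trivially and both theorems apply with the same $\chi,\psiE,\psiV,\psiC$. For part~\ref{TLLNa} I would start from \refT{TT}\ref{TT+}: for any $\dA\ge0$,
\[
\frac{\GF(\Tgl)-\E\GF(\Tgl)}{\sqrt\gl}\approxd N\bigpar{0,\gss(\gl)},
\qquad
\frac{\GF(\Tn)-\E\GF(\Tn)}{\sqrt n}\approxd N\bigpar{0,\hgss(n)},
\]
with all \absmoment{s}, where the variance parameters $\gss(\gl)=\chi^2+H\qw\psiV(\ln\gl)$ and $\hgss(n)=H\qw\psiV(\ln n)-H\qww\psiC(\ln n)^2-2\chi H\qw\psiC(\ln n)$ are \emph{bounded} in $\gl$ and $n$, since $\psiV,\psiC$ are bounded and $\chi$ is a constant. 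The left-hand sides are centred, so taking the moment of order $2$ gives $\Var\GF(\Tgl)=\gl\bigpar{\gss(\gl)+o(1)}=O(\gl)$ and likewise $\Var\GF(\Tn)=O(n)$. Chebyshev's inequality then yields, for every $\eps>0$, $\P\bigpar{\abs{\GF(\Tgl)-\E\GF(\Tgl)}>\eps\gl}=O(\gl\qw)\to0$ and the analogous bound for $\Tn$, giving $\bigpar{\GF(\Tgl)-\E\GF(\Tgl)}/\gl\pto0$ and $\bigpar{\GF(\Tn)-\E\GF(\Tn)}/n\pto0$. Adding the mean asymptotics \eqref{ttmean}--\eqref{ttmeann} of \refT{TT}\ref{TTmean} gives \eqref{tllna1}--\eqref{tllna2}. (For the $\Tgl$ part alone one could instead quote \eqref{uE+} and \eqref{uV+} of \refT{TEVC}, but the bound $\Var\GF(\Tn)=O(n)$ for the fixed-$n$ model uses the depoissonization built into \refT{TT}.)

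For the ``in particular'' in part~\ref{TLLNa}: when $\dA=0$, \eqref{psiX0} says $\psiE\equiv\MfE(-1)$, so $H\qw\psiE(\ln n)+\chi$ is a constant and \eqref{tllna2} collapses to \eqref{tllna3}, the final integral there being just the value of the Mellin transform \eqref{mellin} at $s=-1$. For part~\ref{TLLNb}, the extra hypotheses say that $\gf(T)\ge0$ for every trie $T$ and that $\gf(T')>0$ for some trie $T'$ --- indeed $\P(\gf(\Tn)>0)>0$ forces some $T'\in\stt_n$ with $\P(\Tn=T')>0$ and $\gf(T')>0$. These are exactly the hypotheses of \refT{TEVC}\ref{TEVC+}, which therefore gives $c_0:=\inf_t\bigpar{H\qw\psiE(t)+\chi}>0$. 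Taking $c:=c_0/2>0$, the convergence \eqref{tllna2} gives $\GF(\Tn)/n-\bigpar{H\qw\psiE(\ln n)+\chi}\pto0$, while $H\qw\psiE(\ln n)+\chi\ge c_0$ for every $n$; hence
\[
\P\bigpar{\GF(\Tn)\ge cn}\ge\P\Bigpar{\Bigabs{\tfrac{\GF(\Tn)}{n}-\bigpar{H\qw\psiE(\ln n)+\chi}}\le \tfrac{c_0}{2}}\to1,
\]
which is \eqref{stalis}.

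I expect no real obstacle; the only points that will need attention are (a) recording that $\gss(\gl),\hgss(n)$ are uniformly bounded, so that the Chebyshev estimate is $o(1)$ at the linear scale --- this is where one genuinely uses that the oscillating functions $\psiV,\psiC$ are bounded rather than, say, merely $o(\ln\gl)$ --- and (b) recognising that ``$\P(\gf(\Tn)>0)>0$ for some $n$'' is precisely the trigger for \refT{TEVC}\ref{TEVC+}. Everything else is immediate from \refTs{TT} and~\ref{TEVC}.
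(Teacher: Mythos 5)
Your proposal is correct and follows essentially the same route as the paper: the paper likewise deduces $\bigpar{\GF(\Tgl)-\E\GF(\Tgl)}/\gl\pto0$ and $\bigpar{\GF(\Tn)-\E\GF(\Tqq n)}/n\pto0$ from \refT{TT} (it simply divides the $\sqrt n$-scale approximations \eqref{tt+gl}--\eqref{tt+n}, using \ref{TTE}, where you instead extract the second moment and apply Chebyshev — an immaterial variation), then adds the mean asymptotics coming from \eqref{uE+}, and for part \ref{TLLNb} invokes \refT{TEVC}\ref{TEVC+} exactly as you do. No gaps.
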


\begin{problem}
  Do the limits \eqref{tllna1}--\eqref{tllna3} hold a.s.?
\end{problem}

We give one case where the condition in \refT{TT}\ref{TTconv} holds;
it holds in many other cases too, but see \refE{E0} for a counterexample.

\begin{lemma}\label{LC}
  Let $\GF$ be an additive functional with bounded toll function $\gf$ and
  suppose that there exists $n_0\in\bbN$ such that $\gf(\Tn)=a_n$ (a.s.)\ for
  $n\ge n_0$ and some constants $a_n$.
  Suppose also that $\Var\GF(\Tn)\neq0$ for some $n\ge1$.
  Then
  $\Var\GF(\Tn)=\Omega(n)$ as \ntoo.
\end{lemma}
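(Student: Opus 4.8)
The plan is to condition on a bucket trie: given the bucket trie $\cT'$, $\GF(\Tn)$ splits as a $\cT'$-measurable constant plus a sum of \emph{independent} contributions, and $\Omega(n)$ of these carry the (nonzero) variance of a fixed small trie.

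Let $n_1$ be the least $n$ with $\Var\GF(\Tn)>0$; since $\GF(\Tq1)=\gf(\Ti)$ is non-random, $n_1\ge2$. Put $b:=\max(n_0,n_1)$ and let $\cT'$ be the bucket trie of bucket size $b$ built from the same strings as $\Tn$ (see \refSS{SSbucket}). An internal node $v=\bga$ of $\cT'$ has $\nx v\ge b+1>n_0$, and conditionally on $\nx v=m$ the fringe tree $\Tn^\bga$ has the same distribution as $\Tq m$ (the $m$ strings through $v$, with the common prefix $\bga$ stripped, are \iid); hence $\gf(\Tn^v)=a_{\nx v}$ a.s., and $\nx v$ is a function of $\cT'$. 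Splitting \eqref{GFbga} for $\GF(\Tn)$ according to whether a node is internal in $\cT'$ or lies in the small trie hanging from some bucket $\beta$, and using \refSS{SSbucket} (given $\cT'$, these small tries are independent, the one at a bucket holding $k$ strings being a copy of $\Tq k$), we get that, conditionally on $\cT'$,
\begin{equation*}
  \GF(\Tn)=\sum_{v\text{ internal in }\cT'}a_{\nx v}\;+\;\sum_{\beta\text{ bucket of }\cT'}\GF\bigpar{\Tn^\beta},
\end{equation*}
where the $\GF(\Tn^\beta)$ are independent with $\GF(\Tn^\beta)\eqd\GF(\Tq{k_\beta})$, $k_\beta\in\set{1,\dots,b}$ being the number of strings in $\beta$. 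Thus $\Var\bigpar{\GF(\Tn)\mid\cT'}=\sum_\beta\Var\GF(\Tq{k_\beta})$, and, keeping only the buckets with $k_\beta=n_1$ and using the law of total variance,
\begin{equation*}
  \Var\GF(\Tn)\ge\E\Var\bigpar{\GF(\Tn)\mid\cT'}\ge\Var\GF\bigpar{\Tq{n_1}}\cdot\E\bigabs{\set{\beta:k_\beta=n_1}},
\end{equation*}
and $\Var\GF(\Tq{n_1})>0$ by the choice of $n_1$.

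It remains to show $\E\bigabs{\set{\beta:k_\beta=n_1}}=\Omega(n)$; this is the crux, since a priori the buckets could all hold $b$ strings. The idea is to exhibit $\Omega(n)$ candidate nodes that are each such a bucket with probability bounded below. Let $\cD_n:=\set{\bga\in\cAx:1/n<P(\bga)\le1/(n\pmin)}$. Following the root path of any infinite string until $P$ first falls to $\le1/n$ (each step multiplies $P$ by a factor $\ge\pmin$) produces a prefix in $\cD_n$, so $\sum_{\bga\in\cD_n}P(\bga)\ge1$; as $P(\bga)\le1/(n\pmin)$ on $\cD_n$, this forces $\abs{\cD_n}\ge\pmin n$. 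For $n$ large and $\bga=\bgb\gam\in\cD_n$, the event that $\bga$ is a bucket of $\cT'$ holding $n_1$ strings equals $\set{\nbga=n_1}\cap\set{\nx\bgb\ge b+1}$. Here $\nbga\sim\Bin(n,P(\bga))$ with $nP(\bga)$ in the bounded range $(1,1/\pmin]$, so $\P(\nbga=n_1)$ is bounded below uniformly over $\bga\in\cD_n$; and conditionally on $\nbga=n_1$ the difference $\nx\bgb-\nbga$ is $\Bin\bigpar{n-n_1,\,P(\bga)(1-p_\gam)/(p_\gam(1-P(\bga)))}$, whose mean is bounded and bounded away from $0$ uniformly over $\bga\in\cD_n$, so $\P(\nx\bgb-\nbga\ge b+1-n_1)$ is bounded below as well. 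Hence $\P(\bga\text{ is such a bucket})\ge c>0$ for all $\bga\in\cD_n$ and large $n$, and summing over $\bga\in\cD_n$ yields $\E\bigabs{\set{\beta:k_\beta=n_1}}\ge c\pmin n$. Combined with the display above, $\Var\GF(\Tn)\ge c'n$ for large $n$, which is the claim.

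The main obstacle is this last paragraph: ruling out degenerate bucket profiles and obtaining the two uniform binomial lower bounds, for which the two-sided estimate $1/n<P(\bga)\le1/(n\pmin)$ on $\bga\in\cD_n$ is exactly what is needed; the rest is bookkeeping resting on \refSS{SSbucket}. (Replacing $\Bin$ by $\Po$ and $\Tn$ by $\Tgl$ throughout gives, by the same argument, $\Var\GF(\Tgl)=\Omega(\gl)$, though this is not needed here.)
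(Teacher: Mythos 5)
Your argument is correct, and its first half coincides with the paper's: both condition on the bucket trie with bucket size $b\ge\max(n_0,\cdot)$, use that the toll contributions at internal bucket-trie nodes are a.s.\ determined by the string counts while the small tries grown from the buckets are conditionally independent copies of $\Tq k$, and then lower-bound $\Var\GF(\Tn)$ by (variance of a fixed small trie)$\times$(expected number of suitable buckets). You differ in two ways. First, you keep buckets holding exactly $n_1$ strings, where $n_1$ is minimal with $\Var\GF(\Tq{n_1})>0$; the paper instead keeps buckets holding exactly $b$ strings and therefore needs an extra preliminary step (conditioning on the first letters of the generating strings) to show $\Var\GF(\Tq b)>0$ — your choice makes that step unnecessary. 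Second, and more substantially, the crux $\E\abs{\set{\beta:k_\beta=n_1}}=\Omega(n)$ is handled differently: the paper observes that the number of $b$-string buckets is itself an additive functional $\GFb$ such that $\GFb+\GFo$ is increasing, and invokes its law of large numbers (\refT{TLLN}\ref{TLLNb}, giving even the with-high-probability bound \eqref{stalis}), whereas you give a direct first-moment estimate: a set $\cD_n$ of $\ge\pmin n$ prefixes with $nP(\bga)\in(1,1/\pmin]$, plus two uniform binomial lower bounds for $\set{\nbga=n_1}$ and $\set{\nx\bgb\ge b+1}$. Your route is more elementary and self-contained (it does not rely on the paper's CLT/LLN machinery), at the cost of a page of explicit binomial estimates; the paper's route is shorter given its Theorem \ref{TLLN} and yields the stronger statement that the number of such buckets is $\ge cn$ whp, not just in expectation. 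One cosmetic point: the prefix of a string that lies in $\cD_n$ is the \emph{last} one with $P>1/n$ (its successor may have $P\le 1/n$), not the first one with $P\le1/n$ as your phrasing suggests; the inequality $\sum_{\bga\in\cD_n}P(\bga)\ge1$ and hence $\abs{\cD_n}\ge\pmin n$ is unaffected.
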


Our formulas for
variance asymptotics and asymptotic variances, \eqref{uV0}, \eqref{uV+}
and \eqref{tt+gs}--\eqref{tt+hgs}, use 
$\MfV$ and $\psiV$ which are defined using
$\fV(\gl)$.
The definition \eqref{fVx} of $\fV(\gl)$ is 
less useful for explicit calculations.
We therefore give also an alternative formula, which will be used in the
applications in \refS{Sfringe}.
For simplicity, we consider only the case $\chi=0$.

We use for convenience the special notation
\begin{align}\label{sumax}
  \sumax:=\sum_{\bga:|\bga|\ge0}+\sum_{\bga:|\bga|>0}
\end{align}
where thus every $\bga\in\cAx$
except  $\bga=\emptystring$ is counted twice.

\begin{lemma}\label{LfV}
  Let $\gf$ be a bounded toll function with $\gf(\Ti)=0$,
and let $\GF$ be
  the corresponding additive functional.
  Then,
  for $\gl>0$ and
  (at least)
  $\Re s\in(-2,-\frac12)$,
  \begin{align}
    \fV(\gl)&
    =\sumax\Cov\bigpar{\gf(\Tgl),\gf(\Tglbga)},
\label{lfv1}\\
    \MfV(s)
&=\sumax\intoo\Cov\bigpar{\gf(\Tgl),\gf(\Tglbga)} \gl^{s-1}\dd \gl,
\label{lfv2}  \end{align}
with sums and integrals absolutely convergent.
\end{lemma}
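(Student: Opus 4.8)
\emph{The identity \eqref{lfv1}.}
Write $M:=\sup_T\abs{\gf(T)}<\infty$.
Since $\gf(\Ti)=0$, \eqref{tpaxx} gives $\gf(\Tglbga)=\gf(\Tglbgax)$, and both vanish unless $\Nglbga\ge2$; as $\Nglbga\sim\Po(\glbga)$ we have $\P(\Nglbga\ge2)\le\tfrac12(\glbga)^2$, and $\sum_\bga P(\bga)^2<\infty$ by \eqref{Qrr}, so $\sum_\bga\P(\Nglbga\ge2)<\infty$.
Hence by \eqref{GFbga} the series $\sum_\bga\gf(\Tglbga)$ converges in $L^2$ to $\GF(\Tgl)$, while $\sum_\bga\E\bigabs{\gf(\Tgl)\gf(\Tglbga)}\le M\sum_\bga\E\bigabs{\gf(\Tglbga)}<\infty$, so dominated convergence yields $\Cov\bigpar{\gf(\Tgl),\GF(\Tgl)}=\sum_\bga\Cov\bigpar{\gf(\Tgl),\gf(\Tglbga)}$.
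Inserting this into the $\chi=0$ case $\fV(\gl)=2\Cov\bigpar{\gf(\Tgl),\GF(\Tgl)}-\Var\gf(\Tgl)$ of \eqref{fVx} (see \eqref{fV0}), and noting that the $\bga=\emptystring$ term of $2\sum_\bga$ equals $2\Var\gf(\Tgl)$, so that subtracting $\Var\gf(\Tgl)$ leaves it counted once while every $\bga\neq\emptystring$ is counted twice, gives exactly \eqref{lfv1}; its absolute convergence for fixed $\gl$ is immediate from $\sumax\bigabs{\Cov\bigpar{\gf(\Tgl),\gf(\Tglbga)}}\le2M^2\sum_\bga\P(\Nglbga\ge2)<\infty$.

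\emph{The Mellin identity \eqref{lfv2}.}
By definition $\MfV(s)=\int_0^\infty\fV(\gl)\gl^{s-1}\dd\gl$, so, substituting \eqref{lfv1}, the content is to interchange sum and integral.
Let $S_K(\gl)$ be the partial sum of \eqref{lfv1} over $\bga$ with $\abs\bga\le K$ ($\emptystring$ counted once, the rest twice); the computation above shows $S_K(\gl)=2\Cov\bigpar{\gf(\Tgl),\GF_{\le K}(\Tgl)}-\Var\gf(\Tgl)$, where $\GF_{\le K}(T):=\sum_{\abs\bga\le K}\gf(T^\bga)$ is the truncation of $\GF$ to nodes of depth $\le K$, and $S_K(\gl)\to\fV(\gl)$ pointwise.
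Assuming the uniform-in-$K$ bounds $\Var\GF_{\le K}(\Tgl)=O(\gl)$ for $\gl\ge1$ and $\Var\GF_{\le K}(\Tgl)=O(\gl^2)$ for $\gl\le1$ (the latter because $\bigabs{\GF_{\le K}(\Tgl)}\le M\absi{\Tgl}$ and $\E\absi{\Tgl}^2=O(\gl^2)$ by \refT{TEVC} applied to the bounded, increasing additive functional $\absi\cdot$), and using $\Var\gf(\Tgl)\le M^2\min\bigpar{1,\tfrac12\gl^2}$, Cauchy--Schwarz gives $\bigabs{S_K(\gl)}\le C\theta(\gl)$ uniformly in $K$, with $\theta(\gl):=\gl^2$ for $\gl\le1$ and $\theta(\gl):=\sqrt\gl$ for $\gl\ge1$.
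Since $\int_0^\infty\theta(\gl)\gl^{\Re s-1}\dd\gl<\infty$ precisely when $\Re s\in(-2,-\tfrac12)$, dominated convergence applies and yields $\int_0^\infty S_K(\gl)\gl^{s-1}\dd\gl\to\MfV(s)$; the left-hand side is the $K$-th partial sum of $\sumax\int_0^\infty\Cov\bigpar{\gf(\Tgl),\gf(\Tglbga)}\gl^{s-1}\dd\gl$ (each integral absolutely convergent by $\bigabs{\Cov\bigpar{\gf(\Tgl),\gf(\Tglbga)}}\le2M^2\min\bigpar{1,\tfrac12(\glbga)^2}$), which is \eqref{lfv2}.

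\emph{Main obstacles.}
Two points require care.
First, the uniform-in-$K$ variance bound $\Var\GF_{\le K}(\Tgl)=O(\gl)$: since $\GF_{\le K}$ is again (a difference of) bounded, increasing additive-type functionals, this should follow from the same branch-decomposition and moment estimates that give $\Var\GF(\Tgl)=O(\gl)$ in \refT{TEVC}, with $K$-independent constants; this is routine but not wholly trivial.
Second, the lemma asserts the sums in \eqref{lfv2} are \emph{absolutely} convergent, i.e.\ $\sumax\int_0^\infty\bigabs{\Cov\bigpar{\gf(\Tgl),\gf(\Tglbga)}}\gl^{\Re s-1}\dd\gl<\infty$: for $\Re s\in(-2,-1)$ this follows from the crude bound above, since then $\int_0^\infty\min\bigpar{1,(\glbga)^2}\gl^{\Re s-1}\dd\gl=O\bigpar{P(\bga)^{-\Re s}}$ and $\sum_m\gQ{-\Re s}^m<\infty$ as $-\Re s>1$, but for $\Re s\in(-1,-\tfrac12)$ one additionally needs, for $\bga\neq\emptystring$, that $\Cov\bigpar{\gf(\Tgl),\gf(\Tgl^\bga)}$ decays rapidly once $\glbga$ is large.
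This is a near-independence phenomenon: resampling the sub-trie rooted at $\bga$ (legitimate because the modified fringe trees $\Tglxxx{\bga_1},\Tglxxx{\bga_2},\dots$ are independent for pairwise incomparable $\bga_i$) reduces the covariance to a term supported on $\set{\Nglbga\ge2}$ plus $\Cov\bigpar{g(\Nglbga),h(\Nglbga)}$, where $h(m)=\E\gf(\Tqq m)$ and $g(m)=\E[\gf(\Tgl)\mid\Nglbga=m]$, and $g$ is nearly constant over the bulk of $\Nglbga$ because adding a few keys to the large trie $\Tgl$ changes $\E\gf$ only slightly.
I expect this decay step to be the main technical work; alternatively, \eqref{lfv2} can be read off from the renewal-theoretic representation of $\MfV$ underlying \refT{TEVC} in \refS{Smean} and \cite{SJ242} (see \refApp{A242}), bypassing the termwise analysis.
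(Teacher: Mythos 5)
Your treatment of \eqref{lfv1} is correct and essentially the paper's argument (termwise bound $\bigabs{\Cov\bigpar{\gf(\Tgl),\gf(\Tglbga)}}\le C_\gl P(\bga)^2$, Fubini for the covariance with the bounded factor $\gf(\Tgl)$, then the double-counting bookkeeping against \eqref{fV0}); the passing claim of $L^2$-convergence of $\sum_\bga\gf(\Tglbga)$ is not justified by a triangle-inequality bound, but you only use the $L^1$/absolute-summability statements, so this is harmless. Your truncation-plus-dominated-convergence argument does yield the identity \eqref{lfv2} with the sum interpreted as the limit of the depth-$K$ partial sums, and the uniform-in-$K$ bound $\Var\GF_{\le K}(\Tgl)=O(\gl)$ you assume is indeed available (restricting the sums $Z_{\gl,k}$ in the proof of \refL{LA} to $|\bga|\le K$ only decreases their variances, so the same Minkowski argument applies with $K$-independent constants).

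The genuine gap is the asserted \emph{absolute} convergence of the sum in \eqref{lfv2} on the whole strip $\Re s\in(-2,-\tfrac12)$, which you yourself flag but do not prove. Your crude bound gives $\sumax\intoo\bigabs{\Cov\bigpar{\gf(\Tgl),\gf(\Tglbga)}}\gl^{\Re s-1}\dd\gl\le C\sum_\bga P(\bga)^{-\Re s}$, and this diverges as soon as $\Re s\ge-1$ (since $\rho(-\Re s)\ge1$); so the missing range $[-1,-\tfrac12)$ contains precisely the values used everywhere downstream, namely $s=-1$ and the Fourier abscissa $\Re s=-1$ in \eqref{tevcfou}. The per-term ``near-independence'' decay you sketch is exactly the hard part, is not carried out, and is not how the paper proceeds: no per-term decay is proved at all. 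Instead, for $\gl\ge1$ the paper groups the prefixes by $P(\bga)\in J_k:=(\pmax^{k+1},\pmax^k]$; such prefixes are pairwise incomparable, so the modified fringe tries $\Tglbgax$, and hence the variables $X_\bga=\gf(\Tglbga)$, are independent within a group. Writing $\eps_\bga:=\sign\Cov\bigpar{\gf(\Tgl),X_\bga}$ turns the sum of absolute covariances over a group into the single covariance $\Cov\bigpar{\gf(\Tgl),\sum_{P(\bga)\in J_k}\eps_\bga X_\bga}$, which by the \CSineq{} and the group variance bound $\Var Z_{\gl,k}\le C\gl\min\bigpar{(\gl\pmax^k)^{-\eps},\gl\pmax^k}$ from the proof of \refL{LA} is at most $C\gl\qq\min\bigpar{(\gl\pmax^k)^{-\eps/2},(\gl\pmax^k)\qq}$; summing over $k$ gives $\sumaa\bigabs{\Cov\bigpar{\gf(\Tgl),X_\bga}}\le C\gl\qq$ for $\gl\ge1$, while the $O(\gl^2)$ bound handles $\gl\le1$. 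These two estimates give absolute convergence exactly for $\Re s\in(-2,-\tfrac12)$, and Fubini then yields \eqref{lfv2}. Without this (or an equally effective substitute), your proposal establishes \eqref{lfv2} only as a conditionally convergent sum on part of the strip, which falls short of the lemma as stated and of how it is used.
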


We give also another useful formula for $\MfE$.
\begin{lemma}\label{LfE}
Let $\gf$ be a bounded toll function.
Then, at least for $-2<\Re s<0$,
\begin{align}\label{lfe1}
  \MfE(s)=\sum_{n=2}^\infty\frac{\gG(n+s)}{n!}\E\gf(\Tn).
\end{align}
In particular,
\begin{align}\label{lfe2}
  \MfE(-1)=\MfC(-1)=\sum_{n=2}^\infty\frac{\E\gf(\Tn)}{(n-1)n}.
\end{align}
\end{lemma}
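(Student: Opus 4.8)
The plan is to start from the explicit identity \eqref{lfe1} — which is itself the target — by using the formula \eqref{GF}, or rather its reformulation in terms of fringe subtree counts, together with the distributional description of $\Tgl$. Concretely, recall that $\fE(\gl)=\E\gf(\Tgl)-\chi\gl e^{-\gl}$ from \eqref{fEx}. Since $\gf$ is bounded, $\E\gf(\Tgl)$ is a bounded function of $\gl$, and by conditioning on $N_\gl\sim\Po(\gl)$ we have
\begin{align}\label{plan-poisson}
  \E\gf(\Tgl)=\sum_{n=0}^\infty e^{-\gl}\frac{\gl^n}{n!}\E\gf(\Tq n)
  =\chi\gl e^{-\gl}+\sum_{n=0,\,n\neq1}^\infty e^{-\gl}\frac{\gl^n}{n!}\E\gf(\Tq n),
\end{align}
using $\gf(\Tq1)=\gf(\Ti)=\chi$ and $\gf(\Tq0)=\gf(\emptyset)=0$. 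Hence $\fE(\gl)=\sum_{n=2}^\infty e^{-\gl}\frac{\gl^n}{n!}\E\gf(\Tq n)$, a Poisson-type series with bounded coefficients.

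First I would insert this series into the definition of the Mellin transform \eqref{mellin} and justify interchanging sum and integral. For each fixed $n\ge2$,
\begin{align}\label{plan-gamma}
  \intoo e^{-\gl}\frac{\gl^n}{n!}\gl^{s-1}\dd\gl=\frac{1}{n!}\intoo e^{-\gl}\gl^{n+s-1}\dd\gl=\frac{\gG(n+s)}{n!},
\end{align}
valid whenever $\Re(n+s)>0$, in particular for all $n\ge2$ when $\Re s>-2$. The interchange is legitimate for $-2<\Re s<0$ because $\E\gf(\Tq n)=O(1)$ and $\sum_{n\ge2}\frac{1}{n!}\intoo e^{-\gl}\gl^{n+\Re s-1}\dd\gl=\sum_{n\ge2}\frac{\gG(n+\Re s)}{n!}<\infty$ (the summand is $\sim n^{\Re s-1}$ as $\ntoo$ by Stirling, which is summable since $\Re s<0$); this gives absolute convergence of the double integral–sum and hence \eqref{lfe1} by Fubini–Tonelli. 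This also re-proves (consistently with \refR{Rexists}) that $\MfE(s)$ is analytic on that strip.

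Next, for \eqref{lfe2}, I would simply set $s=-1$ in \eqref{lfe1}: then $\gG(n-1)=(n-2)!$ for $n\ge2$, so $\frac{\gG(n-1)}{n!}=\frac{(n-2)!}{n!}=\frac{1}{(n-1)n}$, yielding $\MfE(-1)=\sum_{n=2}^\infty\frac{\E\gf(\Tq n)}{(n-1)n}$. The equality $\MfC(-1)=\MfE(-1)$ is exactly \eqref{texas} from \refL{LEVCx}, which we may quote. (Note the convergence at $s=-1$: the summand is $O(n^{-2})$, consistent with the strip $-2<\Re s<0$ containing $-1$.)

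I do not expect a serious obstacle here; the only point requiring a little care is the justification of the termwise Mellin transform, i.e.\ checking the absolute convergence needed for Fubini on the strip $-2<\Re s<0$ — that is where boundedness of $\gf$ (hence of the coefficients $\E\gf(\Tq n)$) is used, together with the Stirling asymptotics $\gG(n+s)/n!\sim n^{s-1}$. Everything else is the standard computation \eqref{plan-gamma} of the Mellin transform of a Poisson density and the evaluation of $\gG(n-1)/n!$.
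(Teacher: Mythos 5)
Your proposal is correct and follows essentially the same route as the paper: write $\fE(\gl)$ as the Poisson series $\sum_{n\ge2}e^{-\gl}\frac{\gl^n}{n!}\E\gf(\Tq n)$ (the $n=1$ term cancels against $\chi\gl e^{-\gl}$), integrate termwise against $\gl^{s-1}$ to get the Gamma factors, and set $s=-1$, quoting \eqref{texas} for $\MfC(-1)=\MfE(-1)$. The only cosmetic difference is that you justify the interchange by absolute convergence directly on the strip $-2<\Re s<0$ (using boundedness of $\gf$ and Stirling), where the paper first reduces to $\chi=0$ and $\gf\ge0$ and argues for real $s$ before extending; both are fine.
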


\begin{example}\label{E0}
  The following example is in a sense negative, since it shows how trivial
  results can be derived by non-trivial calculations from the theorems above.
However, the example serves both as an illustration of the formulas above,
and as a counterexample and warning that there may be cancellations that
  are not obvious, leading to, for example, vanishing asymptotic variance
or absence of expected oscillations.

Consider the toll function
\begin{align}\label{e0gf}
  \gf(T):=\suma\indic{\abse{T^\ga}=1}.
\end{align}
Then, if $v$ is a leaf in $T$, then $\gf(T^v)=1$, while if $v$ is
an internal node, then $\gf(T^v)$ equals the number of children that are
leaves. Since every leaf is a child of some internal node, except in the
case $\abse{T}=1$, it follows from \eqref{GF} that if $\abse{T}>1$, then
$\GF(T)$ is twice the number of leaves. In general,
using the notation \eqref{gf0},
\begin{align}\label{e0GF}
  \GF(T)=2\abse{T}-\indic{\abse{T}=1}
=2\abse{T}-\gfo(T).
\end{align}
In particular, $\GF(\Tn)=2n$ for $n\ge2$, and $\Var\GF(\Tn)=0$.
Also, $\GF(\Tgl)=2N_\gl+O(1)$ and 
$\Var \GF(\Tgl)=4\gl+(4\gl^2-3\gl)e^{-\gl}-\gl^2 e^{-2\gl}\sim4\gl$.

The additive functional $\GF$ is increasing and the toll function $\gf$ is
bounded, so \refT{TT} applies.

We have $\chi=\gf(\Ti)=1$ and, by \eqref{fEx},
\begin{align}\label{monfe}
  \fE(\gl)=\E\gf(\Tgl)-\gl e^{-\gl}
=\suma p_\ga\gl e^{-p_\ga\gl}-\gl e^{-\gl}.
\end{align}
Thus, 
when $\Re s>-1$,
using \eqref{mellin} and \eqref{Qks},
\begin{align}\label{mona}
  \MfE(s)&
=\suma \intoo p_\ga\gl^s e^{-p_\ga\gl}\dd s
-\intoo \gl^s e^{-\gl} \dd\gl
\notag\\&
=\suma p_\ga^{-s}\gG(s+1)-\gG(s+1)
=
\bigpar{\rho(-s)-1}\gG(s+1).
\end{align}
By analytic continuation, \eqref{mona} holds for $\Re s>-2$, with a
removable singularity at $s=-1$.
Letting $s\to-1$ yields, using \eqref{QH},
\begin{align}\label{monb}
  \MfE(-1)=\lim_{s\to-1}\frac{\rho(-s)-1}{s+1}\gG(s+2)
=\dds \rho(-s)\big|_{s=-1}=H.
\end{align}
Note that in the periodic case $\dA>0$, the sum \eqref{tevcfou} is over 
roots $\zeta_m:=-1-2\pi m\ii/d$
of $\rho(-s)=1$, and \eqref{mona} shows that $\MfE(\zeta_m)=0$ for each
such root $\zeta_m\neq-1$. 
Hence, $\psiE(t)$ is constant also in the periodic case, and for any $\vp$,
\begin{align}
  \label{e0psiE}
\psiE(t)=\MfE(-1)=H.
\end{align}
In other words, the oscillations that usually occur
vanish in this example. (This is not so obvious from \eqref{tevcsum}.)
Hence, for any $\vp$, \refT{TT}\ref{TTmean} gives 
\begin{align}
  \E\GF(\Tgl)/\gl\to \chi+H\qw\MfE(-1)=2
\qquad \text{as \gltoo},
\end{align}
and similarly for $\E\GF(\Tn)$.
Of course, this is trivial from \eqref{e0GF}.

By \eqref{tex*} and \eqref{mona}, also $\MfC(\zeta_m)=0$ for 
the roots $\zeta_m\neq-1$ of $\rho(-s)=1$, and thus \eqref{tevcfou},
\eqref{texas} and \eqref{monb} yield that, for any $\vp$,
\begin{align}\label{e0psiC}
\psiC(t)=\MfC(-1)=H,  
\end{align}
so this too is constant even in the periodic case.

Similarly, \eqref{fVx} yields, after some calculations,
\begin{align}\label{monfv}
  \fV(\gl)&=\suma (3p_\ga\gl-4p_\ga^2\gl^2)e^{-p_\ga\gl}
+\suma p_\ga^2\gl^2e^{-2p_\ga\gl}
\notag\\&\qquad
-(3\gl-4\gl^2)e^{-\gl}-\gl^2e^{-2\gl},
\end{align}
and thus
\begin{align}\label{mons}
  \MfV(\gl)=\bigpar{\rho(-s)-1}\bigpar{3\gG(s+1)-4\gG(s+2)+2^{-s-2}\gG(s+2)}.
\end{align}
Thus also $\MfV(\zeta_m)=0$ for the roots $\zeta_m\neq-1$, 
and \eqref{mons} leads to, for any $\vp$,
\begin{align}\label{e0psiV}
  \psiV(t)=\MfV(-1)=3H.
\end{align}

Note that \eqref{tt+gs} and \eqref{tt+hgs} yield, using \eqref{e0psiC} and
\eqref{e0psiV},
$\gss(\gl)=4$ and $\hgss(n)=0$. 
Of course, \eqref{tt+gl} and \eqref{tt+n} with these variances are trivial
from \eqref{e0GF}.

This example has $\chi=1$, and we see how $\chi$ and the functions $\psiX$
interact in \eqref{tt+gs}--\eqref{tt+hgs} and \eqref{ttmean}--\eqref{ttmeann}. 
Consider now the modification
\begin{align}
  \gff(T):=\gf(T)-\gfo(T).
\end{align}
This equals the number of children of the root that are external nodes.
By \eqref{e0GF}, 
\begin{align}\label{e0GFx}
  \GFF(T)=\abse{T}-\indic{\abse{T}=1}
=\abse{T}-\gfo(T).
\end{align}
In particular, again $\GFF(\Tn)$ is deterministic.
Similar calculations, or simpler the general \eqref{f+} and
\eqref{psi+} in the proof of \refT{T1}, yield
$\fEp(\gl)=\fE(\gl)$, $\fCp(\gl)=\fC(\gl)$ given by \eqref{tex} and
\eqref{monfe}, and
\begin{align}
  \fVp(\gl)=\fV(\gl)-2\fC(\gl)
&=\suma (p_\ga\gl-2p_\ga^2\gl^2)e^{-p_\ga\gl}
+\suma p_\ga^2\gl^2e^{-2p_\ga\gl}
\notag\\&\qquad
-(\gl-2\gl^2)e^{-\gl}-\gl^2e^{-2\gl}.
\end{align}
Hence,
\begin{align}
  \psiEx(t)&=\psiE(t)=\psiCx(t)=\psiC(t)=H,\label{montec}
\\\label{montv}
\psiVx(t)&=\psiV(t)-2\psiC(t)=H,
\end{align}
and \eqref{tt+gs}--\eqref{tt+hgs} yield $\gss_\sjw(\gl)=1$ and $\gss_\sjw(n)=0$.
Again, \refT{TT}\ref{TT+} and \ref{TTmean} hold trivially.

Finally, consider the modification
\begin{align}\label{gfxx}
  \gfxx(T):=\gf(T)-2\gfo(T).
\end{align}
By \eqref{e0GF}, this toll function yields the additive functional
\begin{align}\label{e0GFxx}
  \GFxx(T)=-\indic{\abse{T}=1}
=-\gfo(T).
\end{align}
Hence $\GFxx(\Tn)=0$ for $n\ge2$, and $\GFxx(\Tgl)=-\indic{N_\gl=1}$
converges rapidly to 0. 
This additive functional is thus essentially 0, although the toll function
in \eqref{gfxx} looks non-trivial.
Both \eqref{ttgl} and \eqref{ttn} obviously fail.
The other parts of \refT{TT} apply also to this degenerate case.
We have $\chi_\qxx=-1$ and,
for example using \eqref{f+} and
\eqref{psi+} again,
$\fEq{\qxx}(\gl)=\fE(\gl)$, $\fCq{\qxx}(\gl)=\fC(\gl)$,
and
\begin{align}
  \fVq{\qxx}(\gl)=\fVp(\gl)-2\fC(\gl)
&=-\suma p_\ga\gl e^{-p_\ga\gl}
+\suma p_\ga^2\gl^2e^{-2p_\ga\gl}
\notag\\&\qquad
+\gl e^{-\gl}-\gl^2e^{-2\gl},
\end{align}
and thus
\begin{align}
  \psiEq{\qxx}(t)&=\psiE(t)=\psiCq{\qxx}(t)=\psiC(t)=H,
\\
\psiVq{\qxx}(t)&=\psiVx(t)-2\psiC(t)=-H.
\end{align}
Thus \eqref{tt+gs}--\eqref{tt+hgs} yield 
$\gss_\qxx(\gl)=0$ and $\hgss_\qxx(n)=0$.
Again \refT{TT}\ref{TT+} and \ref{TTmean} hold trivially.

\end{example}

\section{Central limit theorems for fringe tries}\label{Sfringe}

We give some applications of the general results above, including
applications to the distribution of random fringe trees.

We often state results only for the fixed $n$ model $\Tn$; 
similar results for the Poisson model $\Tgl$ follow similarly, 
but are only sometimes stated explicitly. 

We use the notation of \refS{Smain}, in particular
$\chi,\fX,\MfX,\psiX$
defined in \refT{TEVC}; recall also $\sumaxx$ defined in
\eqref{sumax}. We will
distinguish different additive functionals by subscripts,
and we sometimes use these subscripts in an obvious way also for 
$\chi$, $\fX$ and so on,
but we often omit subscripts when there is no risk of
confusion.

In all examples below, asymptotics for means and variances are given by 
\eqref{ttmean}--\eqref{ttmeann} and \eqref{tt+gs}--\eqref{tt+hgs}, using
$\psiE,\psiV,\psiC$ that are given by the Mellin transforms $\MfE,\MfV,\MfC$
and  \eqref{tevcfou} (absolutely convergent in all cases). 
We calculate these  Mellin transforms in several
cases, but usually omit stating explicitly 
the  formulas for asymptotic means and variances
that they lead to.

\subsection{The size}\label{SSsize}
As a warm-up, we
consider first the size of the trie, measured as $\GF_\si(T):=\absi{T}$, the
number of internal nodes.
This example has been studied by many authors. 
In particular, asymptotic normality was
shown already by \citet{JR88};
see also \cite[Section 5.4]{Mahmoud}.
Variance asymptotics is also studied there and in several other papers,
see the detailed analysis by \citet{FHZ} and the
many references given there.
We show here how these results follows by our methods.

The functional $\GF_\si(T)$ is an additive functional with toll function 
\begin{align}\label{xsgf}
\gf_\si(T)=\indic{\text{the root is an internal node}},   
\end{align}
and thus
\begin{align}\label{xsgfl}
  \gf_\si(\Tgl)=\indic{N_\gl\ge2}.
\end{align}
In this case, $\GF_\si$ is an increasing functional, so \refTs{TT} and
\ref{TLLN}
apply with
$\gf_+=\gf_\si$ and $\gf_-=0$.

 \refL{LC} shows that  $\Var \GF_\si(\Tn)=\Theta(n)$, and thus
\refT{TT}\ref{TTconv} applies; consequently, \refT{TT} shows immediately
that both $\GF_\si(\Tgl)$ and $\GF_\si(\Tn)$ are asymptotically normal;
more precisely, the following holds.
(For the means, recall also \refR{Rlower}.)
\begin{theorem}\label{Tsize}[\citet{JR88}]
Consider the  size $\GF_\si(T)=\absi{T}$.
Then, the central limit theorems
\eqref{tt+gl}--\eqref{tt+n} and \eqref{ttgl}--\eqref{ttn}
hold, with all \absmoment{s},
and with asymptotic variances given by 
\eqref{tt+gs}--\eqref{tt+hgs}
(and thus by \eqref{tt0gs}--\eqref{tt0hgs}  when $\dA=0$).
Furthermore, the means satisfy \eqref{ttmean} and \eqref{ttmeann},
and the laws of large numbers \eqref{tllna1}--\eqref{tllna2} hold.
\nopf
\end{theorem}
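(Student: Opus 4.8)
The plan is to derive every assertion directly from the general Theorems~\ref{TT} and~\ref{TLLN}, applied to the concrete bounded toll function $\gf_\si$; the whole task then reduces to checking the hypotheses of those theorems.

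First I would record the elementary facts. The toll function $\gf_\si$ is bounded, taking only the values $0$ and $1$, and $\chi:=\gf_\si(\Ti)=0$ since the single node of $\Ti$ has no children and is therefore a leaf, not an internal node. Next I would verify that $\GF_\si$ is increasing: by the reduction recorded in \refSS{SStries} it suffices to compare a trie $T$ with the trie obtained by adding one string, and this operation either attaches a new leaf below an already internal node, leaving $\absi{\cdot}$ unchanged, or expands a leaf of $T$ into a path of one or more new internal nodes ending in a branching node with two fresh leaves, strictly increasing $\absi{\cdot}$; in neither case does $\absi{\cdot}$ decrease. Hence $\gf_\si=\gf_+-\gf_-$ with $\gf_+:=\gf_\si$ and $\gf_-:=0$ exhibits the required decomposition into bounded toll functions whose additive functionals $\GF_\pm$ are increasing. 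Now Theorem~\ref{TT}\ref{TT0}--\ref{TTE} and~\ref{TTmean} applies and yields the central limit theorems \eqref{tt+gl}--\eqref{tt+n}, the variance formulas \eqref{tt+gs}--\eqref{tt+hgs} (hence \eqref{tt0gs}--\eqref{tt0hgs} when $\dA=0$) and the mean asymptotics \eqref{ttmean}--\eqref{ttmeann}; likewise Theorem~\ref{TLLN}, whose hypotheses are exactly those of Theorem~\ref{TT} just verified, gives \eqref{tllna1}--\eqref{tllna2}.

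It then remains only to obtain the self-normalized central limit theorems \eqref{ttgl}--\eqref{ttn}, which by Theorem~\ref{TT}\ref{TTconv} require the additional input $\liminf_\ntoo\Var\GF_\si(\Tn)/n>0$. I would supply this via \refL{LC}. Since $\Tn$ has $n$ leaves, its root is internal for every $n\ge2$, so $\gf_\si(\Tn)=\indic{n\ge2}$ equals the constant $a_n:=1$ for all $n\ge n_0:=2$; and $\Var\GF_\si(\Tq2)\neq0$, because $\GF_\si(\Tq2)$ is $1$ plus the length of the longest common prefix of the two generating strings, which is a non-degenerate random variable — it is at least $k$ with probability $\gQ{2}^k=\bigpar{\suma p_\ga^2}^k$ by \eqref{Qkm}, and $\gQ{2}\in(0,1)$ by \eqref{Qk}. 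Hence \refL{LC} gives $\Var\GF_\si(\Tn)=\Omega(n)$, so Theorem~\ref{TT}\ref{TTconv} applies and \eqref{ttgl}--\eqref{ttn} follow, with convergence of all absolute moments.

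There is no genuine obstacle here: all of the analytic content already sits inside \refTs{TEVC}, \ref{TT} and the general limit theorems of \refS{Sgeneral}, and the only points needing (easy) care are the monotonicity of $\GF_\si$ under insertion of a string and the non-degeneracy that feeds \refL{LC}. If one additionally wanted the limiting mean and variance in explicit closed form — which the statement does not ask for — the natural continuation would be to compute $\MfE$, $\MfC$ and $\MfV$ from \refLs{LfE} and~\ref{LfV} using $\gf_\si(\Tgl)=\indic{N_\gl\ge2}$, and substitute them into \eqref{ttmean}--\eqref{ttmeann} and \eqref{tt+gs}--\eqref{tt+hgs}.
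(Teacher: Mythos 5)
Your proposal is correct and follows essentially the same route as the paper: apply \refTs{TT} and \ref{TLLN} with the decomposition $\gf_+=\gf_\si$, $\gf_-=0$ (noting $\GF_\si$ is increasing and $\gf_\si$ bounded), and invoke \refL{LC} to get $\Var\GF_\si(\Tn)=\Omega(n)$ so that \refT{TT}\ref{TTconv} yields \eqref{ttgl}--\eqref{ttn}. Your extra details (the explicit check of monotonicity under string insertion and the non-degeneracy of $\GF_\si(\Tq2)$ via the common-prefix length) are just the steps the paper leaves implicit.
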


We have $\chi=0$ (so the formulas simplify a little)
and, by \eqref{xsgfl},
\begin{align}\label{xsfE}
  \fE(\gl)=\E\gf_\si(\Tgl)=\P\bigpar{N_\gl\ge2}
  =1-(1+\gl)e^{-\gl}
\end{align}
and thus
\begin{align}\label{xsMfE}
  \MfE(s)
  =\intoo \bigpar{1-(1+\gl)e^{-\gl}}\gl^{s-1}\dd\gl
  =-\frac{\gG(s+2)}{s},
  \qquad -2<\Re s<0;
\end{align}
where the integral can be evaluated \eg{} using integration by parts,
\cf{}  \cite[Proof of Theorem 5.3]{SJ242}.
In particular, or by \eqref{lfe2},
\begin{align}\label{xsMfE1}
\MfE(-1)=1, 
\end{align}
so if $\dA=0$, then $\GF_\si(\Tn)/n\pto 1/H$ by
\eqref{tllna3}.

By \eqref{tex*} and \eqref{xsMfE},
\begin{align}\label{xsMfC}
  \MfC(s)=\gG(s+2),\qquad \Re s>-2.
\end{align}

For any trie $T$ and any $\bga\in\cAx$, if $\gf_\si(T)=0$ then
$\gf_\si(T^\bga)=0$. Hence, $\gf_\si(T)\gf_\si(T^\bga)=\gf_\si(T^\bga)$, and thus,
using \eqref{xsfE},
\begin{align}
  \Cov\bigpar{\gf_\si(\Tgl),\gf_\si(\Tglbga)}
  &= \bigpar{1-\E\gf_\si(\Tgl)}\E\gf_\si(\Tglbga)
  \notag\\&
  =(1+\gl)e^{-\gl}\bigpar{1-(1+P(\bga)\gl)e^{-P(\bga)\gl}}.
  \label{pipq}
\end{align}
Consequently, \eqref{lfv2} yields
\begin{align}\label{qj0}
  \MfV(s)
=\sumax\intoo (1+\gl)e^{-\gl}\bigpar{1-(1+P(\bga)\gl)e^{-P(\bga)\gl}}\gl^{s-1}\dd\gl.
\end{align}
For $s>0$, the \rhs{} is, by standard Gamma integrals, evaluated as
\begin{align}\label{qj}
&\sumax\Bigpar{
  \gG(s)+\gG(s+1)-(1+P(\bga))^{-s}\gG(s)-(1+P(\bga))^{1-s-1}\gG(s+1)
    \notag\\&\hskip4em
  -\Pbga(1+P(\bga))^{-s-2}\gG(s+2)}
\notag  \\
  &=\sumax
\frac{(1+s)\gG(s)}{(1+\Pbga)^{s+2}}\Bigpar{(1+\Pbga)^{s+2}-(1+\Pbga)^{2}-s\Pbga}.
\end{align}
The terms in the final sum are, by Taylor expansions, $O(\Pbga^2)$
for fixed $s$, and thus
the sum converges for every $s>0$ by \eqref{Qrr}; 
hence the Mellin transform $\MfV(s)$ is
finite for $s>0$ and equals \eqref{qj}. (Note that the expression in
\eqref{pipq} is positive; hence 
we may interchange the
order of summation and integration in \eqref{lfv2} for real $s$.)
Since the domain of existence of the Mellin transform always is a vertical
strip, this shows that $\MfV(s)$ exists in the half-plane $\Re s>-2$, and
analytic continuation yields that it equals \eqref{qj};
hence,
for all such $s\neq0$,
rewriting $(s+1)\gG(s)=\gG(s+2)/s$,
\begin{align}
  \MfV(s)
  &=\sumax
\frac{\gG(s+2)}{s(1+\Pbga)^{s+2}}\Bigpar{(1+\Pbga)^{s+2}-(1+\Pbga)^{2}-s\Pbga}.   
\end{align}
In particular, 
\begin{align}\label{xsMfV}
  \MfV(-1)=\sumax\frac{\Pbga^2}{1+\Pbga}.
\end{align}
Using \eqref{xsMfC} and \eqref{xsMfV}, we obtain expressions for $\psiC$
and $\psiV$ from \eqref{tevcfou}, leading to (somewhat complicated)
formulas for $\gss(\gl)$ and
$\hgss(n)$ by \eqref{tt0gs}--\eqref{tt0hgs} and \eqref{tt+gs} and
\eqref{tt+hgs}. 
This yields the results found by Jacquet and R\'egnier \cite{JR88,JR89},
\citet{FHZ} and others
by somewhat different methods.

\subsection{Size of fringe tries}\label{SSfringesize}

We turn to the fringe (sub)trees of a random trie.
We first consider their sizes, in this section
measured as their number of external nodes
(leaves).
(Note the difference from \refSS{SSsize}.)

Let $k\ge1$ and let
\begin{align}
\gf_k(T):=\indic{\abse{T}=k}.  
\end{align}
Then, the corresponding additive functional
$\GF_k$ counts the number of fringe trees with exactly $k$ leaves.
Note that $\gf_1=\gfo$ in \refE{Eleaves}, and thus $\GF_1(\Tn)=n$.
In the sequel we mainly consider $k\ge2$.

The functional $\GF_k$ is not increasing, but the functional
$\GF_{\ge  k}:=\sum_{j\ge k}\GF_j$ is,
and $\GF_k = \GF_{\ge k}-\GF_{\ge k+1}$;
furthermore, $\GF_{\ge k}$ has a bounded toll function
$\gf_{\ge  k}:=\sum_{j\ge k}\gf_j$. Hence \refTs{TT} and \ref{TLLN} apply
(with $\gf_+=\gf_{\ge k}$  and $\gf_-=\gf_{\ge k+1}$)
and yield, using also \refR{Rmulti} and \refL{LC}, the following.

\begin{theorem}\label{Tfringesize}
Let $k\ge2$ and consider $\GF_k$, the number of fringe trees with $k$ leaves.
Then, the central limit theorems
\eqref{tt+gl}--\eqref{tt+n} and \eqref{ttgl}--\eqref{ttn}
hold, with all \absmoment{s},
and with asymptotic variances given by 
\eqref{tt+gs}--\eqref{tt+hgs}
(and thus by \eqref{tt0gs}--\eqref{tt0hgs}  when $\dA=0$);
this extends to joint convergence for several $k$.
Furthermore, the means satisfy \eqref{ttmean} and \eqref{ttmeann},
and the laws of large numbers \eqref{tllna1}--\eqref{tllna2} hold.
\nopf
\end{theorem}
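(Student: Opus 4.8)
The plan is to verify the hypotheses of \refT{TT} and \refT{TLLN} for the additive functional $\GF_k$ with toll function $\gf_k(T)=\indic{\abse T=k}$, and then simply read off all the asserted conclusions; the one point that needs a genuine argument is the non-degeneracy hypothesis of \refT{TT}\ref{TTconv} that is used for the self-normalized statements.

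First I would record that $\gf_k$ is bounded and that $\gf_k=\gf_{\ge k}-\gf_{\ge k+1}$, where $\gf_{\ge k}:=\sum_{j\ge k}\gf_j=\indic{\abse T\ge k}$ is again a bounded toll function. Next I would check that the corresponding functional $\GF_{\ge k}(T)=\sum_{v\in T}\indic{\abse{T^v}\ge k}$ is increasing: by the remark in \refSS{SStries} it suffices to compare a trie $T$ with the trie obtained by adding one new string, which (as recalled there) either attaches a new leaf to an internal node of $T$ or converts a leaf of $T$ into a short path of new internal nodes ending in two new leaves. In either case, for every node $v$ already present in $T$ the value $\abse{T^v}$ is unchanged or increased by $1$, while the newly created nodes contribute nonnegative terms to the sum; hence $\GF_{\ge k}$ cannot decrease, and likewise for $\GF_{\ge k+1}$. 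Thus \refT{TT} and \refT{TLLN} apply with $\gf_+=\gf_{\ge k}$ and $\gf_-=\gf_{\ge k+1}$, which immediately yields the central limit theorems \eqref{tt+gl}--\eqref{tt+n} (with variances \eqref{tt+gs}--\eqref{tt+hgs}), the mean estimate \eqref{tte}, the mean asymptotics \eqref{ttmean}--\eqref{ttmeann}, and the laws of large numbers \eqref{tllna1}--\eqref{tllna2}.

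For the self-normalized statements \eqref{ttgl}--\eqref{ttn} I would verify the condition $\liminf_\ntoo\Var\GF_k(\Tn)/n>0$ of \refT{TT}\ref{TTconv} using \refL{LC}. Here $\gf_k(\Tn)=\indic{\abse{\Tn}=k}=\indic{n=k}$ is deterministic for every $n$, so the hypothesis of \refL{LC} holds with $a_n:=\indic{n=k}$ and $n_0=1$; it only remains to exhibit one $n$ with $\Var\GF_k(\Tn)\neq0$. I would take $n=k$: the root of $\Tq k$ always has $\abse{\Tq k}=k$ leaves, so $\GF_k(\Tq k)\ge1$; if the $k$ generating strings do not all begin with the same letter --- an event of probability $1-\gQ k>0$ --- then every non-root node has fewer than $k$ strings through it, so the root is the unique node with $k$ leaves and $\GF_k(\Tq k)=1$; whereas if all $k$ strings begin with the same letter $\ga$ --- probability $\gQ k>0$ --- the node $\ga$ also has $k$ leaves and $\GF_k(\Tq k)\ge2$. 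Hence $\GF_k(\Tq k)$ is non-degenerate, \refL{LC} gives $\Var\GF_k(\Tn)=\Omega(n)$, and \eqref{ttgl}--\eqref{ttn} follow, with convergence of all absolute moments. Finally, joint convergence for several $k$ follows by the Cram\'er--Wold device as in \refR{Rmulti}: any finite linear combination $\sum_k c_k\GF_k$ has bounded toll function $\sum_k c_k\gf_k$, which is a finite linear combination of the $\gf_{\ge j}$ and hence a difference of two increasing bounded functionals, so \refT{TT}\ref{TT0}--\ref{TTE} applies to it, the limiting covariances being supplied by the bilinear extension of \eqref{uV+} in \refR{Rcov}. The only real obstacle is the non-degeneracy check in this paragraph; the rest is bookkeeping to fit the template of \refT{TT}.
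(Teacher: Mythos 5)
Your proposal is correct and follows essentially the same route as the paper: the decomposition $\GF_k=\GF_{\ge k}-\GF_{\ge k+1}$ with bounded toll functions and increasing $\GF_{\ge k}$, application of Theorems \ref{TT} and \ref{TLLN}, Lemma \ref{LC} for the non-degeneracy needed in \eqref{ttgl}--\eqref{ttn}, and Remark \ref{Rmulti} for joint convergence. The paper leaves the monotonicity of $\GF_{\ge k}$ and the witness $\Var\GF_k(\Tq k)>0$ implicit, whereas you spell them out; both checks are accurate.
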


Suppose that $k\ge2$.
We then have 
\begin{align}\label{kfe}
  \fEq{k}(\gl):=\E\gf_k(\Tgl)=\P(N_\gl=k)=\frac{\gl^k}{k!}e^{-\gl}.
\end{align}
Hence, or by \refL{LfE},
  the Mellin transform $\MfEq{k}(s)$ exists for $\Re s>-k$, and
\begin{align}\label{kmfe}
  \MfEq{k}(s) = \frac{\gG(k+s)}{k!}.
\end{align}
In particular,
\begin{align}\label{kmfe-1}
  \MfEq{k}(-1)=\frac1{k(k-1)}.
\end{align}

If $\gf_k(\Tgl)=1$ and $\bga\in\cAx$, then $\gf_k(\Tglbga)=1$ only if all
$k$ strings are passed to $\bga$, which has (conditional) probability
$P(\bga)^k$. Hence,
recalling \eqref{kfe},
\begin{align}\label{tomc}
  \Cov\bigpar{\gf_k(\Tgl),\gf_k(\Tglbga)}
  =\frac{\gl^k}{k!}e^{-\gl}\Bigpar{\Pbga^k-\frac{(\Pbga\gl)^k}{k!}e^{-\Pbga\gl}}.  
\end{align}
Consequently, by \eqref{lfv2}, for $\Re s>-k$,
\begin{align}\label{tomm}
  \MfVq{k}(s)
&  =
  \sumax\intoo
 \frac{\gl^k}{k!}e^{-\gl}\Bigpar{\Pbga^k-\frac{(\Pbga\gl)^k}{k!}e^{-\Pbga\gl}}
\gl^{s-1}\dd\gl
            \notag\\&
=  \frac{\gG(s+k)}{k!} \sumax\Pbga^k
  -\frac{\gG(s+2k)}{k!^2} \sumax\frac{\Pbga^k}{(1+\Pbga)^{s+2k}},
\end{align}
where the sums converge since $k\ge2$, see \eqref{Qrr}.
In particular, this easily yields, using
$\gQ k:=\sum_{\ga\in\cA}p_\ga^k<1$ as in \eqref{Qks}--\eqref{Qk},
\begin{align}
  \MfVq{k}(-1)
&  
=  \frac{1}{k(k-1)}\frac{1+\gQ k}{1-\gQ k}
  -\frac{(2k-2)!}{k!^2} \sumax\frac{\Pbga^k}{(1+\Pbga)^{2k-1}}.
\end{align}

The asymptotic normality in \refT{Tfringesize} holds, as stated there,
jointly for different $k$. Furthermore, still by \refR{Rmulti},
it holds jointly with the asymptotic normality of $\GF_\si(\Tn)=\absi{\Tn}$
in \refT{Tsize}.
Asymptotic covariances can be calculated by similar arguments as above.
We illustrate  this for
the asymptotic covariance between
$\GF_k(\Tn)$ and $\absi{\Tn}$ for a given $k\ge2$.
(Calculations for other covariances are slightly more complicated, but the
principle is the same.) 

The bivariate version of \eqref{tt+n} and \eqref{tt+hgs}
(\cf{} \refR{Rmulti}) yields
\begin{align}
  n\qw\Cov\bigpar{\GF_k(\Tn),\GF_\si(\Tn)}
=\hgs_{k\si}(n)+o(1),
\end{align}
where
\begin{align}\label{hgski}
  \hgs_{k\si}(n)
=H\qw \psiVq{k\si}(\ln n) - H\qww \psiCq{k}(\ln n)\psiCq{\si}(\ln n)
\end{align}
where $\psiVq{k\si}$ is given by \eqref{tevcfou} with 
$\MfX=\MfVq{k\si}$, 
the Mellin transform of $\fVq{k\si}$ which by \eqref{fCov} is given by
(noting that $\chi_k=\chi_\si=0$)
\begin{align}\label{bro}
  \fVq{k\si}(\gl)
= 
\Cov\bigpar{\gf_k(\Tgl),\GF_\si(\Tgl)}
  +\Cov\bigpar{\gf_\si(\Tgl),\GF_k(\Tgl)-\gf_k(\Tgl)}.
\end{align}
We note that 
\begin{align}\label{brom}
  \E\bigsqpar{\gf_k(\Tgl)\GF_\si(\Tgl)}&
=
  \P\bigpar{\abse{\Tgl}=k}\E\bigsqpar{\GF_\si(\Tgl)\bigm|\abse{\Tgl}=k}
\notag\\&
=
\fEq{k}(\gl)
\E\bigsqpar{\GF_\si(\Tq{k})}.
\end{align}
Furthermore, if $\gf_\si(\Tgl)=0$, then $\gf_k(\Tgl)=\GF_k(\Tgl)=0$.
Hence, using also \eqref{xsfE} and \eqref{kfe}, \eqref{bro} yields,
with $E_{k}:=\E\GF_\si(\Tq k)=\E\absi{\Tq k}$,
\begin{align}\label{brox}
&  \fVq{k\si}(\gl)
=
\fEq{k}(\gl) 
\Bigpar{E_k-\E\bigsqpar{\GF_\si(\Tgl)}}
+\bigpar{1-\fEq{\si}(\gl)}
\Bigpar{\E\GF_k(\Tgl)-\E\gf_k(\Tgl)}
\notag\\&
 = \frac{\gl^k}{k!}e^{-\gl}
\Bigpar{E_k-\sumaa \fEq{\si}(P(\bga)\gl)}
  +(1+\gl)e^{-\gl}\sumaay\fEq{k}(P(\bga)\gl)
.\end{align}
 This yields after simple calculations, partly arguing as for
\eqref{qj0}--\eqref{qj}, 
\begin{align}\label{broz}
  \MfVq{k\si}(s)
= \frac{\gG(k+s)}{k!}\biggl(E_k
&-\sumaa\frac{(1+P(\bga))^{k+s+1}-1-(k+s+1)P(\bga)}{(1+P(\bga))^{k+s+1}}
\notag\\
&+\sumaay\frac{k+s+1+P(\bga)}{\xpar{1+P(\bga)}^{k+s+1}}P(\bga)^k
\biggr)
.\end{align}
Furthermore, 
$\MfCq{\si}=\gG(s+2)$ by \eqref{xsMfC} and 
\begin{align}
  \MfCq{k}=-s\gG(k+s)/k! 
\end{align}
by \refL{LEVCx} and \eqref{kmfe}.
Finally, as said above, $\psiVq{k\si},\psiCq{\si},\psiCq{k}$ are given by
\eqref{tevcfou}, and \eqref{hgski} yields $\hgs_{k\si}(n)$. 
In the aperiodic case, $\hgs_{k\si}$ is constant and the formulas simplify:
\begin{align}
  \hgs_{k\si}&
=H\qw\MfVq{k\si}(-1)-H\qww\MfCq{k}(-1)\MfCq{\si}(-1)
\notag\\&
=H\qw\MfVq{k\si}(-1)-H\qww\MfEq{k}(-1)\MfEq{\si}(-1)
\notag\\&
= \frac{H\qw}{k(k-1)}\biggl(E_k
-\sumaa\frac{(1+P(\bga))^{k}-1-kP(\bga)}{(1+P(\bga))^{k}}
\notag\\&\phantom{=\frac{H\qw}{k(k-1)}\biggl(E_k}
+\sumaay\frac{k+P(\bga)}{\xpar{1+P(\bga)}^{k}}P(\bga)^k
-H\qw
\biggr)
.
\end{align}

\subsubsection{Asymptotic distributions}
We use these results to study the distribution of the size of
 a (uniformly) random fringe subtree $\Tn\rf$ of $\Tn$,
defined as in \refSS{SSfringe} as $\Tn^v$ for a uniformly random node $v$ in
$\Tn$.
Note that we allow both internal and external nodes $v$.

\begin{remark}
  Alternatively, one might consider a random internal fringe tree by taking
  only internal nodes $v$. This is equivalent to conditioning the fringe
  tree $\Tn^v$ on $v$ being an internal node. Since $v$ is external if and
only if $\Tn^v=\Ti$, this random internal fringe tree 
equals the random fringe tree $\Tn\rf$ (defined as
  above) conditioned on $\Tn\rf\neq\Ti$. The results below are easily
  transferred to this version.
\end{remark}

The total number of nodes in $\Tn$ is
\begin{align}
  \abs{\Tn}=\absi{\Tn}+\abse{\Tn}
=\GF_{\si}(\Tn)+n.
\end{align}
Hence, by \refT{Tsize} and \eqref{tllna2},
\begin{align}
  \abs{\Tn}/n = H\qw\psiEq{\si}(\ln n)+1+\op(1).
\end{align}
Similarly, by \refT{Tfringesize}, for $k\ge2$,
\begin{align}
  \GF_k(\Tn)/n = H\qw\psiEq{k}(\ln n)+\op(1).
\end{align}
This implies the following result,
using also \eqref{xsMfE1} and \eqref{kfe}.
\begin{theorem}\label{Tcor}
The fringe tree size distribution of $\Tn$ satisfies
\begin{align}\label{tcor1}
  \P\bigpar{\abse{\Tn\rf}=k\mid\Tn}
= \frac{\GF_k(\Tn)}{\abs{\Tn}}
=  \begin{cases}
\frac{\psiEq{k}(\ln n)}{\psiEq{\si}(\ln n)+H}+\op(1), & k\ge2,\\
\frac{H}{\psiEq{\si}(\ln n)+H}+\op(1), & k=1.    
  \end{cases}
\end{align}
In particular, if $\dA=0$,
the distribution converges in probability:
\begin{align}\label{tcor0}
    \P\bigpar{\abse{\Tn\rf}=k\mid\Tn}
\pto
  \begin{cases}
\frac{1}{(1+H)k(k-1)}, &k\ge2,\\
\frac{H}{1+H},&k=1.    
  \end{cases}
\end{align}
\nopf  
\end{theorem}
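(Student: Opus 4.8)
The plan is to express the conditional fringe–size probabilities as ratios of additive functionals already analysed, and then pass to the limit using the laws of large numbers from \refT{Tsize} and \refT{Tfringesize}.

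First I would record the combinatorial identity behind the first equality in \eqref{tcor1}. By \eqref{bamse}, a node $v=\bga$ of a trie $T$ satisfies $\abse{T^\bga}=\nbga$, so $\GF_k(T)=\sum_{v\in T}\indic{\abse{T^v}=k}=\bigabs{\set{v\in T:\abse{T^v}=k}}$; combined with \eqref{ptx}, which holds conditionally on $\Tn$ as noted there, this gives $\P\bigpar{\abse{\Tn\rf}=k\mid\Tn}=\GF_k(\Tn)/\abs{\Tn}$ for every $k\ge1$, where for $k=1$ one uses $\GF_1=\GFo$ and $\GF_1(\Tn)=n$ from \refE{Eleaves}. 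Also $\abs{\Tn}=\absi{\Tn}+\abse{\Tn}=\GF_\si(\Tn)+n$, and trivially $\abs{\Tn}/n\ge1$ since a trie with $n$ leaves has at least $n$ nodes.

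Next I would divide numerator and denominator by $n$ and apply the laws of large numbers. Since $\chi_\si=0$, \refT{Tsize} (that is, \eqref{tllna2} applied to $\GF_\si$) gives $\absi{\Tn}/n=H\qw\psiEq{\si}(\ln n)+\op(1)$, hence $\abs{\Tn}/n=H\qw\psiEq{\si}(\ln n)+1+\op(1)$; and since $\chi_k=\gf_k(\Ti)=0$ for $k\ge2$, \refT{Tfringesize} gives $\GF_k(\Tn)/n=H\qw\psiEq{k}(\ln n)+\op(1)$. As the functions $\psiEq{\si},\psiEq{k}$ are bounded and the denominator $\abs{\Tn}/n$ is bounded below by $1$, the ratio of these two expansions is $\op(1)$-close to the corresponding deterministic quotient; thus $\P\bigpar{\abse{\Tn\rf}=k\mid\Tn}=\frac{H\qw\psiEq{k}(\ln n)}{H\qw\psiEq{\si}(\ln n)+1}+\op(1)=\frac{\psiEq{k}(\ln n)}{\psiEq{\si}(\ln n)+H}+\op(1)$ for $k\ge2$, and $=\frac{1}{H\qw\psiEq{\si}(\ln n)+1}+\op(1)=\frac{H}{\psiEq{\si}(\ln n)+H}+\op(1)$ for $k=1$. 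This is \eqref{tcor1}.

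Finally, for \eqref{tcor0} I would specialise to $\dA=0$, where by \eqref{psiX0} every $\psiX$ equals the constant $\MfX(-1)$: here $\psiEq{\si}\equiv\MfEq{\si}(-1)=1$ by \eqref{xsMfE1}, and $\psiEq{k}\equiv\MfEq{k}(-1)=1/(k(k-1))$ by \eqref{kmfe-1} (equivalently \refL{LfE}). Substituting these constants into \eqref{tcor1} turns the $\op(1)$ statements into convergence in probability, to $\frac{1}{(1+H)k(k-1)}$ for $k\ge2$ and to $\frac{H}{1+H}$ for $k=1$ (and one checks, as a sanity test, that these values sum to $1$ over $k\ge1$). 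There is no serious obstacle: the theorem is a direct corollary of the laws of large numbers already proved in \refS{Sfringe}, and the only genuine subtlety is the elementary observation that the random denominator $\abs{\Tn}/n$ stays bounded away from $0$, so that a quotient of two expansions each controlled up to $\op(1)$ is again controlled up to $\op(1)$.
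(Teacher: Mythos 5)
Your proposal is correct and follows essentially the same route as the paper, which derives \eqref{tcor1} directly from the identity $\abs{\Tn}=\GF_\si(\Tn)+n$ together with the laws of large numbers \eqref{tllna2} for $\GF_\si$ and $\GF_k$ (Theorems \ref{Tsize} and \ref{Tfringesize}), and then specialises to $\dA=0$ using $\MfEq{\si}(-1)=1$ and $\MfEq{k}(-1)=1/(k(k-1))$. Your added remarks (the conditional form of \eqref{ptx}, the lower bound $\abs{\Tn}/n\ge1$ justifying the quotient of $\op(1)$-expansions) are exactly the details the paper leaves implicit.
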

We thus have convergence in probability in the aperiodic case, but (as usual)
oscillations in the periodic case.
It is well-known that the oscillations seen for various properties of tries
tend to be numerically small; hence, the limits in \eqref{tcor0} can be
regarded as approximations also in the periodic case.
Note that the limits in \eqref{tcor0} depend on the letter probabilities $\vp$ 
only through the entropy $H$, and that this limit distribution conditioned on
being $\neq1$ is independent of $\vp$.
In the periodic case, the asymptotics in \eqref{tcor1} depend also on $\dA$;
as always, $\psiEq{k}$ and $\psiEq{\si}$ are given by 
\eqref{tevcfou} with the corresponding
$\MfE$ in \eqref{xsMfE} and \eqref{kmfe}.

\begin{remark}
  The result in \refT{Tcor} is of the quenched type, where we condition on
  the random tree $\Tn$ and obtain approximation or
convergence in probability of the
  conditional distribution. By unconditioning, this immediately implies the 
corresponding annealed result, for
the distribution of $\abse{\Tn\rf}$ where we consider the combined random
experiment of first choosing $\Tn$ at random and then a random fringe
subtree of it.
\end{remark}

\begin{remark}
  The asymptotic distribution in \eqref{tcor0} has probabilities, 
say $\pi_k$,
  decaying as $k^{-2}$ for large $k$. This is similar to
the distribution of the size (now defined as the number of nodes) of
fringe trees in, for example, the random
  recursive tree (with $\pi_k=1/(k(k+1))$, $k\ge1$) 
and the binary search tree (with $\pi_k=2/((k+1)(k+2))$, $k\ge1$);
see \cite{Aldous-fringe,SJ296,SJ306}.
Recall that for conditioned Galton--Watson trees 
(with finite offspring variance), the 
probabilities decay more slowly, as $k^{-3/2}$, see
\cite{Aldous-fringe,SJ264,SJ285}.
\end{remark}

The convergence in probability in \refT{Tcor} can be refined to asymptotic
normality of the conditional probabilities.
In order to include the case $k=1$ in a notationally convenient way, 
we (re)define in the rest of this subsection
\begin{align}\label{psi1}
\psiEq1(t):=H,&& 
\psiCq1(t):=H,&&
\psiVq1(t):=H,&&
\psiVq{1\si}(x):=\psiCq{\si}(x).  
\end{align}
Thus the first case in \eqref{tcor1} holds also for $k=1$.
(Our main justification for the fudge \eqref{psi1} is that it works.
One interpretation, and perhaps explanation, 
is that we replace $\GF_1$ by the almost identical
$\GFF$ in \eqref{e0GFx}, which has $\chi_{\sjw}=0$ and $\psiX(t)$ as in
\eqref{psi1}, 
see \eqref{montec}--\eqref{montv}.)

\begin{theorem}\label{Tcer}
The conditional fringe tree size distribution of $\Tn$, given $\Tn$,
has asymptotically normal fluctuations, in the following sense.
Let $k\ge1$ and   let either $a_{kn}:=  \P\bigpar{\abse{\Tn\rf}=k}
= \E\frac{\GF_k(\Tn)}{\abs{\Tn}}$,
or
$a_{kn}:= \frac{\E\GF_k(\Tn)}{\E\abs{\Tn}}$.
Then, with all moments, as \ntoo,
\begin{align}\label{tcer1}
n\qq\Bigpar{ \P\bigpar{\abse{\Tn\rf}=k\mid\Tn}-a_{kn}}
= n\qq\Bigpar{\frac{\GF_k(\Tn)}{\abs{\Tn}}-a_{kn}}
\approxd
N\bigpar{0,\tgss_k(n)},
\end{align}
where, with $t=\ln n$ and $\psiEz(t):=\psiEq{\si}(t)+H$,
\begin{align}\label{tgssk}
\tgss_k(n) 
&:=
\frac{H}{\psiEz(t)^2}
\Bigpar{\psiVq{k}(t)-2\frac{\psiEq{k}(t)}{\psiEz(t)}\psiVq{k\si}(t)
+\frac{\psiEq{k}(t)^2}{\psiEz(t)^2}\psiVq{\si}(t)}
\notag\\
&\hskip2em
-\frac{1}{\psiEz(t)^4}
\Bigpar{\psiEz(t)\psiCq{k}(t)-
\psiEq{k}(t)\psiCq{\si}(t)}^2
.\end{align}
In particular, if $\dA=0$,
then $\tgss_k(n)$ is constant and, 
\begin{align}\label{tgssk0}
\tgss_k(n)& 
=\frac{H}{(1+H)^4}
\Bigl((1+H)^2\MfVq{k}(-1)
-\frac{2(1+H)}{k(k-1)}\MfVq{k\si}(-1)
\notag\\&\phantom{=\frac{H}{(1+H)^4}}\qquad
+\frac{\MfVq{\si}(-1)-H}{k^2(k-1)^2}\Bigr)
,\qquad\qquad k\ge2,
\\\label{tgss10}
\tgss_1(n) &=
(1+H)^{-4}\bigpar{H^3\MfVq{\si}(-1)-H^2}.
\end{align}
Moreover, the approximation in distribution \eqref{tcer1} holds jointly for
any finite number of $k$, with a multivariate normal distribution 
$N\bigpar{0,(\tgs_{k\ell}(n))_{k,\ell}}$.
\end{theorem}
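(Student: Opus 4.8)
The plan is to derive \eqref{tcer1} from the multivariate central limit theorem for the subtree counts together with the delta method, using that the conditional probability there is the smooth ratio $\GF_k(\Tn)/\abs{\Tn}$ with $\abs{\Tn}=\GF_\si(\Tn)+n$ and $n$ deterministic. By \refTs{Tfringesize} and~\ref{Tsize}, \refR{Rmulti}, and \refL{LC} (so that \refT{TT}\ref{TTconv} is available), the pair
\[
 \bigpar{n\qqw\bigpar{\GF_k(\Tn)-\E\GF_k(\Tn)},\; n\qqw\bigpar{\GF_\si(\Tn)-\E\GF_\si(\Tn)}}\approxd N\bigpar{0,\Sigma(n)},
\]
with all moments, where $\Sigma(n)$ has diagonal entries $\hgss_k(n)=H\qw\psiVq{k}(\ln n)-H\qww\psiCq{k}(\ln n)^2$ and $\hgss_\si(n)=H\qw\psiVq{\si}(\ln n)-H\qww\psiCq{\si}(\ln n)^2$ (cf.\ \eqref{tt+hgs}, using $\chi_k=\chi_\si=0$) and off-diagonal entry $\hgs_{k\si}(n)$ of \eqref{hgski}. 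For $k=1$ the count $\GF_1(\Tn)=\abse{\Tn}=n$ is deterministic, so the bivariate statement degenerates and the delta method below uses only $\GF_\si(\Tn)$; the conventions \eqref{psi1} --- equivalently, replacing $\GF_1$ by the almost identical $\GFF$ of \refE{E0}, whose associated functions are all $\equiv H$ --- make the general formula specialize correctly.

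Next I would apply the delta method. Put $U_n:=\GF_k(\Tn)$, $V_n:=\abs{\Tn}=\GF_\si(\Tn)+n$, $X_n:=n\qqw(U_n-\E U_n)$, $Y_n:=n\qqw(V_n-\E V_n)=n\qqw(\GF_\si(\Tn)-\E\GF_\si(\Tn))$, and $a_{kn}:=\E U_n/\E V_n$. The elementary identity
\[
 n\qq\Bigpar{\frac{U_n}{V_n}-a_{kn}}=\frac{n}{V_n}\bigpar{X_n-a_{kn}Y_n}
\]
holds exactly. By \eqref{tllna2} applied to $\GF_\si$, $V_n/n=1+H\qw\psiEq{\si}(\ln n)+\op(1)=H\qw\psiEz(\ln n)+\op(1)=:\mu_\si(n)+\op(1)$, and $\mu_\si(n)$ is bounded and, by \refT{TEVC}\ref{TEVC+}, satisfies $\mu_\si(n)\ge1$; hence $n/V_n=\mu_\si(n)\qw+\op(1)$ with $0\le n/V_n\le1$. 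By \eqref{ttmeann}, $a_{kn}=\mu_k(n)/\mu_\si(n)+o(1)$ with $\mu_k(n):=H\qw\psiEq{k}(\ln n)$. Feeding these into the identity and invoking the joint convergence just recalled (passing, via \refR{Rsubsub}, to subsequences along which $\ln n$ is essentially constant modulo $d$, so $(X_n,Y_n)$ converges in distribution to a fixed centred Gaussian with all moments) yields
\[
 n\qq\Bigpar{\frac{\GF_k(\Tn)}{\abs{\Tn}}-a_{kn}}\approxd\frac{1}{\mu_\si(n)}\Bigpar{X_n-\frac{\mu_k(n)}{\mu_\si(n)}\,Y_n}\approxd N\bigpar{0,\tgss_k(n)},
\]
with all moments --- the moment part by \refL{Lapprox0}, since $\bigabs{n\qq\bigpar{\GF_k(\Tn)/\abs{\Tn}-a_{kn}}}^s\le\bigabs{X_n-a_{kn}Y_n}^s$ is dominated by a uniformly integrable family by \refT{TT}\ref{TT+} --- where
\[
 \tgss_k(n)=\frac{1}{\mu_\si(n)^2}\Bigpar{\hgss_k(n)-\frac{2\mu_k(n)}{\mu_\si(n)}\hgs_{k\si}(n)+\frac{\mu_k(n)^2}{\mu_\si(n)^2}\hgss_\si(n)}.
\]
That one may center instead by the annealed probability $\E\bigsqpar{\GF_k(\Tn)/\abs{\Tn}}$ follows since $\E\bigsqpar{\GF_k(\Tn)/\abs{\Tn}}-\E\GF_k(\Tn)/\E\abs{\Tn}=O(1/n)=o(n\qqw)$, by a second-order expansion of $u/v$ using $\E\abs{\Tn}=\Theta(n)$, $\Var\abs{\Tn}=O(n)$, $\Cov\bigpar{\GF_k(\Tn),\abs{\Tn}}=O(n)$ and the a priori bound $\GF_k(\Tn)/\abs{\Tn}\in[0,1]$.

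Finally I would substitute $\mu_\si=H\qw\psiEz$, $\mu_k=H\qw\psiEq{k}$, and the above expressions for $\hgss_k,\hgss_\si,\hgs_{k\si}$ (all at $t=\ln n$) into the last display; the powers of $H$ cancel, collecting the $\psiV$-terms gives the first line of \eqref{tgssk} while the $\psiC$-terms combine into $-\psiEz^{-4}\bigpar{\psiEz\psiCq{k}-\psiEq{k}\psiCq{\si}}^2$, so that \eqref{tgssk} results. Specializing to $\dA=0$, where all $\psi$'s are constant with $\psiEq{\si}\equiv1$ by \eqref{xsMfE1} (hence $\psiEz\equiv1+H$) and $\psiEq{k}\equiv1/(k(k-1))$ by \eqref{kmfe-1}, gives \eqref{tgssk0} for $k\ge2$ and \eqref{tgss10} for $k=1$; the latter is also what \eqref{tgssk} produces under the conventions \eqref{psi1}. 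The joint statement for several $k$ is obtained the same way: $\bigpar{\GF_{k_1}(\Tn),\dots,\GF_{k_j}(\Tn),\GF_\si(\Tn)}$ is jointly $\approxd$ normal by \refR{Rmulti}, and the delta method applied to $(u_1,\dots,u_j,v)\mapsto\bigpar{u_i/(v+n)}_{i}$ yields the covariances $\tgs_{k\ell}(n)$. There is no conceptual obstacle; the genuine work is the bookkeeping of the oscillating covariance functions entering the multivariate CLT (and the treatment of $k=1$ via $\GFF$), the somewhat tedious algebra collapsing the delta-method variance into the compact form \eqref{tgssk}, and --- the one point requiring care --- controlling the delta-method remainder in $L^s$ for \emph{every} $s$, for which the decisive facts are the a priori bound $\GF_k(\Tn)/\abs{\Tn}\in[0,1]$ and $\abs{\Tn}\ge n$ (keeping the ratio and its Taylor remainder integrable even though the scaled variable is only $O(n\qq)$-bounded) together with the uniform integrability of all moments of $(X_n,Y_n)$ supplied by \refT{TT}\ref{TT+}.
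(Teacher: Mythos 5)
Your proposal is correct and follows essentially the same route as the paper: a joint all-moments CLT for $\bigpar{\GF_k(\Tn),\GF_\si(\Tn)}$ from \refTs{Tsize} and \ref{Tfringesize} with \refR{Rmulti}, then a ratio (delta-method) step based on the exact identity $n\qq\bigpar{U_n/V_n-x_n/y_n}=\frac{n}{V_n}\bigpar{X_n-\frac{x_n}{y_n}Y_n}$, the subsequence principle, uniform integrability via $\abs{\Tn}\ge n$, and the same algebraic collapse to \eqref{tgssk} with the $k=1$ case handled by the conventions \eqref{psi1}. The only cosmetic difference is that the paper packages the ratio step as the standalone \refL{Lfett} (whose part \ref{Lfett2} also yields the switch of centering to $\E\bigsqpar{\GF_k(\Tn)/\abs{\Tn}}$ directly from first-moment convergence, rather than your second-order expansion), while you carry it out inline.
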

The asymptotic covariances $\tgs_{k\ell}$ can be expressed similarly to the
case $\ell=k$ in \eqref{tgssk}; we leave the details to the reader.

Note that in the periodic case $\dA>0$, the asymptotic variance
\eqref{tgssk} is a continuous periodic function of $\log n$. However
there is no easy way to find its mean or other Fourier coefficients.

\refT{Tcer} follows from joint convergence in \refTs{Tsize} and
\ref{Tfringesize} by standard methods. We prove first a general lemma of
standard type.

\begin{lemma}\label{Lfett}
  Let $(X_n,Y_n)$ be a sequence of random vectors, and assume that,
as \ntoo, 
  \begin{align}\label{fett1}
    n\qqw \bigpar{X_n-\E X_n,Y_n-\E Y_n} 
\approx 
 N\Bigpar{0,\smatrixx{\gs_{XX}(n) & \gs_{XY}(n)\\\gs_{XY}(n) & \gs_{YY}(n)}},
  \end{align}
where $\E X_n=O(n)$, $\E Y_n=\Theta(n)$ and 
$\gs_{XX}(n),\gs_{XY}(n),\gs_{YY}(n)=O(1)$.
\begin{romenumerate}
  
\item \label{Lfett1}
Then, with $x_n:=\E X_n$ and $y_n:=\E Y_n$,
\begin{align}\label{fett2}
  n\qq\Bigpar{\frac{X_n}{Y_n}-\frac{\E X_n}{\E Y_n}}
\approx  N\Bigpar{0, \frac{n^2}{y_n^2}
\Bigpar{\gs_{XX}(n) -2\frac{x_n}{y_n} \gs_{XY}(n)+\frac{x_n^2}{y_n^2} \gs_{YY}(n)}},
\end{align}
\item \label{Lfett2}
If, moreover, \eqref{fett1} holds with all moments, and $Y_n\ge cn$ a.s., for
some $c>0$ and all $n$, then \eqref{fett2} holds with all moments.
Furthermore, we then may replace $\xfrac{\E X_n}{\E Y_n}$ by 
$\E\bigpar{\xfrac{ X_n}{ Y_n}}$ in \eqref{fett2}.
\end{romenumerate}
\end{lemma}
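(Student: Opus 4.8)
The plan is a delta-method computation run through the subsequence principle for $\approxd$ (\refR{Rsubsub}), which lets one avoid assuming that $\gs_{XX}(n),\gs_{XY}(n),\gs_{YY}(n)$ or $x_n/y_n$ converge. Put $U_n:=(X_n-x_n)/\sqrt n$ and $V_n:=(Y_n-y_n)/\sqrt n$, where $x_n:=\E X_n$ and $y_n:=\E Y_n$, so that \eqref{fett1} reads $(U_n,V_n)\approxd N(0,\gS_n)$ with $\gS_n:=\smatrixx{\gs_{XX}(n) & \gs_{XY}(n)\\\gs_{XY}(n) & \gs_{YY}(n)}$, and record the elementary identity
\begin{equation}\label{fettid}
  Z_n:=\sqrt n\Bigpar{\frac{X_n}{Y_n}-\frac{x_n}{y_n}}
  =\frac{(n/y_n)\bigpar{U_n-(x_n/y_n)V_n}}{1+\sqrt n\,V_n/y_n}
  =\frac{n\bigpar{U_ny_n-x_nV_n}}{y_nY_n},
\end{equation}
valid whenever $Y_n\neq0$. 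Since $x_n=O(n)$ and $y_n=\Theta(n)$, both $x_n/y_n$ and $n/y_n$ are bounded, and the variance claimed in \eqref{fett2} equals $(n/y_n)^2\bigpar{\gs_{XX}(n)-2(x_n/y_n)\gs_{XY}(n)+(x_n/y_n)^2\gs_{YY}(n)}$, which is $O(1)$ and nonnegative, being a quadratic form against the positive semidefinite matrix $\gS_n$, scaled by $(n/y_n)^2$.

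For part~\ref{Lfett1} I would argue along subsequences. Given a subsequence, boundedness lets me pass to a subsubsequence along which $x_n/y_n\to\rho$, $n/y_n\to\tau$ and $\gS_n\to\gS$; along it $\sqrt n/y_n\to0$, and \eqref{fett1} together with \refR{Rapprox} gives $(U_n,V_n)\dto(U,V)$ with $(U,V)\sim N(0,\gS)$, hence $(U_n,V_n,\sqrt n\,V_n/y_n)\dto(U,V,0)$. Applying the continuous mapping theorem to \eqref{fettid} (the denominator converging to $1$) yields $Z_n\dto\tau(U-\rho V)\sim N\bigpar{0,\tau^2(\gs_{XX}-2\rho\gs_{XY}+\rho^2\gs_{YY})}$, and the variance in \eqref{fett2} converges along the same subsubsequence to exactly this number; so $Z_n$ and the Gaussian appearing in \eqref{fett2} have the same distributional limit, i.e.\ $Z_n\approxd N(0,\cdot)$ along the subsubsequence. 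Since every subsequence admits such a subsubsequence, the subsequence principle for $\approxd$ gives \eqref{fett2} along the full sequence.

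For part~\ref{Lfett2}, write $v_n$ for the variance in \eqref{fett2}. Now $Y_n\ge cn$ and $y_n=\Theta(n)$ force $|y_nY_n|\ge c'n^2$, while $|U_ny_n-x_nV_n|\le Cn(|U_n|+|V_n|)$, so the last expression in \eqref{fettid} gives the pointwise bound $|Z_n|\le C''(|U_n|+|V_n|)$. Since \eqref{fett1} holds with all moments and $\gS_n=O(1)$, the moments of $N(0,\gS_n)$ of every order are bounded, whence $\sup_n\E(|U_n|+|V_n|)^s<\infty$ and therefore $\sup_n\E|Z_n|^s<\infty$ for every $s$; thus $(|Z_n|^s)$ and $(|N(0,v_n)|^s)$ are uniformly integrable for every $s$, and combining part~\ref{Lfett1} with \refL{Lapprox0} upgrades \eqref{fett2} to hold with all \absmoment{s}. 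In particular, taking moments of order $1$, $\E Z_n=\E N(0,v_n)+o(1)=o(1)$, that is $\sqrt n\bigpar{\E(X_n/Y_n)-x_n/y_n}\to0$; hence $\E(X_n/Y_n)$ differs from $x_n/y_n$ by $o(n\qqw)$, so it may replace $x_n/y_n$ in \eqref{fett2}, the resulting deterministic $o(1)$ shift in $Z_n$ not affecting the limit law or its moments.

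The argument is essentially routine delta method; the two points that need care — hence the ``hard part'' — are, first, that $\gS_n$ need not converge, which is exactly why one extracts $x_n/y_n$, $n/y_n$ and $\gS_n$ along a common subsubsequence rather than invoking Slutsky directly, and, second, that in part~\ref{Lfett2} one must convert the domination $|Z_n|\le C''(|U_n|+|V_n|)$ into uniform integrability of \emph{every} power of $Z_n$ (using that \eqref{fett1} holds with \emph{all} moments), which is what allows \refL{Lapprox0} to deliver convergence of moments rather than mere tightness.
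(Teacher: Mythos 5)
Your proof is correct and follows essentially the same route as the paper: the same algebraic identity for $\sqrt n\,(X_n/Y_n-x_n/y_n)$, the subsequence principle to handle non-convergent $x_n/y_n$, $n/y_n$ and $\gs_{\cdot\cdot}(n)$, and in part (ii) the same domination $|Z_n|\le C(|U_n|+|V_n|)$ from $Y_n\ge cn$ giving uniform integrability of all powers, followed by the first-moment argument to justify replacing $\E X_n/\E Y_n$ by $\E(X_n/Y_n)$.
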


\begin{proof}
\pfitemref{Lfett1}
  Denote the \lhs{} of \eqref{fett1} by $(X'_n,Y'_n$). Then
  \begin{align}\label{semla}
  n\qq\Bigpar{\frac{X_n}{Y_n}-\frac{\E X_n}{\E Y_n}}    
&=
  n\qq\Bigpar{\frac{x_n+n\qq X'_n}{y_n+n\qq Y'_n}-\frac{ x_n}{y_n}}    
=
\frac{n y_n X'_n-n x_n Y'_n}{y_n(y_n+n\qq Y'_n)}    
\notag\\&=
\frac{y_n}{y_n+n\qq Y'_n}\cdot    
\frac{n}{y_n}
\Bigpar{X'_n-\frac{x_n}{y_n} Y'_n},
  \end{align}
and \eqref{fett2} follows since $\xqfrac{y_n}{y_n+n\qq Y'_n}\pto1$.    
(By the subsequence principle, it suffices to consider subsequences such
that
$x_n/n$, $y_n/n$ and 
$\gs_{XX}(n),\gs_{XY}(n),\gs_{YY}(n)$
 converge.)

\pfitemref{Lfett2}
Let $r$ be a positive integer. The assumptions imply that the sequences
$|X'_n|^r$ and $|Y'_n|^r$ are \ui, and
then it follows that the $r$th absolute powers of the 
variables \eqref{semla} are \ui. Hence the $r$th moment converges in
\eqref{fett2}.

In particular, \eqref{fett2} holds with the first moment, and thus
\begin{align}
 n\qq\Bigpar{\E\frac{X_n}{Y_n}-\frac{\E X_n}{\E Y_n}}
=
 \E\Bigsqpar{ n\qq\Bigpar{\frac{X_n}{Y_n}-\frac{\E X_n}{\E Y_n}}}
\to0.
\end{align}
Hence we may replace $\xfrac{\E X_n}{\E Y_n}$ by 
$\E\bigpar{\xfrac{ X_n}{ Y_n}}$ in \eqref{fett2}.
\end{proof}

\begin{proof}[Proof of \refT{Tcer}]
We apply \refL{Lfett} with $X_n:=\GF_k(\Tn)$ and $Y_n:=\abs{\Tn}=n+\GF_\si(\Tn)$.
As noted above, \eqref{fett1} then holds (with all moments)
by 
\refT{TT}
(or \refTs{Tsize} and \ref{Tfringesize})
together with \refR{Rmulti}, if we define,
using \eqref{tt+hgs} and \eqref{hgski},
with $t=\ln n$,
\begin{align}\label{askxx}
  \gs_{XX}(n)&
:=  \hgss_{k}(n)
=  H\qw\psiVq{k}(t)-H\qww \psiCq{k}(t)^2,
\\\label{askxy}
\gs_{XY}(n)&
:=  \hgs_{k\si}(n)
=H\qw \psiVq{k\si}(t) - H\qww \psiCq{k}(t)\psiCq{\si}(t),
\\\label{askyy}
  \gs_{YY}(n)&
:=  \hgss_{\si}(n)
=  H\qw\psiVq{\si}(t)-H\qww \psiCq{\si}(t)^2.
\end{align}
Furthermore, \eqref{ttmeann} yields
\begin{align}\label{askx}
  x_n/n&=\E\GF_k(\Tn)/n=H\qw\psiEq{k}(t)+o(1),
\\\label{asky}
  y_n/n&=1+\E\GF_\si(\Tn)/n=1+H\qw\psiEq{\si}(t)+o(1)
=H\qw\psiEz(t)+o(1).
\end{align}
Note that, as required by \refL{Lfett},
$x_n/n=O(1)$ and $y_n/n=\Theta(1)$.
Note further that \eqref{askxx}, \eqref{askxy} and \eqref{askx} hold
also for $k=1$
by our special definition \eqref{psi1}. 
(Trivially, with $\gs_{XX}(n)=\gs_{XY}(n)=0$ and $x_n=n$; 
recall that $\GF_1(\Tn)=n$ is deterministic.) 
We have, by \eqref{askxx}--\eqref{asky},
\begin{align}
& \frac{n^2}{y_n^2}
\Bigpar{\gs_{XX}(n) -2\frac{x_n}{y_n} \gs_{XY}(n)+\frac{x_n^2}{y_n^2}
  \gs_{YY}(n)}
\notag\\
&=
\frac{H}{\psiEz(t)^2}
\Bigpar{\psiVq{k}(t)-2\frac{\psiEq{k}(t)}{\psiEz(t)}\psiVq{k\si}(t)
+\frac{\psiEq{k}(t)^2}{\psiEz(t)^2}\psiVq{\si}(t)}
\notag\\
&\quad-\frac{1}{\psiEz(t)^2}
\Bigpar{\psiCq{k}(t)^2-
2\frac{\psiEq{k}(t)}{\psiEz(t)}\psiCq{k}(t)\psiCq{\si}(t)
+\frac{\psiEq{k}(t)^2}{\psiEz(t)^2}\psiCq{\si}(t)^2}
+o(1)
\notag\\
&=
\frac{H}{\psiEz(t)^2}
\Bigpar{\psiVq{k}(t)-2\frac{\psiEq{k}(t)}{\psiEz(t)}\psiVq{k\si}(t)
+\frac{\psiEq{k}(t)^2}{\psiEz(t)^2}\psiVq{\si}(t)}
\notag\\
&\quad-\frac{1}{\psiEz(t)^4}
\Bigpar{\psiEz(t)\psiCq{k}(t)-
\psiEq{k}(t)\psiCq{\si}(t)}^2
+o(1)
\end{align}
which equals $\tgss_k(n)+o(1)$ as defined in \eqref{tgssk}. 
Thus, \refL{Lfett} yields \eqref{tcer1} with all moments.
(Note that $Y_n\ge n$ a.s., so \refL{Lfett}\ref{Lfett2} applies.)

When $\dA=0$, $\psiEz(t)=\MfEq{\si}(-1)+H=1+H$ 
by \eqref{psiX0} and \eqref{xsMfE1},
and 
\eqref{tgssk} reduces to
\eqref{tgssk0}--\eqref{tgss10}, 
using also \eqref{kmfe-1} and \eqref{texas}.
\end{proof}

\subsection{Distribution of fringe tries}\label{SSfringeD}

The previous subsection studied the sizes of fringe tries.
For a more detailed study of the distribution of the fringe trees of the
random trie $\Tn$,
let $T$ be a fixed trie, and consider the toll function
\begin{align}
  \gf_T(T'):=\indic{T'=T}
\end{align}
and the corresponding additive functional $\GF_T$ which counts the number of
fringe trees equal 
to $T$.
Let $\tauk=\abse{T}$, and let
$p_T:=\P(\cT_\tauk=T)$.
Note that $\gf_\Ti$ is as defined in \refE{Eleaves}, and coincides with
$\gf_1$ in \refSS{SSfringesize}, so we are mainly interested in the case
$k\ge2$; then $\chi_T:=\gf_T(\Ti)=0$.
For completeness, we include below also the case $T=\Ti$, but in this case
we use the special definitions \eqref{psi1}; thus  $\psiXq{\Ti}:=\psiXq{1}$.

The functional $\GF_T$ is not increasing, but with $\GF_{>k}:=\GF_{\ge k+1}$
defined in \refSS{SSfringesize}, $\GF_T+\GF_{>k}$ is increasing, and thus
\refTs{TT} and \ref{TLLN} apply to $\GF_T=(\GF_T+\GF_{>k})-\GF_{>k}$.
Furthermore, \refL{LC} applies (with $n_0=k+1$ and $a_n=0$).
Consequently, the arguments in \refSS{SSfringesize} 
yield the following
analogues of \refTs{Tfringesize}, \ref{Tcor}, and \ref{Tcer},
using also \eqref{dodo} which we postpone until after the theorems.

\begin{theorem}\label{TfringeT}
Let $T$ be a fixed trie 
and consider $\GF_T$, the number of fringe trees equal to $T$ 
(as ordered trees).
Then, the central limit theorems
\eqref{tt+gl}--\eqref{tt+n} and \eqref{ttgl}--\eqref{ttn}
hold, with all \absmoment{s},
and with asymptotic variances given by 
\eqref{tt+gs}--\eqref{tt+hgs}
(and thus by \eqref{tt0gs}--\eqref{tt0hgs}  when $\dA=0$);
this extends to joint convergence for several tries $T$.
Furthermore, the means satisfy \eqref{ttmean} and \eqref{ttmeann},
and the laws of large numbers \eqref{tllna1}--\eqref{tllna2} hold.
\nopf
\end{theorem}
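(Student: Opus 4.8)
The plan is to represent $\GF_T$ as a difference of two increasing additive functionals with bounded toll functions, apply \refTs{TT} and~\ref{TLLN} verbatim, and read off each claimed conclusion; this runs exactly parallel to \refSS{SSfringesize}. Write $k:=\abse T$; I treat the main case $k\ge2$ (so $T\neq\Ti$ and $\chi_T=\gf_T(\Ti)=0$), the case $T=\Ti$ being $\GF_\Ti=\GF_1$ of \refE{Eleaves}, for which $\GF_1(\Tn)=n$ is deterministic and all assertions hold trivially under the convention \eqref{psi1}.

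First I would put $\gf_+:=\gf_T+\gf_{>k}$ and $\gf_-:=\gf_{>k}$, where $\gf_{>k}=\sum_{j\ge k+1}\gf_j$ is the bounded toll function of $\GF_{>k}=\GF_{\ge k+1}$ (see \refSS{SSfringesize}); then $\gf_T=\gf_+-\gf_-$ with $\gf_\pm$ bounded by $1$. Since $\GF_{\ge k+1}$ is increasing, the one thing to check is that $\GF_+=\GF_T+\GF_{\ge k+1}$ is increasing, and by \refSS{SStries} it suffices to compare a trie $T'$ with $T''$ obtained by adding one string. Adding a string only adds nodes, so $T'\subseteq T''$ and $(T')^v\subseteq(T'')^v$ for every $v\in T'$; moreover adding one string to a trie increases its number of leaves by exactly one, so $\abse{(T'')^v}\in\{\abse{(T')^v},\abse{(T')^v}+1\}$, the value $+1$ occurring exactly when the added string has prefix $v$. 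In particular $\indic{\abse{(T'')^v}\ge k+1}\ge\indic{\abse{(T')^v}\ge k+1}$ for all $v\in T'$. Now a ``destroyed'' copy of $T$, i.e.\ a node $v$ with $(T')^v=T$ but $(T'')^v\neq T$, must have the added string as a prefix, hence $\abse{(T'')^v}=k+1$; so $v$ contributes $0$ to $\GF_{\ge k+1}(T')$ and $1$ to $\GF_{\ge k+1}(T'')$. Summing over all nodes (those of $T''\setminus T'$ only help), $\GF_{\ge k+1}(T'')-\GF_{\ge k+1}(T')$ is at least the number of destroyed copies of $T$, which is at least $-(\GF_T(T'')-\GF_T(T'))$; hence $\GF_T+\GF_{\ge k+1}$ does not decrease.

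Granting this, \refT{TT} with the above $\gf_\pm$ gives \eqref{tt+gl}--\eqref{tt+n} with all \absmoment{s} and the mean asymptotics \eqref{ttmean}--\eqref{ttmeann}, and \refT{TLLN} gives the laws of large numbers \eqref{tllna1}--\eqref{tllna2}. Next, $\gf_T(\Tn)=\indic{\Tn=T}=0$ a.s.\ for $n\ge k+1$, and $\Var\GF_T(\Tn)\neq0$ for some $n$ (clear, since $p_T=\P(\cT_k=T)>0$ as every $p_\ga>0$); so \refL{LC} applies with $n_0=k+1$ and $a_n=0$, giving $\Var\GF_T(\Tn)=\Omega(n)$, whence \refT{TT}\ref{TTconv} yields \eqref{ttgl}--\eqref{ttn}. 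Joint convergence for finitely many tries follows from \refR{Rmulti} via the Cram\'er--Wold device. (The explicit Mellin transforms given after the theorem additionally use \eqref{dodo} for $\E\gf_T(\Tgl)$, but that is not needed for the statement.)

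I expect the monotonicity of $\GF_T+\GF_{\ge k+1}$ to be the only step requiring genuine thought; everything else is a direct invocation of \refTs{TEVC}, \ref{TT} and~\ref{TLLN}, \refL{LC}, and \refR{Rmulti}. The remaining minor point is the nondegeneracy of $\GF_T(\Tn)$ for some $n$ needed by \refL{LC}, which is immediate from $p_T>0$.
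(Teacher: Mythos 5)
Your proposal follows essentially the same route as the paper: it likewise writes $\GF_T=(\GF_T+\GF_{>k})-\GF_{>k}$ with $\GF_{>k}:=\GF_{\ge k+1}$, invokes \refTs{TT} and \ref{TLLN}, \refL{LC} (with $n_0=k+1$, $a_n=0$) and \refR{Rmulti}, and your monotonicity argument for $\GF_T+\GF_{\ge k+1}$ correctly supplies the one step the paper asserts without proof. The only soft spot is your one-line justification of $\Var\GF_T(\Tn)\neq0$ for some $n$: $p_T>0$ alone does not suffice (e.g.\ for $T=\Tch$ in the binary case the count of copies is deterministic for $n=2,3$), and one really needs to exhibit two positive-probability configurations with different counts at a suitable larger $n$ --- though the paper is equally terse on this point.
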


\begin{theorem}\label{TcorT}
The fringe tree distribution of $\Tn$ satisfies
\begin{align}\label{tcor1T}
  \P\bigpar{\Tn\rf=T\mid\Tn}
= \frac{\GF_T(\Tn)}{\abs{\Tn}}
= 
\frac{\psiEq{T}(\ln n)}{\psiEq{\si}(\ln n)+H}+\op(1)
.\end{align}
In particular, if $\dA=0$,
then the distribution converges in probability:
\begin{align}\label{tcor0T}
    \P\bigpar{\abse{\Tn\rf}=T\mid\Tn}
\pto
  \begin{cases}
\frac{p_T}{(1+H)k(k-1)}, &\abs{T}\ge2,\\
\frac{H}{1+H},&T=\Ti.    
  \end{cases}
\end{align}
\nopf  
\end{theorem}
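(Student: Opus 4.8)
The plan is to follow the proof of \refT{Tcor} essentially verbatim, with the size-$k$ functional $\GF_k$ replaced by $\GF_T$. The starting identity is $\P\bigpar{\Tn\rf=T\mid\Tn}=n_T(\Tn)/\abs{\Tn}=\GF_T(\Tn)/\abs{\Tn}$, which is \eqref{ptx} together with $\GF_T(\Tn)=\sum_{v}\indic{(\Tn)^v=T}=n_T(\Tn)$, combined with $\abs{\Tn}=\absi{\Tn}+\abse{\Tn}=\GF_\si(\Tn)+n$. So once the numerator and the denominator are each controlled by a law of large numbers, the result drops out by division.

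Set $k:=\abse{T}$ and assume first that $k\ge2$, so $\chi_T=0$. By \refT{TfringeT} and the law of large numbers \eqref{tllna2}, $\GF_T(\Tn)/n=H\qw\psiEq{T}(\ln n)+\op(1)$; and by \refT{Tsize} with \eqref{tllna2}, $\GF_\si(\Tn)/n=H\qw\psiEq{\si}(\ln n)+\op(1)$, hence $\abs{\Tn}/n=1+H\qw\psiEq{\si}(\ln n)+\op(1)=H\qw\bigpar{\psiEq{\si}(\ln n)+H}+\op(1)$. Since $\abs{\Tn}/n\ge1$ always, the denominator is bounded away from $0$ and dividing the two displays is legitimate, giving $\GF_T(\Tn)/\abs{\Tn}=\psiEq{T}(\ln n)/\bigpar{\psiEq{\si}(\ln n)+H}+\op(1)$, which is \eqref{tcor1T}. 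For the remaining case $T=\Ti$ I would instead use that $\GF_\Ti(\Tn)=\GF_1(\Tn)=n$ is deterministic (\refE{Eleaves}), so that $\GF_\Ti(\Tn)/\abs{\Tn}=n/\bigpar{n+\GF_\si(\Tn)}=H/\bigpar{\psiEq{\si}(\ln n)+H}+\op(1)$; this agrees with \eqref{tcor1T} once one reads $\psiEq{\Ti}:=\psiEq{1}\equiv H$ as in \eqref{psi1}.

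For the aperiodic case $\dA=0$, \eqref{psiX0} makes all the periodic functions constant: $\psiEq{\si}\equiv\MfEq{\si}(-1)=1$ by \eqref{xsMfE1}, and $\psiEq{T}\equiv\MfEq{T}(-1)$. To evaluate the latter — this is the content of \eqref{dodo} — I would apply \refL{LfE} with toll function $\gf_T$: since $\E\gf_T(\Tn)=\P(\Tn=T)$ vanishes unless $n=k$, where it equals $p_T=\P(\Tq k=T)$, only the $n=k$ term survives and $\MfEq{T}(-1)=p_T/\bigpar{k(k-1)}$ for $k\ge2$ (while $\psiEq{\Ti}\equiv H$ for $T=\Ti$). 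Substituting into \eqref{tcor1T} yields \eqref{tcor0T}; the convergence is in probability, not merely in distribution, because the limit is deterministic.

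I expect no genuine obstacle in this step: \refT{TcorT} is essentially a corollary of the already-stated \refT{TfringeT} and \refT{Tsize}. The substantive work sits upstream, in establishing \refT{TfringeT}, where one checks that $\GF_T$ fits the hypotheses of \refTs{TT} and~\ref{TLLN} — writing $\GF_T=(\GF_T+\GF_{>k})-\GF_{>k}$ as a difference of increasing functionals with bounded toll functions ($\gf_T+\gf_{\ge k+1}$ and $\gf_{\ge k+1}$, with $\GF_{>k}=\GF_{\ge k+1}$ as in \refSS{SSfringesize}) and invoking \refL{LC} with $n_0=k+1$, $a_n=0$. Granting that, the only care points here are the division step (covered by $\abs{\Tn}/n\ge1$) and folding the degenerate case $T=\Ti$ into the single formula \eqref{tcor1T} via the convention \eqref{psi1}.
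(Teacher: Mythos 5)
Your proposal is correct and follows essentially the same route as the paper: Theorem~\ref{TcorT} is stated there without proof precisely because it follows, as you argue, from the laws of large numbers \eqref{tllna2} applied to $\GF_T$ (via Theorem~\ref{TfringeT}) and to $\GF_\si$ (via Theorem~\ref{Tsize}), divided using $\abs{\Tn}=n+\GF_\si(\Tn)\ge n$, with the $T=\Ti$ case absorbed by the convention \eqref{psi1}. Your evaluation of $\MfEq{T}(-1)=p_T/(k(k-1))$ via Lemma~\ref{LfE} is a harmless variant of the paper's direct Mellin computation \eqref{de}--\eqref{dodo} and gives the same value.
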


\begin{theorem}\label{TcerT}
The conditional fringe tree distribution of $\Tn$, given $\Tn$,
has asymptotically normal fluctuations, in the following sense.
Let $T$ be a fixed trie and   let either $\aTn:=  \P\xpar{\Tn=k}
= \E\frac{\GF_T(\Tn)}{\abs{\Tn}}$,
or
$\aTn:= \frac{\E\GF_T(\Tn)}{\E\abs{\Tn}}$.
Then, with all moments, as \ntoo,
\begin{align}\label{tcer1T}
n\qq\Bigpar{ \P\bigpar{{\Tn\rf}=T\mid\Tn}-\aTn}
= n\qq\Bigpar{\frac{\GF_T(\Tn)}{\abs{\Tn}}-\aTn}
\approxd
N\bigpar{0,\tgss_T(n)},
\end{align}
where, with $t=\ln n$ and $\psiEz(t):=\psiEq{\si}(t)+H$,
\begin{align}\label{tgssT}
\tgss_T(n) 
&:=
\frac{H}{\psiEz(t)^2}
\Bigpar{\psiVq{T}(t)-2\frac{\psiEq{T}(t)}{\psiEz(t)}\psiVq{T\si}(t)
+\frac{\psiEq{T}(t)^2}{\psiEz(t)^2}\psiVq{\si}(t)}
\notag\\
&\hskip2em
-\frac{1}{\psiEz(t)^4}
\Bigpar{\psiEz(t)\psiCq{T}(t)-
\psiEq{T}(t)\psiCq{\si}(t)}^2
.\end{align}
In particular, if $\dA=0$,
then $\tgss_T(n)$ is constant.
Moreover, the approximation in distribution \eqref{tcer1T} holds jointly for
any finite number of $T$, with a multivariate normal distribution 
$N\bigpar{0,(\tgs_{TT'}(n))_{T,T'}}$.
\end{theorem}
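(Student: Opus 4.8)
The plan is to follow the proof of \refT{Tcer} essentially verbatim, with the size-$k$ functional $\GF_k$ replaced by the tree-counting functional $\GF_T$, and then to supply the Mellin transforms that make \eqref{tgssT} explicit. Set $X_n:=\GF_T(\Tn)$ and $Y_n:=\abs{\Tn}=n+\GF_\si(\Tn)$. By \refTs{TfringeT} and \ref{Tsize} together with the multivariate extension \refR{Rmulti}, the pair $(X_n,Y_n)$ satisfies the hypothesis \eqref{fett1} of \refL{Lfett}, with all moments; the covariance entries are given by \eqref{tt+hgs} and the analogue of \eqref{hgski}, namely $\gs_{XX}(n)=\hgss_{T}(n)$, $\gs_{YY}(n)=\hgss_{\si}(n)$ and $\gs_{XY}(n)=\hgs_{T\si}(n)$, all expressed through the bounded periodic functions $\psiVq{T},\psiVq{\si},\psiVq{T\si},\psiCq{T},\psiCq{\si}$, so in particular they are $O(1)$. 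Moreover $Y_n\ge n$ deterministically, $\E Y_n=\Theta(n)$ and $\E X_n=O(n)$ by \eqref{ttmeann}, so the hypotheses of \refL{Lfett}\ref{Lfett2} are met; for the degenerate case $T=\Ti$ one works, exactly as $k=1$ is handled in the proof of \refT{Tcer}, with the conventions $\psiXq{\Ti}:=\psiXq1$ of \eqref{psi1}.

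Then I would apply \refL{Lfett}\ref{Lfett2}. It yields \eqref{tcer1T} with all moments, with $\aTn$ equal to either $\E\bigpar{X_n/Y_n}$ or $\E X_n/\E Y_n$, and with asymptotic variance $\frac{n^2}{y_n^2}\bigpar{\gs_{XX}(n)-2\tfrac{x_n}{y_n}\gs_{XY}(n)+\tfrac{x_n^2}{y_n^2}\gs_{YY}(n)}$, where $x_n=\E X_n$ and $y_n=\E Y_n$. Substituting $x_n/n=H\qw\psiEq{T}(\ln n)+o(1)$ and $y_n/n=H\qw\psiEz(\ln n)+o(1)$ (from \eqref{ttmeann} and $\GF_1(\Tn)=n$), and carrying out the same algebraic reduction as in the displayed computation in the proof of \refT{Tcer} --- collecting the terms in $\psiCq{T}$ and $\psiCq{\si}$ into a single square --- this variance collapses to $\tgss_T(n)$ as written in \eqref{tgssT}. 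When $\dA=0$ we have $\psiEz\equiv 1+H$ by \eqref{psiX0} and \eqref{xsMfE1}, so $\tgss_T(n)$ is constant. The joint statement for finitely many trees $T$ follows in the same way from the vector form of \refL{Lfett} (or from the Cram\'er--Wold device combined with \refR{Rmulti}).

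The remaining ingredient is the explicit form of $\MfEq{T}$, $\MfVq{T}$ and $\MfVq{T\si}$, recorded in \eqref{dodo}, which is obtained by the same elementary conditioning as in \refSS{SSfringesize}. With $k:=\abse{T}$ and $p_T:=\P(\Tq k=T)$, conditioning on $N_\gl$ gives $\fEq{T}(\gl)=\E\gf_T(\Tgl)=\P(\Tgl=T)=p_T\gl^k e^{-\gl}/k!$, hence $\MfEq{T}(s)=p_T\gG(k+s)/k!$. For $\MfVq{T}$ and $\MfVq{T\si}$ one feeds $\gf_T$ into \refL{LfV} and \eqref{fCov}, using that $\GF_\si(\Tgl)=\absi{T}$ is constant on $\set{\Tgl=T}$, that $\gf_\si(\Tgl)\bigpar{\GF_T(\Tgl)-\gf_T(\Tgl)}=\GF_T(\Tgl)-\gf_T(\Tgl)$ since $T\notin\set{\Ti,\emptyset}$, and --- the single point where one must depart from the $\GF_k$ computation --- that for $\bga\neq\emptystring$ the events $\set{\Tgl=T}$ and $\set{\Tglbga=T}$ are incompatible, because a finite trie never equals one of its proper fringe subtrees: $T^{\bga}=T$ with $\bga\neq\emptystring$ would force $T=T^{\bga^m}$ for every $m$, which is the empty tree once $m$ is large. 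Thus $\Cov\bigpar{\gf_T(\Tgl),\gf_T(\Tglbga)}$ equals $-\E\gf_T(\Tgl)\,\E\gf_T(\Tglbga)$ for $\bga\neq\emptystring$ and $\Var\gf_T(\Tgl)$ for $\bga=\emptystring$; the resulting sums converge since $k\ge2$, as in \eqref{Qrr}, and are evaluated by standard Gamma integrals, as in \refSS{SSfringesize}. I expect this last step --- the double-counting sum of \refL{LfV} and the powers of $1+\Pbga$ --- to be the most error-prone part, but it involves no new conceptual difficulty.
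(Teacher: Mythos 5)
Your proposal is correct and follows essentially the same route as the paper, which proves \refT{TcerT} by repeating the proof of \refT{Tcer} (the joint CLT from \refT{TT}/\refR{Rmulti} fed into \refL{Lfett}\ref{Lfett2}, with the $T=\Ti$ case handled by the conventions \eqref{psi1}) together with the Mellin computations \eqref{de}--\eqref{dodo}. Your observation that $\gf_T(\Tgl)\gf_T(\Tglbga)=0$ for $\bga\neq\emptystring$ is exactly the point used in \eqref{df}, so nothing is missing.
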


Asymptotic means, variances and covariances may be calculated as in
\refSS{SSfringesize}. Suppose $\tauk:=\abse{T}\ge2$.
Then, recalling \eqref{kfe},
\begin{align}\label{de}
  \fEq{T}(\gl)&
=\P(\Tgl=T)
=\P(N_\gl=\tauk)\P\bigpar{\Tgl=T\mid N_\gl=k}
\notag\\&
=\P(N_\gl=\tauk)\P(\Tq{\tauk}=T)
=p_T\fEq{k}(\gl)
  =p_T\frac{\gl^k}{k!}e^{-\gl}.
\end{align}
Hence,  using \eqref{kmfe}--\eqref{kmfe-1}, for $\Re s>-k$,
\begin{align}\label{dd}
  \MfEq{T}(s)
=p_T  \MfEq{k}(s)
=p_T\frac{\gG(s+k)}{k!}.
\end{align}
and
\begin{align}\label{dodo}
  \MfEq{T}(-1)
=\frac{p_T}{k(k-1)}.
\end{align}

Furthermore,
if $|\bga|>0$, then $\gf_T(\Tgl)\gf_T(\Tglbga)=0$.
Hence, \cf{} \eqref{tomc},
\begin{align}\label{df}
  \Cov\bigpar{\gf_T(\Tgl),\gf_T(\Tglbga)}
&  =\fEq{T}(\gl)\indic{|\bga|=0}-\fEq{T}(\gl)\fEq{T}(P(\bga)\gl)
\notag\\&\hskip-3em
 =\fEq{T}(\gl)\indic{|\bga|=0}-p_T^2\frac{\gl^k(\Pbga\gl)^k}{k!^2}e^{-(1+\Pbga)\gl}.  
\end{align}
Consequently, by \eqref{lfv2} and \eqref{dd},
\cf{} \eqref{tomm},
for $\Re s>-k$,
\begin{align}\label{dv}
  \MfVq{T}(s)
  &  =
 \MfEq{T}(s)
    -
p_T^2 \sumax\intoo
 \frac{\Pbga^k\gl^{2k}}{k!^2}e^{-(1+\Pbga)\gl}
\gl^{s-1}\dd\gl
\notag\\&
  =
    p_T\frac{\gG(k+s)}{k!}
-p_T^2\frac{\gG(s+2k)}{k!^2} \sumax\frac{\Pbga^k}{(1+\Pbga)^{s+2k}}
.\end{align}
In particular, 
\begin{align}\label{dv-1}
  \MfVq{T}(-1)
  =
\frac{p_T}{k(k-1)}
-p_T^2\frac{(2k-2)!}{k!^2} \sumax\frac{\Pbga^k}{(1+\Pbga)^{2k-1}}.
\end{align}

We leave further calculations of variances and covariances to the reader.

\begin{example}[Cherries]\label{Echerry}
  A cherry is the tree $\Tch$
with one internal node (the root) and two external
  nodes.
  This is a trie generated by two strings with  different
  first letters.
  Suppose for simplicity that $\cA=\setoi$, and write 
$p_0=p$, $p_1=q$. 
  Then $p_\Tch=\P(\cT_2=\Tch)=2pq$.
Hence, \eqref{dd} and \eqref{dv} yield
\begin{align}\label{chv}
  \MfEq{\Tch}(s) &=pq\gG(s+2),
            \\
  \MfVq{\Tch}(s)
          &  =
            pq\gG(s+2)
-p^2q^2\gG(s+4) \sumax\frac{\Pbga^2}{(1+\Pbga)^{s+4}}.
\end{align}
\end{example}

\subsection{Protected nodes}

The \emph{rank} of a node $v$ in a rooted tree is the minimum distance to a
descendant of $v$ that is a leaf.
(In particular, leaves are the nodes with rank 0.)
For a trie $T$ and a node $\bga\in T$, we thus have,
recalling that $\nbga$ is the number of the generating strings that have
$\bga$ as a prefix, \cf{} \eqref{bamse},
\begin{align}\label{rank}
  \rank(\bga):=\min\bigset{|\bgb|:\bga\bgb \text{ is a leaf in $T$}}
=\min\bigset{|\bgb|:\nx{\bga\bgb}=1}.
\end{align}

Nodes with rank $\ge k$ are called \emph{$k$-protected}.
Here $k\ge0$; the interesting cases are $k\ge2$.
(For $k=1$ we get just the internal nodes. 
The results below then reduce to corresponding results in \refSS{SSsize}.)

Let $\GFkprot(T)$ be the number of $k$-protected nodes in $T$.
This is an additive functional with toll function,
for $T\neq\emptyset$, 
\begin{align}\label{gfkprot}
  \gfkprot(T):=\indic{\text{the root $\rot$ of $T$ is $k$-protected}}
=\indic{\rank(\rot)\ge k}
.
\end{align}
$\GFkprot$ is not an additive functional, since adding a new leaf
may make some nodes unprotected. However, the only nodes that may lose
protection are the $k-1$ nearest ancestors of the new leaf, and thus
$\GFkprot+k\GF_{\Ti}$ is an increasing functional.
Hence, \refTs{TT} and \ref{TLLN} apply to $\GFkprot$, and we obtain
analogues of \refTs{Tfringesize}--\ref{Tcer} and \ref{TfringeT}--\ref{TcerT}
yielding asymptotic normal distributions of the number and proportion of
$k$-protected nodes.
(We omit detailed statements.)

At least the asymptotic mean is rather easily calculated.
For a trie $T$ we have, using \eqref{gfkprot} and \eqref{rank}, 
for $k\ge1$ and including
the case $T=\emptyset$,
\begin{align}\label{gfkprot2}
  \gfkprot(T)
=\indic{\nx{\bgb}\neq1 \;\forall \bgb\in\cA^{k-1} }-\indic{\nx{\emptystring}=0}
.
\end{align}
In particular, for the Poisson random trie $\Tgl$, 
where $\nx{\bgb}=\Nglx{\bgb}$,
\begin{align}\label{ql}
  \gfkprot(\Tgl)
=\indic{\Nglbga\neq1 \;\forall \bga\in\cA^{k-1} }-\indic{N_\gl=0}
.
\end{align}
Hence,
since $\Nglbga\sim\Po(P(\bga)\gl)$ are independent for $\bga\in\cA^{k-1}$,
\begin{align}
  \fE(\gl)&=
\E\gfkprot(\Tgl)
=\prod_{\bga\in\cA^{k-1}} \bigpar{1- P(\bga)\gl e^{P(\bga)\gl}}-e^{-\gl}
\label{qm0}\\
&=\sum_{\emptyset\neq S\subseteq\cA^{k-1}}(-1)^{|S|}\prod_{\bga\in S}P(\bga) \cdot
e^{-\sum_{\bga\in S}P(\bga) \gl} \gl^{|S|}
-\bigpar{e^{-\gl}-1}
.\label{qm}
\end{align}
For $-1<\Re s<0$, the Mellin transform $\MfE(s)$ can be calculated 
using \eqref{qm} in \eqref{mellin} and integrating termwise, yielding
\begin{align}\label{qn}
  \MfE(s)=
\sum_{\emptyset\neq S\subseteq\cA^{k-1}}(-1)^{|S|}\prod_{\bga\in S}P(\bga) \cdot
\Bigpar{\sum_{\bga\in S}P(\bga)}^{-|S|-s} \gG(|S|+s)
-\gG(s).
\end{align}
We know that $\MfE$ is analytic in the strip $-2<\Re s<0$, see
\refR{Rexists}, and the \rhs{} of \eqref{qn} 
is analytic for $-2<\Re s<0$ except possibly at $s=-1$.
Hence, \eqref{qn} holds in this strip, with a removable singularity at $-1$.
To find $\MfE(-1)$, let $g(s)$ be the sum over $|S|\ge2$ in \eqref{qn};
then, using \eqref{Qkm},
\begin{align}\label{qo}
  \MfE(s)&=-\sum_{\bga\in\cA^{k-1}}P(\bga)^{-s}\gG(s+1)+g(s)-\gG(s)
\notag\\&
=-\gG(s+1)\gQ {-s}^{k-1}-\gG(s)+g(s)
\notag\\&
=-\frac{\gG(s+2)}{s}\cdot
\frac{s\gQ {-s}^{k-1}+1}{s+1}+g(s)
  \end{align}
and thus, letting $s\to-1$ and recalling \eqref{QH},
\begin{align}\label{qr}
 \MfE(-1)&= \dds\bigpar{s\gQ {-s}^{k-1}}\big|_{s=-1}+g(-1)
\notag\\&
=1-
(k-1)\dds \gQ {-s}\big|_{s=-1}+g(-1)
\notag\\&
=1-(k-1)H+
\sum_{S\subseteq\cA^{k-1},|S|\ge2}(-1)^{|S|}
\frac{\prod_{\bga\in S}P(\bga)}
{\bigpar{\sum_{\bga\in S}P(\bga)}^{|S|-1}} (|S|-2)!
.
\end{align}

As in earlier applications, 
this yields asymptotics for the mean. $\MfV$ and variance asymptotics may be
calculated by
similar arguments, but the results are more complicated and we omit the details.

\begin{example}
For the number of 2-protected nodes
in a binary trie we have
 $k=2$ and $\cA=\setoi$, and then \eqref{qn} and \eqref{qr} yield
\begin{align}\label{q2}
\MfE(s)&=-\bigpar{p_0^{-s}+p_1^{-s}}\gG(s+1)+p_0p_1\gG(s+2)-\gG(s)
\intertext{with}
  \MfE(-1)&=1-H+p_0p_1.
\end{align}
In particular, for $p_0=\frac12$, $\MfE(-1)=\frac{5}{4}-\ln 2\doteq0.55685$.
Hence, 
by the analogue of \refTs{Tcor} and \ref{TcorT},
for a large random symmetric binary trie  
the proportion of 2-protected
nodes is
roughly (ignoring small oscillations), 
\begin{align}
  \frac{\MfE(-1)}{1+H}=\frac{5/4-\ln2}{1+\ln2}\doteq0.32888.
\end{align}

For comparison, the corresponding proportion in a binary search tree
converges (in probability) to $11/30\doteq0.36667$
\cite{MahmoudWard-bst, Bona,SJ283,SJ296};  
in a uniformly random binary tree the proportion converges to
$33/64=0.515625$ 
\cite{SJ283}. 

In this example, one can also use the easily verified fact that for any
binary tree $T$ with $\abse{T}>1$, with $T_\ch$ the cherry in \refE{Echerry},
\begin{align}
  \GF_{\xprot2}(T)=\GF_\si(T)-\GF_{\Ti}+\GF_{T_\ch}.
\end{align}
Hence results in this case alternatively follow from results in the 
\refSs{SSsize}--\ref{SSfringeD}.
\end{example}

In general, the sums in \eqref{qn} and \eqref{qr} have almost
$2^{|\cA|^{k-1}}$ terms, which quickly becomes very large for larger $k$ or
$|\cA|$. However, in the symmetric case, the sums simplify by symmetry since
the summands then depend only on $|S|$.

\begin{example}\label{Eprotsymm}
  Consider the symmetric case with $|\cA|=r\ge2$ and $p_\ga=1/r$ for all
  $\ga\in\cA$. We calculate $\MfE$, which we denote by $\MfEq{\kprot,r}$.

For $k=2$, 
\eqref{qn} and \eqref{qr} yield 
\begin{align}\label{gang}
  \MfEq{\xprot2,r}(s)
&=
\sum_{j=1}^r \binom{r}{j}(-1)^j {r^{s}} j^{-j-s}\gG(s+j)-\gG(s). 
\\
  \MfEq{\xprot2,r}(-1)
&=
1-\ln r+\sum_{j=2}^r \binom{r}{j}(-1)^j {r^{-1}} j^{1-j}(j-2)!
\notag\\
&=
1-\ln r+\sum_{j=2}^{r} (-1)^j\frac{(r-1)!}{(r-j)!\,(j-1)j^j } 
. \label{gang-1}
\end{align}
Furthermore, for general $k\ge2$, \eqref{qn}  implies
\begin{align}\label{gangk}
  \MfEq{\kprot,r}(s)
= \MfEq{\xprot2,r^{k-1}}(s).
\end{align}
For example,  for the binary case and $k=3,4$,
\begin{align}
    \MfEq{\xprot3,2}(-1)&
= \MfEq{\xprot2,4}(-1)
=\frac {1897}{1152}-2\ln2
\doteq 0.26041 
,\\
\MfEq{\xprot4,2}(-1)&
= \MfEq{\xprot2,8}(-1)
=
\frac {13666493449090877}{6245298339840000}
-3\ln2
\notag\\&
\doteq 0.10884 
.\end{align}
Recall that the asymptotic  proportion of $k$-protected nodes,
ignoring the oscillations, equals $\MfEq{\kprot,2}(-1)/(1+H)$, where $H=\ln2$.
\refTab{tab:protected} gives numerical values for small $k$.
\begin{table}
  \centering
  \begin{tabular}{r|l|l}
$k$ & $\MfEq{\kprot,2}$    & $\MfEq{\kprot,2}/(1+\ln2)$\\
\hline
1 & 1 & 0.59061 \\
2 & 0.55685 &0.32888 \\
3& 0.26040 & 0.15380\\
4& 0.10884 & 0.06428\\
5& 0.04718 & 0.02786\\
6& 0.02182 & 0.01289\\
7& 0.01039 & 0.00613\\
8& 0.00502 & 0.00296\\
9& 0.00244 & 0.00144\\
10& 0.00120 & 0.00070
  \end{tabular}
  \caption{Approximate proportions 
of $k$-protected nodes in symmetric   random binary  tries
(right column).}
  \label{tab:protected}
\end{table}
%
%
%
\end{example}

The numerical values in \refTab{tab:protected} suggest that the proportions
decrease geometrically as \ktoo.
In fact, this holds for any $r$.

\begin{theorem}\label{Tmars}
Consider symmetric tries as in \refE{Eprotsymm}, and assume $r,k\ge2$.
  As \ktoo{} or \rtoo{} (or both),
  \begin{align}\label{tmarsk}
    \MfEq{\kprot,r}(-1)
\sim \frac1{2r^{k-1}}.
  \end{align}
In particular, for symmetric binary tries,
  \begin{align}\label{tmars2}
    \MfEq{\xprot{k},2}(-1)
\sim 2^{-k}.
  \end{align}
\end{theorem}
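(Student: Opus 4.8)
The plan is to reduce the theorem to a single asymptotic relation in one integer variable. Put $m:=r^{k-1}$. By \eqref{gangk} we have $\MfEq{\kprot,r}(-1)=\MfEq{\xprot2,m}(-1)$, and since $r,k\ge2$, each of the limiting regimes in the statement ($\ktoo$, or $\rtoo$, or both) forces $\mtoo$. Thus both \eqref{tmarsk} and its special case $r=2$, namely \eqref{tmars2} (where $m=2^{k-1}$, so $\tfrac1{2m}=2^{-k}$), follow once we prove that $\MfEq{\xprot2,m}(-1)=\tfrac1{2m}\bigpar{1+o(1)}$ as $\mtoo$.

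For this I would use the Mellin integral rather than the finite alternating sum \eqref{gang-1}, which has too much internal cancellation to read off the asymptotics. The toll function here is bounded, so \eqref{bze}--\eqref{bzv} hold with every $\eps<1$; hence, by \refR{Rexists}, the integral defining $\MfE$ converges absolutely on a strip containing $s=-1$ (one also sees directly from \eqref{qm0} that $\fE(\gl)=O(\gl^2)$ as $\gl\to0$ and $\fE(\gl)\to1$ as $\gltoo$, so that $\intoo\fE(\gl)\gl^{-2}\dd\gl$ converges absolutely and equals $\MfE(-1)$). Writing $\fE$ for the symmetric $m$-ary instance of \eqref{qm0} with $k=2$ (so that $P(\bga)=1/m$ for each of the $m$ level-$1$ nodes),
\begin{align*}
  \MfEq{\xprot2,m}(-1)=\intoo\fE(\gl)\,\gl^{-2}\dd\gl,
  \qquad
  \fE(\gl)=\Bigpar{1-\tfrac{\gl}{m}\,e^{-\gl/m}}^{m}-e^{-\gl},
\end{align*}
so the goal becomes $m\intoo\fE(\gl)\gl^{-2}\dd\gl\to\tfrac12$. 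I would split $\intoo=\int_0^S+\int_S^m+\int_m^\infty$ with $S:=(\ln m)^2$ (and $m$ large enough that $S<m$).

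On $[0,S]$, expanding the logarithm gives $m\ln\bigpar{1-\tfrac{\gl}{m}e^{-\gl/m}}=-\gl+\tfrac{\gl^2}{2m}+O(\gl^3/m^2)$, whence $\fE(\gl)=e^{-\gl}\bigpar{\tfrac{\gl^2}{2m}+O((\gl^3+\gl^4)/m^2)}$ uniformly for $\gl\le S$ (using $S^3/m^2\to0$ and $S^2/m\to0$); therefore $m\int_0^S\fE(\gl)\gl^{-2}\dd\gl=\tfrac12\int_0^S e^{-\gl}\dd\gl+O(1/m)\to\tfrac12$. On $[S,m]$ I would use $\bigpar{1-\tfrac{\gl}{m}e^{-\gl/m}}^{m}\le\exp\bigpar{-\gl e^{-\gl/m}}\le e^{-\gl/e}$ (the last step because $\gl/m\le1$), so that $m\int_S^m|\fE(\gl)|\gl^{-2}\dd\gl\le 2m\,S^{-2}\int_S^\infty e^{-\gl/e}\dd\gl=O\bigpar{m\,e^{-S/e}}\to0$. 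Finally, on $[m,\infty)$ the substitution $\gl=mt$ converts the integral to $\int_1^\infty\tfrac{(1-te^{-t})^{m}-e^{-mt}}{t^2}\dd t$, which tends to $0$ by dominated convergence: for $t\ge1$ we have $0<te^{-t}<1$, so $(1-te^{-t})^m\to0$ pointwise and $e^{-mt}\to0$, while the integrand is dominated by $2t^{-2}$. Summing the three ranges yields $m\intoo\fE(\gl)\gl^{-2}\dd\gl\to\tfrac12$, as required.

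The main obstacle is the tail estimate: the trivial bound $|\fE(\gl)|\le1$ yields only $m\int_S^\infty|\fE(\gl)|\gl^{-2}\dd\gl\le m/S$, which is useless, so one genuinely needs the pointwise exponential decay $\bigpar{1-\tfrac{\gl}{m}e^{-\gl/m}}^{m}\le e^{-\gl e^{-\gl/m}}$ throughout $0\le\gl\le m$, together with the rescaling-plus-dominated-convergence step for $\gl\ge m$. A secondary technical point is keeping the Taylor estimate on $[0,S]$ uniform as $S=S(m)\to\infty$; the choice $S=(\ln m)^2$ works, and indeed any $S$ with $S\to\infty$, $S^3=o(m^2)$, and $m\,e^{-S/e}\to0$ would do.
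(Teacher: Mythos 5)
Your proof is correct and essentially follows the paper's own route: the same reduction to $k=2$ via \eqref{gangk}, the same integral representation \eqref{mar1}, and the same rescaling-plus-dominated-convergence treatment of the tail $\gl\ge m$. The only minor divergence is in the bulk $[0,m]$, where the paper applies dominated convergence on all of $[0,r]$ using a mean-value-theorem estimate together with the dominating function $x^2e^{-x/(2e)}$, whereas you split off $[0,(\ln m)^2]$ with a uniform expansion of $m\ln\bigpar{1-\tfrac{\gl}{m}e^{-\gl/m}}$ and dispose of the middle range via the bound $\bigpar{1-\tfrac{\gl}{m}e^{-\gl/m}}^m\le e^{-\gl/e}$; both variants are sound.
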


In other words, for large $k$ and much larger $n$, the proportion of
$k$-protected nodes in a symmetric binary trie is roughly (again ignoring
oscillations) $2^{-k}/(1+\ln2)$.

\begin{proof}
  By \eqref{gangk}, it suffices to consider $k=2$.
In this case, \eqref{qm0} yields
\begin{align}\label{mar1}
  \MfEq{\xprot2,r}(-1)
&=
\intoo \fEq{\xprot2,r}(\gl)\gl\qww\dd\gl
\notag\\&
=\intoo \Bigpar{\Bigpar{1-\frac{\gl}{r}e^{-\gl/r}}^r-e^{-\gl}}
\frac{\ddx \gl}{\gl^2}
.\end{align}
Let
\begin{align}\label{marg}
  g_r(x):=
\Bigpar{1-\frac{x}{r}e^{-x/r}}^r-e^{-x}.
\end{align}
Note first that as \rtoo,
by the change of variables $x=ru$ and dominated convergence,
\begin{align}\label{mar3}
r  \int_r^\infty g_r(x)\frac{\ddx x}{x^2}
=   \int_1^\infty g_r(ru)\frac{\ddx u}{u^2}
\le    \int_1^\infty \bigpar{1-ue^{-u}}^r\,\frac{\ddx u}{u^2}
\to0.
\end{align}

Furthermore, 
for $x\in[0,r]$,
write $y_1:=1-\frac{x}{r}e^{-x/r}$ and $y_0:=e^{-x/r}$,
so $g_r(x)=y_1^r-y_0^r$.
For $y\in\oi$, we have
$0\le (1-ye^{-y})-e^{-y}\le y^2/2$, and 
\begin{align}\label{mar4}
  (1-ye^{-y})-e^{-y}= \tfrac12y^2+O(y^3),
\end{align}
and thus, by the mean value theorem, for some $\gth=\gth(x,r)\in\oi$,
\begin{align}
  g_r(x)&
=y_1^r-y_0^r=(y_1-y_0)r \xpar{y_0+\gth(y_1-y_0)}^{r-1}
\label{mara}\\&
=r\Bigpar{\frac12\parfrac{x}{r}^2 +O\parfrac{x}{r}^3}
 \Bigpar{1-\frac{x}{r}+O\parfrac{x}{r}^2}^{r-1}.
\label{mar5}
\end{align}
Hence, for fixed $x\ge0$, $rg_r(x)\to \frac12x^2e^{-x}$ as \rtoo.
Moreeover, again by \eqref{mara}, for $x\in[0,r]$ and $r\ge2$,
\begin{align}\label{mar6}
  r g_r(x) \le r^2 (y_1-y_0) y_1^{r-1}
\le r^2\parfrac{x}{r}^2 \Bigpar{1-\frac{x}{r}e^{-1}}^{r-1}
\le x^2 e^{-x/(2e)}.
\end{align}
Consequently, as \rtoo, dominated convergence yields
\begin{align}\label{mar7}
r  \int_0^rg_r(x)\frac{\ddx x}{x^2}
\to
\intoo\frac12 x^2e^{-x}\frac{\ddx x}{x^2}=\frac12,
\end{align}
which together with \eqref{mar3} and \eqref{mar1}--\eqref{marg} 
yields the result.
\end{proof}

\begin{problem}
  Extend these results to the non-symmetric case.
In particular, for a general $\vp$, does $\MfEq{\kprot}$ decrease
geometrically as \ktoo?
If so, at which rate?
\end{problem}

Some similar (but less complete) 
results for binary and $m$-ary search trees are given in
\cite{BonaPittel} and \cite[Section 10.1]{SJ306}.

\subsection{Number of subtrees}
Let $s(T)$  be the number of subtrees of a tree $T$, and
$s_1(T)$ the number of subtrees that contain the root.
Then, as noted by \citet{Wagner12,Wagner15},
$\GF(T):=\ln(1+s_1(T))$ is an additive functional with toll function
\begin{align}
  \gf(T):=\ln\bigpar{1+1/s_1(T)}.
\end{align}
The functional $\gf$ is bounded (by $\ln2$). Moreover, $\GF(T)$ is an
increasing functional, and thus \refTs{TT} and \ref{TLLN} apply and yield 
asymptotic normality for $\GF(\Tn)$. 
This time we do not see a simple argument showing $\Var \GF(\Tn)=\Omega(n)$, 
so we cannot apply \eqref{ttgl}--\eqref{ttn}; nevertheless
\eqref{tt+gl}--\eqref{tt+n} hold, and we obtain the following theorem.
(We conjecture that $\Var \GF(\Tn)=\Omega(n)$ in this application too, but
leave this as an open problem.)

\begin{theorem}\label{Tsub}
  As \ntoo,
  \begin{align}\label{tsub}
\frac{\log s(\Tn)-\E[\log s(\Tn)]}{\sqrt n}    
\approx\frac{\log s_1(\Tn)-\E[\log s_1(\Tn)]}{\sqrt n}    
\approx N\bigpar{0,\hgss(n)},
  \end{align}
with all \absmoments, where $\hgss(n)$ is a continuous bounded function
given by \eqref{tt+hgs}.
\end{theorem}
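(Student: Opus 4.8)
The plan is to read off the result for the bounded additive functional $\GF(T):=\log\bigpar{1+s_1(T)}$ from \refT{TT} and then transfer it to $\log s_1$ and $\log s$ by elementary perturbation arguments. As already observed before the theorem, $\gf(T)=\log\bigpar{1+1/s_1(T)}$ is a bounded (by $\log2$) toll function with $\GF$ the corresponding additive functional, and $\GF$ is increasing, since every root‑containing subtree of $T_1$ is a root‑containing subtree of $T_2$ whenever $T_1\subseteq T_2$. Hence \refT{TT} applies with $\gf_+=\gf$, $\gf_-=0$, and \refT{TT}\ref{TT+} gives $n\qqw\bigpar{\GF(\Tn)-\E\GF(\Tn)}\approxd N\bigpar{0,\hgss(n)}$ with all \absmoments, where $\hgss(n)$ is the function \eqref{tt+hgs} with $\chi=\gf(\Ti)=\log2$; this is continuous and bounded because $\psiV,\psiC$ are. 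We do not assume $\liminf_\ntoo\Var\GF(\Tn)/n>0$, so part \refT{TT}\ref{TTconv} is unavailable and only this ``$\approxd N(0,\hgss(n))$'' form is claimed.

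Next I would pass from $\GF(\Tn)=\log\bigpar{1+s_1(\Tn)}$ to $\log s_1(\Tn)$. Since $s_1(\Tn)\ge1$, the difference $\GF(\Tn)-\log s_1(\Tn)=\log\bigpar{1+1/s_1(\Tn)}\in[0,\log2]$ is deterministically bounded, so $n\qqw\bigpar{(\log s_1(\Tn)-\E\log s_1(\Tn))-(\GF(\Tn)-\E\GF(\Tn))}$ is bounded in absolute value by $2\log2\,/\sqrt n$ and hence tends to $0$ in every $L^r$. By the $\approxd$‑calculus of \refSS{SSapprox} (a Slutsky‑type argument; \cf{} \refL{Lapprox0} and \refR{Rsubsub}) it follows that $n\qqw\bigpar{\log s_1(\Tn)-\E\log s_1(\Tn)}\approxd N\bigpar{0,\hgss(n)}$ with all \absmoments as well.

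Finally I would pass from $\log s_1$ to $\log s$. Every subtree of $T$ has a unique topmost node $v$ and is then a root‑containing subtree of the fringe tree $T^v$, so $s(T)=\sum_{v\in T}s_1(T^v)$; hence $s_1(T)\le s(T)\le|T|\max_{v\in T}s_1(T^v)\le|T|\,s_1(T)$, where the last step uses $s_1(T^v)\le s_1(T)$ — the map sending a root‑containing subtree $S$ of $T^v$ to $S$ together with the path from the root of $T$ to $v$ is an injection into the root‑containing subtrees of $T$. Therefore $0\le\log s(\Tn)-\log s_1(\Tn)\le\log|\Tn|$. Since $|\Tn|=n+\absi{\Tn}$ and $\E\absi{\Tn}=O(n)$ by \refT{Tsize} (equation \eqref{ttmeann}, as $\psiE$ is bounded), Jensen's inequality together with the elementary bound $(\log x)^r\le C_r\bigpar{1+x\qqqq}$ for $x\ge1$ gives $\E\bigsqpar{(\log|\Tn|)^r}=O(n\qqqq)=o(n^{r/2})$ for every $r\ge1$; hence $n\qqw\bigpar{(\log s(\Tn)-\log s_1(\Tn))-\E(\log s(\Tn)-\log s_1(\Tn))}\to0$ in every $L^r$. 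A further application of the $\approxd$‑calculus yields $n\qqw\bigpar{\log s(\Tn)-\E\log s(\Tn)}\approxd N\bigpar{0,\hgss(n)}$ with all \absmoments. Both centred, rescaled quantities in \eqref{tsub} are thus $\approxd N\bigpar{0,\hgss(n)}$, and hence $\approxd$ each other (subtract the defining relations $\E f(X_n)=\E f\bigpar{N(0,\hgss(n))}+o(1)$ and the analogue for $Y_n$), which is the assertion. The only steps carrying any content are the inequality $s_1(T^v)\le s_1(T)$ and the moment estimate for $\log|\Tn|$; everything else is bookkeeping within the $\approxd$‑notation already set up in \refSS{SSapprox}.
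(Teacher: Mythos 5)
Your proposal is correct and follows essentially the same route as the paper: apply \refT{TT} to the bounded increasing additive functional $\GF(T)=\ln\bigpar{1+s_1(T)}$, then transfer to $\ln s_1$ and $\ln s$ via $s_1(T)\le s(T)\le\abs{T}s_1(T)$ together with a moment bound $\E\bigsqpar{(\ln\abs{\Tn})^r}=o\bigpar{n^{r/2}}$ and a Slutsky-type argument for $\approxd$ with all \absmoments. The only (harmless) differences are cosmetic: you prove the Wagner inequality and the monotonicity directly instead of citing them, and you bound $\E\bigsqpar{(\ln\abs{\Tn})^r}$ via Jensen from $\E\abs{\Tn}=O(n)$, where the paper uses moments of $\abs{\Tn}^{m/4}$ from \refT{Tsize}.
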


\begin{proof}
 We have $\GF(T)=\ln s_1(T)+O(1)$
and $s_1(T)\le s(T)\le \abs{T}s_1(T)$ (see \cite{Wagner12,Wagner15}),
and thus, 
recalling $\abs{\Tn}=n+\GF_\si(\Tn)$ and
using \eqref{tllna2} for $\GF_\si$, 
\begin{align}\label{tra}
  \ln s(\Tn)
= \ln s_1(\Tn) + O\bigpar{\ln\abs{\Tn}}
= \GF(\Tn) + O\bigpar{\ln n}+ \Op(1),
\end{align}
where as usual $\Op(1)$ denotes a random variable (depending on $n$) that is
bounded in probability.
Furthermore, for any fixed $m\ge1$, by \refT{Tsize},
\begin{align}\label{tray}
  \E\bigsqpar{ \ln^m\abs{\Tn}}
\le C_m \E\bigsqpar{\abs{\Tn}^{m/4}}
\le C_m n^{m/4} =o\bigpar{n^{m/2}}.
\end{align}
Taking $m=1$, we obtain from \eqref{tra} and \eqref{tray},
\begin{align}\label{trax}
\E\bigsqpar{  \ln s(\Tn)}
= \E\bigsqpar{\ln s_1(\Tn)} + o\bigpar{n\qq}
= \E\GF(\Tn) + o\bigpar{n\qq},
\end{align}
The asymptotic normality \eqref{tt+n} in \refT{TT} 
together with \eqref{tra} and \eqref{tray}
yields
\eqref{tsub}, with \absmoments.
\end{proof}

Cf.\ similar results for some other classes of random trees in
\cite{Wagner12,Wagner15} and \cite{SJ285}.

\subsection{Shape parameter}
The \emph{shape parameter} is defined as the logarithm of the product of all
fringe tree sizes; this is thus an additive functional $\GF(T)$
with toll function $\gf(T)=\ln\abs{T}$.
The shape functional $\GF(T)$ is increasing. However, $\gf(T)$ is unbounded,
so we cannot use \refTs{TT} and \ref{TLLN} as stated.
Nevertheless, we have by \refT{Tsize}, as in \eqref{tray}, for any $r\ge1$,
\begin{align}\label{traz}
  \E\bigsqpar{ \ln^r\abs{\Tgl}}
\le C_r \E\bigsqpar{\abs{\Tgl}^{r/4}}
\le C_r \gl^{r/4}.
\end{align}
In particular, \eqref{bze}, \eqref{bzv} and \eqref{Efr} (for any $r$) hold,
and thus by \refR{RTT}, or using \refT{T1n} below,
we find, for example,
\begin{align}
  \frac{\GF(\Tn)-\E\GF(\Tn)}{\sqrt n}\approx N\bigpar{0,\hgss(n)},
\end{align}
with all moments.

Cf.\ similar results for some other classes of random trees in
\cite{MeirMoon}, \cite{FillKapur} and
\cite{Wagner15}.

\subsection{Bucket tries}\label{SSbucket2}
The results above are easily adapted to bucket tries for a fixed bucket size
$b$, by noting that the internal nodes of a bucket trie are precisely the
nodes $\bga$ of the corresponding trie with $\nu_\bga>b$.
In particular, if the bucket tries corresponding to $\Tgl$ and $\Tn$ are denoted
$\Tgl\bb$ and $\Tn\bb$, 
then $\absi{\Tgl\bb}=\GF_{>b}(\Tgl)$
and $\absi{\Tn\bb}=\GF_{>b}(\Tn)$,
and it follows that \refT{Tsize} holds for $\Tgl\bb$ and $\Tn\bb$ too.
We have, generalizing the case $b=1$ in \eqref{xsfE}--\eqref{xsMfE1},
\begin{align}\label{bar}
  \fE(\gl)
=  \fEq{>b}(\gl)
=\P(N_\gl>b)=1-\sum_{i=0}^b \frac{\gl^i}{i!}e^{-\gl}
\end{align}
and thus, 
\begin{align}\label{barb}
  \MfE(\gl)&=\intoo\Bigpar{1-\sum_{i=0}^b \frac{\gl^i}{i!}e^{-\gl}}\gl^{s-1}\dd\gl
=-\frac{\gG(s+b+1)}{b!\,s},
\\
\MfE(-1)&=\frac{1}{b}\label{barb1}
.\end{align}

Consider now the number of buckets containing exactly $k$ strings, for some
fixed $k\in\set{1,\dots,b}$. If we assume $n>b$ (so the root is internal),
this equals the additive functional $\GF_{b;k}$ with toll function
\begin{align}
  \gf_{b;k}(T) = \suma \indic{\nu_{\rot} >b,\, \nu_\ga=k}.
\end{align}
$\GF_{b;k}$ is not increasing, but 
$\GF_{b;k}+\GFo$ is, so
\refTs{TT} and \ref{TLLN} apply to 
$\GF_{b;k}=(\GF_{b;k}+\GFo)-\GFo$. 

 Since $N_\gl-\Nglga$ and $\Nglga$ are independent,
\begin{align}
\fEq{b;k}(\gl)&
=\E \gf_{b;k}(\Tgl) 
= \suma \P\bigpar{N_{\gl} >b,\, \Nglga=k}
\notag\\&
= \suma \P\bigpar{N_{\gl}-\Nglga >b-k}\P\bigpar{\Nglga=k}
\notag\\&
= \suma \Bigpar{1-\sum_{i=0}^{b-k}\frac{(1-p_\ga)^i\gl^i}{i!}e^{-(1-p_\ga)\gl}}
\frac{p_\ga^k\gl^k}{k!}e^{-p_\ga\gl}.
\end{align}
Hence,
\begin{align}
  \MfEq{b;k}(s)&
=\frac{1}{k!} \suma p_\ga^{-s}\gG(s+k)
-
\suma \sum_{i=0}^{b-k}
\frac{ p_\ga^k}{k!} 
\frac{(1-p_\ga)^i}{i!}\gG(s+k+i)
\notag\\&
=\frac{1}{k!}\bigpar{\rho(-s)-\rho(k)}\gG(s+k)
-
\sum_{i=1}^{b-k}\suma{ p_\ga^k}{(1-p_\ga)^i}\frac{\gG(s+k+i)}{k!\,i!}.
\end{align}
In particular, for $k\ge2$,
\begin{align}\label{topp}
  \MfEq{b;k}(-1)&
=\frac{1-\rho(k)}{k(k-1)}
-
\sum_{i=1}^{b-k}\suma{ p_\ga^k}{(1-p_\ga)^i}\frac{(k+i-2)!}{k!\,i!}.
\end{align}
For $k=1$, we obtain by taking the limit as $s\to-1$, 
\eqref{topp} with the first fraction (now undefined)  replaced by $H$,
\cf{} \eqref{monb}.

We leave calculations of $\MfV$ and (co)variances to the reader.

\section{General central limit theorems}\label{Sgeneral}

We state here several related general central limit theorems for additive
functionals on tries; proofs are given in \refS{Spfcentral}.
As said in the introduction, the theorems use conditions on moments of the
additive functionals and their toll functions;
we will later obtain \refT{TT} as a
special case of the results below by using \refT{TEVC} to verify these
moment conditions.

In the statements of the
theorems below, we use several functions $a(\gl)$, $b(\gl)$ and $c(\gl)$,
(with indices in the multivariate versions).
This might seem frightening, but is intended to be friendly and flexible for
applications; the meaning of these functions is as follows.

First, $a(\gl)$ is an approximation of the mean $\E \GF(\Tgl)$, and $b(\gl)$
and $c(\gl)$ are approximations of variances and covariances, see
\eg{} \eqref{EF}, \eqref{VF}, \eqref{CovFN}.
We may choose  $a(\gl):=\E \GF(\Tgl)$,
$b(\gl):=\Var \GF(\Tgl)$, and
$c(\gl):=\Cov\bigpar{\GF(\Tgl),N_\gl}$,
and then \eqref{EF}, \eqref{VF} and \eqref{CovFN} are trivial,
but in applications it is often
preferable to use simpler approximations of the means and (co)variances,
which is precisely what these functions are intended to be.
Note that here the means and (co)variances are for the Poisson model, also
in the
theorems for the  model with fixed $n$; this is both because of our
proofs, and because in applications, the moments typically are easier to
compute for the Poisson model. 
However, the mean for fixed $n$ is asymptotically the same as for the
Poisson model, and the variances are related;
see \eg{} \eqref{EFn}--\eqref{VFn}.

\begin{remark}\label{Rgla}
The conditions on these functions
  in the theorems below  are asymptotic, as $\gltoo$.
Hence the values of these functions for small $\gl$ are irrelevant, and it is
enough that they are defined for large $\gl$.
\end{remark}

\begin{remark}\label{Rgln}
In the theorems below we assume that the assumptions hold for arbitrary
 real $\gl$. (Or at least for sufficiently large $\gl$, see \refR{Rgla}.)
  However, the results hold (by the same proofs) also if we consider only a
  given sequence   $\gl_n\to\infty$.
\end{remark}

In general, there
there are oscillations in the variance.
We therefore state many of the results as approximations (in distribution)
using the notation $\approxd$ defined in \refSS{SSapprox}.
(This is especially important in the multivariate versions.)
Note that we then include rather trivial cases when
the normalized variable
(\eg{} the \lhs{} of \eqref{t1x} or \eqref{t1bx})
converges to 0 (in probability).

We begin with a general central limit theorem for the Poisson model.

\begin{theorem}\label{T1}
  Let $\gf$ be a toll function and
let $\GF$ be the corresponding additive functional given by \eqref{GF}.
Let 
$a(\gl)$ and $b(\gl)$  be real-valued
functions and
suppose that
for some $r>2$, 
as \gltoo, 
\begin{align}
  \E \GF(\Tgl)& = a(\gl)+o\bigpar{\sqrt{\gl}}\label{EF}
  \\ 
  \Var \GF(\Tgl)&=
   b(\gl)+o(\gl),\label{VF}
   \\
  \Var \GF(\Tgl)&=
   O(\gl),\label{VFO}
   \\
  \Var \gf(\Tgl) &= o(\gl), \label{Vf}
  \\
  \E |\gf(\Tgl)-\E \gf(\Tgl)|^r &= O\bigpar{\gl^{r/2}}.
 \label{Efr}
\end{align}
\begin{romenumerate}
  
\item \label{T1a}
Then, as \gltoo,
\begin{align}\label{t1x}
  \frac{\GF(\Tgl)-a(\gl)}{\sqrt{\gl}}
  \approxd N\bigpar{0,b(\gl)/\gl}
\end{align}
or, equivalently,
\begin{align}\label{t1bx}
  \frac{\GF(\Tgl)-\E \GF(\Tgl)}{\sqrt{ \gl}}
  \approxd N\bigpar{0,\Var[\GF(\Tgl)]/\gl},
\end{align}
in both cases with all \absmoment{s} of order $s<r$.
  
\item\label{T1b}
  Suppose further that
  \begin{align}
  b(\gl) &= \Omega(\gl).
  \label{bB}    
  \end{align}
Then, as \gltoo,
\begin{align}\label{t1}
  \frac{\GF(\Tgl)-a(\gl)}{\sqrt{b(\gl)}}
  \dto N(0,1)
\end{align}
and
\begin{align}\label{t1b}
  \frac{\GF(\Tgl)-\E \GF(\Tgl)}{\sqrt{\Var \GF(\Tgl)}}
  \dto N(0,1),
\end{align}
in both cases with convergence of all \absmoment{s} of order $s<r$.

\end{romenumerate}
\end{theorem}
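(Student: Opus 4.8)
The plan is to prove part~\ref{T1a} first — this is the heart of the matter — and then deduce part~\ref{T1b} from it. Throughout I would replace $a(\gl)$ by $\E\GF(\Tgl)$ and $b(\gl)$ by $\Var\GF(\Tgl)$: since \eqref{EF} gives $\E\GF(\Tgl)=a(\gl)+o(\sqrt\gl)$, \eqref{VF} gives $b(\gl)=\Var\GF(\Tgl)+o(\gl)$, and $\Var\GF(\Tgl)/\gl=O(1)$ by \eqref{VFO}, the variants \eqref{t1x} and \eqref{t1bx} are equivalent, with or without moments. So it suffices to establish \eqref{t1bx} together with convergence of all absolute moments of order $s<r$.

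\emph{Step 1: a uniform $L^r$ moment bound.} The first task is to show (finiteness included)
\[
  \E\bigabs{\GF(\Tgl)-\E\GF(\Tgl)}^r=O\bigpar{\gl^{r/2}}.
\]
I would prove this by iterating the recursion \eqref{toll} a \emph{fixed} number $p$ of times. Writing $Y_\bga:=\GF(\Tglbgax)-\gf(\Tglbgax)$, \refR{Ralt} makes the $Y_\bga$ with $|\bga|=p$ independent, with $Y_\bga\eqd\GF(\Tqq{\gl P(\bga)})-\gf(\Tqq{\gl P(\bga)})$ by \eqref{tpax}, and
\[
  \GF(\Tgl)-\E\GF(\Tgl)
  =\sum_{|\bga|\le p}\bigpar{\gf(\Tgl^\bga)-\E\gf(\Tgl^\bga)}
  +\sum_{|\bga|=p}\bigpar{Y_\bga-\E Y_\bga}.
\]
The first sum has a bounded number of terms, each $O(\gl^{1/2})$ in $L^r$ by \eqref{Efr}, so its $L^r$ norm is $O(\gl^{1/2})$. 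To the independent second sum I would apply the sharp Rosenthal‑type inequality \refL{LPinelis}: its ``variance'' term is $O(\gl^{1/2})$ because $\sum_{|\bga|=p}\Var Y_\bga=O\bigpar{\gl\sum_{|\bga|=p}P(\bga)}=O(\gl)$ by \eqref{VFO} and \eqref{Vf}; while its ``$\sum_{|\bga|=p}\norm{\cdot}_r^r$'' term is at most a constant times $\gl^{r/2}\gQ{r/2}^p$ multiplied by the quantity $\sup_{\mu\le\gl}\E\bigabs{\GF(\Tqq\mu)-\E\GF(\Tqq\mu)}^r/\mu^{r/2}$ one is trying to bound, using $\sum_{|\bga|=p}P(\bga)^{r/2}=\gQ{r/2}^p$ by \eqref{Qkm}. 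Since $r>2$ we have $\gQ{r/2}<1$ by \eqref{Qk} (applied with exponent $r/2>1$), so choosing $p=p(r)$ large enough that the constant from \refL{LPinelis} times $\gQ{r/2}^{p/r}$ is $<\tfrac12$ turns this into a self‑improving inequality that forces the supremum to be bounded. I expect this step — and in particular the need for the sharper inequality \refL{LPinelis} rather than the textbook Rosenthal inequality, whose constant is too large when $r$ is near $2$ — to be the main obstacle.

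\emph{Step 2: the central limit theorem.} For \eqref{t1bx} (without moments), by the subsequence principle for $\approxd$ (\refR{Rsubsub}) together with \refR{Rapprox} it suffices to prove $\bigpar{\GF(\Tgl)-\E\GF(\Tgl)}/\sqrt\gl\dto N(0,\gss_0)$ along any subsequence on which $\Var\GF(\Tgl)/\gl\to\gss_0$; such subsequences exist by \eqref{VFO}. If $\gss_0=0$ this is immediate from Chebyshev's inequality. If $\gss_0>0$, I would choose a truncation level $m=m(\gl)\to\infty$, slowly enough (possible by \eqref{Vf}) that the level‑$m$ subtrees still have parameter $\gl P(\bga)\to\infty$ and that the top levels contribute negligible variance, and use the iterated recursion to write $\GF(\Tgl)-\E\GF(\Tgl)=U_\gl+V_\gl$ with $V_\gl=\sum_{|\bga|=m}(Y_\bga-\E Y_\bga)$ a sum of independent terms and $U_\gl=\sum_{|\bga|\le m}(\gf(\Tgl^\bga)-\E\gf(\Tgl^\bga))$. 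Then I would show $\Var U_\gl=o(\gl)$ by estimating each level $|\bga|=j$ separately: the $\gf(\Tglbgax)$ at a fixed level are independent with $\Var\gf(\Tglbgax)=\Var\gf(\Tqq{\gl P(\bga)})=o(\gl P(\bga))$ by \eqref{Vf} uniformly over the relevant $j$, while $\gf(\Tgl^\bga)$ differs from $\gf(\Tglbgax)$ only when $\Nglbga=1$, whose number has negligible mean for such $\bga$; summing over the $O(\log\gl)$ levels and using the slow growth of $m$ gives $o(\gl)$. Hence $U_\gl/\sqrt\gl\pto0$, $\Cov(U_\gl,V_\gl)=o(\gl)$ by Cauchy--Schwarz, and $\Var V_\gl/\gl\to\gss_0>0$. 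Finally, Lyapunov's central limit theorem applies to $V_\gl$: the Lyapunov ratio of order $r$ is at most a constant times $\sum_{|\bga|=m}\E\bigabs{Y_\bga-\E Y_\bga}^r\big/(\Var V_\gl)^{r/2}$, and Step~1 (applied with parameter $\gl P(\bga)$) together with \eqref{Efr} gives $\E\bigabs{Y_\bga-\E Y_\bga}^r=O\bigpar{(\gl P(\bga))^{r/2}}$, so the numerator is $O\bigpar{\gl^{r/2}\gQ{r/2}^m}$ and the ratio is $O\bigpar{\gQ{r/2}^m}\to0$. Combining $V_\gl/\sqrt{\Var V_\gl}\dto N(0,1)$ with $U_\gl/\sqrt\gl\pto0$ yields $\bigpar{\GF(\Tgl)-\E\GF(\Tgl)}/\sqrt\gl\dto N(0,\gss_0)$, and hence \eqref{t1bx}.

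\emph{Step 3: moments, and part \ref{T1b}.} Step~1 shows that $\bigabs{\GF(\Tgl)-\E\GF(\Tgl)}^s/\gl^{s/2}$ is uniformly integrable for every $s<r$, and the Gaussian limits $N(0,\Var\GF(\Tgl)/\gl)$ have uniformly bounded moments by \eqref{VFO}; hence \refL{Lapprox0} upgrades \eqref{t1bx} to convergence of all absolute moments of order $s<r$ (and of ordinary moments when $s$ is an integer), proving \ref{T1a}. For \ref{T1b}, assumption \eqref{bB} together with \eqref{VF} gives $\Var\GF(\Tgl)=\Omega(\gl)$, so $\gss_0>0$ on every subsequence and $\sqrt{\gl/\Var\GF(\Tgl)}$ is bounded above and below (using also \eqref{VFO}); multiplying \eqref{t1bx} by this deterministic factor turns $N(0,\Var\GF(\Tgl)/\gl)$ into $N(0,1)$ exactly, giving \eqref{t1b} with all absolute moments of order $s<r$, and \eqref{t1} then follows since $a(\gl)=\E\GF(\Tgl)+o(\sqrt\gl)$ and $b(\gl)=\Var\GF(\Tgl)\bigpar{1+o(1)}$ by \eqref{EF}, \eqref{VF} and \eqref{bB}.
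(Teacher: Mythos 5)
Your overall architecture is essentially the paper's: a uniform bound $\E|\GF(\Tgl)-\E\GF(\Tgl)|^r=O(\gl^{r/2})$ obtained from branch independence and a Rosenthal-type inequality (the paper's \refL{Lr}), then a CLT along subsequences using a slowly growing truncation depth $m(\gl)$ and Lyapunov's condition, then uniform integrability plus \refL{Lapprox0} for the moments, and part \ref{T1b} by rescaling. Steps 2 and 3 are sound as written (the variance of the cumulative toll over the $m(\gl)$ levels and the $\Nglbga=1$ corrections do require the diagonal choice of $m(\gl)$ and a second-moment rather than first-moment estimate, but these are routine). The genuine gap is in Step 1. Your ``self-improving inequality'' bounds $M(\gl):=\sup_{\mu\le\gl}\E|\GF(\Tqq\mu)-\E\GF(\Tqq\mu)|^r/\mu^{r/2}$ by $\theta\,M(\gl)+C$ with $\theta<1$ and concludes that $M(\gl)$ ``is forced to be bounded''. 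That deduction is valid only if you already know $M(\gl)<\infty$; if $M(\gl)=+\infty$ the inequality says nothing. Finiteness of $\E|\GF(\Tgl)|^r$ is not automatic here: $\GF(\Tgl)=\sum_{\bga\in\cAx}\gf(\Tgl^\bga)$ is an infinite sum (tries have unbounded depth), and the hypotheses only control moments of the toll $\gf$. The paper needs a separate, non-trivial lemma for exactly this point (\refL{L1}, proved by splitting $\gf$ according to $\abse{T}$ and using bucket tries and geometric factors $\gQ{m}^{\ell}$ along depths), together with \refL{L0} for local boundedness in $\gl$; these anchor the induction at its base case. Your bootstrap needs the same input, and it is not rescued by simply truncating $\GF$ at a finite depth, since \eqref{VFO} and \eqref{Vf} concern $\GF$ itself and do not obviously transfer to depth truncations uniformly. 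So you must either prove an analogue of \refL{L1} or invoke it.

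A smaller point: your remark that the sharp constant in \refL{LPinelis} is indispensable is backwards for your own scheme. Because you descend a fixed number $p$ of levels at once, the relevant factor is $\gQ{r/2}^{p}$, which can be made smaller than any given constant by taking $p$ large, so the textbook Rosenthal inequality suffices. It is the paper's variant, which iterates one level at a time (so the factor is $\gQ{r/2}$, possibly close to $1$ when $r$ is near $2$), that genuinely needs $K$ close to $1$. Once the a priori finiteness is supplied, your fixed-depth bootstrap is a correct and slightly more elementary substitute for the paper's scale-by-scale induction in \refL{Lr}.
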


\begin{remark}
  \label{Rs<r}
  We do not know whether \eqref{Efr} implies that also
  the $r$th  moment  converges in \eqref{t1}--\eqref{t1b},
  and we leave this as an open problem.
  (The proof shows that this moment stays bounded, but this is not enough to
  imply convergence.)
  Nevertheless, the theorem shows that if \eqref{Efr} holds for 
all $r>2$, 
then   \eqref{t1x}--\eqref{t1bx}
  hold with
  all \absmoments{}
  and that, if also \eqref{bB} holds, then
  all \absmoments{}  converge
  in \eqref{t1}--\eqref{t1b}.
  The same applies to the theorems below.
\end{remark}

We derive results for the  model with fixed $n$
by conditioning. For this we assume that the functional $\GF$
can be written as a difference between two
increasing functionals with suitable conditions.
(In particular, the theorem applies to increasing functionals $\GF$.)

\begin{theorem}\label{T1n}
  Let $\gf$ be a toll function and
let $\GF$ be the corresponding additive functional given by \eqref{GF}.
Let 
$b(\gl)$ be a real-valued
function that satisfies
\eqref{VF},
and let $c(\gl)$ be a function such that,
as \gltoo,
\begin{align}
  \label{CovFN}
    \Cov\bigpar{ \GF(\Tgl),N_\gl}&=
   c(\gl)+o(\gl).
\end{align}
Suppose further that $\gf=\gf_{+}-\gf_{-}$ for some toll functions $\gf_{\pm}$
such that
the corresponding functionals $\GF_{\pm}$
are increasing, and furthermore
\eqref{VFO},
\eqref{Vf} and \eqref{Efr} hold for $\GF_{\pm}$ and $\gf_{\pm}$ and some $r>2$.
\begin{romenumerate}
  
\item \label{T1na}
Then, as \ntoo,
\begin{align}
  \E \GF(\Tn)&
 =\E \GF(\Tqq n)+o\bigpar{\sqrt{n}}
                \label{EFn}
  \\
  \Var \GF(\Tn)& = b(n)-c(n)^2/n+o\bigpar{n}
\label{VFn0}  
  \\&
  =\Var \GF(\Tqq n)-\Cov\bigpar{\GF(\Tqq n),N_n}^2/n
  +o\bigpar{n}
                  , \label{VFn}
\end{align}
and
\begin{align}\label{t1na}
  \frac{\GF(\Tn)-\E \GF(\Tn)}{\sqrt{n}}
  \approxd N\Bigpar{0,\frac{\Var \GF(\Tn)}n}
    \approxd N\Bigpar{0,\frac{b(n)}n-\frac{c(n)^2}{n^2}},
\end{align}
 with all \absmoment{s} of order $s< r$.

\item \label{T1nb}
Suppose further that
$a(\gl)$ is a function satisfying \eqref{EF},
and that, as \ntoo,
\begin{align}\label{bc}
  b(n)-c(n)^2/n = \Omega\bigpar{n}.
\end{align}
Then, as \ntoo,
\begin{align}\label{t1n}
  \frac{\GF(\Tn)-a(n)}{\sqrt{b(n)-c(n)^2/n}}
  \dto N(0,1)
\end{align}
and, equivalently,
\begin{align}\label{t1nb}
  \frac{\GF(\Tn)-\E \GF(\Tn)}{\sqrt{\Var \GF(\Tn)}}
  \dto N(0,1),
\end{align}
in both cases with convergence of
all \absmoment{s} of order $s< r$.

\end{romenumerate}
\end{theorem}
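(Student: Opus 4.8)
The plan is to prove the statement for the Poisson model first, jointly with $N_\gl$, and then to \emph{depoissonize} by conditioning on the event $\{N_\gl=n\}$, using that $\Tn$ has the conditional law of $\Tgl$ given $N_\gl=n$, together with the conditional limit theorem of \citet{Nerman}. The hypothesis that $\GF=\GF_+-\GF_-$ with $\GF_\pm$ increasing is precisely the monotonicity needed to apply that conditional theorem.

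For the Poisson side, I would first check that every linear combination $s\gf_++t\gf_-+u\gfo$ ($s,t,u\in\bbR$) satisfies the hypotheses \eqref{VFO}, \eqref{Vf} and \eqref{Efr} of \refT{T1}; this is immediate from the corresponding assumptions on $\gf_\pm$, $\GF_\pm$ (and the trivial bounds for $N_\gl$) via $\Var(U+V)\le2\Var U+2\Var V$ and $\norm{U+V}_r\le\norm{U}_r+\norm{V}_r$. Applying \refT{T1}\ref{T1a} to each such combination and invoking the Cram\'er--Wold device gives, as $\gl\to\infty$, the joint asymptotic normality of $\gl\qqw\bigl(\GF_+(\Tgl)-\E\GF_+(\Tgl),\,\GF_-(\Tgl)-\E\GF_-(\Tgl),\,N_\gl-\gl\bigr)$, with all absolute moments of order $s<r$; in particular $(\GF(\Tgl),N_\gl)$, centred and scaled by $\sqrt\gl$, is asymptotically Gaussian with limiting covariances $b(\gl)/\gl$, $c(\gl)/\gl$ and $1$, by \eqref{VF}, \eqref{CovFN} and $\Var N_\gl=\gl$.

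Next comes the depoissonization. Realising the strings as a Poisson process on an interval, the maps $\gl\mapsto\GF_\pm(\Tgl)$ are a.s.\ nondecreasing and $N_\gl$ is the associated counting process, so the (multivariate) conditional limit theorem of \citet{Nerman}, applied to the increasing pair $(\GF_+,\GF_-)$ with clock $N_\gl$, shows that, conditionally on $N_\gl=n$ with $\gl=n$, the vector $(\GF_+(\Tqq n),\GF_-(\Tqq n))$ --- which by definition equals $(\GF_+(\Tgl),\GF_-(\Tgl))$ conditioned on $N_\gl=n$ --- is asymptotically Gaussian, with all absolute moments of order $s<r$, and with asymptotic mean and covariances obtained from the unconditional ones by Gaussian conditioning on $N_\gl$: conditioning at $\gl=n=\E N_\gl$ produces no first-order shift of the mean, so $\E\GF(\Tn)=\E\GF(\Tqq n)+o(\sqrt n)$, which is \eqref{EFn}; and the asymptotic covariance matrix is replaced by its Schur complement with respect to the $N_\gl$ entry, i.e.\ each entry loses $\Cov(\cdot,N_\gl)\,\Cov(N_\gl,\cdot)/\Var N_\gl$. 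Forming $\GF(\Tn)=\GF_+(\Tn)-\GF_-(\Tn)$ and reading off its asymptotic variance gives $b(n)-c(n)^2/n+o(n)$, which, after replacing $b(n),c(n)$ by the exact $\Var\GF(\Tqq n)$ and $\Cov(\GF(\Tqq n),N_n)$ (legitimate by \eqref{VF}, \eqref{CovFN}), is \eqref{VFn0}--\eqref{VFn}. Together with the Gaussian limit this is exactly \eqref{t1na}, proving part \ref{T1na}. Part \ref{T1nb} is then immediate: \eqref{bc} makes $\sqrt{b(n)-c(n)^2/n}$ of order $\sqrt n$, hence also $\Var\GF(\Tn)=\Theta(n)$ by \eqref{VFn0}, so dividing \eqref{t1na} by $\sqrt{\Var\GF(\Tn)/n}\to1$ (and, for \eqref{t1n}, replacing $\E\GF(\Tn)$ by $a(n)$ up to $o(\sqrt n)$ via \eqref{EF}) upgrades $\approxd N(0,\cdot)$ to $\dto N(0,1)$, with convergence of all absolute moments of order $s<r$.

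The main obstacle is the depoissonization: one must get Nerman's theorem to deliver simultaneously the Gaussian limit, convergence of absolute moments up to order $r$, and the sharp $o(\sqrt n)$ and $o(n)$ errors in the conditional mean and variance --- in particular the vanishing of the $O(\sqrt n)$ mean correction, which is special to conditioning exactly at $\gl=n$. A second, lesser, difficulty is that $\GF$ is only a \emph{difference} of increasing functionals, which forces either a genuinely bivariate form of Nerman's theorem for $(\GF_+,\GF_-)$, or its univariate form applied to $\GF_+$, $\GF_-$ and $\GF_++\GF_-$ with the cross term recovered by polarisation (and a Cram\'er--Wold argument restricted to the increasing combinations $s\GF_++t\GF_-$, $s,t\ge0$, and their negatives); one must also verify that the monotone-coupling hypotheses of Nerman's theorem genuinely hold here, which is where the combinatorial structure of tries --- adding one string modifies the tree, and hence each $\GF_\pm$, monotonically --- enters.
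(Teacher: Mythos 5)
Your proposal is correct and follows essentially the same route as the paper: there, \refT{T1n} is obtained as the $K=1$ case of \refT{Tknabc}, whose proof is precisely your plan --- a joint Poisson central limit theorem for the increasing functionals together with $N_\gl$ obtained from \refT{T1} via Cram\'er--Wold and the subsequence principle, followed by conditioning on $N_\gl=n$ using Nerman's conditional limit theorem under the stochastic monotonicity supplied by $\GF_\pm$, yielding the Schur-complement covariance \eqref{VFn0} and, via first- and second-moment convergence in the conditioned limit, \eqref{EFn}. The technical points you flag (moment convergence after conditioning and the multivariate monotone form of the conditional theorem) are handled in the paper exactly as you suggest, by invoking \citet{Nerman} (Section 4) and the corollary in \cite{SJ190}.
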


These theorems are easily extended to multivariate versions.
This can essentially be done by the standard Cram\'er--Wold device,
with a (minor) technical complication because
of the possibility of oscillations in the covariance matrix, and thus no
straightforward limit distribution.
We begin with a multivariate extension of \refT{T1}.
For later convenience, we give two
equivalent versions of this extension, using functions $a_k$ and
$b_{k\ell}$ as discussed above in \refC{Ck} but not in \refT{Tk}.

\begin{theorem}\label{Tk}
  Let $\gf_1,\dots,\gf_K$ be toll functions, for some $K\ge1$,
let $\GF_k$ be the corresponding additive functionals given by \eqref{GF}, 
and
assume that,
as \gltoo,
\eqref{VFO}, \eqref{Vf} and \eqref{Efr} hold for each  $\gf_k$ and some $r>2$.
  Then, as \gltoo,
\begin{align}\label{t1k}
\Bigpar{\frac{\GF_k(\Tgl)-\E\GF_k(\Tgl)}{\sqrt{\gl}}}_{k=1}^K
  \approxd N\bigpar{0,\gS(\gl)},
\end{align}
where the covariance matrix $\gS(\gl)=\bigpar{\gs_{k\ell}(\gl)}_{k,\ell=1}^{K}$
is given by
\begin{align}\label{gskl}
  \gs_{k\ell}(\gl)
 :=\frac{\Cov\bigpar{ \GF_k(\Tgl),\GF_\ell(\Tgl)}}{\gl}.
\end{align}
Furthermore, 
\eqref{t1k} holds with 
all   \absmoments{} of  order $s< r$.
\end{theorem}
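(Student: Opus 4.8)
The plan is to deduce the multivariate statement \eqref{t1k} from the univariate result \refT{T1}\ref{T1a} via the Cram\'er--Wold device, the only subtlety being that the covariance matrices $\gS(\gl)$ need not converge (oscillations), so we pass through the subsequence principle of \refR{Rsubsub}. First I would fix an arbitrary vector $\boldsymbol\theta=(\theta_1,\dots,\theta_K)\in\bbR^K$ and consider the scalar additive functional $\GF_{\boldsymbol\theta}:=\sum_{k=1}^K\theta_k\GF_k$, whose toll function is $\gf_{\boldsymbol\theta}=\sum_k\theta_k\gf_k$. Since each $\gf_k$ satisfies \eqref{VFO}, \eqref{Vf}, \eqref{Efr}, the same holds for the finite linear combination $\gf_{\boldsymbol\theta}$ (for \eqref{Efr} use Minkowski's inequality in $L^r$, absorbing the constant $\sum_k|\theta_k|$ into the $O$; for \eqref{VFO} and \eqref{Vf} use $\Var(\sum_k\theta_k X_k)\le K\sum_k\theta_k^2\Var X_k$). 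Moreover $\Var\GF_{\boldsymbol\theta}(\Tgl)=\sum_{k,\ell}\theta_k\theta_\ell\Cov(\GF_k(\Tgl),\GF_\ell(\Tgl))$, and \eqref{VFO} for each $\gf_k$ gives $\Var\GF_{\boldsymbol\theta}(\Tgl)=O(\gl)$, so \refT{T1}\ref{T1a} applies with $a(\gl):=\E\GF_{\boldsymbol\theta}(\Tgl)$ and $b(\gl):=\Var\GF_{\boldsymbol\theta}(\Tgl)=\boldsymbol\theta^{\mathsf T}\gS(\gl)\boldsymbol\theta\,\gl$ (here writing $\gS(\gl)$ as in \eqref{gskl}), yielding
\begin{align*}
  \frac{\GF_{\boldsymbol\theta}(\Tgl)-\E\GF_{\boldsymbol\theta}(\Tgl)}{\sqrt\gl}
  \approxd N\bigpar{0,\boldsymbol\theta^{\mathsf T}\gS(\gl)\boldsymbol\theta}
\end{align*}
with all \absmoments{} of order $s<r$.

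Next, to upgrade this family of one-dimensional statements to the joint statement \eqref{t1k}, I would invoke the subsequence principle for $\approxd$ (\refR{Rsubsub}). Given any subsequence of $\gl\to\infty$, the matrices $\gS(\gl)$ are bounded (entrywise $O(1)$ by \eqref{VFO} and Cauchy--Schwarz), so there is a subsubsequence along which $\gS(\gl)\to\gS_\infty$ for some symmetric positive semidefinite matrix $\gS_\infty$. Along that subsubsequence the displayed univariate convergence becomes, for every $\boldsymbol\theta$, $(\GF_{\boldsymbol\theta}(\Tgl)-\E\GF_{\boldsymbol\theta}(\Tgl))/\sqrt\gl\dto N(0,\boldsymbol\theta^{\mathsf T}\gS_\infty\boldsymbol\theta)$ (here $\approxd$ against a fixed limit is ordinary $\dto$ by \refR{Rapprox}), which is exactly the Cram\'er--Wold condition for $(\GF_k(\Tgl)-\E\GF_k(\Tgl))/\sqrt\gl\dto N(0,\gS_\infty)$. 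Since $N(0,\gS_\infty)=N(0,\gS(\gl))+o(1)$ in the sense of $\approxd$ along this subsubsequence (test functions are bounded and continuous, and $\gS(\gl)\to\gS_\infty$), we get $(\GF_k(\Tgl)-\E\GF_k(\Tgl))_k/\sqrt\gl\approxd N(0,\gS(\gl))$ along the subsubsequence. As the original subsequence was arbitrary, \refR{Rsubsub} gives \eqref{t1k} along the full sequence.

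For the moment claim, I would argue along the same subsequence scheme. The scalar conclusion of \refT{T1}\ref{T1a} already gives convergence of all \absmoments{} of order $s<r$ for each linear combination $\GF_{\boldsymbol\theta}$; in particular the normalized vectors $X^{(\gl)}:=(\GF_k(\Tgl)-\E\GF_k(\Tgl))_k/\sqrt\gl$ have $\E|X^{(\gl)}|^s=O(1)$ for every $s<r$ (bound $|X^{(\gl)}|$ by a constant times $\sum_k|e_k^{\mathsf T}X^{(\gl)}|$ and apply the scalar bound coordinatewise), hence $(|X^{(\gl)}|^s)$ is uniformly integrable for each $s<r$. Combined with the distributional approximation $X^{(\gl)}\approxd N(0,\gS(\gl))$ just established and the corresponding uniform integrability of $|N(0,\gS(\gl))|^s$ (the matrices being bounded), \refL{Lapprox0} yields $X^{(\gl)}\approxd N(0,\gS(\gl))$ with \absmoments{} of order $s$, for every $s<r$, as required.

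The routine verifications (that linear combinations of toll functions inherit \eqref{VFO}, \eqref{Vf}, \eqref{Efr}) are straightforward; the one genuine point to handle carefully — and the only place where the structure of the argument is not completely mechanical — is the oscillation of $\gS(\gl)$, which is precisely why one cannot state a single limiting Gaussian and must run Cram\'er--Wold inside the subsequence principle. Once that bookkeeping is set up, everything else follows from \refT{T1} applied to scalar linear combinations together with \refL{Lapprox0} and \refR{Rsubsub}.
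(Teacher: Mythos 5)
Your proposal is correct and follows essentially the same route as the paper: Cram\'er--Wold applied to scalar linear combinations $\sum_k\theta_k\GF_k$ (whose toll functions inherit \eqref{VFO}, \eqref{Vf}, \eqref{Efr} by Minkowski), \refT{T1} for the scalar case, the subsequence principle of \refR{Rsubsub} to handle the oscillating covariance matrix, and \refL{Lapprox0} for the moment claim. The only cosmetic differences are the order of operations (the paper extracts the convergent subsequence of $\gS(\gl)$ before invoking \refT{T1}) and that the paper obtains the uniform moment bound directly from \refL{Lr} rather than from the coordinatewise scalar moment conclusions; both variants are valid.
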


\begin{corollary}
  \label{Ck}
  Suppose in addition to the assumptions of \refT{Tk} that
  $a_k(\gl)$ and $b_{k\ell}(\gl)$,
  for $k,\ell=1,\dots,K$,
  are real-valued
functions such that,
as \gltoo,
\eqref{EF}  holds for each $\GF_k$
(with $a_k(\gl)$),
and
\eqref{VF} holds in the form
\begin{align}\label{CovFF}
    \Cov\bigpar{ \GF_k(\Tgl),\GF_\ell(\Tgl)}&=
   b_{k\ell}(\gl)+o(\gl).
\end{align}
Then, as \gltoo,
\begin{align}\label{t1kc}
\Bigpar{\frac{\GF_k(\Tgl)-a_k(\gl)}{\sqrt{\gl}}}_{k=1}^K
  \approxd N\bigpar{0,\gS(\gl)},
\end{align}
where the covariance matrix $\gS(\gl)=\bigpar{\gs_{k\ell}(\gl)}_{k,\ell=1}^{K}$
is given by
\begin{align}\label{gsklc}
  \gs_{k\ell}(\gl):=\frac{b_{k\ell}(\gl)}{\gl}.
\end{align}
Furthermore, \eqref{t1kc} holds with 
all   \absmoments{} of  order $s< r$. 
\end{corollary}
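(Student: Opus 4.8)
The plan is to deduce \refC{Ck} from \refT{Tk} by two elementary perturbations of \eqref{t1k}: replacing the centering $\E\GF_k(\Tgl)$ by $a_k(\gl)$, and replacing the exact normalized covariance matrix by $\gS(\gl)=\bigpar{b_{k\ell}(\gl)/\gl}_{k,\ell}$. \refT{Tk} supplies $X_\gl:=\Bigpar{\frac{\GF_k(\Tgl)-\E\GF_k(\Tgl)}{\sqrt\gl}}_{k=1}^{K}\approxd N\bigpar{0,\gS_0(\gl)}$, with all \absmoments{} of order $s<r$, where $\gS_0(\gl)$ has entries $\Cov(\GF_k(\Tgl),\GF_\ell(\Tgl))/\gl$. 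Since \eqref{EF} holds for each $\GF_k$, the deterministic vector $\gd(\gl):=\Bigpar{\frac{\E\GF_k(\Tgl)-a_k(\gl)}{\sqrt\gl}}_{k=1}^{K}$ tends to $0$, and the left-hand side of \eqref{t1kc} is precisely $X_\gl+\gd(\gl)$. By \eqref{CovFF}, $\gS_0(\gl)-\gS(\gl)\to0$; moreover \eqref{VFO} together with the \CSineq{} forces all entries of both matrices to be $O(1)$, so these matrices, and all the corresponding Gaussian \absmoments{}, stay bounded.

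First I would dispose of the covariance perturbation. By the subsequence principle for $\approxd$ (\refR{Rsubsub}) it suffices to argue along an arbitrary subsequence on which $\gS(\gl)$ converges, say $\gS(\gl)\to\gS_\infty$; then also $\gS_0(\gl)\to\gS_\infty$. Along this subsequence $N\bigpar{0,\gS_0(\gl)}\dto N(0,\gS_\infty)$ and $N\bigpar{0,\gS(\gl)}\dto N(0,\gS_\infty)$, with convergence of all \absmoments{} (Gaussian moments are continuous in the covariance, and uniform integrability follows from boundedness of a higher moment); hence $N\bigpar{0,\gS_0(\gl)}\approxd N\bigpar{0,\gS(\gl)}$ with all \absmoments{}, and transitivity of $\approxd$ (immediate from the definition) gives $X_\gl\approxd N\bigpar{0,\gS(\gl)}$ with all \absmoments{} of order $s<r$.

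Next I would handle the shift by $\gd(\gl)$. Along a subsequence with $\gS(\gl)\to\gS_\infty$ we have $X_\gl\dto N(0,\gS_\infty)$, hence $X_\gl+\gd(\gl)\dto N(0,\gS_\infty)$ because $\gd(\gl)$ is deterministic with $\gd(\gl)\to0$; as also $N\bigpar{0,\gS(\gl)}\dto N(0,\gS_\infty)$, \refR{Rapprox} gives $X_\gl+\gd(\gl)\approxd N\bigpar{0,\gS(\gl)}$ along the subsequence, hence along the full sequence by \refR{Rsubsub}. For the moment statement, expanding $\E(X_\gl+\gd(\gl))^\multim$ by the multinomial theorem — resp.\ estimating $\bigabs{\,\abs{X_\gl+\gd(\gl)}^s-\abs{X_\gl}^s\,}$ — shows that it differs from $\E X_\gl^\multim$ (resp.\ $\E\abs{X_\gl}^s$) by $o(1)$, since $\gd(\gl)\to0$ while the relevant (absolute) moments of $X_\gl$ are bounded (they converge, by the previous step); alternatively one invokes uniform integrability of $\abs{X_\gl}^s$, valid since \absmoments{} of some order $s'\in(s,r)$ are bounded, together with \refL{Lapprox0}. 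Combining the two steps proves \eqref{t1kc} with all \absmoments{} of order $s<r$.

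The only real subtlety — the nearest thing here to an obstacle — is the bookkeeping forced by possible singularity and oscillation of the covariance matrices: there is no single limiting Gaussian, so one cannot simply cite a convergence theorem, and everything must be routed through the subsequence principle, keeping the moving target $N\bigpar{0,\gS(\gl)}$ consistent on both sides of each use of $\approxd$. Everything else is routine.
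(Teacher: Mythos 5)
Your proposal is correct and follows essentially the same route as the paper: along subsequences where the normalized covariances converge, \eqref{CovFF} lets you replace the exact covariance matrix by $\gS(\gl)$ from \eqref{gsklc}, \eqref{EF} lets you replace the centering $\E\GF_k(\Tgl)$ by $a_k(\gl)$, and the moment statement follows from the boundedness of $r$th absolute moments (\refL{Lr}) together with \refL{Lapprox0}, exactly as in the proofs of \refTs{T1} and \ref{Tk}.
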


We state also a corresponding multivariate
extension of \refT{T1n} for the  model with fixed $n$.

\begin{theorem}\label{Tknabc}
  Let $\gf_1,\dots,\gf_K$ be toll functions, for some $K\ge1$,
let $\GF_k$ be the corresponding additive functionals given by \eqref{GF}, 
and let 
$a_k(\gl)$, $b_{k\ell}(\gl)$ and $c_k(\gl)$  be real-valued
functions such that 
\eqref{EF} 
and \eqref{CovFN}
hold for each $\GF_k$ 
(with $a_k(\gl)$ and $c_k(\gl)$), and
\eqref{CovFF} holds.

Suppose further that each $\gf_k=\gf_{k+}-\gf_{k-}$ for some toll
functions $\gf_{k\pm}$
such that the corresponding functionals $\GF_{k\pm}$
are increasing, and furthermore
\eqref{VFO},
\eqref{Vf} and \eqref{Efr} hold for $\GF_{k\pm}$ and $\gf_{k\pm}$ and some $r>2$,

Then, as \ntoo,
\begin{align}\label{t1kn}
\Bigpar{\frac{\GF_k(\Tn)-a_k(n)}{\sqrt{n}}}_{k=1}^K
  \approxd N\bigpar{0,\hgS(n)},
\end{align}
where the covariance matrix $\hgS(\gln)=\bigpar{\hgs_{k\ell}(\gln)}_{k,\ell=1}^{K}$
is given by
\begin{align}\label{hgskln}
  \hgs_{k\ell}(\gln)
  :=\frac{b_{k\ell}(\gln)-c_k(\gln)c_\ell(\gln)/\gln}{\gln}
  =\frac{b_{k\ell}(\gln)}{\gln}-\frac{c_k(\gln)}{\gln}\frac{c_\ell(\gln)}{\gln}.
\end{align}
Moreover, \eqref{t1kn} holds with
all  \absmoment{s} of  order $s< r$; in particular,
\begin{align}
 \E \GF_k(\Tn)&=a_k(n)+o\bigpar{\sqrt{n}}, \label{bro1}
  \\
  \Cov\bigpar{\GF_k(\Tn),\GF_\ell(\Tn)}
 &= b_{k\ell}(n)-\frac{c_k(n)c_\ell(n)}{n}
                   +o\bigpar{n}. \label{bro2}
\end{align}
\end{theorem}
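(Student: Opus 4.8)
The plan is to deduce the multivariate central limit theorem from the one‑dimensional \refT{T1n} by the Cram\'er--Wold device; the only genuine subtlety is that the covariance matrix $\hgS(n)$ in \eqref{hgskln} oscillates and need not converge, so the Cram\'er--Wold step must be run along subsequences, in the spirit of \refR{Rsubsub}. All the analytic work (moment estimates for the Poisson model and the depoissonization via Nerman's conditional limit theorem) is already packaged in \refT{T1n}.

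Fix $\mathbf t=(t_1,\dots,t_K)\in\bbR^K$ and consider $\GF_{\mathbf t}:=\sum_{k=1}^K t_k\GF_k$, an additive functional with toll function $\gf_{\mathbf t}:=\sum_k t_k\gf_k$. Writing $t_k=t_k^+-t_k^-$ with $t_k^\pm\ge0$, set $\gf_{\mathbf t,+}:=\sum_k(t_k^+\gf_{k+}+t_k^-\gf_{k-})$ and $\gf_{\mathbf t,-}:=\sum_k(t_k^+\gf_{k-}+t_k^-\gf_{k+})$; then $\gf_{\mathbf t}=\gf_{\mathbf t,+}-\gf_{\mathbf t,-}$, and the corresponding functionals $\GF_{\mathbf t,\pm}$ are increasing, being nonnegative linear combinations of the increasing $\GF_{k\pm}$. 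First I would verify that $\GF_{\mathbf t}$ satisfies the hypotheses of \refT{T1n}: conditions \eqref{VFO}, \eqref{Vf}, \eqref{Efr} for $\gf_{\mathbf t}$ and for $\gf_{\mathbf t,\pm}$ follow from the assumed bounds for the $\gf_k$ and $\gf_{k\pm}$, using the \CSineq{} for the variances and covariances and Minkowski's inequality for the $L^r$‑bound (only finitely many terms are involved); and the functions $a_{\mathbf t}:=\sum_k t_k a_k$, $b_{\mathbf t}:=\sum_{k,\ell}t_k t_\ell b_{k\ell}$, $c_{\mathbf t}:=\sum_k t_k c_k$ satisfy \eqref{EF}, \eqref{VF} and \eqref{CovFN} for $\GF_{\mathbf t}$ by linearity of expectation and bilinearity of covariance, using \eqref{CovFF}. \refT{T1n}\ref{T1na} then gives $\E\GF_{\mathbf t}(\Tn)=a_{\mathbf t}(n)+o(\sqrt n)$ (combining \eqref{EFn} with \eqref{EF} evaluated at $\gl=n$; with $\mathbf t$ a coordinate vector this is \eqref{bro1}) and
\begin{align}\label{e:tkn}
\sum_{k=1}^K t_k\,\frac{\GF_k(\Tn)-a_k(n)}{\sqrt n}
=\frac{\GF_{\mathbf t}(\Tn)-a_{\mathbf t}(n)}{\sqrt n}
\approxd N\bigpar{0,\,\mathbf t^{\mathsf T}\hgS(n)\mathbf t}
\end{align}
with all \absmoment{s} of order $s<r$, where a routine bilinear computation identifies $\mathbf t^{\mathsf T}\hgS(n)\mathbf t=b_{\mathbf t}(n)/n-c_{\mathbf t}(n)^2/n^2$ as the quadratic form of the matrix \eqref{hgskln}; here $\E\GF_{\mathbf t}(\Tn)$ has been replaced by $a_{\mathbf t}(n)$, which perturbs the centring only by $o(\sqrt n)$.

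It remains to lift \eqref{e:tkn} to the joint statement \eqref{t1kn}. The entries $\hgs_{k\ell}(n)$ are bounded — each $b_{k\ell}(\gl)$ and $c_k(\gl)$ is $O(\gl)$, by \eqref{VFO}, \eqref{CovFF}, \eqref{CovFN}, the \CSineq{}, and $\Var N_\gl=\gl$ — so every subsequence has a subsubsequence along which $\hgS(n)\to\hgS$ for some matrix $\hgS$, which is positive semidefinite since $\mathbf t^{\mathsf T}\hgS(n)\mathbf t=\Var\GF_{\mathbf t}(\Tn)/n+o(1)$ by \eqref{VFn0}. Along such a subsubsequence, \eqref{e:tkn} together with $\mathbf t^{\mathsf T}\hgS(n)\mathbf t\to\mathbf t^{\mathsf T}\hgS\mathbf t$ yields $\sum_k t_k(\GF_k(\Tn)-a_k(n))/\sqrt n\dto N(0,\mathbf t^{\mathsf T}\hgS\mathbf t)$ for every $\mathbf t$ (cf.\ \refR{Rapprox}), so the Cram\'er--Wold device gives $\bigpar{(\GF_k(\Tn)-a_k(n))/\sqrt n}_{k=1}^K\dto N(0,\hgS)$, i.e.\ $\approxd N(0,\hgS(n))$, along the subsubsequence; by the subsequence principle (\refR{Rsubsub}) this holds along the full sequence, which is the distributional content of \eqref{t1kn}. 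For the moments, taking $\mathbf t$ to be a coordinate vector in \eqref{e:tkn} shows that each $|(\GF_k(\Tn)-a_k(n))/\sqrt n|^s$, hence $|\cdot|^s$ of the whole vector, is uniformly integrable for $s<r$; combined with the convergence in distribution along each subsubsequence, \refL{Lapprox0} upgrades this to convergence of all \absmoment{s} of order $s<r$ there, which the subsequence principle (applied also to the moment functionals) passes to the full sequence, and polarization applied to \eqref{e:tkn} gives the covariance asymptotics \eqref{bro2}.

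The main obstacle is not analytic but organizational: the task is to run the subsequence argument carefully enough that it simultaneously produces the distributional approximation \eqref{t1kn}, the convergence of moments, and the asymptotic identities \eqref{bro1}--\eqref{bro2}. One could alternatively re‑prove \refT{T1n} directly in the vector‑valued setting, but the reduction above seems shorter.
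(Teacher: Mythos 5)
Your reduction is mathematically sound, and it is a genuinely different organization from the paper's proof. The paper proves the multivariate statement directly: it applies the Poissonized multivariate CLT (\refT{Tk}) to the $2K+1$ functionals $\set{\gfo,\gf_{k\pm}}$, uses the coordinatewise stochastic monotonicity of the vector $(\GF_{k\pm}(\Tgl))$ in $N_\gl$, and then invokes Nerman's conditional limit theorem to condition on $N_\gl=n$, identifying the conditional covariance $\gb_{k\eta_1,\ell\eta_2}-\gb_{k\eta_1,\Ti}\gb_{\ell\eta_2,\Ti}$ with \eqref{hgskln}; moment transfer comes from the moment version of Nerman's theorem. You instead run Cram\'er--Wold: for each $\mathbf t$ you form $\GF_{\mathbf t}=\sum_k t_k\GF_k$, split it correctly into increasing parts, check the hypotheses of \refT{T1n} (your verifications of \eqref{EF}, \eqref{CovFF}, \eqref{CovFN} for $a_{\mathbf t},b_{\mathbf t},c_{\mathbf t}$, the quadratic-form identity $\mathbf t^{\mathsf T}\hgS(n)\mathbf t=b_{\mathbf t}(n)/n-c_{\mathbf t}(n)^2/n^2$, and the UI/moment bookkeeping via \refL{Lapprox0} and polarization are all fine), and reassemble along subsequences where $\hgS(n)$ converges. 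What your route buys is that only a bivariate conditioning (inside \refT{T1n}) is ever needed; what the paper's route buys is a single pass with no extra Cram\'er--Wold layer.

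The one genuine issue is logical order: in the paper, \refT{T1n} is \emph{not} an independent result --- its proof reads ``this is essentially the special case $K=1$ of \refT{Tknabc}.'' So spliced in as written, your argument is circular. To make it self-contained you must first prove \refT{T1n} directly, and that proof is exactly the machinery you describe as ``already packaged'': \refT{Tk} applied to $(\gfo,\gf_+,\gf_-)$, the stochastic monotonicity \eqref{selma}, Nerman's theorem, and its moment version. The hard depoissonization content is therefore not avoided, only relocated; with that reorganization your proof is correct.
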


\begin{remark}\label{Rrr}
  In
\eqref{t1kn}, we may replace $a_k(n)$ by either
  $\E\GF_k(\Tgn)$ (since we may choose $a_k(n):=\E\GF_k(\Tgn)$),
  or by $\E\GF_k(\Tn)$ (by \eqref{EFn}).
In these cases \eqref{EF} holds automatically and does not have to be
  verified.
\end{remark}

\section{Proofs of general central limit theorems}
\label{Spfcentral}

We first note that if $\gf$ is a functional of tries and either $\gf\ge0$ or
$\E|\gf(\Tgl)|<\infty$, then, since $N_\gl\sim\Po(\gl)$,
and $\gf(\emptyset)=0$,
  \begin{align}\label{l000}
    \E \gf(\Tgl)
    =e^{-\gl}\sumni\frac{\gl^n}{n!}a_n,
  \end{align}
with $a_n:=\E \gf(\Tn)$.

\begin{lemma}\label{L0}
  Let\/ $0\le\gl_1\le\gl_2$.
  \begin{romenumerate}
  \item \label{L0a}
  If\/ $\gf:\stt\to\bbR$ is an arbitrary functional,
  then
  \begin{align}\label{l0a}
    \E |\gf(\Tqq{\gl_1})|
    \le e^{\gl_2}\E |\gf(\Tqq{\gl_2})|.
  \end{align}
\item \label{L0m}
  Moreover, if\/ $m$ is such that\/ $\gf(T)=0$ when $\abse{T}<m$, then
    \begin{align}\label{l0m}
    \E |\gf(\Tqq{\gl_1})|
    \le \Bigparfrac{\gl_1}{\gl_2}^me^{\gl_2}\E |\gf(\Tqq{\gl_2})|.
  \end{align}
\end{romenumerate}
\end{lemma}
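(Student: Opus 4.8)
The plan is to reduce both inequalities to the series representation \eqref{l000} for the Poisson model, after which a single elementary bound relating powers of $\gl_1$ and $\gl_2$ does all the work. First I would apply \eqref{l000} to the functional $|\gf|$ (legitimate, since $|\gf|(\emptyset)=0$, and $|\gf|\ge0$, so the hypothesis there is met), obtaining, for every $\gl\ge0$,
\begin{equation*}
  \E|\gf(\Tqq{\gl})| = e^{-\gl}\sumni\frac{\gl^n}{n!}\,b_n \in [0,\infty],
\end{equation*}
where $b_n:=\E|\gf(\Tn)|$ is finite because $\Tn$ takes only finitely many values. Since all terms are nonnegative, there are no convergence subtleties: the two asserted inequalities are read in $[0,\infty]$ and are trivial whenever the right-hand side is infinite. (Alternatively one could skip \eqref{l000} and condition directly on $N_\gl$, using that $\Tqq{\gl}$ given $\{N_\gl=n\}$ is distributed as $\Tn$; that is the route I would actually write out.)

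For part \ref{L0m} I would first dispose of the degenerate case $\gl_2=0$, where then $\gl_1=0$ and the left-hand side is $0$; so assume $\gl_2>0$. The hypothesis $\gf(T)=0$ for $\abse T<m$ forces $b_n=0$ for $n<m$, since $\abse{\Tn}=n$, so the series above runs over $n\ge m$ only. The key step is then the elementary estimate, valid for $n\ge m$ and $0\le\gl_1\le\gl_2$,
\begin{equation*}
  \gl_1^n = \gl_1^m\,\gl_1^{n-m} \le \gl_1^m\,\gl_2^{n-m} = \Bigparfrac{\gl_1}{\gl_2}^m\gl_2^n .
\end{equation*}
Multiplying by $b_n/n!$, summing over $n\ge m$, and using $e^{-\gl_1}\le1$ then gives \eqref{l0m} at once.

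Part \ref{L0a} follows for free: every functional vanishes on $\emptyset$, the only trie with $\abse T<1$, so \eqref{l0m} applies with $m=1$, and since $\gl_1/\gl_2\le1$ (treating $\gl_1=\gl_2=0$ trivially) this yields \eqref{l0a}. I do not expect any genuine obstacle here; the only points that need a word of care are the case $\gl_2=0$ in part \ref{L0m} and the possibility $\E|\gf(\Tqq{\gl_2})|=\infty$, both already handled above.
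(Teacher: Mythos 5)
Your proof is correct and follows essentially the same route as the paper: both apply the series representation \eqref{l000} to $|\gf|$ and use nonnegativity together with the vanishing of the terms with $n<m$, the only (cosmetic) difference being that the paper reads off \ref{L0a} and \ref{L0m} directly from the series while you obtain \ref{L0a} as the case $m=1$ of \ref{L0m}. One aside is inaccurate but harmless: for $n\ge2$ the trie $\Tn$ takes infinitely many values (its depth is unbounded), so $b_n=\E|\gf(\Tn)|$ need not be finite for an arbitrary functional; this does not affect the argument since, as you yourself note, all the inequalities may be read in $[0,\infty]$.
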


\begin{proof}
By \eqref{l000} applied to $|\gf|$,
  \begin{align}\label{l00}
    \E |\gf(\Tgl)|=e^{-\gl}\sumni\frac{\gl^n}{n!}a_n,
  \end{align}
  where $a_n:=\E |\gf(\Tn)|\ge0$,
and both \eqref{l0a} and \eqref{l0m} follow. (The latter because $a_n=0$
  for $n<m$.)
\end{proof}

\begin{lemma}
  \label{L1}
 Let $\gf$ be a toll function and
let $\GF$ be the corresponding additive functional given by \eqref{GF}.
  Let $r\ge1$ and
assume that $\E|\gf(\Tgl)|^r<\infty$ for some
$\gl>0$. Then $\E |\GF(\Tgl)|^r<\infty$.
\end{lemma}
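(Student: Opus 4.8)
The plan is to work in the Poisson model, where the fringe subtrees rooted at the nodes of any fixed depth are mutually independent, and to dominate $|\GF(\Tgl)|$ by $|\chi|N_\gl$ plus a single nonnegative additive functional whose toll vanishes on trees with at most one leaf. Concretely, first I would observe that by \eqref{GFbga} one has $|\GF(\Tgl)|\le\sum_\bga|\gf(\Tgl^\bga)|$, and that the nodes $\bga$ with $\abse{\Tgl^\bga}\le1$ are the $\emptyset$-fringes (contributing $0$) and the $N_\gl$ leaves of $\Tgl$ (each contributing $|\gf(\Ti)|=|\chi|$, with $\chi:=\gf(\Ti)$). Hence $|\GF(\Tgl)|\le|\chi|N_\gl+\Psi(\Tgl)$, where $\Psi$ is the additive functional with toll $\psi:=|\gf|\,\indic{\abse{\cdot}\ge2}$. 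This $\psi$ is nonnegative, vanishes on $\Ti$ and on $\emptyset$, and $\E\psi(\Tgl)^r=\E\bigpar{|\gf(\Tgl)|^r\indic{N_\gl\ge2}}\le\E|\gf(\Tgl)|^r<\infty$. Since $N_\gl\in L^r$, it suffices to prove $\E\Psi(\Tgl)^r<\infty$.

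Next I would decompose $\Psi$ by depth. Because $\psi(\Ti)=0$, \eqref{tpaxx} gives $\psi(\Tglbga)=\psi(\Tglbgax)\eqd\psi\bigpar{\Tqq{\gl P(\bga)}}$ for every $\bga$, so by \eqref{GFbga}, $\Psi(\Tgl)=\sum_{m\ge0}S_m$ with $S_m:=\sum_{|\bga|=m}\psi(\Tglbgax)$. For a fixed $m$ the family $\set{\Tglbgax:|\bga|=m}$ is prefix-free, hence the summands of $S_m$ are independent. By Minkowski's inequality (applied to the nonnegative series), $\norm{\Psi(\Tgl)}_r\le\sum_{m\ge0}\norm{S_m}_r$, so the task reduces to showing that $\norm{S_m}_r$ decays geometrically in $m$.

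To estimate a layer, I would apply \refL{L0}\ref{L0m} to the nonnegative functional $\psi^s$, which also vanishes when $\abse T<2$: for any $\bga$ and any $s\in\set{1,2,r}$ this yields $\E\bigsqpar{\psi\bigpar{\Tqq{\gl P(\bga)}}^s}\le P(\bga)^2e^{\gl}\E\bigsqpar{\psi(\Tgl)^s}$, which is finite for $s\le r$ (using $\norm{\psi(\Tgl)}_2\le\norm{\psi(\Tgl)}_r$ when $r\ge2$). Summing over $|\bga|=m$ and using \eqref{Qkm}, each of $\sum_{|\bga|=m}\E\psi(\Tglbgax)$, $\sum_{|\bga|=m}\E\bigsqpar{\psi(\Tglbgax)^2}$, $\sum_{|\bga|=m}\E\bigsqpar{\psi(\Tglbgax)^r}$ is $O\bigpar{\gQ2^m}$, with $\gQ2<1$ by \eqref{Qk}. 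Now if $1\le r\le2$, Minkowski's inequality already gives $\norm{S_m}_r\le\sum_{|\bga|=m}\norm{\psi(\Tglbgax)}_r\le C\sum_{|\bga|=m}P(\bga)^{2/r}=C\,\gQ{2/r}^m$ with $\gQ{2/r}<1$ since $2/r>1$; while if $r>2$, Rosenthal's inequality applied to the centred variables $\psi(\Tglbgax)-\E\psi(\Tglbgax)$ bounds $\E[S_m^r]$ by a constant times $\bigpar{\sum\E\psi(\Tglbgax)}^r+\sum\E\bigsqpar{\psi(\Tglbgax)^r}+\bigpar{\sum\E\bigsqpar{\psi(\Tglbgax)^2}}^{r/2}=O\bigpar{\gQ2^{mr}+\gQ2^m+\gQ2^{mr/2}}$, so $\norm{S_m}_r=O\bigpar{\gQ2^m+\gQ2^{m/r}+\gQ2^{m/2}}$. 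In either case $\sum_m\norm{S_m}_r<\infty$, hence $\E\Psi(\Tgl)^r<\infty$ and then $\E|\GF(\Tgl)|^r<\infty$.

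The step I expect to be the main obstacle is exactly the control of the sum over depths, i.e.\ showing $\sum_m\norm{S_m}_r<\infty$. The underlying difficulty is the unary-path structure of tries --- a node may pass \emph{all} of its strings to a single child, so a direct induction on the number of strings does not reduce the problem size. Passing to the Poisson model is what makes the argument go through, since there the depth-$m$ fringes are unconditionally independent; the geometric factor $\gQ2^m$ together with $\gQ2<1$ then supplies the needed decay. Note that for $r>2$ one genuinely needs Rosenthal's inequality rather than just Minkowski's, because $\gQ{2/r}\ge1$ in that range.
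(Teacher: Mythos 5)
Your argument is correct in substance, but it takes a genuinely different route from the paper's proof. The paper proceeds by splitting the toll according to the number of leaves of the trees on which it is nonzero: the part living on one-leaf trees contributes a multiple of $N_\gl$; for each fixed $j$ with $2\le j\le\floor r$ it runs a bucket-trie argument — first bounding $\norm{\GF(\Tq j)}_r$ by noting that at most one node per level of $\Tq j$ can carry a nonzero toll (giving a $\gQ j^{\ell}$ decay over levels), then conditioning on the bucket trie with bucket size $j$ so that $\GF(\Tgl)$ becomes a sum of at most $N_\gl$ independent copies of $\GF(\Tq j)$ — and for the part vanishing on trees with at most $r$ leaves it applies \refL{L0}\ref{L0m} with $m=\floor r+1$, so that $m/r>1$ and a single Minkowski bound over all of $\cAx$ converges by \eqref{Qrr}. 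You replace this case analysis by one reduction (the toll $\psi(T)=|\gf(T)|\ett{\abse{T}\ge2}$ plus the term $|\chi|N_\gl$), a decomposition of $\Psi(\Tgl)$ by depth, within-layer independence of the $\Tglbgax$, and Rosenthal's inequality; the point is exactly the one you identify, namely that $\norm{\psi(\Tglbgax)}_r\le C_\gl P(\bga)^{2/r}$ has exponent $2/r\le1$ when $r\ge2$, so term-by-term Minkowski over all nodes fails, but the layer norms $\norm{S_m}_r$ still decay geometrically, like $\gQ2^{m/r}$ with $\gQ2<1$. Both proofs work; the paper's buys a proof of this lemma that needs no Rosenthal inequality (Rosenthal, in Pinelis's form with constant $K$ near $1$, is reserved for the inductive argument in \refL{Lr}), while yours buys a shorter, unified argument with no bucket tries and no splitting by support size.

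One small repair is needed: in your range ``$1\le r\le2$'' you assert $\gQ{2/r}<1$ because $2/r>1$, which fails at $r=2$, where $\gQ1=1$ and the Minkowski bound gives no decay. This is harmless, since $r=2$ is handled by your other branch: for $r=2$ the Rosenthal step is just additivity of variances over the independent summands, giving $\norm{S_m}_2=O\bigpar{\gQ2^{m/2}}$, so it suffices to put the boundary case with $r>2$ rather than with $r<2$.
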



\begin{proof}
  We consider first three special cases.

\pfcase{$\gf(T)=0$ unless $\abse{T}=1$.}\label{C1}
Then,
using \refE{Eleaves},
if $a:=\gf(\Ti)$,
we have $\gf = a\gfo$, 
and 
$\GF(\Tgl)=a\abse{\Tgl} = a N_\gl$. Hence,
$\E|\GF(\Tgl)|^r<\infty$ for every $r<\infty $. 

\pfcase{There exists $m\ge 2$ such that $\gf(T)=0$ unless $\abse{T}=m$.}
\label{C2}
Consider first the random trie $\Tq m$ constructed from $m$ strings
$\Xi\sss1,\dots,\Xi\sss m$.
Note that $\Tq m\eqd (\Tgl\mid N_\gl=m)$ and thus
\begin{align}
  \E|\gf(\Tq m)|^r=\E\bigpar{|\gf(\Tgl)|^r\mid N_{\gl}=m}
\le \P(N_\gl=m)\qw\E|\gf(\Tgl)|^r<\infty.
\end{align}

Let $\bga\in\cAx$ and consider the fringe tree
$\Tq{m}^\bga$. If not all $m$ strings $\Xi\sss j$ have the prefix $\bga$,
then this fringe tree has less than $m$ leaves, and thus, by our assumption,
$\gf(\Tq{m}^\bga)=0$. Furthermore, if we condition on the opposite event,
\ie, that all $m$ strings have prefix $\bga$, then the fringe tree $\Tq{m}^\bga$
has the same distribution as the unconditioned $\Tq m$. Hence,
\begin{align}\label{mib}
  \E |\gf(\Tq{m}^\bga)|^r
  =\P\bigpar{\Xi\succ\bga}^m\E|\gf(\Tq{m})|^r
   =C\P\bigpar{\Xi\succ\bga}^m 
    =C P\xpar{\bga}^m.
\end{align}
Moreover, for every $\ell\ge0$, there exists at most one $\bga\in\cA^\ell$
such that $\gf(\Tq{m}^\bga)\neq0$.
Hence, if we let
\begin{align}
  X_\ell:=\sum_{|\bga|=\ell}\gf(\Tq{m}^\bga),
\end{align}
then, by \eqref{mib} and \eqref{Qkm},
where by \eqref{Qk}
$\gQ m:=\sum_{\ga\in\cA}p_\ga^m<1$,
\begin{align}\label{Qell}
  \E |X_\ell|^r
  =\E\sum_{|\bga|=\ell}|\gf(\Tq{m}^\bga)|^r
  =C\sum_{|\bga|=\ell}P(\ga)^m
  =C \gQ m^\ell.
\end{align}
Thus $\norm{X_\ell}_r\le C \gQ m^{\ell/r}$.
Hence, \eqref{GFbga} and Minkowski's inequality yield
\begin{align}\label{mia}
  \norm{\GF(\Tq{m})}_r
  =\Bignorm{\sum_{\ell\ge0}X_\ell}_r
  \le\sum_{\ell\ge0} C \gQ m^{\ell/r}<\infty.
\end{align}

Now return to the random trie $\Tgl$ in the Poisson model.
Consider the bucket trie with bucket size $m$,
based on the same strings.
As said in \refSS{SSbucket},
the trie $\Tgl$ is obtained from the bucket trie
by letting a small trie grow from each bucket.
By our assumption,
the only non-zero contributions to $\GF(\Tgl)$ in \eqref{GF} then comes
from the small tries grown from the buckets that contain exactly $m$
strings.
Condition on the bucket trie, and let $M$ be the number of buckets with  $m$
strings. Then the small tries grown from them are $M$ independent copies of
$\Tq m$. Hence, if $W_1,W_2,\dots,$ are \iid{} copies of $\GF(\Tq m)$,
we have
\begin{align}
  \bigpar{\GF(\Tgl)\mid M}\eqd \sum_{j=1}^M W_j.
\end{align}
Consequently, by Minkowski's inequality and \eqref{mia},
\begin{align}\label{mic}
  \bignorm{\bigpar{\GF(\Tgl)\mid M}}_r=
  \Bignorm{\sum_{j=1}^M W_j}_r
  \le M \bignorm{W_1}_r
  =M\bignorm{\GF(\Tq m)}_r
  =CM.
\end{align}
Furthermore, since the sets of strings in the buckets are disjoint, $M\le
N_\gl/m\le N_\gl$. Consequently, 
\begin{align}\label{midjan}
  \E|\GF(\Tgl)|^r
  =
  \E  \bigsqpar{  \E\bigpar{|\GF(\Tgl)|^r\mid M}}
  \le \E(CM^r)\le C \E N_\gl^r<\infty .
\end{align}

\pfcase{$\gf(T)=0$ if\/ $\abse{T}\le r$.}\label{C2r}
Then, in particular, $\gf(\Ti)=0$.

Let $m:=\floor{r}+1$.
Then \eqref{tpaxx}
and \refL{L0}\ref{L0m} (applied to $|\gf(T)|^r$)
  yield, for any
  $\bga\in\cAx$,
  \begin{align}
\E|\gf(\Tgl^\bga)|^r
    =    \E |\gf(\Tqq{\gl P(\bga)})|^r
 \le  P(\bga)^{m} e^\gl\E |\gf(\Tgl)|^r.
  \end{align}
Hence, for some $C_\gl<\infty$,
\begin{align}\label{sw}
  \norm{\gf(\Tgl^\bga)}_r \le C_\gl P(\bga)^{m/r}.
\end{align}
Consequently,
\eqref{GFbga}, Minkowski's inequality and \eqref{Qrr} yield,
since $m/r>1$,
\begin{align}\label{swb}
  \norm{\GF(\Tgl)}_r
  \le \sum_{\bga\in\cAx} \norm{\gf(\Tgl^\bga)}_r
\le  \sum_{\bga\in\cAx} C_\gl P(\bga)^{m/r}
  <\infty, 
\end{align}
and thus $\E| \GF(\Tgl)|^r<\infty$.

\pfcase{The general case.}
Decompose
\begin{align}
  \gf=\sum_{1\le j\le \floor{r}}\gf_j+\gf',
\end{align}
where $\gf_j(T):=\gf(T)\ett{\abse{T}=j}$
and $\gf'(T):=\gf(T)\ett{\abse{T}>r}$.
Then \refCase{C1} applies to $\gf_1$, \refCase{C2} to $\gf_j$ for
$2\le j\le \floor{r}$, and \refCase{C2r} to $\gf'$.
Consequently,  the corresponding additive functionals $\GF_j$ and $\GF'$
satisfy $\E |\GF_j(\Tgl)|^r<\infty$ and $\E |\GF'(\Tgl)|^r<\infty$,
and the result follows by 
Minkowski's inequality since
$\GF(\Tgl)=\sum_{j=1}^m\GF_j(\Tgl)+\GF'(\Tgl)$.
\end{proof}

\begin{lemma}
  \label{Lr}
 Let $\gf$ be a toll function and
let $\GF$ be the corresponding additive functional given by \eqref{GF}.
Let $r>2$ and
assume  that, as \gltoo,
\begin{align}
  \Var \GF(\Tgl)&=
O(\gl),\label{VFOr}
  \\
  \E |\gf(\Tgl)-\E \gf(\Tgl)|^{r} &= O\bigpar{\gl^{r/2}}.
 \label{Efrx}
\end{align}
  Then, 
$\E |\GF(\Tgl)|^r<\infty$ for all $\gl\ge0$ and
  \begin{align}\label{lr}
  \E |\GF(\Tgl)-\E \GF(\Tgl)|^r =  O\bigpar{\gl^{r/2}}, \qquad \gl\ge1.
  \end{align}
\end{lemma}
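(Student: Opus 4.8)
Lemma \ref{Lr} asserts a moment bound of order $\gl^{r/2}$ for $\GF(\Tgl)$ centered at its mean, given only the variance bound \eqref{VFOr} and the toll-function moment bound \eqref{Efrx}. Let me sketch how I would prove it.

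\textbf{The plan.} The idea is to exploit the branching structure of the Poisson model: the principal branches $\Tgl^\ga$, $\ga\in\cA$, are independent, and by \eqref{pax} and the fringe-tree discussion in \refSS{SSfringe2}, $\Tgl^\ga$ (or rather the modified $\Tglxxx{\ga}$) is distributed as $\Tqq{p_\ga\gl}$. The recursion \eqref{toll} gives
\begin{align*}
  \GF(\Tgl)=\gf(\Tgl)+\sum_{\ga\in\cA}\GF(\Tgl^\ga),
\end{align*}
and the summands on the right are independent of each other (as noted in \refR{Ralt}, the differences $\GF(\Tglxxx\ga)-\gf(\Tglxxx\ga)$ are independent; since here we are dealing with $\GF(\Tgl^\ga)$ versus $\GF(\Tglxxx\ga)$ which differ by $O(1)$ times an indicator, one handles the $\Ti$ fringe correction separately, or simply works with $\Tglxxx\ga$ throughout). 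The natural tool for bounding the $r$th central moment of a sum of independent terms plus a small additive contribution is \emph{Rosenthal's inequality}; the excerpt announces (step \ref{p1} of the introduction, \refL{LPinelis}) that a version of Rosenthal's inequality is available, and I would use exactly that. First I would establish finiteness of $\E|\GF(\Tgl)|^r$ — but this is already \refL{L1}, since \eqref{Efrx} certainly implies $\E|\gf(\Tgl)|^r<\infty$.

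\textbf{Key steps.} The core is an \emph{inductive / bootstrap} argument on $\gl$. Write $M_r(\gl):=\E|\GF(\Tgl)-\E\GF(\Tgl)|^r$; I want $M_r(\gl)=O(\gl^{r/2})$ for $\gl\ge1$. Apply Rosenthal's inequality to the centered recursion: centering \eqref{toll}, $\GF(\Tgl)-\E\GF(\Tgl) = (\gf(\Tgl)-\E\gf(\Tgl)) + \sum_\ga(\GF(\Tgl^\ga)-\E\GF(\Tgl^\ga))$ — strictly with $\Tglxxx\ga$ to get genuine independence, absorbing the $O(1)$ discrepancy from \eqref{byx} into the toll term, which only changes the $O(\gl^{r/2})$ constant since $r/2>1$. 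Rosenthal then bounds $M_r(\gl)$ by a constant times the sum of: (i) the $r$th central moment of the toll term, which is $O(\gl^{r/2})$ by \eqref{Efrx}; (ii) $\sum_\ga M_r(p_\ga\gl)$ (the ``$\sum \E|X_i|^r$'' part of Rosenthal); and (iii) a power — here the $(r/2)$th power — of $\sum_\ga \Var\GF(\Tglxxx\ga)$ (the ``$(\sum\Var X_i)^{r/2}$'' part), which by \eqref{VFOr} applied to each branch is $O\bigl((\sum_\ga p_\ga\gl)^{r/2}\bigr)=O(\gl^{r/2})$. So
\begin{align*}
  M_r(\gl) \le C\gl^{r/2} + C\sum_{\ga\in\cA} M_r(p_\ga\gl)
\end{align*}
for $\gl\ge1$. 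Now run the usual ``master-theorem'' style iteration: since $\pmax<1$, unrolling this recursion, the contribution of the depth-$k$ terms to the $C\gl^{r/2}$ piece is $C^{k+1}\sum_{|\bga|=k}P(\bga)^{r/2}\gl^{r/2}$, and $\sum_{|\bga|=k}P(\bga)^{r/2}=\rho(r/2)^k$ with $\rho(r/2)<1$ by \eqref{Qk} (using $r/2>1$). One has to be slightly careful because the recursion only holds for $\gl\ge1$, so the iteration terminates once $P(\bga)\gl<1$; for those small arguments one uses the crude bound from \refL{L0}\ref{L0a} (or just $M_r(\mu)\le \E|\GF(\Tqq\mu)|^r \le e\,\E|\GF(\Ti)|^r = O(1)$ for $\mu\le 1$ via monotonicity in the number of leaves, noting $\GF(\Tqq 0)=0$), and the number of such boundary strings at each level contributes another geometrically-summable amount. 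Summing the geometric series in $k$ (ratio $C\rho(r/2)$ — and here one must check $C\rho(r/2)<1$; if Rosenthal's constant $C$ is too large this fails, which is exactly why a \emph{sharp} form of Rosenthal with a good constant, or an additional trick, is needed) gives $M_r(\gl)=O(\gl^{r/2})$.

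\textbf{The main obstacle.} The delicate point is the constant in Rosenthal's inequality: the naive recursion $M_r(\gl)\le C\gl^{r/2}+C\sum_\ga M_r(p_\ga\gl)$ only closes if $C\rho(r/2)<1$, but the best constant in Rosenthal grows like $r/\log r$, so this can fail for large $r$. The standard fix — and I expect this is what \refL{LPinelis} provides — is a version of Rosenthal's inequality in which the ``$\sum\E|X_i|^r$'' term comes with constant $1$ (or $1+o(1)$), i.e. $\E|\sum X_i|^r \le \sum_i \E|X_i|^r + C_r(\sum_i\E X_i^2)^{r/2}$ when the $X_i$ are independent and centered (this is the Pinelis/Utev type refinement). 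With that form the recursion becomes $M_r(\gl)\le C\gl^{r/2}+\sum_\ga M_r(p_\ga\gl)$, whose iteration has geometric ratio exactly $\rho(r/2)<1$ regardless of $r$, and the argument goes through cleanly. So the whole proof hinges on invoking the right inequality; after that, everything is the routine branching-recursion bookkeeping sketched above, plus attention to the $\gl<1$ boundary and the $\Ti$-fringe correction \eqref{byx}, both of which contribute only $O(1)$ per node and hence $O(\gl^{r/2})$ in total.
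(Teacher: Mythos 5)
Your proposal follows essentially the same route as the paper's proof: the centered root recursion \eqref{toll} with the independent modified branches $\Tglxxx\ga$, Pinelis's form of Rosenthal's inequality (\refL{LPinelis}) with the constant on the $\sum\E|X_i|^r$ term taken close to $1$ so that the recursion closes because $\rho(r/2)<1$, finiteness from \refL{L1}, and an induction/iteration over scales of $\gl$ with the small-$\gl$ base case handled via \refL{L0}. The only (cosmetic) differences are that the paper first reduces to $\gf(\Ti)=0$ by subtracting a multiple of $\gfo$ (treating $N_\gl$ separately) rather than absorbing the $O(1)$ discrepancy from \eqref{byx} into the toll term, and it runs the induction on intervals $[1,\gl_k]$ in the $L^r$-norm rather than unrolling the moment recursion.
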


\begin{proof}\CCreset
Note first that in the special case $\gf(T)=\gfo(T):=\indic{T=\Ti}$ in
\refE{Eleaves}, $\GF(\Tgl)=N_\gl\sim\Po(\gl)$, and \eqref{VFOr}--\eqref{lr}
hold; for \eqref{lr}, this is because as \gltoo,
$(N_\gl-\gl)/\gl\qq\dto N(0,1)$ with all absolute moments.
(This follows \eg{} first for integer $\gl$ from \cite[Theorem 7.5.1]{Gut},
and then in  general using Minkowski's inequality.)
Hence, by subtracting a suitable multiple of $\gfo$ from $\gf$, 
and using Minkowski's inequality for each of \eqref{VFOr}--\eqref{lr},
we may
in the remainder of the proof assume that $\gf(\Ti)=0$.
Then also $\GF(\Ti)=0$.
  
By \eqref{toll} and \eqref{tpaxx} (for $\GF$, using $\GF(\Ti)=0$),
we have the decomposition
  \begin{align}\label{qff}
    \GF(\Tgl)=\gf(\Tgl)+\sum_{\ga\in\cA}\GF(\Tglga)
    =\gf(\Tgl)+\sum_{\ga\in\cA}\GF(\Tglgax).
  \end{align}
Define, for $\ga\in\cA$, 
  \begin{align}\label{Xglga}
  \Xglga:=\GF\bigpar{\Tglgax}-\E \GF\bigpar{\Tglgax}.    
  \end{align}
    Then, 
by \eqref{qff},
\begin{align}\label{qx}
  \GF(\Tgl)-\E\GF(\Tgl) =   \gf(\Tgl)-\E\gf(\Tgl) +\suma \Xglga.
\end{align}
In the Poisson model, 
the different modified branches $\Tglgax$, $\ga\in\cA$,
are independent random tries,
and thus the random variables $\Xglga$, $\ga\in\cA$, are independent.
Furthermore, $\E\Xglga=0$ by \eqref{Xglga}. Hence, 
we may apply the version of Rosenthal's inequality in \refL{LPinelis} below,
and conclude that, if we fix any $K>1$ (this will be chosen later), there
exists
$\CC=\CCx(r,K)$ such that
\begin{align}\label{pj}
  \E \Bigabs{\suma \Xglga}^r
\le K\suma   \E |X_{\gl,\ga}|^r
+ \CCx\Bigpar{\suma\E X_{\gl,\ga}^2 }^{r/2}.
\end{align}
Let
\begin{align}\label{qg}
g(\gl):=\norm{\GF(\Tgl)-\E \GF(\Tgl)}_r
=\bigpar{\E|\GF(\Tgl)-\E \GF(\Tgl)|^r}^{1/r}.   
\end{align}
Since  $\GF(\Tglgax)\eqd\GF(\Tqq{p_\ga\gl })$
by \eqref{tpaxx}, recalling that $P(\ga)=p_\ga$ for $\ga\in\cA$,
it follows from \eqref{Xglga} that
\begin{align}
  \label{qgg}
  \E |\Xglga|^r
  =\E\bigabs{\GF(\Tqq{p_\ga \gl}) -\E\GF(\Tqq{p_\ga \gl})}^r
  =g(p_\ga\gl)^r.
\end{align}

By \eqref{VFOr} and \eqref{Efrx},
there exists $\gl_0\ge1$ such
that for all $\gl\ge\gl_0$, and all $\ga\in\cA$,
  \begin{align}\label{kkc}
    \E \Xglga^2=\Var \GF(\Tglgax) 
   =\Var \GF(\Tqq{p_\ga\gl })
                                   &   \le \CCname{\Ca} p_\ga\gl
                                   \le \CCx \gl,
    \\\label{kkd}
    \norm{ \gf(\Tgl)-\E \gf(\Tgl)}_r &\le \CCname{\Cb} \gl\qq.
  \end{align}
  Hence, by
  \eqref{qg}, \eqref{qx},  Minkowski's inequality, \eqref{pj},  \eqref{qgg},
and \eqref{kkc}--\eqref{kkd},  
for $\gl\ge\gl_0$,
  \begin{align}\label{kkb}
    g(\gl)&\le \Bignorm{\suma \Xglga}_r+ \bignorm{ \gf(\Tgl)-\E \gf(\Tgl)}_r
\notag \\&
    \le \Bigpar{K\suma g\bigpar{p_\ga\gl}^r+ \CCname{\Cc} \gl^{r/2}}^{1/r}
         + \Cb \gl\qq
\notag \\&
    \le \Bigpar{K\suma g\bigpar{p_\ga\gl}^r}^{1/r}+ \CCname{\Cd} \gl\qq.
  \end{align}

Let $\gl_1:=\gl_0/\pmin$, and
$\gl_k:=\gl_1/\pmax^{k-1}$ for $k\ge2$.
We show by induction on $k$ that for some large $A<\infty$,
\begin{align}\label{gind}
  g(\gl)\le A \gl\qq,
  \qquad \gl\in[1,\gl_k].
\end{align}
First,
$\E|\gf(\Tqq{\gl_1})|^r<\infty$ by \eqref{kkd},
and thus \refL{L1} yields
$\E|\GF(\Tqq{\gl_1})|^r<\infty$.
Hence, 
by  \refL{L0}\ref{L0a},
\begin{align}\label{gnu}
 g(\gl)\le 2\norm{\GF(\Tgl)}_r\le \CC \norm{\GF(\Tqq{\gl_1})}_r \le\CC,
\qquad \gl\le\gl_1.
\end{align}
Thus 
\eqref{gind} holds in the base case $k=1$ if $A\ge\CCx$.

For the induction step, assume \eqref{gind}.
It suffices to consider $\gl\in(\gl_k,\gl_{k+1}]$, and then $p_\ga\gl\in
[\gl_0,\gl_k]$ for every $\ga\in\cA$. Hence,
\eqref{kkb} and
the induction hypothesis \eqref{gind} yield, recalling \eqref{Qks},
\begin{align}\label{kkf}
  g(\gl)
  &\le \Bigpar{K\suma A^r (p_\ga\gl)^{r/2}}^{1/r} +  \Cd \gl\qq
  \notag  \\&
  = \Bigpar{\Bigpar{K\suma p_\ga^{r/2} }^{1/r} + \Cd A\qw} A\gl\qq
  \notag  \\&
  = \Bigpar{\bigpar{K\gQ {r/2} }^{1/r} + \Cd A\qw} A\gl\qq.
\end{align}
By \eqref{Qk}, $\gQ {r/2}<1$. We now assume that $K$
was chosen such that $1<K<\gQ {r/2}\qw$. Then $K\gQ {r/2}<1$, and we may choose
$A$ so large that
\begin{align}\label{kkg}
\bigpar{K \gQ {r/2} }^{1/r} + \Cd A\qw\le1.
\end{align}
 Then \eqref{kkf} shows that \eqref{gind} holds also for
 $\gl\in(\gl_k,\gl_{k+1}]$, showing the induction step.

We have shown that \eqref{gind} hold for every $k\ge1$, and thus
$g(\gl) \le A \gl\qq$ for all $\gl\ge1$, which by the definition
\eqref{qg} is the same as \eqref{lr}. 
We have also shown in \eqref{gnu} that $g(\gl)=O(1)$ for $\gl\le1$.
Hence,
$\E|\GF(\Tgl)|^r<\infty$ for every $\gl$.
\end{proof}

The proof above used the following version of Rosenthal's inequality.
The standard version of Rosenthal's inequality, see \eg{} 
\cite[Theorem 3.9.1]{Gut}, is \eqref{rosen} with $K=C=C(r)$ (growing with $r$); 
the fact needed here that one can choose $K$ arbitrarily close to 1 (at the
expense of increasing $C$) is due to \citet{Pinelis}, see also
\cite{Pinelis2} for a sharper result.

\begin{lemma}[Rosenthal, Pinelis \cite{Pinelis}]\label{LPinelis}
For every $r>2$ and every $K>1$, there exists a constant $C=C(r,K)$ such that
for any independent random variables $X_1,\dots,X_n$ with means $\E X_i=0$,
\begin{align}\label{rosen}
  \E\Bigabs{\sumin X_i}^r 
\le K\sumin \E|X_i|^r + C \Bigpar{\sumin \E|X_i|^2}^{r/2}.
\end{align}
\end{lemma}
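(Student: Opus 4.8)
The plan is to prove \eqref{rosen}, which is due to \citet{Pinelis}, by a truncation argument of the type standard for Rosenthal-type inequalities, organised so that the leading factor is pushed arbitrarily close to $1$. We may assume $\E|X_i|^r<\infty$ for all $i$ (otherwise the right-hand side is infinite) and $B^2:=\sumin\E X_i^2>0$ (otherwise all $X_i=0$ a.s.). Fix a large parameter $\gL>1$, to be chosen at the end as a function of $r$ and $K$, and split each summand at the level $\gL B$: set $Y_i:=X_i\indic{|X_i|\le\gL B}$ and $Z_i:=X_i\indic{|X_i|>\gL B}$, so $X_i=Y_i+Z_i$ and, since $\E X_i=0$, also $\sumin\E Y_i=-\sumin\E Z_i$. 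Then, writing $U:=\sumin(Y_i-\E Y_i)$,
\begin{equation*}
  \sumin X_i=U+\sumin Z_i-\sumin\E Z_i,
\end{equation*}
and Minkowski's inequality gives $\bignorm{\sumin X_i}_r\le\norm U_r+\bignorm{\sumin Z_i}_r+\bigabs{\sumin\E Z_i}$.

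First I would dispose of the two ``small'' terms. The summands $Y_i-\E Y_i$ are independent, mean zero and bounded by $2\gL B$, with $\sumin\Var Y_i\le B^2$ and $\sumin\E|Y_i-\E Y_i|^r\le C_r\gL^{r-2}B^r$ (using $\E|Y_i|^r\le(\gL B)^{r-2}\E X_i^2$ and $r\ge2$); hence the classical Rosenthal inequality with its non-optimal constant, \eg{} \cite[Theorem~3.9.1]{Gut}, or a Bennett tail estimate followed by integration, yields $\E|U|^r\le C_{r,\gL}B^r$. Also $\bigabs{\sumin\E Z_i}\le\sumin\E|Z_i|\le\sumin\E X_i^2/(\gL B)=B/\gL$. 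Both of these are absorbed into the final ``$C(\sumin\E X_i^2)^{r/2}$'' term.

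The crux is the ``large'' term $\sumin Z_i$, where the leading constant must be controlled. Let $N:=\#\{i:|X_i|>\gL B\}$; then $\E N\le\sumin\P(|X_i|>\gL B)\le\sumin\E X_i^2/(\gL B)^2=\gL^{-2}=:p$. By convexity of $t\mapsto t^r$, $\bigabs{\sumin Z_i}^r\le N^{r-1}\sumin|Z_i|^r$, so $\E\bigabs{\sumin Z_i}^r\le\sumin\E\bigsqpar{N^{r-1}|Z_i|^r}$. For fixed $i$ write $N=\indic{|X_i|>\gL B}+N_i$ where $N_i$ depends only on $(X_j)_{j\ne i}$; on $\{Z_i\ne0\}$ we have $N=1+N_i$, and $N_i$ is independent of $X_i$, so $\E\bigsqpar{N^{r-1}|Z_i|^r}=\E\bigsqpar{(1+N_i)^{r-1}}\,\E\bigsqpar{|X_i|^r\indic{|X_i|>\gL B}}$. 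Since $N_i$ is a sum of independent indicators with $\E N_i\le p\le1$, expanding $(1+N_i)^{r-1}\le(1+N_i)^{\lceil r-1\rceil}$ and using $\E N_i^j\le C_j\,\E N_i$ for integer $j\ge1$ (valid because $\E N_i\le1$, by bounding elementary symmetric functions of the indicator means) gives $\E\bigsqpar{(1+N_i)^{r-1}}\le 1+C(r)p$. Summing over $i$, $\E\bigabs{\sumin Z_i}^r\le\bigpar{1+C(r)p}\sumin\E|X_i|^r$.

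Finally I would combine: with $a:=\bignorm{\sumin Z_i}_r$ and $b:=\norm U_r+B/\gL\le C_{r,\gL}B$, the elementary inequality $(a+b)^r\le(1+\eps)a^r+C(r,\eps)b^r$ for $a,b\ge0$ together with the two previous steps gives
\begin{equation*}
  \E\Bigabs{\sumin X_i}^r\le(1+\eps)\bigpar{1+C(r)\gL^{-2}}\sumin\E|X_i|^r+C(r,\eps,\gL)B^r.
\end{equation*}
Given $K>1$, choose $\gL=\gL(r,K)$ so large that $\bigpar{1+C(r)\gL^{-2}}^2\le K$, then $\eps=\eps(r,K)$ so small that $(1+\eps)\bigpar{1+C(r)\gL^{-2}}\le K$; since $B^r=(\sumin\E X_i^2)^{r/2}$, this is \eqref{rosen} with $C=C(r,K)$. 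The one genuinely delicate estimate is $\E\bigsqpar{(1+N_i)^{r-1}}\le 1+C(r)p$ with $p=\gL^{-2}$: it is exactly what allows us to handle $\sumin Z_i$ directly instead of by a further application of Rosenthal's inequality (which would only restore a leading constant $C_r$), and it is where $r>2$ and the smallness of $p$ enter. Everything else is routine.
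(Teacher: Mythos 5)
Your argument is correct, but it is a genuinely different route from the paper's. The paper does not reprove the inequality at all: it quotes the explicit form of Pinelis's result (his Corollary together with formula (4)), in which the coefficient of $\sumin\E|X_i|^r$ is a finite sum involving free parameters $p(s),q(s),a_j$, and simply observes that these parameters can be tuned (first $q(r)\in(1,K)$, then $p(r)$ large, then the $a_j$ large) so that this coefficient is at most $K$; the whole proof is parameter bookkeeping on top of the cited theorem. You instead give a self-contained truncation proof: splitting at level $\gL B$ with $B^2=\sumin\E X_i^2$, handling the bounded part by classical Rosenthal (where a constant $C_{r,\gL}$ is harmless since it multiplies $B^r$), and — this is the real content — showing that the large-deviation part satisfies $\E\bigabs{\sumin Z_i}^r\le\bigpar{1+C(r)\gL^{-2}}\sumin\E|X_i|^r$, because on the event that $Z_i\neq0$ the number of other exceedances $N_i$ is an independent sum of indicators with mean at most $\gL^{-2}$, so $\E\bigsqpar{(1+N_i)^{r-1}}\le 1+C(r)\gL^{-2}$; the moment bound $\E N_i^j\le C_j\E N_i$ you invoke is valid exactly as you indicate (elementary symmetric functions of the indicator means, using $\E N_i\le1$), and the final $(a+b)^r\le(1+\eps)a^r+C(r,\eps)b^r$ step correctly pushes the leading constant below any prescribed $K>1$. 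What each approach buys: the paper's citation is short and rests on a sharp published result (Pinelis even has refinements cited as \cite{Pinelis2}), while your proof makes the lemma self-contained and makes transparent *why* the leading constant can be taken arbitrarily close to $1$ — the $r$th-moment term really only records the event that a single summand is exceptionally large — at the cost of a longer argument and no explicit control of how $C(r,K)$ blows up as $K\downarrow1$.
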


\begin{remark}\label{Rrosen}
 Note that in the special case $r=4$, a simple calculation shows \eqref{rosen}
directly, with $K=1$ and $C=3$. Similarly, when $r$ is any even integer,
\eqref{rosen} (with any $K>1$)
is easily shown by elementary calculations and \Holder's inequality.
(These cases suffice for most applications of our theorems.)
\end{remark}

\begin{proof}
  \citet[Corollary and (4)]{Pinelis} (with $t=r$)
yields the inequality \eqref{rosen} with $K$  replaced by
\begin{align}
  \label{pink}
\sum_{j=0}^{\floor{r/2}-1} c_j(r)\xfrac{a_j^{-2j}}{j!}
=
c_0(r)+\sum_{j=1}^{\floor{r/2}-1} c_j(r)\xfrac{a_j^{-2j}}{j!}
\end{align}
and $C$ given by an
explicit formula (involving $c_j(t)$ and $a_j$) that we ignore;
here 
$a_j>0$ are arbitrary and
$c_j(r)$ are some numbers defined from some other numbers
$p(s)$ and $q(s)$, $s\in[2,r]$, that can be chosen freely under the
conditions
$p(s)\ge1$, $q(s)\ge1$ and, when $s>3$,
\begin{align}\label{pinkod}
p(s)^{-1/(s-3)}+q(s)^{-1/(s-3)}\le1;
\end{align}
in particular, $c_0(r)=q(r)$. (See \cite{Pinelis} for further details.)

Given $K>0$ we can choose first $q(r)$ with $1<q(r)<K$ and then
$p(r)\ge1$ so large that if $r>3$, \eqref{pinkod} holds for $s=r$. 
We choose also, for example, 
$p(s)=q(s)=\max\set{1,2^{s-3}}$ for $s\in[2,r)$.
This defines the numbers $c_j(r)$ for $j=0,\dots,\floor{r/2}-1$ with
$c_0(r)=q(r)<K$, and we then can choose $a_j$, $j\ge1$, so large that
the sum in \eqref{pink} is $\le K$.
\end{proof}

We may now prove \refT{T1}, the general
central limit theorem  for the Poisson model.

\begin{proof}[Proof of \refT{T1}]\CCreset
All limits and asymptotic notions below are as \gltoo.

Let $m\ge1$.
Using the decomposition \eqref{toll} recursively $m$ times on the tree
$\Tgl$,
we obtain,
  \begin{align}
    \GF(\Tgl)
& =\sum_{|\bga|<m}\gf\bigpar{\Tglbga}+\sum_{|\bga|=m}\GF\bigpar{\Tglbga}
                \label{Fix1}\\&
=\sum_{|\bga|<m}\gf\bigpar{\Tglbga}
+\sum_{|\bga|=m}\bigpar{\GF\bigpar{\Tglbga}-\GF\bigpar{\Tglbgax}}
+\sum_{|\bga|=m}\GF\bigpar{\Tglbgax}    
\label{Fix2} \\&
    =:R'_m+R''_m+\sum_{|\bga|=m}\GF\bigpar{\Tglbgax},
\label{Fix3} \end{align}
defining $R'_m$ and $R''_m$ as the first two sums in \eqref{Fix2}.
By \eqref{tpax},
  $\Tglbgax\eqd \Tqq{\gl P(\bga)}$, and thus
  \eqref{Vf} implies
that for every fixed $\bga\in\cAx$,
  \begin{align}\label{ff+}
    \Var \gf(\Tglbgax)
    =    \Var \gf\bigpar{\Tqq{\gl P(\bga)}}
    = o\bigpar{\gl P(\bga)}
        = o(\gl).
  \end{align}
By \eqref{tpax1} and Minkowski's inequality, this implies
  \begin{align}\label{ff}
    \bigpar{\Var \gf(\Tglbga)}\qq
\le \bigpar{\Var \gf(\Tglbgax)}\qq+ O(1)
        = o\bigpar{\gl\qq}.
  \end{align}
Hence,  Minkowski's inequality again yields,
for any fixed $m$,
  \begin{align}\label{frm'}
    \bigpar{\Var R'_m}\qq
    \le \sum_{|\bga|<m}\bigpar{\Var \gf(\Tglbga)}\qq
    = \sum_{|\bga|<m}o\bigpar{\gl\qq}
    =o\bigpar{\gl\qq}.
  \end{align}
Similarly, by \eqref{tpax1} (applied to $\GF$),
$\GF\bigpar{\Tglbga}-\GF\bigpar{\Tglbgax}=O(1)$,
and thus, still for fixed $m$,
$R''_m=O(1)$ and thus $\Var R''_m=O(1)$. Hence, defining $R_m:=R_m'+R''_m$,
\begin{align}\label{VR}
  \Var R_m \le 2\Var R'_m+2\Var R''_m =o(\gl).
\end{align}
Consequently, $R_m$ is negligible, and the major term in \eqref{Fix3} is the
last sum.
We subtract the expectations, and obtain from \eqref{Fix3}
\begin{align}\label{Fixx}
  \GF(\Tgl)-\E \GF(\Tgl)
  =  R_m-\E R_m +
\sum_{|\bga|=m} \Xglbga,
  \end{align}
  where
  \begin{align}\label{Xglbga} 
    \Xglbga:={\GF\bigpar{\Tglbgax}-\E \GF\bigpar{\Tglbgax}}.
  \end{align}
\refL{Lr} applies, since \eqref{VFOr} and \eqref{Efrx} are
our assumptions \eqref{VFO} and \eqref{Efr}; thus, 
for $\gl\ge1$,
\begin{align}\label{gro}
  \E\bigabs{\GF(\Tgl)-\E \GF(\Tgl)}^r
\le \CCname{\CCa} {\gl^{r/2}}
.\end{align}
Hence, using again \eqref{tpax},
for any  $m\ge0$
and all $\bga\in\cA^m$,
at least for $\gl\ge \pmin^{-m}$,
\begin{align}\label{gruk}
  \E \abs{\Xglbga}^r
&=\E\bigabs{\GF\bigpar{\Tqq{\gl P(\bga)}}-\E \GF\bigpar{\Tqq{\gl P(\bga)}}}^r
\le \CCx(\gl P(\bga))^{r/2}
,
\end{align}
where the constant ${\CCa}$  does not depend on $m$.

The random modified fringe trees $\Tglbgax$
for $|\bga|=m$ are independent;
hence the random
variables $\Xglbga$ in \eqref{Fixx} are independent.
Furthermore,
by the definition \eqref{Xglbga}, $\E \Xglbga=0$.
Moreover,
\eqref{gruk} and \eqref{Qkm} imply that for  $\gl\ge \pmin^{-m}$, 
\begin{align}\label{kk}
  \sum_{|\bga|=m}\frac{\E| \Xglbga|^r}{\gl^{r/2}}
  &\le \sum_{|\bga|=m}\CCx P(\bga)^{r/2}
= \CCx \gQ {r/2}^m.
\end{align}

We have so far kept $m$ fixed, and shown that
\eqref{kk} holds for large $\gl$, and also that \eqref{VR} holds, and thus,
for example, for large $\gl$,
  \begin{align}\label{frmm}
{\Var R_m}
\le
   \frac{1}{m}{\gl}.
  \end{align}
In other words, there exist $\gl(m)<\infty$ such that 
\eqref{kk} and \eqref{frmm} hold for  $\gl\ge\gl(m)$.
We may also assume $\gl(m+1)\ge\gl(m)+1$.
Now define (for large $\gl$)  $m(\gl):=\max\set{m:\gl(m)\le\gl}$,
and take in the remainder of the proof $m:=m(\gl)$.
Then, $m=m(\gl)\to\infty$ as \gltoo,
and by definition, \eqref{kk} and \eqref{frmm}
hold with $m=m(\gl)$.
Since $m\to\infty$ and $\gQ {r/2}<1$, see \eqref{Qk}, we have
$\gQ {r/2}^m\to0$, and thus \eqref{kk} shows that 
\begin{align}\label{ll}
  \sum_{|\bga|=m}\frac{\E| \Xglbga|^r}{\gl^{r/2}}
\to0.
\end{align}
Furthermore, \eqref{frmm} implies
\begin{align}
  \label{Rmo}
\Var R_m=o(\gl).
\end{align}

By the subsequence principle in \refR{Rsubsub}, 
it suffices to show that for any given sequence
$\gl_n\to\infty$, the results hold for some subsequence.
By \eqref{VFO}, $\Var(\GF(\Tgl))/\gl=O(1)$, and thus we may, by selecting a
subsequence $(\gl'_n)$ of the given sequence $(\gl_n)$,
assume that $\Var(\GF(\Tgl))/\gl\to\gam$ for some $\gam\ge0$,
and thus also, by \eqref{VF}, $b(\gl)/\gl\to\gam$.

If $\gam=0$, \ie, $\Var\GF(\Tgl)=o(\gl)$ along the subsequence, 
then the \lhs{} of \eqref{t1bx}
tends to 0 in probability, and \eqref{t1bx} holds trivially (along the
subsequence). The same holds for \eqref{t1x} by \eqref{EF}--\eqref{VF}.
(Note also that $\gam=0$ is impossible in \ref{T1b}
since we there assume \eqref{bB}.) 

Now suppose $\gam>0$, and consider only the selected subsequence $(\gl_n')$.
Then \eqref{bB} holds,
and thus \eqref{Rmo} yields
\begin{align}\label{Rmb}
\Var R_m = o(\gl)=o\bigpar{b(\gl)}.  
\end{align}
Hence,
\eqref{VF} and Minkowski's inequality yield
\begin{align}\label{brb}
  \Var\bigpar{\GF(\Tgl)-R_m}
  =b(\gl)+o(\gl)
    =b(\gl)+o\bigpar{b(\gl)}.
\end{align}
 Now use the decomposition \eqref{Fixx} with  $m=m(\gl)$.
Note that
\eqref{Fixx} and \eqref{brb} yield
\begin{align}\label{ii}
  \sum_{|\bga|=m}\Var\bigpar{ \Xglbga}
  =
  \Var\Bigpar{  \sum_{|\bga|=m} \Xglbga}
  =\Var\bigpar{\GF(\Tgl)-R_m}
  \sim b(\gl).
\end{align}
Hence, 
the central limit theorem (see \eg{} \cite[Theorem 7.2.4]{Gut} or
\cite[Theorem 5.12]{Kallenberg}) applies to the sum
$\sum_{|\bga|=m}\Xglbga/b(\gl)\qq$, with Lyapounov's condition
(as in \cite[Theorem 7.2.2]{Gut})
verified by \eqref{ll} and \eqref{bB}.
Consequently,
\begin{align}\label{fy}
  b(\gl)\qqw\sum_{|\bga|=m}\Xglbga \dto N(0,1).
\end{align}
Furthermore,
\eqref{Rmb} implies $b(\gl)\qqw (R_m-\E R_m)\pto0$.
 Thus 
\eqref{Fixx}, 
\eqref{fy} and the
Cram{\'e}r--Slutsky theorem \cite[Theorem 5.11.4]{Gut}
yield
\begin{align}
 \frac{\GF(\Tgl)-\E \GF(\Tgl)}{ b(\gl)\qq} \dto N(0,1).
\end{align}
The conclusions \eqref{t1} and \eqref{t1b} (along the subsequence)
now follow using
\eqref{EF},  \eqref{VF}, and \eqref{bB}.
Moreover, by multiplying \eqref{t1}
by $\sqrt{b(\gl)/\gl}\to\gam\qq$, it follows that the \lhs{} of \eqref{t1x}
converges in distribution to $N(0,\gam)$, which yields \eqref{t1x},
see \refR{Rapprox}. Similarly, \eqref{t1bx} holds.

Combining the two cases above,
we have shown, 
for any $\gam\ge0$,
that
\eqref{t1x}--\eqref{t1bx} and
\eqref{t1}--\eqref{t1b} (assuming \eqref{bB}) hold along the subsequence
$(\gl'_n)$,
Since we started with an arbitrary subsequence $(\gl_n)$,
they hold for arbitrary $\gltoo$, see \refR{Rsubsub}.
This proves 
\eqref{t1x}--\eqref{t1bx} and
\eqref{t1}--\eqref{t1b}. 

It remains only to show that these hold with moments as stated.
If \eqref{bB} holds, then \eqref{gro} implies,
recalling also \eqref{EF} and \eqref{VF}, that
the $r$th absolute moments of the \lhs{s} of \eqref{t1} and \eqref{t1b}
are bounded as \gltoo,
which as is well-known implies that every
power of lower order is uniformly integrable, and thus
every \absmoment{} 
of lower order converges to the corresponding moment of $N(0,1)$.
(See \eg{} \cite[Theorems 5.4.2 and 5.5.9]{Gut}.)

Similarly,
if we write 
\eqref{t1x} or \eqref{t1bx} as $X_\gl\approx Y_\gl$, 
then \eqref{gro} implies that 
$\E|X_\gl|^r=O(1)$ for $\gl\ge1$,
and thus if $0<s<r$, then the variables  
$|X_\gl|^s$, $\gl\ge1$, are \ui.
The same holds for $|Y_\gl|^s$ (at least for large $\gl$), 
since $Y_\gl$ is normal with $\Var Y_\gl=O(1)$ as \gltoo.
Hence, 
\refL{Lapprox0} applies to $X_{\gl_n}$ and $Y_{\gl_n}$
for any sequence $\gl_n\to\infty$,
and it follows that $X_{\gl}\approxd Y_{\gl}$
with \absmoments{} of order $s$.
\end{proof}

We next prove the multivariate extensions of \refT{T1}.

\begin{proof}[Proof of \refT{Tk}]
 By the subsequence principle in \refR{Rsubsub},
it suffices to show that for any given sequence
$\gl_n\to\infty$, the result holds for some subsequence.

The \CSineq{} and \eqref{VFO} (for $\GF_k,\GF_\ell$) yield, as \gltoo,
\begin{align}
  \Cov\bigpar{ \GF_k(\Tgl),\GF_\ell(\Tgl)}
  =O(\gl).
\end{align}
By selecting a suitable subsequence $(\gl'_n)$ of the given sequence $(\gl_n)$,
we may thus assume that
\begin{align}\label{beta}
  \gs_{k\ell}(\gl):=
  \frac{\Cov\bigpar{\GF_k(\Tgl),\GF_\ell(\Tgl)}}{\gl}
  \to \gb_{k\ell}
\end{align}
as \gltoo{} along the subsequence, for 
all $k,\ell$ and some real $\gb_{k\ell}$.

Let $(t_1,\dots,t_K)$ be an arbitrary vector in $\bbR^K$ and
consider the linear combination
\begin{align}
  \label{YGF}
  \GF:=\sumkK t_k  \GF_k.
\end{align}
This is an additive functional with toll function
$  \gf:=\sumkK t_k  \gf_k$.
Then \eqref{VFO}, \eqref{Vf} and \eqref{Efr} hold by the assumptions and
Minkowski's inequality.
Let $a(\gl):=\E\GF(\Tgl)$ and $b(\gl):=\Var\GF(\Tgl)$, so \eqref{EF} and
\eqref{VF} hold trivially.
Thus \refT{T1} applies, and \eqref{t1bx} holds.

Furthermore, \eqref{YGF} yields 
\begin{align}\label{qb}
\Var {\GF(\Tgl)}
  = \sumklK t_kt_\ell
\Cov\bigpar{\GF_k(\Tgl),\GF_\ell(\Tgl)}
.\end{align}
Hence, \eqref{beta} implies that,
along the subsequence $(\gl'_n)$,
we have
$\Var {\GF(\Tgl)}/\gl\to \sum_{k,\ell} t_kt_\ell \gb_{k\ell}$,
and thus \eqref{t1bx} implies that
\begin{align}\label{t1knti}
  \sumkK t_k{\frac{\GF_k(\Tgl)-\E\GF_k(\Tgl)}{\sqrt{\gl}}}
  \dto N\Bigpar{0,\sum_{k,\ell} t_kt_\ell \gb_{k\ell}}.
\end{align}
Since the vector $(t_1,\dots,t_K)$ is arbitrary,
it follows by the Cram\'er--Wold device that,
along the subsequence,
\begin{align}\label{ystad}
  \Bigparfrac{\GF_k(\Tgl)-\E \GF_k(\Tgl)}{\sqrt{\gl}}_1^K
  \dto N\bigpar{0,(\gb_{k\ell})_{k,\ell=1}^K}.
\end{align}
Combined with \eqref{beta}, this shows that
\eqref{t1k} holds along the subsequence, see \refR{Rapprox}.
Since we started with an arbitrary subsequence $(\gl_n)$,
the subsequence principle shows that \eqref{t1k} holds in general,
see \refR{Rsubsub}.

Finally,
if we write \eqref{t1k} as $X_\gl\approxd Y_\gl$, then, as in the proof of
\refT{T1}, \refL{Lr} implies that $\E|X_\gl|^r=O(1)$ for $\gl\ge1$, and  
\refL{Lapprox0} shows that \eqref{t1k} holds with \absmoment{s} of order $s$ 
for $s<r$.
\end{proof}

\begin{proof}[Proof of \refC{Ck}]
Consider again a subsequence where \eqref{beta} holds for some
$\gb_{k\ell}$.
By \eqref{CovFF}, we also have
\begin{align}\label{bbeta}
  \frac{b_{k\ell}(\gl)}{\gl}
  \to \gb_{k\ell}.
\end{align}
Hence the proof of \refT{Tk} just given shows that \eqref{t1k} holds also
with $\gS(\gl)$ defined by \eqref{gsklc} instead of \eqref{gskl}.

Furthermore, \eqref{EF} implies that 
we may replace $\E \GF_k(\gl)$ by $a_k(\gl)$ in \eqref{ystad},
and thus in \eqref{t1k}. The result \eqref{t1kc} follows.
Finally, the same argument as in the proofs of \refTs{T1} and \ref{Tk} shows
that \eqref{t1kc} holds with \absmoment{s} of order $s<r$.
\end{proof}

We turn to proofs of the theorems for the model $\Tn$ with a given number
of leaves.
This time we begin with the multivariate version.

\begin{proof}[Proof of \refT{Tknabc}]
  We consider the toll functions $\gf_{k\pm}$, and also the toll function
$\gfo(T):=\indic{T=\Ti}$ in \refE{Eleaves}; recall that
$\GFo(\Tgl)=N_\gl$ by \eqref{gF00}.
  Note that \eqref{Vf} and \eqref{Efr} are trivial for $\gfo$, since
  $\gfo(T)=O(1)$, and that \eqref{VFO} holds for $\GFo$ because
  \begin{align}\label{greta}
  \Var\GFo(\Tgl) = \Var N_\gl =\gl .
  \end{align}
Hence, \refT{Tk} applies to the set of toll functions
$\set{\gfo,\gf_{k\pm}}$. 
(We  use $\Ti$ and $k\pm$ (for $k=1,\dots,K$) as
indices instead of \set{1,\dots,2K+1}.)

Consider $\bbR^{2K}$ with the usual coordinate-wise partial order, \ie,
$(x_i)_1^{2K}\le (y_i)_1^{2K}$ if $x_i\le y_i$ for every $i$.
Since each $\GF_{k\pm}$ by assumption is an 
increasing functional,
and $\Tq{n+1}$ is obtain by adding a new string to $\Tq{n}$,
it follows that if $n_1\le n_2$, then
\begin{align}
\bigpar{\GF_{k\pm}(\Tq{n_1})}_{k\pm} \le \bigpar{\GF_{k\pm}(\Tq{n_2})}_{k\pm}
\qquad\text{in $\bbR^{2K}$}.
\end{align}
Furthermore,
by the construction of the random trie $\Tqq{\gl}$,
if we condition on $N_\gl=n$, then we recover $\Tn$ (in distribution), \ie,
$\bigpar{\Tqq{\gl}\mid N_\gl=n}\eqd \Tn$.
It follows that the random vector $(\GF_{k\pm}(\Tgl))_{k\pm}$
is stochastically increasing in $\GFo(\Tgl)=N_\gl$ in the sense that
for any $\bfx\in\bbR^{2K}$ and $n_1\le n_2$, 
\begin{multline}\label{selma}
  \P\Bigpar{(\GF_{k\pm}(\Tgl))_{k\pm}\le\bfx \bigm| \GFo(\Tgl)=n_1}
  =
    \P\bigpar{(\GF_{k\pm}(\Tq{n_1}))_{k\pm}\le\bfx}
    \\
  \ge
    \P\bigpar{(\GF_{k\pm}(\Tq{n_2}))_{k\pm}\le\bfx}
=  \P\Bigpar{(\GF_{k\pm}(\Tgl))_{k\pm}\le\bfx \bigm| \GFo(\Tgl)=n_2}.
  \end{multline}

Consider now the sequence $\gl_n=n$, and take an arbitrary subsequence
$(n_j)$ such that,
for the set of functionals $\set{\GFo,\GF_{k\pm}}$,  
the covariances converge as in \eqref{beta}, and thus
\eqref{ystad} holds by the proof of \refT{Tk} above.
Note that then, by \eqref{beta} and \eqref{greta},
\begin{align}\label{kx}
  \gb_{\Ti,\Ti}=\lim_\jtoo\gs_{\Ti,\Ti}({n_j})
  =\lim_\jtoo\frac{\Var \GFo(\Tqq {n_j})}{{n_j}}
  =\lim_\jtoo\frac{\Var N_{n_j} }{{n_j}}=1,
\end{align}
and, similarly,
$\E \GFo(\Tqq {n})=\E N_{n}=n$. 
We may now apply a theorem by
\citet[Theorem~1]{Nerman}, or (slightly more conveniently)
its corollary \cite[Theorem~2.3]{SJ190},
which allows us to  condition on $\GFo(\Tgl)=n$ in \eqref{ystad}
(under the stochastic monotonicity \eqref{selma} just shown).
Consequently, we obtain that, along the subsequence $(n_j)$,
\begin{align}\label{ystad+}
  \Bigparfrac{\GF_{k\pm}(\Tn)-\E \GF_{k\pm}(\Tqq n)}{\sqrt{n}}_{k\pm}
  &\eqd
\Bigpar{\Bigparfrac{\GF_{k\pm}(\Tqq n)-\E \GF_{k\pm}(\Tqq n)}
    {\sqrt{n}}_{k\pm}  \Bigm| \GFo(\Tqq n)=n}
\notag  \\&
  \dto N\bigpar{\Ti,(\xgb_{k\pm,\ell\pm})},
\end{align}
where, for $\eta_1,\eta_2\in\set{+,-}$, recalling \eqref{kx},
\begin{align}\label{xgb}
  \xgb_{k\eta_1,\ell\eta_2}
  :=
  \gb_{k\eta_1,\ell\eta_2}-\frac{\gb_{k\eta_1,\Ti}\gb_{\ell\eta_2,\Ti}}{\gb_{\Ti,\Ti}}
=  \gb_{k\eta_1,\ell\eta_2}-{\gb_{k\eta_1,\Ti}\gb_{\ell\eta_2,\Ti}}.
\end{align}
Note that in \eqref{ystad+}  we normalize $\GF_{k\pm}(\Tq n)$
using $\E\GF_{k\pm}(\Tqq n)$ for the Poisson model.

Since $\GF_k=\GF_{k+}-\GF_{k-}$, it follows from \eqref{ystad+} that,
along the subsequence,
\begin{align}\label{kivik}
  \Bigparfrac{\GF_{k}(\Tn)-\E \GF_{k}(\Tqq n)}{\sqrt{n}}_{k=1}^K
  \dto N\bigpar{0,(\xgb_{k\ell})_{k,\ell=1}^K},
\end{align}
where, using \eqref{xgb},
\begin{align}\label{hjo}
  \xgb_{k\ell}&=\xgb_{k+,\ell+}-\xgb_{k+,\ell-}-\xgb_{k-,\ell+}+\xgb_{k-,\ell-}
                \notag\\
&  =\gb_{k+,\ell+}-\gb_{k+,\ell-}-\gb_{k-,\ell+}+\gb_{k-,\ell-}
  -  (\gb_{k+,\Ti}-\gb_{k-,\Ti})(\gb_{\ell+,\Ti}-\gb_{\ell-,\Ti}).
\end{align}
We are considering a subsequence such that \eqref{beta} holds 
for the functionals $\set{\GFo,\GF_{\pm}}$
along the subsequence.
It follows from \eqref{hgskln}, \eqref{CovFF}, \eqref{CovFN}, \eqref{beta}
(for the set $\set{\GFo,\GF_{\pm}}$), 
$\GFo(\Tgn)=N_n$,
linearity and \eqref{hjo},
that,
along the subsequence,
\begin{align}\label{malby}
  \hgs_{k\ell}(n)
&  =
\frac{\Cov\bigpar{ \GF_k(\Tgn),\GF_\ell(\Tgn)}}{n}
  -\frac{\Cov\bigpar{ \GF_k(\Tgn),N_n}}{n}
                    \frac{\Cov\bigpar{ \GF_\ell(\Tgn),N_n}}{n}
+o(1)
                    \notag\\&
  \to\xgb_{k\ell}.
\end{align}
By \eqref{EF}, we may replace $\E\GF_k(\Tqq n)$ by $a_k(n)$ in \eqref{kivik},
and thus \eqref{malby} shows that \eqref{t1kn} holds along the subsequence.
Hence, \eqref{t1kn} holds in general by the subsequence principle.

Furthermore, the proof of \refT{Tk} shows also that,
along the subsequence $(n_j)$ above,
\eqref{ystad} holds with absolute moments of order $s<r$.
By \cite[Section 4]{Nerman} (see also \cite[Theorem 2.6]{SJ190}),
the same holds after conditioning on $\GFo(\Tgn)=n$, \ie, in
\eqref{ystad+}.
Since absolute moment convergence here is equivalent to 
uniform $s$th power integrability 
\cite[Theorem 5.5.9]{Gut},
it follows that
also \eqref{kivik} holds with 
uniform $s$th power integrability.
We may again replace $\E\GF_k(\Tqq n)$ by $a_k(n)$, using
\eqref{EF}.
Hence, \eqref{t1kn} holds 
along the subsequence 
with 
uniform $s$th power integrability,
and thus with convergence of  $s$th \absmoment{s}.
Hence, 
by the subsequence principle again,
\eqref{t1kn} holds with  $s$th \absmoment{s}.

In particular, \eqref{t1kn} holds with moments of order 1 and 2, which
gives \eqref{bro1} and \eqref{bro2}.
\end{proof}

\begin{proof}[Proof of \refT{T1n}]
This is essentially the special case $K=1$ of \refT{Tknabc}.
In  part \ref{T1na}, we
do not assume any function $a(\gl)$. However, we may then define
$a(\gl):=\E\GF(\Tgl)$, so \eqref{EF} holds trivially.
Thus we may throughout the proof assume
that we have a function $a(\gl)$ such that \eqref{EF} holds.
Then \refT{Tknabc} applies with $K=1$.
In particular, \eqref{bro1}--\eqref{bro2} hold, which
yields \eqref{EFn}--\eqref{VFn0} using
choice $a(\gl)=\Var\GF(\Tgl)$ just made for \ref{T1na};
then \eqref{VFn} follows by
 \eqref{VF} and \eqref{CovFN},
noting that \eqref{VFO} implies 
$ \Cov\bigpar{ \GF(\Tgl),N_\gl} = O(\gl)$ by the \CSineq.
The approximations \eqref{t1na} follow from \eqref{t1kn} and 
\eqref{EFn}--\eqref{VFn0},
with \absmoment{s} of order $s<r$.

For part \ref{T1nb},
we have by \eqref{t1kn} (or \eqref{t1na}),
\begin{align}\label{daw}
  \frac{\GF(\Tn)-a(n)}{\sqrt{n}}\approxd N\bigpar{0,\hgss(n)},
\end{align}
with
\begin{align}\label{hgss}
  \hgss(n):=\frac{b(n)-c(n)^2/n}{n}.
\end{align}
The assumptions
\eqref{VFO}, \eqref{VF} and
 \eqref{bc} imply $\hgss(n)=\Theta(1)$.
Hence, \eqref{daw} implies that for any subsequence such that $\hgss(n)$
converges, say $\hgss(n)\to\gam$, we have $\gam>0$, and then \eqref{daw}
implies 
\begin{align}\label{Vaa}
  \frac{\GF(\Tn)-a(n)}{\sqrt{b(n)-c(n)^2/n}}
  =
    \frac{\GF(\Tn)-a(n)}{\sqrt{n\hgss(n)}}\dto N\xpar{0,1}
\end{align}
along the subsequence.
By the subsequence principle, \eqref{Vaa} holds in general, which is
\eqref{t1n}.
This yields also \eqref{t1nb},
using \eqref{EFn}--\eqref{VFn0}, \eqref{EF}, and again \eqref{bc}.
Moment convergence follows by the same argument.
\end{proof}

\section{Proof of \refT{TEVC}}\label{Smean}

Before proving \refT{TEVC}, we give some lemmas. To begin with, we assume
that $\gf(\Ti)=0$.

\begin{lemma}\label{LM1}
  Suppose that $\gf(\Ti)=0$ and $\E|\gf(\Tgl)|<\infty$ for some $\gl>0$.
  Then
  \begin{align}\label{lm1}
    \E \GF(\Tgl) 
=  \sum_{\bga\in\cAx}\E{\gf(\Tgl^\bga)}
= \sum_{\bga\in\cAx}\E\gf(\Tqq\glbga),
  \end{align}
where the
sums have finite summands and converge absolutely.
Moreover,
  \begin{align}\label{lm0}
\sum_{\bga\in\cAx}\E\bigabs{\gf(\Tgl^\bga)}
=
\sum_{\bga\in\cAx}\E\bigabs{\gf(\Tqq\glbga)}
<\infty.
  \end{align}
\end{lemma}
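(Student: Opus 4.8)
The plan is to fix a $\gl>0$ with $\E|\gf(\Tgl)|<\infty$ and prove both displayed identities for this $\gl$. I would start with a reduction. Since $\gf(\Ti)=0$ and also $\gf(\emptyset)=0$, \eqref{tpaxx} gives $\gf(\Tgl^\bga)=\gf(\Tglbgax)$ pointwise for every $\bga\in\cAx$, and $\gf(\Tglbgax)\eqd\gf(\Tqq\glbga)$; in particular $\E\gf(\Tgl^\bga)=\E\gf(\Tqq\glbga)$ and $\E|\gf(\Tgl^\bga)|=\E|\gf(\Tqq\glbga)|$ whenever either side makes sense. Hence the two sums on the right of \eqref{lm1} agree term by term, as do the two sums in \eqref{lm0}, and it suffices to study $\sum_{\bga\in\cAx}\E|\gf(\Tqq\glbga)|$ and $\sum_{\bga\in\cAx}\E\gf(\Tqq\glbga)$.

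The main step is the absolute bound \eqref{lm0}. Because $\gf(T)=0$ whenever $\abse T<2$ (again by $\gf(\Ti)=0$ and $\gf(\emptyset)=0$), I can apply \refL{L0}\ref{L0m} with $m=2$, $\gl_2:=\gl$ and $\gl_1:=\glbga\le\gl$, which yields $\E|\gf(\Tqq\glbga)|\le P(\bga)^2 e^{\gl}\E|\gf(\Tgl)| = C\,P(\bga)^2$ for every $\bga\in\cAx$, where $C=e^{\gl}\E|\gf(\Tgl)|$ is finite by the choice of $\gl$. Summing over $\bga$ and invoking \eqref{Qrr} with $r=2>1$ gives $\sum_{\bga\in\cAx}\E|\gf(\Tqq\glbga)|\le C\sum_{\bga\in\cAx}P(\bga)^2<\infty$, which is \eqref{lm0}; in particular each summand is finite.

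Finally, for \eqref{lm1}: by \eqref{GFbga} we have $\GF(\Tgl)=\sum_{\bga\in\cAx}\gf(\Tgl^\bga)$, a sum with only finitely many nonzero terms for each realization (those with $\bga\in\Tgl$). Since $\sum_{\bga}\E|\gf(\Tgl^\bga)|<\infty$ by the previous step, $\GF(\Tgl)$ is integrable and the Fubini--Tonelli theorem for series permits interchanging summation and expectation, so $\E\GF(\Tgl)=\sum_{\bga\in\cAx}\E\gf(\Tgl^\bga)$ with the series absolutely convergent; rewriting each term via the reduction of the first paragraph gives \eqref{lm1}. I do not anticipate a genuine obstacle: the only real point is that the standing assumption $\gf(\Ti)=0$ is exactly what allows the exponent $m=2$ in \refL{L0}\ref{L0m}, upgrading the trivial uniform bound $\E|\gf(\Tqq\glbga)|=O(1)$ to the summable estimate $O(P(\bga)^2)$ that makes \eqref{Qrr} applicable; without $\gf(\Ti)=0$ the leaf contributions are not summable and the statement fails, which is why this assumption is imposed for the present lemmas.
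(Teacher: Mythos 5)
Your proof is correct and follows essentially the same route as the paper: the bound $\E|\gf(\Tqq{\gl P(\bga)})|\le C_\gl P(\bga)^2$ from \refL{L0}\ref{L0m} with $m=2$, summability via \eqref{Qrr}, the identification of the two sums via \eqref{tpaxx}, and Fubini applied to \eqref{GFbga} for \eqref{lm1}. Your closing remark correctly identifies why $\gf(\Ti)=0$ is needed.
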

\begin{proof}
  By \refL{L0}\ref{L0m}, with $m=2$,
  \begin{align}\label{P2}
    \E|\gf(\Tqq\glbga)|
    \le P(\bga)^2 e^{\gl}\E|\gf(\Tqq\gl)|
    = C_\gl P(\bga)^2.   
  \end{align}
Hence, using \eqref{Qrr},
\begin{align}\label{55}
  \sum_{\bga\in\cAx}   \E|\gf(\Tqq\glbga)|
  \le C_\gl  \sum_{\bga\in\cAx}   P(\bga)^2
<\infty,
\end{align}
which proves the inequality in \eqref{lm0}.
The equality in \eqref{lm0} follows from \eqref{tpaxx}.

Finally, the first equality in \eqref{lm1} follows by \eqref{GFbga} and
Fubini's theorem, using \eqref{lm0} which also implies
 absolute convergence of the sum.
The second equality follows by \eqref{tpaxx}.
\end{proof}

For the variance, we give in the next lemma several different formulas.

\begin{lemma}\label{LV}
  Suppose that $\gf(\Ti)=0$ and $\E|\gf(\Tgl)|^2<\infty$ for some $\gl>0$.
  Then
  \begin{align}
    \Var \GF(\Tgl)
&=
\sum_{\bga,\bgb\in\cAx}\Cov\bigpar{\gf(\Tglbga),\gf(\Tglbgb)}
\label{lv1}    \\&
=\sum_{\bga,\bgb\in\cAx}\E\bigsqpar{\gf(\Tglbga)\gf(\Tglbgb)}
- \Bigpar{\sum_{\bga\in\cAx}\E\gf(\Tglbga)}^2
\label{lv2}    \\&
    =2\sumaa\Cov\bigpar{\gf(\Tglbga),\GF(\Tglbga)}-\sumaa\Var\gf(\Tglbga) 
\label{lv3}    \\&
    =\sumaa
    \Bigpar{2\Cov\bigpar{\gf(\Tqq\glbga),\GF(\Tqq\glbga)}-\Var\gf(\Tqq\glbga)}
\label{lv4}  \end{align}
and
\begin{align}
  \Cov\bigpar{\GF(\Tgl),N_\gl}
  &= \sumaa \Cov\bigpar{\gf(\Tqq{\glbga}),N_{\glbga}}
\label{lc1}  \\
    &= \sumaa \E\bigsqpar{\gf(\Tqq{\glbga})N_{\glbga}}
      -   \gl \sumaa P(\bga)\E\gf(\Tqq{\glbga}),
      \label{lc2}
\end{align}
where all sums 
have finite summands and converge absolutely.
\end{lemma}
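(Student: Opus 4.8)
The plan is to expand $\GF(\Tgl)=\sum_{\bga\in\cAx}\gf(\Tglbga)$ as in \eqref{GFbga}, show this series converges in $L^2$, and then use two structural facts: the modified fringe tries $\Tglbgax$ rooted at pairwise incomparable strings are independent (\refSS{SSfringe2}), and summing $\gf$ over all descendants of a fixed node collapses by \eqref{GFbga} to a single $\GF$-value. To begin I would note that $\E|\GF(\Tgl)|^2<\infty$ (\refL{L1} with $r=2$), so that $\Var\GF(\Tgl)$, and via the \CSineq{} and $N_\gl\in L^2$ also $\Cov(\GF(\Tgl),N_\gl)$, are finite. Since $\gf(\Ti)=0$, I may replace $\gf(\Tglbga)$ by $\gf(\Tglbgax)$ throughout (\eqref{tpaxx}) and $\GF(\Tglbga)$ by $\GF(\Tglbgax)$ (\refR{Ralt} together with \eqref{tpaxx}). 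The basic quantitative input is \refL{L0}\ref{L0m} with $m=2$, applied to the functionals $|\gf|$, $|\gf|^2$ and $|\GF|^2$, each of which vanishes on trees with fewer than two leaves because $\gf(\Ti)=\GF(\Ti)=0$; it gives $\E|\gf(\Tqq{\glbga})|^2\le C_\gl P(\bga)^2$ and $\E|\GF(\Tqq{\glbga})|^2\le C_\gl P(\bga)^2$, hence $\norm{\gf(\Tglbga)}_2$, $\norm{\GF(\Tglbga)}_2$ and $\norm{\GF(\Tglbga)-\gf(\Tglbga)}_2$ are all $\le C_\gl^{1/2}P(\bga)$, and since $\sum_\bga P(\bga)^2<\infty$ by \eqref{Qrr} every single sum in the lemma will be seen to converge absolutely.

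For the $L^2$-convergence, put $S_M:=\sum_{|\bga|\le M}\gf(\Tglbga)$ and regroup the remaining terms by their length-$(M{+}1)$ prefix: $\GF(\Tgl)-S_M=\sum_{|\bgb|=M+1}\GF(\Tglbgb)=\sum_{|\bgb|=M+1}\GF(\Tglxxx{\bgb})$. The summands are independent, so the variance of this tail equals $\sum_{|\bgb|=M+1}\Var\GF(\Tglxxx{\bgb})\le C_\gl\sum_{|\bgb|=M+1}P(\bgb)^2=C_\gl\gQ2^{M+1}\to0$ (as $\gQ2<1$ by \eqref{Qk}), while its mean $\sum_{|\bga|>M}\E\gf(\Tglbga)\to0$ because $\E\GF(\Tgl)=\sum_\bga\E\gf(\Tglbga)$ converges absolutely (\refL{LM1}). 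Hence $S_M\to\GF(\Tgl)$ in $L^2$. Bilinearity of covariance now gives \eqref{lv1}, and \eqref{lv2} follows on writing each covariance as $\E[\,\cdot\,]-\E\,\E$ and using that $\E\GF(\Tgl)=\sum_\bga\E\gf(\Tglbga)$ is absolutely convergent.

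To get \eqref{lv3} I would observe that $\Cov(\gf(\Tglbga),\gf(\Tglbgb))=\Cov(\gf(\Tglbgax),\gf(\Tglxxx{\bgb}))$ vanishes whenever neither of $\bga,\bgb$ is a prefix of the other, by independence of the two modified branches; so the double sum in \eqref{lv1} reduces to comparable pairs, and after removing the diagonal $\sum_\bga\Var\gf(\Tglbga)$ and summing over the strict descendants of a fixed $\bga$ — where $\sum_{\bgb\succ\bga}\gf(\Tglbgb)=\GF(\Tglbga)-\gf(\Tglbga)$ by \eqref{GFbga} — one obtains $\Var\GF(\Tgl)=\sum_\bga\Var\gf(\Tglbga)+2\sum_\bga\bigpar{\Cov(\gf(\Tglbga),\GF(\Tglbga))-\Var\gf(\Tglbga)}$, which is \eqref{lv3}; \eqref{lv4} is then immediate because $(\gf(\Tglbgax),\GF(\Tglbgax))\eqd(\gf(\Tqq{\glbga}),\GF(\Tqq{\glbga}))$ by \eqref{tpax}. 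For \eqref{lc1}--\eqref{lc2}: since $\sum_\bga\gf(\Tglbgax)$ converges in $L^2$ and $N_\gl\in L^2$, $\Cov(\GF(\Tgl),N_\gl)=\sum_\bga\Cov(\gf(\Tglbgax),N_\gl)$; in the Poisson model the strings with prefix $\bga$ and the strings without it are independent Poisson thinnings, and $\Tglbgax$ depends only on the former while $N_\gl-\Nglbga$ depends only on the latter, so $\Cov(\gf(\Tglbgax),N_\gl)=\Cov(\gf(\Tglbgax),\Nglbga)$; as $\abse{\Tglbgax}=\Nglbga\sim\Po(\glbga)$ (by \eqref{byx} and \eqref{bamse}), the pair $(\gf(\Tglbgax),\Nglbga)$ has the law of $(\gf(\Tqq{\glbga}),N_{\glbga})$, giving \eqref{lc1}, and expanding the covariance gives \eqref{lc2}; here $|\Cov(\gf(\Tqq{\glbga}),N_{\glbga})|\le\norm{\gf(\Tqq{\glbga})}_2\norm{N_{\glbga}}_2=O(P(\bga)^{3/2})$, summable by \eqref{Qrr}.

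The step I expect to be the main nuisance is the absolute-convergence bookkeeping for the double sums \eqref{lv1}--\eqref{lv2}: for a comparable pair $\bga\prec\bgb=\bga\bgg$ the crude \CS{} bound $|\Cov(\gf(\Tglbga),\gf(\Tglbgb))|\le C_\gl P(\bga)P(\bgb)$ is \emph{not} summable over $\bgg$ (since $\sum_\bgg P(\bgg)=\infty$), so absolute convergence cannot be argued term by term; one really must perform the regrouping of the previous paragraph — sum over the descendants of $\bga$ first, producing $\GF(\Tglbga)-\gf(\Tglbga)$, and only then over $\bga$ — which lands everything on sums dominated by $\sum_\bga P(\bga)^2<\infty$. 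Granting the results quoted from \refLs{L0}, \ref{L1}, \ref{LM1} and the independence facts of \refSS{SSfringe2}, the remainder is routine verification.
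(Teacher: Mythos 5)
Your overall architecture matches the paper's: expand $\GF(\Tgl)=\sum_{\bga\in\cAx}\gf(\Tglbga)$, control the level-truncated partial sums in $L^2$ via \refL{L0}\ref{L0m} and independence of the modified fringe tries at a fixed level, regroup comparable pairs by the prefix relation to get \eqref{lv3}--\eqref{lv4}, and use Poisson thinning plus $\abse{\Tglbgax}=\Nglbga$ for \eqref{lc1}--\eqref{lc2}. Those parts are correct and are essentially the paper's argument (its level sums $Y_k=\sum_{|\bga|=k}\gf(\Tglbga)$ play the role of your $S_M$, and its treatment of $\Cov(\GF(\Tgl),N_\gl)$ is the same thinning computation), and your bounds $C_\gl P(\bga)^2$ and $O(P(\bga)^{3/2})$ do give absolute convergence of all the \emph{single} sums.

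The genuine gap is the absolute convergence of the \emph{double} sums in \eqref{lv1}--\eqref{lv2}, which is part of the statement and is also what licenses your rearrangements. Your $L^2$ convergence of $S_M$ only yields $\Var\GF(\Tgl)$ as the limit of the square partial sums $\sum_{|\bga|,|\bgb|\le M}\Cov\bigpar{\gf(\Tglbga),\gf(\Tglbgb)}$; passing from this to the unordered double sum of \eqref{lv1}, and then to the iterated, prefix-grouped sum behind \eqref{lv3}, requires either absolute convergence of the double series or a separate uniform (dominated-convergence) argument for the inner sums as $M\to\infty$ — and your closing paragraph supplies neither: "sum over the descendants of $\bga$ first, then over $\bga$" only shows that the iterated sum converges, which is weaker than what the lemma asserts and does not by itself justify the interchange. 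The device you are missing is the paper's trick of running the argument with the toll function $|\gf|$: since $|\gf|(\Ti)=0$ and $\E|\gf(\Tgl)|^2<\infty$, \refL{L1} shows that the additive functional with toll $|\gf|$, namely $\sum_{\bga\in\cAx}|\gf(\Tglbga)|$, lies in $L^2$, whence by Tonelli $\sum_{\bga,\bgb\in\cAx}\E\bigabs{\gf(\Tglbga)\gf(\Tglbgb)}=\E\bigsqpar{\bigpar{\sum_{\bga\in\cAx}|\gf(\Tglbga)|}^2}<\infty$ (the paper reaches this by applying its $Y_k$ computation to $|\gf|$). Combined with \eqref{lm0}, this gives the claimed absolute convergence of \eqref{lv1}--\eqref{lv2}, and only then do the regroupings you perform (incomparable pairs vanish; the descendants of a fixed $\bga$ sum to $\GF(\Tglbga)$) become legitimate; with that ingredient added, the rest of your argument goes through.
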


\begin{remark}\label{RLV}
 Analoguous formulas for the covariance $\Cov\bigpar{\GF_1(\Tgl),\GF_2(\Tgl)}$
 for two toll functions $\gf_1$ and $\gf_2$ with $\gf_1(\Ti)=\gf_2(\Ti)=0$
 follow immediately by polarization in \eqref{lv1}--\eqref{lv4}; 
the details are omitted.

 However, note that these formulas for variance and covariance do not
 include the case $\gfo(T):=\indic{T=\Ti}$ with $\GFo(T)=\abse{T}$, see
 \refE{Eleaves}.
 (It is easily checked that \eg{} \eqref{lv4} and \eqref{lc1}
fail for $\gfo$.) Hence, separate formulas are given in \refL{LV} for
the covariance with $\GFo(\Tgl)=N_\gl$. 
\end{remark}

\begin{proof}[Proof of \refL{LV}]
Let, recalling \eqref{tpaxx},
  \begin{align}\label{Xbga}
    \Xbga:=\gf(\Tglbga)=\gf(\Tglbgax)
    \eqd \gf(\Tqq\glbga),
  \end{align}
and define, for $k\ge0$,
  \begin{align}\label{Yl}
    Y_k:=\sum_{|\bga|=k} \Xbga.      
  \end{align}
By \eqref{Xbga} and \refL{L0}\ref{L0m}, applied to $\gf^2$ and  with $m=2$,
\cf{} \eqref{P2},
  \begin{align}\label{bill}
    \E \Xbga^2
    =  \E|\gf(\Tqq\glbga)|^2 \le C_\gl P(\bga)^2.
  \end{align}
  Furthermore, for any $k\ge0$,
  the random variables $\Xbga$ with $|\bga|=k$ are
  independent.
Hence, using \eqref{bill} and \eqref{Qkm}, 
  \begin{align}\label{bull}
 \Var Y_k=\sum_{|\bga|=k}\Var \Xbga
    \le   \sum_{|\bga|=k}\E \Xbga^2
    \le    \sum_{|\bga|=k}C_\gl P(\bga)^2
    =C_\gl \gQ 2^k.
  \end{align}
Since $\gQ 2<1$ by \eqref{Qk}, it follows from \eqref{GFbga}, \eqref{Xbga}
and \eqref{Yl}  
that
\begin{align}\label{mans}
  \GF(\Tgl)-\E\GF(\Tgl)=\sumbga\bigpar{\Xbga-\E \Xbga}
  =\sumk\bigpar{Y_k-\E Y_k},
\end{align}
where the sums converge absolutely in $L^1$ 
since $\sumaa\norm{X_\bga}_1<\infty$ 
by \eqref{lm0},
and the final  sum converges absolutely in $L^2$ by \eqref{bull},
\ie,
$  \sumk\norm{Y_k-\E Y_k}_2<\infty$.
(The first sum does not always
converge absolutely in $L^2$; this is why we introduce $Y_k$.)
Hence,
\begin{align}\label{moa}
  \Var \GF(\Tgl)
  =
  \sum_{k,\ell\ge0} \Cov(Y_k,Y_\ell)
  <\infty
\end{align}
with absolute convergence.

Suppose temporarily that $\gf\ge0$. Then 
$\sumk\E Y_k =\sumaa \E \Xbga<\infty$
by \eqref{Yl}, \eqref{Xbga}, and \eqref{lm0}, and  thus \eqref{moa} yields
\begin{align}\label{mob}
  \sum_{\bga,\bgb\in\cAx}\E\bigsqpar{\Xbga\Xbgb}
  =\sum_{k,\ell}\E\bigsqpar{Y_kY_\ell}
    =\sum_{k,\ell}\bigpar{\Cov(Y_k,Y_\ell)+\E Y_k\E Y_\ell}
<\infty.
\end{align}
Returning to a general $\gf$, we apply \eqref{mob} to $|\gf|$, and find
\begin{align}
  \label{moc}
    \sum_{\bga,\bgb\in\cAx}\E\bigabs{\Xbga\Xbgb}<\infty.
\end{align}
We see from \eqref{moc} and \eqref{lm0} that the sums in \eqref{lv2} are
absolutely convergent. It follows that so is the sum in \eqref{lv1}, and that
it equals \eqref{lv2}; furthermore, recalling \eqref{Xbga} and \eqref{Yl},
this sum equals
$\sum_{k,\ell}\Cov(Y_k,Y_\ell)$. Hence \eqref{moa} implies \eqref{lv1} and
\eqref{lv2}. 

Next, rewrite \eqref{moa} as
\begin{align}\label{moab}
  \Var\GF(\Tgl) = 2\sum_{0\le k\le \ell}  \Cov(Y_k,Y_\ell)-\sumk\Var Y_k.
\end{align}
Let $k\le \ell$. If $\bga\in\cA^k$ and $\bgb\in\cA^\ell$ and $\bga$ is not a
prefix of $\bgb$, then $X_\bga$ and $X_\bgb$ are independent.
Thus,
\begin{align}\label{moac}
  \Cov(Y_k,Y_\ell)
  =\sum_{\bga\in\cA^k,\,\bgb\in\cA^\ell}\Cov(X_\bga,X_\bgb)
  =\sum_{\bga\in\cA^k,\,\bgg\in\cA^{\ell-k}}\Cov(X_\bga,X_{\bga\bgg}).
\end{align}
Hence, for any $k\ge0$, recalling \eqref{Xbga} and absolute convergence
in \eqref{lv1},
\begin{align}\label{moad}
\sum_{\ell\ge k}  \Cov(Y_k,Y_\ell)
  =\sum_{\bga\in\cA^k,\,\bgg\in\cAx}\Cov(X_\bga,X_{\bga\bgg}),
\end{align}
with absolute convergence, also when summed over $k$.
Furthermore, by \eqref{mans} applied to $\Tglbga$,
\begin{align}\label{mansan}
  \GF(\Tglbga)-\E\GF(\Tglbga)
  =\sumj \sum_{|\bgg|=j}\bigpar{X_{\bga\bgg}-\E X_{\bga\bgg}}
\end{align}
with the sum over $j$ converging in $L^2$. Thus, for each $\bga$,
\begin{align}
  \Cov(\gf(\Tglbga),\GF(\Tglbga))
  =\sumj \sum_{|\bgg|=j}\Cov(X_\bga,X_{\bga\bgg})
  =
  \sum_{\bgg\in\cAx}\Cov(X_\bga,X_{\bga\bgg}).
\end{align}
Hence, \eqref{moad} yields,
with absolute convergence, also when summed over $k$,
\begin{align}
\sum_{\ell\ge k}  \Cov(Y_k,Y_\ell)
    =\sum_{\bga\in\cA^k}\Cov(\gf(\Tglbga),\GF(\Tglbga)).
\end{align}
Consequently, \eqref{lv3} follows from \eqref{moab} and \eqref{bull}.
Finally, \eqref{lv4} follows from \eqref{lv3} by \eqref{tpax}.

For the covariance, let
$\psi(T):=\gf(T)\abse{T}$, so that
\begin{align}\label{mmix}
\E\psi(\Tgl)
  =\E\bigsqpar{\gf(\Tgl)\abse{\Tgl}}
=\E\bigsqpar{\gf(\Tgl)N_\gl}.
\end{align}
Since $\E|\gf(\Tgl)|^2<\infty$ and $\E N_\gl^2<\infty$, the \CSineq{} implies
that $\E|\psi(\Tgl)|<\infty$.
Hence \refL{L1} applies to both $\gf$ and $\psi$, which shows that both sums
in \eqref{lc2} converge absolutely. Thus so does the sum in \eqref{lc1}.

For any $\bga\in\cAx$, by properties of the Poisson distribution, the two set of
strings $\set{\Xi\sssk:k\le N_\gl \text{ and } \Xi\sssk\not\succ\bga}$ 
and $\set{\Xi\sssk:k\le N_\gl \text{ and } \Xi\sssk\succ\bga}$ 
are independent. Consequently,
$N_\gl-N_{\gl,\bga}$ is independent of $N_{\gl,\bga}$ and of
$\Tglbgax$
and thus of $\Xbga=\gf(\Tglbgax)$.
Hence, 
recalling \eqref{byx}--\eqref{tpax},
\begin{align}\label{skm}
  \Cov\bigpar{\Xbga,N_\gl}
&=
  \Cov\bigpar{\Xbga,N_{\gl,\bga}}
=\Cov\bigpar{\gf(\Tglbgax),\abse{\Tglbgax}}
\notag\\&
=
  \Cov\bigpar{\gf(\Tqq{\glbga}),\abse{\Tqq{\glbga}}}
\notag\\&
=\E{\psi(\Tqq{\glbga})}
-\E{\gf(\Tqq{\glbga})}\glbga.
\end{align}
We sum \eqref{skm} first over $\bga\in\cA^k$, and then over $k\ge0$, using
the $L^2$ convergence in \eqref{mans}, and obtain \eqref{lc1}--\eqref{lc2}.
\end{proof}

\begin{lemma}
  \label{LA}
  Suppose that $\gf$ is a toll function such that
  as \gltoo,
  \begin{align}
    \Var \gf(\Tgl) = O\bigpar{\gl^{1-\eps}}\label{la1}
  \end{align}
  for some $\eps>0$. Then
  \begin{align}
    \Var \GF(\Tgl) = O\bigpar{\gl}, \qquad \gl\in(0,\infty).\label{la2}
  \end{align}
\end{lemma}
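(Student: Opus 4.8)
The plan is to reduce to the case $\gf(\Ti)=0$, derive the one‑step variance recursion in the Poisson model, and close it by an induction on $\gl$ whose success hinges on the strict inequality $\gQ{1-\eps/2}=\suma p_\ga^{1-\eps/2}>1$. For the reduction, I may assume $0<\eps\le1$, and I would replace $\gf$ by the toll function $\gf-\chi\gfo$, where $\chi:=\gf(\Ti)$ and $\gfo$ is as in \refE{Eleaves}: it vanishes on $\Ti$, its additive functional is $\GF-\chi\GFo$ which at $\Tgl$ equals $\GF(\Tgl)-\chi N_\gl$ by \eqref{gF00}, and $\Var\bigpar{\chi\gfo(\Tgl)}=\chi^2\gl e^{-\gl}(1-\gl e^{-\gl})$ decays exponentially, so \eqref{la1} still holds for it. Since $\Var\GF(\Tgl)\le 2\Var\bigpar{\GF(\Tgl)-\chi N_\gl}+2\chi^2\gl$, it suffices to prove \eqref{la2} when $\gf(\Ti)=0$, which I now assume.

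Next I would record the recursion. By \eqref{la1}, $\E\gf(\Tgl)^2<\infty$ for large $\gl$, hence for all $\gl>0$ by \refL{L0}\ref{L0a}, and then $\E\GF(\Tgl)^2<\infty$ for all $\gl>0$ by \refL{L1}. Writing $u(\gl):=\Var\gf(\Tgl)$ and $V(\gl):=\Var\GF(\Tgl)$, I apply \eqref{toll} to $\Tgl$ and use $\GF(\Ti)=0$ to get $\GF(\Tgl)=\gf(\Tgl)+\suma\GF(\Tglgax)$, where the branches $\Tglgax$, $\ga\in\cA$, are independent with $\Tglgax\eqd\Tqq{p_\ga\gl}$ (see \refSS{SSfringe2}). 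Expanding the variance and applying the \CSineq{} to the cross terms then yields
\begin{equation}\label{plan-rec}
  V(\gl)\le u(\gl)+\suma V(p_\ga\gl)+2\sqrt{u(\gl)}\suma\sqrt{V(p_\ga\gl)},\qquad\gl>0.
\end{equation}

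The core is the induction. Fix $C_1$ and $\gl_0\ge1$ with $u(\gl)\le C_1\gl^{1-\eps}$ for $\gl\ge\gl_0$, and set $\theta:=\gQ{1-\eps/2}-1$, which is $>0$ because $1-\eps/2<1$ and every $p_\ga<1$. I would then choose $L\ge\gl_0$ so large that $\pmin^{\eps/2-1}L^{-\eps/2}\le\tfrac12$, put $B:=e^{L/\pmin}\E\GF\bigpar{\Tqq{L/\pmin}}^2<\infty$ (an upper bound for $V$ on $[0,L/\pmin]$, from \eqref{l000} applied to the functional $\GF^2$), and pick $C\ge 2B/L$ large enough that $C\theta\ge C_1+2\gQ{1/2}\sqrt{CC_1}$. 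The claim to prove by induction on scales is $V(\gl)\le C\gl-C\gl^{1-\eps/2}$ for all $\gl\ge L$. The base case $\gl\in[L,L/\pmin]$ follows from $C\gl-C\gl^{1-\eps/2}\ge CL-C\pmin^{\eps/2-1}L^{1-\eps/2}\ge\tfrac12CL\ge B\ge V(\gl)$. For the step, if the bound holds on $[L,\gl_1]$ with $\gl_1\ge L/\pmin$ and $\gl\in(\gl_1,\gl_1/\pmax]$, then $p_\ga\gl\in[L,\gl_1]$ for every $\ga$, so feeding the bound into \eqref{plan-rec} (using the crude form $V(p_\ga\gl)\le Cp_\ga\gl$ in the last term) together with $u(\gl)\le C_1\gl^{1-\eps}\le C_1\gl^{1-\eps/2}$, $\suma p_\ga=1$ and $\suma p_\ga^{1-\eps/2}=\gQ{1-\eps/2}$ (by \eqref{Qkm}) gives
\begin{equation*}
  V(\gl)\le C\gl-\gl^{1-\eps/2}\bigpar{C\gQ{1-\eps/2}-C_1-2\gQ{1/2}\sqrt{CC_1}}\le C\gl-C\gl^{1-\eps/2},
\end{equation*}
the last inequality being exactly $C\theta\ge C_1+2\gQ{1/2}\sqrt{CC_1}$. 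Hence $V(\gl)\le C\gl$ for $\gl\ge L$, while for $0<\gl\le L$ one has $V(\gl)\le B$ and, since $\GF(\emptyset)=\GF(\Ti)=0$, $V(\gl)\le\E\GF(\Tgl)^2=O(\gl^2)$ as $\gl\to0$ by \eqref{l000}; so $V(\gl)=O(\gl)$ on all of $(0,\infty)$, and undoing the reduction gives \eqref{la2} in general.

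The main obstacle is that \eqref{plan-rec} carries no contraction, since $\suma p_\ga=1$ exactly; iterating it naively produces only a bound of order $\gl\log\gl$ or worse, so $\Var\gf(\Tgl)=O(\gl)$ alone would not suffice and the $\eps>0$ must be used. The device that works is to carry the subtracted term $C\gl^{1-\eps/2}$: under \eqref{plan-rec} it is magnified to $C\gQ{1-\eps/2}\gl^{1-\eps/2}$ with $\gQ{1-\eps/2}>1$, which leaves a genuine surplus $\theta C\gl^{1-\eps/2}$ to absorb the $O(\gl^{1-\eps/2})$ contributions of the toll. The only remaining labour is bookkeeping: choosing the interdependent constants $L$, $B$, $C$ so that the base case and the inductive step close simultaneously.
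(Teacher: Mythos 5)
Your argument is correct, but it takes a genuinely different route from the paper's proof of \refL{LA}. The paper argues ``horizontally'': it groups the toll contributions by slices, setting $Z_{\gl,k}:=\sum_{P(\bga)\in J_k}\gf(\Tglbgax)$ with $J_k:=(\pmax^{k+1},\pmax^k]$; within one slice no string is a prefix of another, so the summands are independent and $\sum_{P(\bga)\in J_k}P(\bga)\le1$, which together with the two toll bounds ($O(\gl^{1-\eps})$ for large and $O(\gl^{2})$ for small argument, the latter from \refL{L0}) gives $\Var Z_{\gl,k}\le C\gl\min\bigset{(\gl\pmax^{k})^{-\eps},\gl\pmax^{k}}$, and a single application of Minkowski's inequality over $k$ sums two geometric series and yields $O(\gl)$ uniformly on $(0,\infty)$, with no recursion or induction. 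You instead argue ``vertically'' at the root, which is essentially the strategy of the paper's \refL{Lr} (the decomposition \eqref{qff} there) specialized to second moments: variances add exactly over the independent branches, so no Rosenthal--Pinelis input is needed, but — as you correctly diagnose — the naive ansatz $C\gl$ does not close because $\suma p_\ga=1$; your corrected ansatz $C\gl-C\gl^{1-\eps/2}$, exploiting $\gQ{1-\eps/2}>1$, supplies the missing contraction and absorbs both the toll variance and the \CS{} cross term, and your separate treatment of $\gl\le L$ via \refL{L0} and \refL{L1} recovers the uniform statement on $(0,\infty)$; the constants $L$, $B$, $C$ can indeed be fixed in that order without circularity. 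What each approach buys: the paper's slice argument is shorter and avoids any bootstrapping of constants, while yours is self-contained at the level of the one-step recursion, parallels the proof of \refL{Lr} (so the two moment bounds come from one template), and makes transparent exactly where $\eps>0$ enters — consistent with \refR{Reps0}, which shows the conclusion fails if one only assumes $\Var\gf(\Tgl)=O(\gl)$.
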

\begin{proof}\CCreset
  By subtracting a suitable multiple of $\gfo(T):=\indic{T=\Ti}$ from $\gf$,
  we may assume that $\gf(\Ti)=0$. (Because $\GFo(\Tgl)=N_\gl$ satisfies
  \eqref{la2}.)

By \eqref{la1}, there exist $\gl_0$ and $\CCname{\CCkanga}$ such that, for
$\gl\ge\gl_0$,
  \begin{align}\label{kanga}
    \Var \gf(\Tgl)\le \CCx\gl^{1-\eps}
  \end{align}
\refL{L0}\ref{L0m} applies to $\gf^2$, with $\gl_2=\gl_0$ and $m=2$, and
shows that, for $\gl\le\gl_0$,
  \begin{align}\label{ru}
    \Var \gf(\Tgl)
\le 
    \E [\gf(\Tgl)^2]
\le
\CCname{\CCru}\gl^{2}.
  \end{align}
It follows that, perhaps after increasing $\CCkanga$ and $\CCru$,
\eqref{kanga} and \eqref{ru} both hold for all $\gl\in(0,\infty)$.
  
Let $J_k:= (\pmax^{k+1},\pmax^k]$, $k\ge0$,
and define, recalling \eqref{tpaxx}, 
  \begin{align}\label{z1}
    Z_{\gl,k}:=\sum_{P(\bga)\in J_k}\gf(\Tglbga)
    =\sum_{P(\bga)\in J_k}\gf(\Tglbgax).
  \end{align}
Thus, we have by \eqref{GFbga} the decomposition
  \begin{align}\label{z2}
    \GF(\Tgl)=\sumk Z_{\gl,k}.
  \end{align}
 If $\bga$ is a prefix of $\bgb$ and $\bga\neq\bgb$, then
$P(\bgb)\le\pmax P(\bga)$, and thus $P(\bga)$ and $P(\bgb)$ cannot both
belong to the same $J_k$. Hence, the modified fringe tries $\Tglbgax$
are
independent for all $\bga$ with $P(\bga)\in J_k$.
Consequently,
using \eqref{tpaxx},
  \begin{align}\label{z3}
    \Var Z_{\gl,k}=\sum_{P(\bga)\in J_k}\Var \gf(\Tglbgax)
    =\sum_{P(\bga)\in J_k}\Var \gf(\Tqq{\glbga}).
  \end{align}
By definition, $P(\bga)$ is the probability that the random string $\Xi$ has
$\bga$ as a prefix; hence $\sum_{P(\bga)\in J_k}P(\bga)$ is the expected
number of prefixes $\bga$ in $\Xi$ with $P(\bga)\in J_k$. Since none of
these strings $\bga$ is a prefix of another, as just seen, $\Xi$ can contain at
most one such prefix. Hence,
\begin{align}\label{sump1}
  \sum_{P(\bga)\in J_k}P(\bga)\le 1.
\end{align}
Combining \eqref{z3} with \eqref{kanga} and \eqref{sump1}, we obtain
\begin{align}
  \Var Z_{\gl,k}
  &\le
    \sum_{P(\bga)\in J_k}\CCkanga(\glbga)^{1-\eps}
  =\CCkanga\gl^{1-\eps}     \sum_{P(\bga)\in J_k}P(\bga)^{1-\eps}
\notag\\  &
    \le \CCkanga\gl^{1-\eps} \pmax^{-\eps(k+1)} \sum_{P(\bga)\in J_k}P(\bga)
          \le  \CC\gl\bigpar{\gl \pmax^{k}}^{-\eps}.\label{pu+}
\end{align}
Similarly, using instead \eqref{ru},
\begin{align}
  \Var Z_{\gl,k}
  &\le
    \sum_{P(\bga)\in J_k}\CCru(\glbga)^{2}
  =\CCru\gl^{2}     \sum_{P(\bga)\in J_k}P(\bga)^2
\notag\\  &
    \le \CCru\gl^{2} \pmax^{k} \sum_{P(\bga)\in J_k}P(\bga)
          \le  \CCru\gl\bigpar{\gl \pmax^{k}}.\label{pu-}
\end{align}
By \eqref{z2} and Minkowski's inequality, \eqref{pu+}--\eqref{pu-} imply
\begin{align}
  \bigpar{\Var\GF(\Tgl)}\qq
&  \le \sumk  \bigpar{\Var Z_{\gl,k}}\qq
  \notag\\&
  \le \CC\gl\qq \sumk
   \min\bigset{\bigpar{\gl \pmax^{k}}^{-\eps/2},\bigpar{\gl \pmax^{k}}\qq}
  \notag\\&  \le \CC\gl\qq,
\end{align}
since the last sum is dominated by the sum of two convergent geometric series,
uniformly in $\gl$. This shows \eqref{la2}.
\end{proof}

\begin{remark}\label{Reps0}
 We cannot take $\eps=0$ in \eqref{la1} and assume only $\Var\gf(\Tgl)=O(\gl)$.
  A counter example is $\gf(T)=\abse{T}\indic{\abse{T}\ge2}$;
  then $\GF(T)$ is the  external path length,
  and $\Var \GF(\Tgl)$ is of order $\gl\ln^2\gl$, see
  \cite[Lemma 12]{JR88}.
\end{remark}

\begin{proof}[Proof of \refT{TEVC}]
We prove the theorem in two steps, first in the special case
$\chi=\gf(\Ti)=0$, and then in general.

\setcounter{stepp}{0}
\stepp{$\chi=\gf(\Ti)=0$.}\label{step0}
First, \eqref{bze}--\eqref{bzv} show that $\E|\gf(\Tgl)|$ and $\E|\gf(\Tgl)|^2$
are finite for large $\gl$, and thus for all $\gl>0$ by \refL{L0}.
Hence,  \refLs{LM1} and \ref{LV} apply for any $\gl$.
By \eqref{fE0}--\eqref{fC0}, we can write \eqref{lm1}, \eqref{lv4} and
\eqref{lc1} as
\begin{align}
    \E \GF(\Tgl) = \sum_{\bga\in\cAx}\fE(\glbga),\label{KE}
    \\
   \Var \GF(\Tgl) = \sum_{\bga\in\cAx}\fV(\glbga),\label{KV}
    \\
   \Cov\bigpar{ \GF(\Tgl),N_\gl} = \sum_{\bga\in\cAx}\fC(\glbga),\label{KC}
\end{align}
with absolute convergence; in particular the \lhs{s} are finite and so are
(taking the term $\bga=\emptystring$ in the sums)
$\fE(\gl),\fV(\gl),\fC(\gl)$ for every $\gl>0$.

These equations are all instances of \eqref{Fgl} in \refT{Tsum} in
\refApp{A242},
and we verify the conditions of that theorem.
First, we may write also $\fV$ and $\fC$  using only expectations of
functionals of $\Tgl$:
\begin{align}\label{fVE}
  \fV(\gl)&
=2\E\bigsqpar{\gf(\Tgl)\GF(\Tgl)}
  -2\E\gf(\Tgl)\E\GF(\Tgl)-\E\gf(\Tgl)^2+\bigpar{\E\gf(\Tgl)}^2 ,
\\
  \fC(\gl)&
=\E\bigsqpar{\gf(\Tgl)N_\gl}-\gl\E\gf(\Tgl)
. \label{fCE}
\end{align}
All expectations in \eqref{fE0} and \eqref{fVE}--\eqref{fCE} are finite by
\eqref{bze}--\eqref{bzv} and \refL{L0}, \eqref{KE}--\eqref{KV}, and
the \CSineq.
Hence, it follows from the general formula \eqref{l000} that
$\fE,\fV,\fC$ are all continuous, and in fact, entire analytic.
Furthermore, it follows from \refL{L0}\ref{L0m} that the expectations in
\eqref{fE0}, \eqref{fVE} and \eqref{fCE} all are $O(\gl^2)$ for $\gl\le1$,
and thus \eqref{b1} holds for $\fE,\fV,\fC$.
Equivalently, the entire functions $\fX$ satisfy 
\begin{align}\label{ris}
\fX(0)=\fX'(0)=0.  
\end{align}

Next, $\fE$ satisfies \eqref{b2} by the assumption \eqref{bze}.
Furthermore,
\refL{LA} applies by \eqref{bzv} and yields
$\Var\GF(\Tgl)=O(\gl)$. 
Also, $\Var N_\gl=\gl$.
Hence \eqref{fV0}--\eqref{fC0},
\eqref{bzv} and the \CSineq{} 
yield
\begin{align}\label{baq}
  \fV(\gl),\fC(\gl)=O\bigpar{\gl^{1-\eps/2}}
\end{align}
for large $\gl$, and thus for $\gl\ge1$ (since $\fV$ and $\fC$ are
continuous and thus bounded on finite intervals). In other words,
\eqref{b2} holds for $\fV$ and $\fC$, with $\eps$
replaced by $\eps/2$.

Hence, \refT{Tsum}\ref{Tsum0}--\ref{Tsumcont}
apply to $\fE,\fV,\fC$.
Furthermore, if $\dA>0$ and 
$\fX'(\gl)=O(\gl^{-\eps_1})$ as \gltoo,
then also \refT{Tsum}\ref{Tsumabsconv} applies;
note that $\fX'(\gl)=O(\gl)$ as $\gl\to0$ by \eqref{ris}.
In particular, this is always the case for $\fE$ (when $\dA>0$), since 
it follows from
\eqref{tex} (which will be proved below) and
\eqref{baq} that 
$\fE'(\gl)=O\bigpar{\gl^{-\eps/2}}$ for $\gl\ge1$.

Consequrently, the results in \ref{TEVC0} and \ref{TEVCd} follow from
\refT{Tsum}.

Finally, if $\gf\ge0$ and $\gf(T')>0$ for some trie $T'$, then
$\fE(\gl)=\E\gf(\Tgl)>0$ for every $\gl>0$ by \eqref{l000}; hence
\ref{TEVC+} follows
by \refT{Tsum}\ref{TsumTheta}.

\stepp{The general case.} \label{stepgeneral}
Consider the toll function 
\begin{align}
  \label{japan}
\gff:=\gf-\chi\gfo
\end{align}
and the corresponding
additive functional $\GFF$; note that $\gff(\Ti)=0$.
Then
\begin{align}\label{kina}
  \GFF(\Tgl)=\GF(\Tgl)-\chi\GFo(\Tgl)
=\GF(\Tgl)-\chi N_\gl.
\end{align}
Define,
for $\sX=\sE,\sV,\sC$ as usual,
 $\fXp$
by \eqref{fE0}--\eqref{fC0} for the functionals $\gff$
and $\GFF$, and define 
$\psiXx$ by \eqref{psiX0}--\eqref{tevcfou} with $\fXp$.
Then, by the case just proved, 
\eqref{uE+}--\eqref{uC+} hold for moments of $\GFF(\Tgl)$, if we omit the
terms $\chi$ or $\chi^2$ and replace $\psiX$ by $\psiXx$.
Hence, using \eqref{kina} and
recalling $\E N_\gl=\Var N_\gl=\gl$,
  \begin{align}
\frac{\E\GF(\Tgl)}{\gl}
&=\frac{\E\GFF(\Tgl)+\chi \E N_\gl}{\gl}
= \frac1 H \psiEx(\ln\gl) +\chi+ o(1),\label{uE++}
    \\
\frac{\Var\GF(\Tgl)}{\gl}
&
=\frac{\Var\GFF(\Tgl)+ 2\chi\Cov(\GFF(\Tgl),N_\gl)+\chi^2\Var N_\gl}{\gl}
\notag\\&
= \frac1 H \psiVx(\ln\gl) + 2\chi \frac1 H \psiCx(\ln\gl) 
+\chi^2+ o(1),\label{uV++}
    \\
\frac{\Cov\bigpar{\GF(\Tgl),N_\gl}}{\gl}
&=
\frac{\Cov\bigpar{\GFF(\Tgl),N_\gl}+\chi\Var N_\gl}{\gl}
\notag\\&
= 
\frac1 H \psiCx(\ln\gl)+\chi + o(1).\label{uC++}
  \end{align}
This proves
\eqref{uE+}--\eqref{uC+} 
(and thus \eqref{uE0}--\eqref{uC0} when $\dA=0$)
if we define
\begin{align}\label{psi+}
  \psiE&:=\psiEx,&
\psiV&:=\psiVx+2\chi\psiCx,&
\psiC&:=\psiCx,
\end{align}
which agrees with \eqref{psiX0}--\eqref{tevcsum} if we have
\begin{align}\label{f+}
  \fE&=\fEp,&
\fV&=\fVp+2\chi\fCp,&
\fC&=\fCp.
\end{align}
It remains to verify that \eqref{f+} agrees with the definitions
\eqref{fEx}--\eqref{fCx}.
In fact, \eqref{f+} yields,
by \eqref{fE0}--\eqref{fC0} and \eqref{japan}--\eqref{kina},
\begin{align}\label{fEp}
\fE(\gl)&=  \fEp(\gl)
=\E \gff(\Tgl)
=\E \gf(\Tgl)-\chi\E\gfo(\Tgl),
\\
\fV(\gl)&=
\fVp(\gl)+2\chi\fCp(\gl)
\notag\\&
=2\Cov\bigpar{\gff(\Tgl),\GFF(\Tgl)}
-\Var\bigpar{\gff(\Tgl)}
+2\chi\Cov\bigpar{\gff(\Tgl), N_\gl}
\notag\\&
=2\Cov\bigpar{\gff(\Tgl),\GF(\Tgl)}
-\Var\bigpar{\gf(\Tgl)-\chi\gfo(\Tgl)}
\notag\\&
=2\Cov\bigpar{\gf(\Tgl),\GF(\Tgl)}
-2\chi\Cov\bigpar{\gfo(\Tgl),\GF(\Tgl)}
\notag\\&\qquad
-\Var{\gf(\Tgl)}
+2\chi\Cov\bigpar{\gfo(\Tgl),\gf(\Tgl)}
-\chi^2\Var\gfo(\Tgl),
              \\
\fC(\gl)&=
\fCp(\gl)=\Cov\bigpar{\gf(\Tgl),N_\gl}-\chi\Cov\bigpar{\gfo(\Tgl),N_\gl}.
\label{fCp}
\end{align}
Furthermore, recalling 
that
$\gfo(\Tgl)=\indic{N_\gl=1}$,
we have
\begin{align}
  \E\gfo(\Tgl)&=\P(N_\gl=1)=\gl e^{-\gl},\label{ell1}
\\
\Var \gfo(\Tgl)&=\gl e^{-\gl}\bigpar{1-\gl e^{-\gl}},\label{ell2}
\\
\Cov\bigpar{\gfo(\Tgl),N_\gl}&=\P(N_\gl=1)-\P(N_\gl=1)\gl
=(1-\gl)\gl e^{-\gl}.\label{ell3}
\end{align}
Also, since $\GF(\Ti)=\gf(\Ti)$ and thus
$\gfo(T)\bigpar{\gf(T)-\GF(T)}=0$ for every $T$, we have
\begin{align}\label{ell4}
\Cov\bigpar{\gfo(\Tgl),\gf(\Tgl)-\GF(\Tgl)}&
=-\E \gfo(\Tgl) \E\bigpar{\gf(\Tgl)-\GF(\Tgl)}
\notag\\&
=-\gl e^{-\gl}\bigpar{\E \gf(\Tgl)-\E\GF(\Tgl)}.
\end{align}
Combining \eqref{fEp}--\eqref{fCp} and \eqref{ell1}--\eqref{ell4}, we obtain
\eqref{fEx}--\eqref{fCx} after simple calculations.

The assertion on absolute convergence of the Fourier series follows as in the
case $\chi=0$ from \refT{Tsum}\ref{Tsumabsconv}, using \eqref{f+} to verify 
$\fX(\gl)=O(\gl^2)$ for $\gl<1$.
 
This completes the proof of \ref{TEVC0}--\ref{TEVCd}.
For \ref{TEVC+}, we note that this was proved in Step~\ref{step0} if
$\chi=0$.  If $\chi>0$, 
we note that $\gff\ge0$ and thus 
$\fE(\gl)=\E\gff(\Tgl)\ge0$; hence $\psiE(t)\ge0$ by \eqref{tevcsum}
and $\inf_t\bigpar{H\qw\psiE(t)+\chi}\ge\chi>0$.
Alternatively, we may simply note that $\GFF\ge0$ and thus \eqref{kina}
implies
$\GF(\Tgl)\ge\chi N_\gl$ and  $\E\GF(\Tgl)\ge\chi\gl$.
\end{proof}

Note that \eqref{ris}--\eqref{baq} and the comments between them justify the
claims on existence of the Mellin transforms $\MfX(s)$ in \refR{Rexists}.

\begin{proof}[Proof of \refL{LEVCx}]  
By the proof of \refT{TEVC}, we have $\fE=\fEp$ and $\fC=\fCp$, so it
suffices to consider the case $\chi=0$. 
In this case,
it follows from \eqref{l000} that 
the derivative of the entire analytic function
$\fE(\gl)$ is, using \eqref{fE0} and \eqref{fC0} (or \eqref{fCE}),
\begin{align}
  \fE'(\gl)
  &=  -\fE(\gl)+ e^{-\gl}\sumni\frac{n\gl^{n-1}}{n!}a_n
  =-\fE(\gl)+\gl\qw\E\bigpar{N_\gl\gf(\Tgl)}
\notag  \\
        &=\frac{-\gl\E\gf(\Tgl)+\E\bigpar{\gf(\Tgl)N_\gl}}{\gl}
  =  \frac{\fC(\gl)}{\gl},\label{kut}
\end{align}
which proves \eqref{tex}.
Furthermore, for $-2<\Re s<-1+\eps/2$ (\cf{} \refR{Rexists}), 
\eqref{kut} and an integration by parts gives, using
\eqref{bze} and $\fE(\gl)=O(\gl^2)$ shown above,
\begin{align}
  \MfC(s)=\intoo \fE'(\gl)\gl^s\dd\gl
  =-s\intoo \fE(\gl)\gl^{s-1}\dd\gl
=-s\MfE(s),
\end{align}
showing \eqref{tex*}.
Finally, \eqref{texpsi} follows from \eqref{tex*} when $\dA=0$, and
otherwise from either 
\eqref{tevcsum} and \eqref{tex}, or
\eqref{tevcfou} and \eqref{tex*};
we omit the details.
\end{proof}

\section{Proof of \refT{TT} -- \refL{LfE}}\label{SpfTT}
Finally, we prove
\refT{TT} and the remaining other results in \refS{Smain}.

\begin{proof}[Proof of \refT{TT}]
  Since $\gf$ and $\gf_\pm$ are bounded, 
\eqref{bze}--\eqref{bzv} hold for $\gf$ and $\gf_\pm$ (with
  $\eps=1$), and thus \refT{TEVC} applies to these functionals. In
  particular, 
\eqref{uV+} (or \refL{LA}) implies
  that \eqref{VFO} holds for $\GF$ and $\GF_\pm$.
  Furthermore, \eqref{Vf} and \eqref{Efr} (for any $r>0$)
  hold trivially for $\gf$ and
 $\gf_\pm$, again because the functionals are bounded.
Moreover, define
\begin{align}
a(\gl)&:=\E\GF(\Tgl),
\\
\label{btto}
  b(\gl)&:=
\Bigpar{\chi^2+ \frac1 H \psiV(\ln\gl)}\gl,  
\end{align}
Then 
\eqref{EF} holds trivially, and
\eqref{VF} holds by 
\eqref{uV+}.
Consequently, \refT{T1} applies, with any $r>2$, 
 and \eqref{t1x} yields
 \begin{align}\label{ttk}
\frac{\GF(\Tgl)-\E\GF(\Tgl)}{\sqrt{\gl}}
  \approxd N\bigpar{0,b(\gl)/\gl},
\end{align}
with  all \absmoment{s},
which by \eqref{btto} is
\eqref{tt+gl} with \eqref{tt+gs}.
In the special case $\dA=0$, this yields
\eqref{tt0gl} with
\eqref{tt0gs}.

Moreover,
define also
\begin{align}\label{ctto}
  c(\gl):=
    \Bigpar{\chi+\frac1 H \psiC(\ln\gl)}\gl.
\end{align}
Then \refT{TEVC} shows also that \eqref{CovFN} holds, and thus
\refT{T1n} applies with any $r>2$.
Hence \eqref{t1na} holds, 
with all \absmoments,
which, recalling \eqref{btto} and
\eqref{ctto}, yields
\eqref{tt+n}, 
with \eqref{tt+hgs}.
In the special case $\dA=0$, this yields
\eqref{tt0n} with \eqref{tt0hgs}.
This proves \ref{TT0} and \ref{TT+}.

\ref{TTE} follows by \eqref{EFn}.

For \ref{TTconv}, suppose that $\liminf_\ntoo\Var\GF(\Tn)/n>0$.
By \eqref{VFn0}, this means that \eqref{bc} holds.
Hence, \eqref{t1nb} holds, which is \eqref{ttn},
with all \absmoments.
Furthermore,
\eqref{VFn0}, \eqref{btto} and \eqref{ctto}
show that, 
\begin{align}\label{ros}
  0<
  \liminf_\ntoo\Var\GF(\Tn)/n&
=
  \liminf_{\ntoo} 
  \bigpar{\chi^2+H\qw\psiV(\ln n)-\bigpar{\chi+H\qw \psiC(\ln n)}^2}
\notag\\&  =
  \inf_{x\in\bbR} \bigpar{\chi^2+H\qw\psiV(x)-\xpar{\chi+H\qw \psiC(x)}^2},
  \end{align}
where the final equality holds because this function of $x$ is continuous
and periodic (and constant  if $\dA=0$).
Hence,
also
\begin{align}
\inf_{\gl>0} b(\gl)/\gl=  \inf_{x\in\bbR}\bigpar{\chi^2+ H\qw\psiV(x)} >0.
  \end{align}
 Consequently,
\eqref{bB} holds and \eqref{t1b} follows, which is \eqref{ttgl},
with all \absmoments.

For \ref{TTmean},
\eqref{ttmean} follows from \eqref{uE+}, and then \eqref{ttmeann} follows by
\eqref{EFn}. 
\end{proof}

\begin{proof}[Proof of \refT{TLLN}]
  \pfitemref{TLLNa}
  It follows from
\refT{TT}, more precisely
\eqref{tt+gl}--\eqref{tt+n}
  together with \ref{TTE}, 
that
\begin{align}\label{tt+glp}
{\frac{\GF(\Tgl)-\E \GF(\Tgl)}{\gl}}
  &\pto0,
  \\\label{tt+np}
  {\frac{\GF(\Tn)-\E \GF(\Tgn)}{n}}
  &\pto0.
\end{align}
The result \eqref{tllna1}--\eqref{tllna2}
now follows from \eqref{uE+} in \refT{TEVC}.

\pfitemref{TLLNb}
In this case, \refT{TEVC}\ref{TEVC+} applies and shows that
$\inf_t\bigpar{H\qw\psiE(t)+\chi}>0$, 
and thus that
$\E \GF(\Tgl)>2c \gl$ for some $c>0$ and all large $\gl$.
Hence, \eqref{tt+np} implies \eqref{stalis}.
\end{proof}

\begin{proof}[Proof of \refL{LC}]
Let $m\ge1$ be such that
$\Var\GF(\Tq m)>0$, \ie, $\GF(\Tq m)$ is not deterministic. 
Let $b:=\max\set{n_0,m}$.

We show first that $\Var\GF(\Tb)>0$.
This is clear if $b=m$, so suppose
 that $\ellb>m$.
Let $\ga$ and $\gb$ be two distinct letters in $\cA$.
Condition on the event $\cE_{m,\ellb}$
that the strings $\Xi\sss1,\dots,\Xi\sss m$ begin with
$\ga$, and $\Xi\sss{m+1},\dots,\Xi\sss \ellb$ begin with $\gb$.
Then the root $\emptystring$ of $\cT_\ellb$ has two children $\ga$ and $\gb$, 
with $m$ and $\ellb-m$ strings passed to them, respectively.
By assumption, $\gf(\Tq \ellb)=a_b$, and thus \eqref{toll} yields
\begin{align}\label{klubba}
  \GF(\Tq \ellb)=a_b+\GF(\Tb^\ga)+\GF(\Tb^\gb),
\end{align}
where (still conditioned on $\cE_{m,\ellb}$)
$\GF(\Tb^\ga)$ and $\GF(\Tb^\gb)$ are independent.
Furthermore $\GF(\Tb^\ga)\eqd\GF(\Tq m)$, which is not deterministic;
hence \eqref{klubba} shows that $\GF(\Tb)$ conditioned on $\cE_{m,\ellb}$ is not
deterministic. Thus $\GF(\Tb)$ (unconditioned)
is not deterministic,
and $\Var \GF(\Tb)>0$ in this case too.

 Consider the bucket trie $\Tn'$ with bucket size $b$ grown from the $n$
 strings $\Xi\sss1,\dots,\Xi\sss{n}$.
 Then $\Tn'$ is a subtree of $\Tn$. Let $M_k$ be the number of buckets
 in $\Tn'$ that contain $k$ strings, $k=1,\dots,b$.
 Recall that $\Tn$ may be obtained from the bucket trie $\Tn'$ by growing a
 small trie from every bucket; denote these small tries by
$T_{ki}$, where $k=1,\dots,b$ and $i=1,\dots,M_k$, so $\abse{T_{ki}}=k$.
Recall also that 
conditioned on $\Tn'$, all these small tries are independent,
and that $T_{ki}$ is a copy of $\Tq{k}$.

Let $I(\Tn')$ denote the set of internal nodes of the bucket trie $\Tn'$.
The, \eqref{GF} implies the decomposition
\begin{align}\label{lolo}
  \GF(\Tn)=\sum_{v\in I(\Tn')}\gf(\Tn^v)+\sum_{k=1}^b\sum_{i=1}^{M_k}\GF(T_{ki}).  
\end{align}
By the construction of the bucket trie,
$\abse{\Tn^v}>b$ for every $v\in I(\Tn')$, and thus, by assumption,
$\gf(\Tn^v)=a_{\abse{\Tn^v}}$. Consequently,  the first sum in \eqref{lolo}
depends on the bucket trie $\Tn'$ but not on the small tries $T_{ki}$.
Consequently, conditioning on the bucket trie,
\begin{align}
  \Var \bigpar{ \GF(\Tn)\mid \Tn'}
  =\sum_{k=1}^b\sum_{i=1}^{M_k}\Var[\GF(T_{ki})]
  =\sum_{k=1}^bM_k\Var[\GF(\Tq{k})].  
\end{align}
Hence,
\begin{align}
  \Var \xpar{ \GF(\Tn)}
&  \ge
\E  \Var \bigpar{ \GF(\Tn)\mid \Tn'}
  =\sum_{k=1}^b \E M_k\cdot\Var[\GF(\Tq{k})]
\notag  \\&
  \ge  \E M_b\cdot\Var[\GF(\Tq{b})].  
\end{align}
We have shown that $\Var[\GF(\Tb)]>0$, and it remains only to show that
$\E M_b=\Omega(n)$, \ie, $\liminf_\ntoo\E M_b/n>0$.

It is easily seen that $M_b$ \as{}
equals the number of nodes in $\Tn$ that have
exactly $b$ strings passed to them and have more than one child.
Hence, $M_b=\GFb(\Tn)$ where the additive functional $\GFb$ has toll
function $\gfb(T)$, defined as the indicator that $\abse{T}=b$ and that the
  root of $T$ has more than one child.
  If we add a new string to a trie $T$, then $\GFb(T)$ may decrease
  by at most 1,
  since $\gf(T^v)$ can be affected only for $v$ in the path from the root to the
  new leaf or pair of leaves,
  and in this path there is at most one node with $\gf(T^v)\neq0$.
  It follows that if $\gfo(T):=\indic{T=\Ti}$ as in \refE{Eleaves}, so
  $\GFo(T)=\abse{T}$, then $\GFb+\GFo$ is an increasing functional.
  Hence,
  \refT{TLLN} applies to $\gfb$,
with $\gf_+:=\gfb+\gfo$ and $\gf_-:=\gfo$.
Thus, \eqref{stalis} holds, which implies
$\E M_b = \E\GFb(\Tn) \ge \frac12cn$ for large $n$; this completes the proof. 
\end{proof}

\begin{proof}[Proof of \refL{LfV}]
  Let again $\Xbga:=\gf(\Tglbga)$, see \eqref{Xbga}.  
  Since $|\gf(T)|\le C$, we have, using \eqref{P2},
  \begin{align}\label{pip}
    \bigabs{\Cov\bigpar{\gf(\Tgl),\Xbga}}
    \le \E \abs{C \Xbga} + C \E \abs { \Xbga}
    \le C_\gl P(\bga)^2.
  \end{align}
  Hence, \eqref{Qrr} implies that
the sum in \eqref{lfv1} converges absolutely for every $\gl$.
  Furthermore, again by \eqref{P2}, $\GF(\Tgl)=\sum_\bga\Xbga$ with
  convergence in $L^1$, and thus, since $\gf(\Tgl)$ is bounded,
  \begin{align}
    \Cov\bigpar{\gf(\Tgl),\GF(\Tgl)}
    =\sum_\bga\Cov\bigpar{\gf(\Tgl),\Xbga},
  \end{align}
  which shows the equality of the expressions in \eqref{fV0}
and \eqref{lfv1}.

By \refL{L0}\ref{L0m} (with $m=2$), $\E|\gf(\Tgl)|\le C\gl^2$ for
$\gl\le1$.
Hence, if $\gl\le1$, then for every $\ga$, recalling \eqref{Xbga}, 
$\E|\gf(\Tglbga)|\le C\gl^2P(\bga)^2$ for every $\bga\in\cAx$, and thus,
arguing as in \eqref{pip},
  \begin{align}\label{pipa}
    \sum_{\bga\in\cAx} \bigabs{\Cov\bigpar{\gf(\Tgl),\Xbga}}
    \le \sum_{\bga\in\cAx}  C \E \abs { \Xbga}
    \le \sum_{\bga\in\cAx}  C \gl^2 P(\bga)^2
    =C\gl^2.
  \end{align}

  For $\gl\ge1$, we use instead the decomposition and notation
  in the proof of \refL{LA}.
  Let $\eps_\bga:=\sign \bigpar{\Cov\bigpar{\gf(\Tgl),\Xbga}}\in\set{\pm1}$.
  Then, for each $k\ge0$,
  \begin{align}
    \sum_{P(\bga)\in J_k} \bigabs{\Cov\bigpar{\gf(\Tgl),\Xbga}}
    &=
 \sum_{P(\bga)\in J_k} \eps_\bga \Cov\bigpar{\gf(\Tgl),\Xbga}
  \notag\\&\label{pipb}    =
\Cov\Bigpar{ \gf(\Tgl),\sum_{P(\bga)\in J_k} \eps_\bga \Xbga}.
  \end{align}
Furthermore, the variables $\Xbga=\gf(\Tglbgax)$ are independent for
$\bga\in J_k$,
and thus, see \eqref{z1}, \eqref{z3} and \eqref{pu+}--\eqref{pu-},
\begin{align}
  \Var\Bigpar{\sum_{P(\bga)\in J_k} \eps_\bga \Xbga}
&  =
\sum_{P(\bga)\in J_k} \Var{ \Xbga}
  = \Var Z_{\gl,k}
  \notag\\&\label{pipc}
  \le C \gl \min\bigpar{(\gl\pmax^k)^{-\eps},\gl\pmax^k}.
\end{align}
Hence, by \eqref{pipb} and the \CSineq, recalling that $\gf$ is bounded,
\begin{align}
\sum_{P(\bga)\in J_k} \bigabs{\Cov\bigpar{\gf(\Tgl),\Xbga}}
    \le C \gl\qq \min\bigpar{(\gl\pmax^k)^{-\eps/2},(\gl\pmax^k)^{1/2}}.
\end{align}
We may now sum over $k\ge0$ and obtain,
since $\pmax<1$,
\begin{align}\label{pipp}
\sumaa \bigabs{\Cov\bigpar{\gf(\Tgl),\Xbga}}
    \le C \gl\qq.
\end{align}
The two estimates \eqref{pipa} for $\gl\le1$ and \eqref{pipp} for $\gl\ge1$
imply the same estimates (with a different $C$) for $\sumaxx$, and it follows
that, for $\gs:=\Re s\in(-2,-\frac12)$,
\begin{align}
 \intoo\sumax\bigabs{\Cov\bigpar{\gf(\Tgl),\gf(\Tglbga)} \gl^{s-1}}\dd \gl
  \le\intoo C\min\set{\gl^{\gs+1},\gl^{\gs-1/2}}\dd\gl
    <\infty.
\end{align}
Hence,
Fubini's theorem shows that we may interchange the sum and integral in
\eqref{lfv2}. Thus
\eqref{lfv2} follows by \eqref{lfv1}.
\end{proof}

\begin{proof}  [Proof of \refL{LfE}]
By replacing $\gf$ by $\gff$ as in the proof of \refT{TEVC}, we may again
assume $\chi=0$; recall \eqref{japan} and \eqref{f+}.
Furthermore, by considering the positive and negative parts of $\gf$
separately, we may also assume $\gf\ge0$.
Then, by \eqref{fE0} and \eqref{l000}, we have,
with  $a_n:=\E\gf(\Tn)$,
\begin{align}
  \fE(\gl)=e^{-\gl}\sum_{n=2}^\infty \frac{a_n}{n!} \gl^n .
\end{align}
Hence, at least for $-2<\Re s<-1+\eps$, first for real $s$ and then generally,
\begin{align}
  \MfE(s)=\intoo\fE(\gl)\gl^{s-1}\dd\gl
=\sum_{n=2}^\infty \frac{a_n}{n!} \intoo\gl^{n+s-1}e^{-\gl}\dd\gl
=\sum_{n=2}^\infty \frac{a_n}{n!} \gG(n+s),
\end{align}
yielding \eqref{lfe1}. Taking $s=-1$ yields \eqref{lfe2}, 
recalling $\MfC(-1)=\MfE(-1)$ from \refL{LEVCx}.
\end{proof}

\begin{ack}
I thank Pawel Hitczenko for help with references on
Rosenthal's inequality.
\end{ack}

\appendix

\section{Asymptotics of certain sums}\label{A242}

The following theorem is essentially \cite[Theorem 5.1]{SJ242},
with some extensions as discussed in the proof.
Recall the definition of
the entropy $H$ in \eqref{entropy},
the greatest common divisor $\dA:=\gcd\set{-\ln p_\ga:\ga\in\cA}$ in
\refSS{SSgcd},
and the Mellin transform $\Mf$ in \eqref{mellin}. 

\begin{theorem}\label{Tsum}
  Suppose that $f$ is a real-valued function on  $(0,\infty)$, and that
  \begin{equation}
	\label{Fgl}
    F(\gl)=\sumaa	f(\gl P(\aaa)),
  \end{equation}
  for $\gl>0$,
with $P(\aaa)$ given by \eqref{paaa}.
Assume further that $f$ is \aex\ continuous 
and satisfies the estimates 
\begin{align}
f(x)&=O(x^2),&& 0< x<1, \label{b1}\\ 
f(x)&=O(x^{1-\eps}), && 1<x<\infty, \label{b2}
\end{align}
for some $\eps>0$.

  \begin{romenumerate}
\item\label{Tsum0}
  If\/ $\dA=0$, then, as $\gl\to\infty$,
  \begin{equation}\label{jeiw}
	\frac{F(\gl)}{\gl} \to 
\frac1 H \Mf(-1)
=
\frac1 H \intoo f(x)x\qww\dd x.	
  \end{equation}

\item\label{Tsumd}
More generally, for any $\dA$, 
as $\gl\to\infty$,
  \begin{equation}\label{jeiiw}
	\frac{F(\gl)}{\gl} = \frac1 H \psi(\ln\gl) + o(1),
  \end{equation}
where
$\psi$ is a bounded function defined as follows: 
\begin{enumerate}
\item 
If $\dA=0$ then $\psi$ is constant: for all $t$,
  \begin{align}\label{psi00}
    \psi(t):=\Mf(-1).
  \end{align}

\item 
If $d=\dA>0$, then
$\psi$ is a bounded $d$-periodic function having the Fourier series
\begin{equation}\label{tsumfou}
  \psi(t)\sim\summoooo\hpsi(m)e^{2\pi\ii mt/d}
\end{equation}
with
\begin{equation}\label{tsumfouf}
  \hpsi(m)=\Mf\Bigpar{-1-\frac{2\pi m}{d}\ii}
=\intoo f(x)x^{-2-2\pi\ii m/d}\dd x.
\end{equation}
Furthermore, 
\begin{equation}
  \label{tsumsum}
\psi(t)=d\sumkoooo e^{kd-t}f\bigpar{e^{t-kd}}.
\end{equation}
\end{enumerate}

\item\label{Tsumcont}
  If $f$ is continuous, then $\psi$ is too.

\item\label{TsumTheta}
  If $f$ is continuous and $f>0$ on $(0,\infty)$,
  then $\inf_t\psi(t)>0$. Hence, $\psi(t)=\Theta(1)$ and, as \gltoo,
  $F(\gl)=\Theta(\gl)$. 

\item\label{Tsumabsconv}
Suppose that $\dA>0$.
If $f$ is continuous and  the Fourier series
\eqref{tsumfou} converges absolutely, then
its sum equals $\psi(t)$ everywhere, so
we may replace $\sim$ in \eqref{tsumfou} by $=$.
In particular, this holds if
$f$ is continuously differentiable on $(0,\infty)$ and
$f'(x)=O(x^\eps)$ as $x\to0$ and 
$f'(x)=O(x^{-\eps})$ as $x\to\infty$
for some $\eps>0$.

\end{romenumerate}

\end{theorem}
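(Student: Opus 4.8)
The plan is to recast the sum \eqref{Fgl} as a renewal equation and then quote the key renewal theorem in its non‑lattice and lattice forms; parts \ref{Tsumcont}--\ref{Tsumabsconv} then come out as soft consequences. This is the route of \cite[Theorem 5.1]{SJ242}, so in practice one cites that for \ref{Tsum0}--\ref{Tsumcont} and only spells out the extensions, but let me describe the whole argument. Let $\mu:=\sum_{\ga\in\cA}p_\ga\delta_{-\ln p_\ga}$; this is a probability measure on $(0,\infty)$ (each $-\ln p_\ga>0$ since $p_\ga<1$) with mean $\int t\dd\mu(t)=-\sum_\ga p_\ga\ln p_\ga=H\in(0,\infty)$ by \eqref{entropy}. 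Let $U:=\sum_{n\ge0}\mu^{*n}$ be its renewal measure; since $\mu^{*n}=\sum_{|\aaa|=n}P(\aaa)\delta_{-\ln P(\aaa)}$ by \eqref{paaa}, we get $U=\sumaa P(\aaa)\delta_{-\ln P(\aaa)}$. Put $g(u):=e^{-u}f(e^u)$ for $u\in\bbR$. Since $g\bigpar{\ln\bigpar{\gl P(\aaa)}}=\bigpar{\gl P(\aaa)}\qw f\bigpar{\gl P(\aaa)}$, we have $f\bigpar{\gl P(\aaa)}=\gl P(\aaa)\,g\bigpar{\ln\gl+\ln P(\aaa)}$, hence
\[
  F(\gl)=\gl\sumaa P(\aaa)\,g\bigpar{\ln\gl+\ln P(\aaa)}
  =\gl\intoo g(\ln\gl-t)\dd U(t)=\gl\,(g*U)(\ln\gl).
\]
(Absolute convergence of the sum for each fixed $\gl$ is clear: the terms with $\gl P(\aaa)<1$ are $O\bigpar{\gl^2P(\aaa)^2}$, summable by \eqref{Qrr}, and only finitely many $\aaa$ have $\gl P(\aaa)\ge1$.)

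Next I would invoke the key renewal theorem for $g*U$ at $+\infty$. The bounds \eqref{b1}--\eqref{b2} give exponential decay of $g$ at both ends: for $u<0$, $f(e^u)=O(e^{2u})$ so $g(u)=O(e^u)$; for $u>0$, $f(e^u)=O(e^{(1-\eps)u})$ so $g(u)=O(e^{-\eps u})$. Together with the a.e.\ continuity of $g$ inherited from $f$ this makes $g$ directly Riemann integrable; alternatively, to avoid that machinery one splits $g=g\ett{|u|\le A}+g\ett{|u|>A}$, approximates the compactly supported part uniformly by continuous functions, and controls the rest through an $L^1$ tail bound on $U$ uniform in $\gl$. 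When $\dA=0$, $\mu$ is non‑lattice and $(g*U)(x)\to H\qw\intoooo g(u)\dd u$ as $\xtoo$; the substitution $v=e^u$ turns the integral into $\intoo f(v)v\qww\dd v=\Mf(-1)$, giving \eqref{jeiw} and \eqref{psi00}. When $d=\dA>0$, $\mu$ and hence $U$ live on $d\bbZ_{\ge0}$; writing $U=\sum_{k\ge0}u_k\delta_{kd}$, the Erd\H os--Feller--Pollard renewal theorem gives $u_k\to d/H$ and $\sup_k u_k<\infty$, so $(g*U)(x)=\sum_{k\ge0}u_kg(x-kd)=\frac{d}{H}\sumkoooo g(x-kd)+o(1)$ as $\xtoo$, the error coming from replacing $u_k$ by $d/H$ in the boundedly many terms where $x-kd$ is not large and from discarding the exponentially small remainder (using $\sup_k u_k<\infty$ and the decay of $g$). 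Thus \eqref{jeiiw} holds with $\psi(t):=d\sumkoooo g(t-kd)$, which is exactly \eqref{tsumsum}; this is a bounded $d$‑periodic function whose Fourier coefficients are $\intoooo g(u)e^{-2\pi\ii mu/d}\dd u=\Mf\bigpar{-1-2\pi m\ii/d}$ (again $v=e^u$), i.e.\ \eqref{tsumfou}--\eqref{tsumfouf}.

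For the remaining parts: if $f$ is continuous then so is $g$, and the exponential decay makes $d\sum_kg(t-kd)$ converge uniformly on compacts, so $\psi$ is continuous (trivially so when $\dA=0$), proving \ref{Tsumcont}. If moreover $f>0$ on $(0,\infty)$ then $g>0$ on $\bbR$, hence $\psi(t)=d\sum_kg(t-kd)>0$ for every $t$ (resp.\ $\psi=\Mf(-1)=\intoo f(v)v\qww\dd v>0$ when $\dA=0$); being continuous and $d$‑periodic (or constant) it attains a positive minimum, so $\psi=\Theta(1)$ and $F(\gl)=\Theta(\gl)$ by \eqref{jeiiw}, which is \ref{TsumTheta}. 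For \ref{Tsumabsconv}: absolute convergence of \eqref{tsumfou} forces uniform convergence to a continuous $d$‑periodic function with the same Fourier coefficients as $\psi$, so by \ref{Tsumcont} the two coincide; and the stated sufficient condition, $f\in C^1$ with $f'(x)=O(x^\eps)$ near $0$ and $O(x^{-\eps})$ near $\infty$, gives via $g'(u)=-g(u)+f'(e^u)$ a continuous $g'$ with exponential decay at both ends, hence $\psi\in C^1$, hence absolutely summable Fourier coefficients (Parseval together with \CS).

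I expect the one genuinely delicate step to be the lattice case of the key renewal theorem: extracting the \emph{exact} periodic limit $\psi$, rather than merely $F(\gl)=O(\gl)$, requires the discrete renewal theorem plus the uniformity argument that lets one pass to the limit inside $\sum_k u_k g(x-kd)$. The exponential decay of $g$ reduces this to routine estimates, but it is the crux; the direct‑Riemann‑integrability verification under mere a.e.\ continuity of $f$ is the secondary technical point, handled either by the standard criterion or by the decomposition‑and‑$L^1$‑tail device noted above.
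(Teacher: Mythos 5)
Your proposal is correct and takes essentially the same route as the paper: parts \ref{Tsum0}--\ref{Tsumcont} are precisely the renewal-theoretic argument behind \cite[Theorem 5.1]{SJ242} (which the paper simply cites, noting the extensions to a general alphabet, signed $f$, and arbitrary $\eps>0$ that your argument also covers), and your treatment of \ref{TsumTheta} coincides with the paper's. The only cosmetic difference is in \ref{Tsumabsconv}, where the paper appeals to Zygmund's convergence result and Bernstein's theorem for the Lipschitz function $\psi$, while you use uniqueness of Fourier coefficients and Parseval plus Cauchy--Schwarz for $\psi\in C^1$ -- equally valid.
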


\begin{proof}
  \pfitemx{\ref{Tsum0},\ref{Tsumd},\ref{Tsumcont}}
This is, as said above, essentially
\cite[Theorem 5.1]{SJ242}. There are three technical differences:
\begin{alphenumerate}
  
\item
  \cite{SJ242} considers for simplicity only $\cA=\setoi$. However, the
  same proof holds for arbitrary $\cA$.

\item
  \cite[Theorem 5.1]{SJ242}
  assumes that $f\ge0$. This is technically convenient in the proof
  (\eg, all sums and integrals are defined), but the result extends
  immediately to real-valued $f$ by considering its positive and negative parts.
  
\item
  \cite[Theorem 5.1]{SJ242} as stated there
  assumes \eqref{b2} with $\eps=1$, but as said in
  \cite[Remark 5.2]{SJ242}, the proof holds for any $\eps>0$.
  (We may similarly relax \eqref{b1} to
$f(x)= O( x^{1+\eps})$ for $0<x<1$,
  but we have no use for this.)
\end{alphenumerate}

With these extensions,
\cite[Theorem 5.1]{SJ242}
yields \ref{Tsum0}--\ref{Tsumcont}.
(Note that, with $g(t)=e^ff(e^{-t})$ as in \cite{SJ242},
$\widehat g(u)=\Mf(-1+u\ii)$. Also,
\ref{Tsumcont}
is trivial if $\dA=0$.)

\pfitemref{TsumTheta}
This too is trivial if $\dA=0$ by \eqref{psi00} and \eqref{jeiw},
since the integral in \eqref{jeiw} now is positive.

Thus, suppose $\dA>0$.
Then $\psi(t)>0$ for every real $t$ by \eqref{tsumsum}. Since $\psi$ is
periodic by \ref{Tsumd} and continuous by \ref{Tsumcont},
it follows that $\inf_t\psi(t)>0$.
Hence, $F(\gl)=\Theta(\gl)$ as \gltoo{} by \eqref{jeiiw}.

\pfitemref{Tsumabsconv} 
$\psi$ is continuous by
\ref{Tsumcont} and has by assumption a  Fourier series that converges
everywhere, which 
implies that the Fourier series converges to $\psi$,
see \eg{}
\cite[III.(3.4) and applications after it]{Zygmund}.

If
$f$ is continuously differentiable on $(0,\infty)$ and
$f'(x)=O(x^\eps)$ as $x\to0$ and 
$f'(x)=O(x^{-\eps})$ as $x\to\infty$,
then \eqref{tsumsum} can be differentiated termwise and the resulting sum
converges uniformly on compact sets. 
Hence $\psi$ has a continuous derivative, and is, in
particular, Lipschitz on $[0,d]$, and thus its Fourier series converges
absolutely by a theorem by Bernstein \cite[Theorem VI.(3.1)]{Zygmund}.
\end{proof}

\begin{remark}
  It follows from \eqref{b1}--\eqref{b2} that the Mellin transform $\Mf(s)$
  exists at least when $-2<\Re s<-1+\eps$, and thus in particular
  when $\Re s=-1$ as in \eqref{jeiw} and \eqref{tsumfouf}.
\end{remark}


\section{Approximation in distribution and moments}
\label{Aapprox}
We prove here the following lemma, which includes \refL{Lapprox0} together
with a converse. 
It extends
the standard
result that if $X_n\dto Y$, then 
convergence of absolute moments of order $s$
is equivalent to uniform integrability of $|X_n|^s$, and implies
convergence of moments, 
see \eg{} \cite[Theorem 5.5.9]{Gut}.
Recall the definitions in \refSS{SSapprox}.

 \begin{lemma}
   \label{Lapprox}
 Let $(X_n)\xoo$  and $(Y_n)\xoo$  be random vectors in $\bbR^d$.
Let further
  $s>0$ be a real number,
and suppose that
the sequence  $\xpar{|Y_n|^s}$ is uniformly integrable.
Then the following are equivalent:
\begin{romenumerate}
  
\item \label{Lappri}
$X_n\approxd Y_n$ with absolute moments of order $s$.
\item \label{Lapprii}
 $X_n\approxd Y_n$
and
the sequence  $(|X_n|^s)$ is uniformly integrable.
\end{romenumerate}

Furthermore, if \ref{Lappri} or \ref{Lapprii} holds (and thus both hold),
and $s$ is an integer, then also
\begin{romenumerateq}
\item \label{Lappriii}
$X_n\approxd Y_n$ with  moments of order $s$.
\end{romenumerateq}
Moreover, if \ref{Lappri} or \ref{Lapprii} holds,
then they also hold with $s$ replaced by any  positive $s'<s$,
and \ref{Lappriii} holds with $s$ replaced by any smaller positive integer.
 \end{lemma}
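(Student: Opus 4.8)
The plan is to prove \refL{Lapprox} by reducing everything to the classical scalar fact (\eg\ \cite[Theorem 5.5.9]{Gut}) that for $Z_n\dto W$, uniform integrability of $|Z_n|^s$ is equivalent to $\E|Z_n|^s\to\E|W|^s$, combined with the subsequence principle for $\approxd$ from \refR{Rsubsub}. The key observation is that $\approxd$ is designed precisely so that the ``$\dto$'' hypotheses needed for the classical results can be recovered after passing to subsequences along which $Y_n$ actually converges in distribution.

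First I would prove \ref{Lappri}$\implies$\ref{Lapprii}. Assume $X_n\approxd Y_n$ with absolute moments of order $s$; the only thing to show is uniform integrability of $(|X_n|^s)$. Fix an arbitrary subsequence. Since $(|Y_n|^s)$ is \ui, it is in particular bounded in $L^1$, so along a further subsequence $Y_n\dto W$ for some random vector $W$ (using tightness; \ui\ of $|Y_n|^s$ forces tightness of $Y_n$), and then \ui\ gives $\E|W|^s=\lim\E|Y_n|^s<\infty$. Along that subsequence $X_n\approxd Y_n$ together with $Y_n\dto W$ gives $X_n\dto W$ (\refR{Rapprox}), and $\E|X_n|^s=\E|Y_n|^s+o(1)\to\E|W|^s$; hence by the classical scalar criterion applied to $|X_n|$ and $|W|$, the sequence $(|X_n|^s)$ is \ui\ along that subsequence. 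Since the original subsequence was arbitrary and \ui\ is a subsequence-stable property (a sequence is \ui\ iff every subsequence has a \ui\ further subsequence — indeed iff $\sup_n\E[|X_n|^s\indic{|X_n|>M}]\to0$, which can be checked along subsequences), $(|X_n|^s)$ is \ui.

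For \ref{Lapprii}$\implies$\ref{Lappri}: assume $X_n\approxd Y_n$ and both $(|X_n|^s)$, $(|Y_n|^s)$ \ui. We must show $\E|X_n|^s=\E|Y_n|^s+o(1)$, \ie, $\E|X_n|^s-\E|Y_n|^s\to0$. Again argue by the subsequence principle: take any subsequence; pass to a further one along which $Y_n\dto W$ (possible by tightness from \ui), hence $X_n\dto W$ too; then \ui\ of $|X_n|^s$ and of $|Y_n|^s$ gives $\E|X_n|^s\to\E|W|^s$ and $\E|Y_n|^s\to\E|W|^s$, so their difference tends to $0$ along that subsequence. As every subsequence has such a further subsequence, the difference tends to $0$ along the full sequence. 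This is exactly \refL{Lapprox0}, so the same argument serves there.

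The remaining claims are quick corollaries. For the integer case \ref{Lappriii}: once $(|X_n|^s)$ and $(|Y_n|^s)$ are both \ui\ with $X_n\approxd Y_n$, for any multi-index $\multim$ with $|\multim|=s$ the variables $X_n^\multim$ and $Y_n^\multim$ satisfy $|X_n^\multim|\le|X_n|^s$ (and likewise for $Y_n$), so these are \ui; running the subsequence argument once more — along a subsequence with $Y_n\dto W$, hence $X_n\dto W$, the \ui\ of $X_n^\multim$ gives $\E X_n^\multim\to\E W^\multim$ and similarly $\E Y_n^\multim\to\E W^\multim$ — shows $\E X_n^\multim-\E Y_n^\multim\to0$, which is \eqref{momapproxd}. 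Finally, monotonicity in $s$: if $0<s'<s$ then \ui\ of $(|X_n|^s)$ implies \ui\ of $(|X_n|^{s'})$ (since $t\mapsto t^{s'/s}$ grows sublinearly, $x^{s'}$ is dominated by $1+x^s$, and more precisely $\E[|X_n|^{s'}\indic{|X_n|^{s'}>M}]\le M^{-(s-s')/s'}\E[|X_n|^s\indic{|X_n|>M^{1/s'}}]+\cdots$, a standard estimate), and likewise for $Y_n$; so \ref{Lapprii} holds with $s'$, hence by the equivalence so does \ref{Lappri}, and if $s'$ is moreover an integer then \ref{Lappriii} with $s'$ follows as above.

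The main obstacle is purely bookkeeping rather than conceptual: making sure the subsequence extractions are done in the right order — \ui\ of $|Y_n|^s$ must be used first to extract a distributionally convergent sub-subsequence of $Y_n$ (via tightness), and only then does $X_n\approxd Y_n$ deliver $X_n\dto W$ along the same indices so that the classical scalar moment-convergence criterion can be invoked for both sequences against the common limit $W$. One should also note that the classical criterion is stated for scalar $\dto$; here we apply it to the real-valued sequences $|X_n|$, $|Y_n|$ (for the absolute-moment statements) and to the real-valued coordinate-monomial sequences $X_n^\multim$, $Y_n^\multim$ (for the integer-moment statement), so no vector-valued version is needed — only the fact, already invoked in the excerpt (\refR{Rsubsub}), that $\dto$ and $\approxd$ both obey the subsequence principle in any metric space.
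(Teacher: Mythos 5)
Your proposal is correct and follows essentially the same route as the paper's own proof: tightness extracted from uniform integrability, the subsequence principle for $\approxd$ (and for uniform integrability itself), and the classical equivalence of uniform integrability and moment convergence under convergence in distribution (\cite[Theorem 5.5.9]{Gut}), with the monotonicity in $s$ and the integer-moment case handled the same way. The only cosmetic difference is that in the direction \ref{Lapprii}$\implies$\ref{Lappri} the paper extracts subsubsequential limits of both $X_n$ and $Y_n$ and uses $X\eqd Y$, whereas you extract only $Y_n\dto W$ and transfer it to $X_n$ via $\approxd$; this is an inessential variation.
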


 \begin{proof}
\ref{Lapprii}$\implies$\ref{Lappri},\ref{Lappriii}.
The assumptions that $|Y_n|^s$ and $|X_n|^s$ are \ui{} imply that the
sequences $(X_n)$  and $(Y_n)$ are tight.
Hence, for any subsequence $(n_j)$, there exists a subsubsequence
along which $X_n\dto X$ and $Y_n\dto Y$ for some random
variables $X$ and $Y$.
The assumption $X_n\approxd Y_n$ implies $X\eqd Y$.
Furthermore, the uniform integrability and convergence in distribution imply
that, still along the subsubsequence, 
$\E |X_n|^s\to\E|X|^s$ and $\E |Y_n|^s\to\E|Y|^s$,
see  \cite[Theorem 5.5.9]{Gut}.
Since
$\E |X|^s=\E|Y|^s$, it follows that \eqref{absmomapprox} holds along the
subsubsequence. By the subsequence principle, \eqref{absmomapprox} holds for
the full sequence, and thus \ref{Lappri} holds.

If $s$ is an integer, then  \eqref{momapproxd} follows by the same argument.

\ref{Lappri}$\implies$\ref{Lapprii}.
This is similar. 
The uniform integrability of $|Y_n|^s$ implies that the
sequence  $Y_n$ is tight.
Hence, for any subsequence $(n_j)$, there exists a subsubsequence
along which $Y_n\dto Y$ for some random variable $Y$.
The assumption $X_n\approxd Y_n$ implies that also $X_n\dto Y$ along the
subsubsequence.
Furthermore, 
still along the subsubsequence, 
the assumption on uniform integrability implies
$\E |Y_n|^s\to\E|Y|^s$, and since we assume
\ref{Lappri} and thus  \eqref{absmomapprox},
we have $\E |X_n|^s\to\E|Y|^s$.
Hence, by \cite[Theorem 5.5.9]{Gut} again,
$|X_n|^s$ is \ui{}
along the subsubsequence. 
The subsequence principle holds also for uniform integrability, 
and thus \ref{Lapprii} holds. (To see this, assume that 
the full sequence $|X_n|^s$ is not \ui; then there exists an $\eps>0$ and a
subsequence $n_j$ such that
$\E\bigsqpar{|X_{n_j}|^s\ett{|X_{n_j}|^s>j}}>\eps$, but 
then no subsubsequence is \ui, a contradiction.)

Finally, it is well known that if \ref{Lapprii} holds for some $s$, it holds
for all smaller $s'$ as well.
 \end{proof}

 \begin{remark}\label{RNM}
The standard case with $X_n\dto Y$ is the case when all $Y_n$ are equal; in
that case \ui{} of $|Y_n|^s$ is redundant. In general, however, it is
needed. Here are some counterexamples without uniform integrability.
\begin{romenumerate}
\item  \label{RNM1}
Let $X_n=Y_n=n$ (non-random); then \ref{Lappri} holds
trivially but not \ref{Lapprii}. 
\item \label{RNM2}
Let $\P(X_n=-n)=1-\P(X_n=n)=1/n$ and $Y_n=n$. Then $|X_n|=|Y_n|$ so
\eqref{absmomapprox} is trivial and \ref{Lappri} holds for any $s$, but
\ref{Lapprii} fails for all $s$ and \ref{Lappriii} for odd integers $s$.
  
\item \label{RNM3}
Let $\P(X_n=n^2)=1-\P(X_n=0)=1/n$ and 
$\P(Y_n=n^3)=1-\P(Y_n=0)=1/n^3$.
Then $X_n\dto0$ and $Y_n\dto0$ and thus $X_n\approxd Y_n$.
Furthermore, \ref{Lappri} holds for $s=2$ but not for $s=1$.
\end{romenumerate}
Such examples indicate that moment
approximation as in \eqref{momapprox}--\eqref{absmomapprox} is not of much
interest unless $|Y_n|^s$ are \ui.
 \end{remark}

 \begin{remark}
   If $s$ is an even integer, then $|X_n|^s=X_n^s$ when $d=1$, and in
   general $|X_n|^s$ is a linear combination of moments $\E X_n^\multim$
   with $|\multim|=s$. Hence, \eqref{momapproxd} implies
   \eqref{absmomapprox},
which together with \refL{Lapprox} shows that,
assuming that $|Y_n|^s$ are \ui,
$X_n\approxd Y_n$ with absolute moments of order $s$ is equivalent to
$X_n\approxd Y_n$ with moments of order $s$.
Again, the condition of uniform integrability is needed here, as shown by 
\refR{RNM}\ref{RNM2}.
 \end{remark}

\section{A lower bound on the approximation error}
\label{Alower}

The rate of convergence of the asymptotic results \eqref{uE0} and
\eqref{uE+} has been studied in detail by \citet{FlajoletRV}.
They focussed on the aperiodic case $\dA=0$ and
gave upper bounds for the error, with a very slow rate of convergence.
It is implicit in their arguments that their bounds are essentially the best
possible, and we state one version of that  as \refT{TC} below. 
For simplicity we consider there, and in most part of this appendix,
only the special case of the
size, as in \refSS{SSsize}, although the results are more general and 
the method applies also to other examples in \refS{Sfringe},
\cf{} \cite[Definition 4 and Lemma 6]{FlajoletRV}.

However, we first review the much simpler periodic case. 
This case is well-known, see \eg{} \cite[Section 7.2]{JS-Analytic},
and included her for completeness and comparison.

\subsection{The symmetric case}\label{SSss}

Consider the case $p_\ga=1/r$ for every $\ga\in\cA$, where $r:=|\cA|$.
Note that $\dA=H=\ln r$.
Let $\gf$ and $\fE$ be as in \refT{TEVC} and assume for simplicity
$\chi=0$.
Then, by \eqref{KE}, 
\begin{align}\label{ss1}
 \frac{\E \GF(\Tgl)}{\gl}=\summ \gl\qw r^m \fE\bigpar{r^{-m}\gl}
=\summoooo \frac{ \fE\bigpar{r^{-m}\gl}}{r^{-m}\gl}
+R(\gl),
  \end{align}
where  
\begin{align}\label{rgl}
R(\gl)=
-\sumki \frac{\fE\bigpar{r^{k}\gl}}{r^k\gl}.
\end{align}
The assumption \eqref{bze} implies that the sum
\eqref{rgl} converges, and
\begin{align}\label{cs3}
  R(\gl) = O\bigpar{\gl^{-\eps}}.
\end{align}
Furthermore,
the last sum in \eqref{ss1} equals $H\qw\psiE(\ln\gl)$, see \eqref{tevcsum}.
Hence, the elementary calculation \eqref{ss1} yields \eqref{uE+} with the
error term $R(\gl)=O(\gl^{-\eps})$. In fact, in several examples in
\refS{Sfringe}, $\fE(\gl)$ decreases exponentially as $\gltoo$, and then the
same holds for $R(\gl)$.

In the special case of the size, treated in \refSS{SSsize}, \eqref{xsfE} yields
\begin{align}\label{rglss}
R(\gl)=
-\sumki \frac{1-(1+r^k\gl)e^{-r^{k}\gl}}{r^k\gl}
=-\frac{1}{r-1}\gl\qw + O\bigpar{e^{-r\gl}}
\end{align}
as $\gltoo$. Thus the error rate \eqref{cs3} (with $\eps=1$) is exact in
this case.

\subsection{The asymmetric periodic case}\label{SSas}

Suppose $d=\dA>0$; then there exist positive integers $\kk_\ga$ such that
$-\ln p_\ga=\kk_\ga d$ and thus $p_\ga=e^{-\kk_\ga d}$, $\ga\in\cA$. Hence, 
\begin{align}\label{fox}
  \rho(s)=\suma p_\ga^s = \suma e^{-\kk_\ga ds}=Q\bigpar{e^{-ds}},
\end{align}
where $Q$ is the polynomial $Q(z)=\suma  z^{\kk_\ga}$
of degree $\kk:=\max\kk_\ga$.
We exclude the
symmetric case in \refSS{SSss}; then $\kk\ge2$.
Denote the roots of $Q(w)=1$ by $w_1,\dots,w_\kk$, with $w_1=e^{-d}>0$.
Consider again $\GF(T):=\absi{T}$, the size.
A standard inversion of the Mellin transform yields, see \eg{}
\cite[Lemma 6]{FlajoletRV} (although there stated for the aperiodic case),
or \cite[Section 7.2]{JS-Analytic},
\begin{align}\label{vix}
  \frac{\E\GF(\Tgl)}{\gl} = \sum_{z_j} a_j \gl^{z_j-1} + O\bigpar{\gl^{-M}}
\end{align}
for any $M<\infty$, where $a_j$ are some complex numbers and $z_j$ ranges 
over the roots of $\rho(z_j)=1$. By \eqref{fox}, these roots are (changing
the notation)
\begin{align}
  z_{k,\ell}:=\frac{-\ln w_k}{d}+ \frac{2\pi\ii}{d}\ell,
\qquad k\in\set{1,\dots,\kk},\; \ell\in\bbZ.
\end{align}
The roots $z_{1,\ell}$ have $\Re z_{1,\ell}=1$, and the corresponding terms in
\eqref{vix} yield the periodic function $H\qw\psiE(\ln\gl)$ in \eqref{uE+}.
Similarly, the terms for $z_{k,\ell}$ for a fixed $k>1$ sum to 
$\gl^{-\eps_k}g_k(\ln\gl)$, where $g_k$ is a $d$-periodic function
and
\begin{align}
  \eps_k:=1-\Re z_{k,0}=1+\ln|w_k|/d,
\end{align}
which necessarily satisfies $\eps_k>0$.
Hence, if $\eps:=\min_{2\le k\le\kk}\eps_k>0$, then 
\eqref{vix} yields
\eqref{uE+} with an error term
\begin{align}
R(\gl)=O\bigpar{\gl^{-\eps}}.
\end{align}
Furthermore, this is the exact order of the error term (for typical $\gl$).
Here, $\eps>0$ depends on the probabilities $(p_\ga)_\ga$ 
and may be arbitrarily small, even in the binary case.
This too is certainly known, but we do not know a reference and give an
example for completeness.

\begin{example}\label{Eeps}\CCreset
Consider the binary case, with $\vp=(p,1-p)$. 
In the periodic case $\dA>0$, denote $\eps$ above by
$\eps(p)$; in the aperiodic case let $\eps(p)=0$. 
Let $\gd>0$.

Let $p_0$ be any number with $\ln (1-p_0)/\ln p_0$ irrational.
Then there exist roots $s$ of $\rho(s)=p_0^{s}+(1-p_0)^s=1$ with $1-\gd<\Re
s<1$,
see \eg{} \cite{FlajoletRV}, or the proof of \refT{TC} below.
Let $s_0$ be one such root. It follows from the implicit function theorem
that for every $p$ sufficiently close to $p_0$, there exists $s$ with
$p^s+(1-p)^s=1$ and $s$ so close to $s_0$ that $\Re s \in(1-\gd,1)$. Hence,
$\eps(p)\le 1-\Re s<\gd$. We may here choose $p$ such that $\ln(1-p)/\ln p$
is rational.
Consequently, the set of $p$ such that $\dA>0$ and 
$\eps(p)<\gd$ is dense in $(0,1)$
for every $\gd>0$.

For a concrete example, let $m\ge1$ and
let $p=p_m$ be the unique positive root of $p+p^{(m+1)/m}=1$.
It is not difficult to show that as $m\to\infty$, $p_m\to\frac12$ and
$\eps(p_m)\to0$. We omit the details.
\end{example}

\subsection{The aperiodic case}
In the aperiodic case $\dA=0$ 
of \refT{TEVC}, \eqref{uE0} (or \eqref{uE+}) says that
\begin{align}\label{c1}
  \frac{\E\GF(\Tgl)}{\gl}=\chi+H\qw\MfE(-1)+R(\gl),
\end{align}
where $R(\gl)=o(1)$ as $\gl\to\infty$. As said above,
it follows from
\citet{FlajoletRV} that $R(\gl)$ typically tends to 0 very slowly.
More precisely, we have the following, for simplicity considering only the size.

\begin{theorem}[implicit in \cite{FlajoletRV}]\label{TC}
  Assume $\dA=0$ and let $\GF(T)=\absi{T}$, the size of $T$.
Then \eqref{c1} holds (with $\chi=0$ and $\MfE(-1)=1$)
and there exist $C<\infty$ and arbitrarily large $\gl$ such that
\begin{align}\label{c2}
  |R(\gl)| > \exp\bigpar{-C(\ln\gl)^{(|\cA|-1)/(|\cA|+1)}}.
\end{align}
\end{theorem}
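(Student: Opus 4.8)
The plan is to extract from the Mellin inversion the explicit expression for the error term and then show that it cannot be smaller than the stated bound for all large $\gl$, by locating zeros of $\rho(s)=1$ that lie very close to (but not on) the line $\Re s=1$. First I would recall, as in \cite[Lemma~6]{FlajoletRV} (see also \cite[Section 7.2]{JS-Analytic}), that for the size functional the Poisson mean has the exact Mellin--Perron representation
\begin{align}\label{proposal-mellin}
  \frac{\E\GF(\Tgl)}{\gl}
  = 1 + \frac1{2\pi\ii}\int_{(\sigma)} \frac{\MfE(s)}{1-\rho(-s)}\,\gl^{-s-1}\dd s,
\end{align}
with $\MfE$ given by \eqref{xsMfE}, for a suitable vertical line $\Re s=\sigma\in(-2,-1)$; shifting the contour past the pole at $s=-1$ produces the constant $H\qw\MfE(-1)=H\qw$ and leaves $R(\gl)$ as a sum (or integral) of residue contributions from the zeros $\zeta$ of $\rho(-s)=1$ with $\Re\zeta<-1$, each contributing a term of size $\asymp \gl^{\Re\zeta+1}$. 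The point is that $R(\gl)$ is essentially $\sum_\zeta c_\zeta \gl^{\zeta+1}$, an almost periodic function whose "frequencies" $\Im\zeta$ come with amplitudes $|c_\zeta|$, and it is bounded below on a set of $\gl$ of positive density by the largest such term, i.e.\ by $\gl^{-\eps}$ where $\eps=\inf\{1+\Re\zeta: \rho(-\zeta)=1\}$ (this infimum is $0$ in the aperiodic case, which is why no power rate holds).

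Next I would make quantitative the statement that there are zeros $\zeta$ of $\rho(-s)=1$ with $1+\Re\zeta$ arbitrarily small; more precisely, that for each large $T$ there is a zero with $\Im\zeta \asymp T$ and $1+\Re\zeta = O\bigpar{(\ln T)^{-2/(|\cA|-1)}}$ or thereabouts. This is the heart of the matter and is exactly the Diophantine input of \citet{FlajoletRV}: writing $\rho(-s)=\sum_\ga p_\ga^{-s}=\sum_\ga e^{-s\ln p_\ga}$, a zero near the critical line at height $t$ requires the $|\cA|$ numbers $t\ln p_\ga$ to be simultaneously close to multiples of $2\pi$, and by Dirichlet's simultaneous approximation theorem (pigeonhole in $|\cA|-1$ "independent" phases, since the $\ln p_\ga$ are incommensurable in the aperiodic case) one can find such $t\le T$ with the phases within $O(T^{-1/(|\cA|-1)})$ of a lattice point; a first-order expansion of $\rho(-s)$ around the resulting near-zero then gives a genuine zero with $1+\Re\zeta$ of the order of the square of that defect, hence $O(T^{-2/(|\cA|-1)})$. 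Taking $\gl$ with $\ln\gl$ of order $T$, the corresponding term $\gl^{\zeta+1}$ has modulus $\exp\bigpar{-(1+\Re\zeta)\ln\gl} \ge \exp\bigpar{-C\,T^{-2/(|\cA|-1)}\cdot T}$; choosing $T \asymp (\ln\gl)^{(|\cA|-1)/(|\cA|+1)}$ — which balances $T^{-2/(|\cA|-1)}\cdot\ln\gl$ against $T$ — makes this of size $\exp\bigpar{-C(\ln\gl)^{(|\cA|-1)/(|\cA|+1)}}$, matching \eqref{c2}.

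Finally I would argue that this dominant term is not cancelled by the rest of the sum: one shows the contribution of all zeros with $1+\Re\zeta$ larger than the critical value is $O(\gl^{-\eps_0})$ for a fixed $\eps_0>0$ (there are only finitely many such zeros in any strip, and they are bounded away from $\Re s=-1$), so for $\gl$ large this is negligible compared with $\exp\bigpar{-C(\ln\gl)^{(|\cA|-1)/(|\cA|+1)}}$; and the phase $\gl^{\ii\,\Im\zeta}$ of the dominant term can be forced to be, say, within $\pi/4$ of $1$ by a further small perturbation of $\gl$ within the freedom already present (the approximation step produces a whole interval of good $t$, hence of good $\ln\gl$). The main obstacle is precisely the second paragraph: turning the qualitative "zeros approach the critical line" into the sharp exponent $(|\cA|-1)/(|\cA|+1)$, which requires the simultaneous Diophantine approximation argument together with the quadratic relation between the approximation defect and the real-part gap of the corresponding zero, and then the optimization of $T$. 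Everything else — the Mellin representation \eqref{proposal-mellin}, the residue bookkeeping, and the lower bound "almost periodic sums are $\Omega$ of their largest amplitude on a positive-density set" — is standard and can be cited from \cite{FlajoletRV} and \cite{JS-Analytic}; accordingly I would keep the write-up brief and refer to those sources for the routine parts, spelling out only the Diophantine lemma and the choice of parameters.
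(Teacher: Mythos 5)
The first half of your plan coincides with the paper's: the residue expansion of $R(\gl)$ over the roots of $\rho(z)=1$ near the line $\Re z=1$ (the paper quotes this as Lemma~6 of Flajolet--Roux--Vall\'ee), the simultaneous Diophantine approximation step producing, for infinitely many heights $t_j\asymp q_j$, roots with $1-\Re z_j\le Cq_j^{-2/(r-1)}$ where $r=|\cA|$ (the paper cites Hardy--Wright, Theorem 200, and the proof of Theorem 2(ii) in Flajolet--Roux--Vall\'ee), and the choice $\ln\gl\asymp t_j/s_j$ which yields the exponent $(r-1)/(r+1)$. That part is fine.

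The gap is in your last paragraph, i.e.\ precisely the step you dismiss as routine. First, in the aperiodic case the roots accumulate at the line $\Re z=1$, so it is false that the roots other than the selected one "are bounded away from $\Re s=-1$" with "only finitely many in any strip"; their collective contribution is not $O(\gl^{-\eps_0})$ for any fixed $\eps_0>0$. Second, the selected term is not the term of largest amplitude: the residue coefficients contain $\gG(-z_k)$ and hence decay like $e^{-\Theta(|\Im z_k|)}$ (this is hypothesis \eqref{cq} in the paper), so roots at much smaller height, and comparably close to the line, carry exponentially larger coefficients than the one you chose. Consequently the principle "an almost periodic sum is $\Omega$ of its largest amplitude on a set of positive density" is neither the statement you need nor a citable fact for this sum, whose terms also decay in $x$ through $e^{-s_kx}$; and perturbing $\gl$ to fix the phase of one term says nothing about the sum of all the others. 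This non-cancellation step is the actual crux, and the paper devotes a separate result to it (\refL{LCC}): one multiplies $h$ by $e^{-\zeta_jx}$ and averages against a Gaussian $N(\mu_j,4\mu_j)$ with $\mu_j=t_j/s_j$; the frequency separation $|t_k-t_j|\ge c$ suppresses every other term by a factor $e^{-2c^2\mu_j}\le e^{-(2c^2/\gd)t_j}$, which for $\gd$ small beats the exponentially large competing amplitudes, leaving essentially $a_j$ with $|a_j|\ge e^{-Ct_j}$; a tail estimate then localizes a good point $x_j\in(t_j/(2s_j),\,2t_j/s_j)$, so the decay factor $e^{-s_jx_j}$ only costs $e^{-2t_j}$. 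Without this lemma (or an equivalent quantitative extraction of the chosen frequency), your argument does not close.
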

Note that the lower bound in \eqref{c2} is larger than $\gl^{-\eps}$ for any
$\eps>0$ (and large $\gl$). Cf.\ the results in the periodic cases above.

\citet{FlajoletRV} prove  corresponding upper bounds, for most but not all
probability vectors $(p_\ga)$; see in particular
\cite[Theorem 4 and Corollaries 1 and 2]{FlajoletRV}.
As said above, 
the lower bound in \refT{TC} is only implicit in
\cite{FlajoletRV}; a detailed proof seems to require some work, and since
we do not know any published proof, we give one below for completeness.

\begin{remark}
  It follows further from \cite{FlajoletRV}
that for special vectors $(p_\ga)_{\ga\in\cA}$,
even larger lower bounds hold; in fact, by considering the binary case 
with $\ln p_0/\ln p_1$ an irrational number that can be approximated
extremely well by rationals (a suitable Liouville number), we can make $R(\gl)$
converge arbitrarily slowly to $0$.
\end{remark}

We first prove a lemma.

\begin{lemma}\label{LCC}
  Suppose that 
  \begin{align}\label{c4}
h(x)=\sumjoooo a_j e^{\zeta_jx},
  \end{align}
where the complex numbers $a_j$ and $\zeta_j=-s_j+\ii t_j$ satisfy the
following, for some $c>0$ and all $j\in\bbZ$,
\begin{align}
s_j&>0,\label{cs}\\
t_{j+1}-t_j&\ge c,
\label{ct}\\
|a_j|&= e^{-\Theta(|t_j|)+O(1)}
.\label{cq}
\end{align}
Then there exists $\gd>0$ and $C$ such that for every $j$ with $s_j<\gd$ and
$t_j>0$, there
exists $x$ with $ t_j/(2s_j) < x < 2 t_j/s_j$ and
$|h(x)|>e^{-C t_j}$.

The same result holds for a one-sided sum 
$h(x)=\sumj a_j e^{\zeta_jx}$.
\end{lemma}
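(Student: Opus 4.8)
The plan is to prove \refL{LCC} by isolating the single dominant term $a_j e^{\zeta_j x}$ and showing that the remaining terms cannot cancel it for a suitable choice of $x$ in the stated interval. First I would fix $j$ with $t_j>0$ and $s_j<\gd$ (with $\gd$ to be chosen small), and write $h(x)=a_j e^{\zeta_j x}+\sum_{k\neq j}a_k e^{\zeta_k x}$. On the interval $I_j:=(t_j/(2s_j),2t_j/s_j)$ the main term has modulus $|a_j|e^{-s_j x}$, which lies between $|a_j|e^{-2t_j}$ and $|a_j|e^{-t_j/2}$; combined with \eqref{cq} this is of size $e^{-\Theta(t_j)}$, i.e.\ bounded below by $e^{-C_1 t_j}$ for some constant. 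So it suffices to bound the tail $\sum_{k\neq j}|a_k|e^{-s_k x}$ and show it can be made, for some $x\in I_j$, smaller than half the main term — or, more robustly, to use an averaging / pigeonhole argument over $x\in I_j$ so that the phases $e^{\ii t_k x}$ do not conspire to cancel.

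The key estimate is the tail bound. For $k$ with $|t_k|$ large we have $|a_k|\le e^{-c_2|t_k|}$ by \eqref{cq}, while $e^{-s_k x}\le 1$ since $s_k>0$ and $x>0$; by \eqref{ct} the $t_k$ are $c$-separated, so $\sum_{|t_k|>T}|a_k|\le \sum_{|t_k|>T}e^{-c_2|t_k|}=O(e^{-c_2 T})$. Taking $T$ a small multiple of $t_j$ makes this part $\le \tfrac14 e^{-C_1 t_j}$. The remaining terms have $|t_k|\le T=O(t_j)$; there are $O(t_j)$ of them, each with $|a_k|=e^{O(1)}$, so their total contribution is at most $O(t_j)\cdot e^{O(1)}\cdot e^{-\min_k s_k x}$. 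Here is where we must be careful: $s_k$ can be small, so $e^{-s_k x}$ need not be small, and the bound $O(t_j)$ on the number of terms is exactly what forces the interval $I_j$ to have length $\Theta(t_j/s_j)$ rather than $O(1)$. The idea is then a mean-value argument: average $|h(x)|^2$ (or $|h(x)|$) over $x\in I_j$, use the near-orthogonality of the characters $e^{\ii t_k x}$ on an interval of length $\gg 1/c$ (so cross terms between distinct $k,\ell$ contribute $O(1/((t_k-t_\ell)|I_j|))$ after integration, summable by \eqref{ct}), and conclude that $\frac{1}{|I_j|}\int_{I_j}|h(x)|^2\,dx \ge \tfrac12|a_j|^2 e^{-4t_j}-(\text{small})\ge e^{-C t_j}$; hence some $x\in I_j$ has $|h(x)|^2$ at least this large. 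Choosing $\gd$ small guarantees $I_j$ is long (length $>1/(2s_j)>1/(2\gd)$) so the orthogonality defect is controlled uniformly.

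The main obstacle I anticipate is making the orthogonality/averaging argument clean in the presence of the exponential damping factors $e^{-s_k x}$, which are not constant over $I_j$ and are not the same for different $k$; one has to either (i) split off the terms with $s_k$ comparable to $s_j$ and treat the rest by the crude tail bound, or (ii) introduce a weight and integrate against it so that the effective "length" seen by each frequency is still $\gg 1$. A convenient technical device is to substitute $x=t_j u/s_j$ so that $I_j$ becomes the fixed interval $(1/2,2)$ in $u$, the main term becomes $a_j e^{-t_j u}e^{\ii t_j u}$, and each tail frequency becomes $e^{(-s_k+\ii t_k)t_j u/s_j}$ with separated imaginary parts of size $\ge c t_j/s_j\to\infty$, which makes the oscillation very rapid and the cross terms tiny; this should turn the orthogonality step into an elementary $\int_{1/2}^2 e^{\ii\lambda u}\,du=O(1/\lambda)$ estimate. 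Once the lemma is established in this two-sided form, the one-sided version is immediate: drop all terms with index $<$ (resp.\ $>$) the relevant threshold; the same tail bound and averaging argument apply verbatim since we only ever used $s_k>0$, the frequency separation, and the upper bound on $|a_k|$, none of which require a two-sided sum.
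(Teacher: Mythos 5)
Your overall strategy (localize to the window $I_j=(t_j/(2s_j),2t_j/s_j)$, average there, and argue that the other frequencies cannot cancel the $j$-th term) is in the right spirit, but the key quantitative step does not work as you state it, and this is a genuine gap. In your $L^2$ average the cross term between two frequencies $k\neq\ell$ is bounded, after integrating $e^{(\zeta_k+\bar\zeta_\ell)x}$ over $I_j$ and dividing by $|I_j|$, only by something of order $|a_k||a_\ell|\,s_j/(|t_k-t_\ell|\,t_j)$: a boxcar average suppresses an off-frequency only polynomially, like $1/\lambda$. Summed over pairs this error is of order $s_j/t_j$ (and your naive AM--GM even meets the divergent sum $\sum_{\ell\neq k}|t_k-t_\ell|^{-1}$), whereas the term you want to retain, $\tfrac12|a_j|^2e^{-4t_j}$ (or more precisely the $j$-th diagonal average, of order $|a_j|^2e^{-t_j}/t_j$), is \emph{exponentially} small in $t_j$ because $|a_j|$ itself is only $e^{-\Theta(t_j)}$ by \eqref{cq}. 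So ``main term minus small'' is not $\ge e^{-Ct_j}$ on the strength of your estimates: the ``small'' swamps the main term. The only way to save a second-moment/boxcar argument is to compare the cross terms with the \emph{full} diagonal sum $\sum_k|a_k|^2\int_{I_j}e^{-2s_kx}\,dx$ (the other diagonal terms are large precisely when the dangerous cross terms are large), i.e.\ to prove a Montgomery--Vaughan/Hilbert-type almost-orthogonality inequality for $c$-separated, exponentially damped frequencies; that statement is neither formulated nor proved in your proposal, and it does not follow from the elementary bound $\int_{1/2}^{2}e^{\mathrm{i}\lambda u}\,du=O(1/\lambda)$ after your substitution. (Your splitting off of frequencies with $s_k$ not small, and the one-sided case, are fine and match what is needed.)

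For contrast, the paper avoids this difficulty by replacing the boxcar average with a Gaussian one and working with a \emph{first} moment: it evaluates $\E\bigl[e^{-\zeta_jZ_j}h(Z_j)\bigr]$ with $Z_j\sim N(\mu_j,4\mu_j)$, $\mu_j=t_j/s_j$. The Gaussian moment generating function turns the frequency gap \eqref{ct} into a factor $e^{-2\mu_j(t_j-t_k)^2}\le e^{-2c^2t_j/s_j}$ for every $k\neq j$, which is $e^{-(2c^2/\delta)t_j}$ and hence, after choosing $\delta$ small in terms of the implied constants in \eqref{cq}, beats the exponentially small main term $|a_j|$; no orthogonality among the remaining terms is needed, and a tail estimate then localizes the witness $x$ to the required interval. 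If you want to keep your route, you must supply the damped-exponential almost-orthogonality lemma (or switch to a weight whose Fourier transform decays faster than any relevant $e^{-Ct_j}$, which is exactly the Gaussian trick).
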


\begin{proof}\CCreset
  In this proof $C_i$ and $c_i$ denote positive constants that depend only
  on $c$ in \eqref{ct} and the implicit constants in \eqref{cq}.
Note first that \eqref{cq} and \eqref{ct} imply 
\begin{align} \label{csum}
\sum_k|a_k|\le \CCname{\Csum}.
\end{align}
In particular, the sum \eqref{c4} converges for every $x\ge0$.

We assume first that $s_j\le \gd$ for every $j$, and treat then the general
case.

\setcounter{stepp}{0}
\pfcase{$\max_j s_j\le\gd\le1$.} Consider a $j$ with $t_j\ge1$.
Let $Z\sim N(0,1)$ be a standard normal random variable and define
\begin{align}
\mu_j&:=t_j/s_j\ge  t_j/\gd \ge1/\gd,\label{muj}\\
  h_j(x)&:=e^{-\zeta_j x}h(x)=\sum_k a_k e^{(\zeta_k-\zeta_j)x},\label{hj}\\
Z_j&:=\mu_j+2\mu_j\qq Z \sim N\bigpar{\mu_j,4\mu_j}.\label{Zj}
\end{align}
Note that 
the assumption $s_k\in[0,\gd]$ and \eqref{csum}
imply 
that the sums \eqref{c4} and \eqref{hj}
converge for every real $x$, with
\begin{align}\label{anka}
  |h_j(x)|\le \sum_k|a_k| e^{(s_j-s_k)x}\le \Csum e^{\gd|x|}.
\end{align}
In particular, $h_j(x)$ is defined for all real $x$.
Furthermore, by \eqref{hj},
\begin{align}\label{c19}
  \E h_j(Z_j) &
= \sum_k a_k \E e^{(\zeta_k-\zeta_j)Z_j}
= \sum_k a_k \E e^{(\zeta_k-\zeta_j)\mu_j+2\mu_j(\zeta_k-\zeta_j)^2}
\notag\\&
=:\sum_k A_k,
\end{align}
where we thus denote the terms in the sum by $A_k$.
Note that $A_j=a_j$, so, by \eqref{cq},
\begin{align}\label{hades}
 | A_j|=|a_j| \ge e^{-\CCname{\CCh} t_j}.
\end{align}
For $k\ne j$, we note that \eqref{cq} implies $|a_k|\le\CCname{\CCa}$, 
and thus, by \eqref{c19} and \eqref{muj},
\begin{align}\label{req}
  |A_k|&
=|a_k|e^{(s_j-s_k)\mu_j+2\mu_j(s_j-s_k)^2-2\mu_j(t_j-t_k)^2}
\le \CCx e^{\gd\mu_j+2\mu_j\gd^2-2\mu_j|t_j-t_k|^2}
\notag\\&
\le e^{\CC+3\gd\mu_j-2\mu_j|t_j-t_k|^2}
\le e^{\CCname{\CCqu}\gd\mu_j-2\mu_j|t_j-t_k|^2}
.
\end{align}
It follows from \eqref{ct} that whenever $j\neq k$, we have $|t_j-t_k|\ge
c|j-k|\ge c$. Hence,
if $\CCname{\CCcc}:=c^2/2$ and $\gd$ is small enough,
then \eqref{req} implies
\begin{align}\label{ete}
  |A_k|
\le e^{\mu_j(\CCqu\gd-2c^2|j-k|^2)}
\le e^{\mu_j(\CCqu\gd-c^2-c^2|j-k|^2)}
\le e^{-\CCcc \mu_j- c^2\mu_j|j-k|^2 }.
\end{align}
Recalling \eqref{muj}, we thus find that for $\gd$ small enough, 
\begin{align}\label{nam}
  \sum_{k\neq j}|A_k|\le \CCname{\CCuq} e^{-\CCcc\mu_j}
\le \CCuq e^{-(\CCcc/\gd)t_j}
\le 
e^{-2\CCh t_j}.
\end{align}
Combining \eqref{nam} with \eqref{hades} and \eqref{c19}, 
we find that for $t_j$ sufficiently  large, 
  \begin{align}\label{quam}
\bigabs{\E h_j(Z_j)}
\ge |A_j|-\sum_{k\neq j}|A_k| \ge \tfrac12 |a_j| \ge e^{-(\CCh+1)t_j}.
  \end{align}

Next, let $I_j:=(\frac{1}{2}\mu_j,\frac{3}{2}\mu_j)$. 
By the \CSineq, \eqref{anka} and a standard tail estimate for the 
normal distribution, 
if $\gd$ is small enough,
\begin{align}\label{grip}
\qquad&\hskip-2em  
\bigabs{\E \bigsqpar{h_j(Z_j)\indic{Z_j\notin I_j}}}^2
\le \E \bigsqpar{h_j(Z_j)^2}\P\bigpar{Z_j\notin I_j}
\notag\\&
\le \Csum^2 \bigpar{\E e^{2\gd Z_j} + \E e^{-2\gd Z_j}}
\P\bigpar{|Z|\ge \tfrac14\mu_j\qq}
\notag\\&
\le \CC e^{2\gd\mu_j+8\gd^2\mu_j}e^{-\frac{1}{32}\mu_j}
\le \CCx e^{-\frac{1}{64}\mu_j}
\le \CCx e^{-\frac{1}{64 \gd} t_j}.
\end{align}
Hence, if $\gd$ is small enough, using $t_j\ge1$ and \eqref{quam},
\begin{align}
  \bigabs{\E \bigsqpar{h_j(Z_j)\indic{Z_j\notin I_j}}}
\le e^{-(\CCh+2)t_j} \le e^{-1} |\E h_j(Z_j)|,
\end{align}
and thus, using \eqref{quam} again,
\begin{align}
  \bigabs{\E \bigsqpar{h_j(Z_j)\indic{Z_j\in I_j}}}
\ge \tfrac12 |\E h_j(Z_j)| \ge e^{-\CC t_j}
\end{align}
Consequently, there exists $x_j\in I_j$ such that $|h_j(x_j)|\ge e^{-\CCx t_j}$, 
 and hence,
\begin{align}\label{rolf}
  |h(x_j)|=e^{-s_j x_j}|h_j(x_j)|\ge e^{-(3/2)t_j}|h_j(x_j)|\ge e^{-\CCname{\CCfinal} t_j}.
\end{align}

We have shown that there exist $\gd_1>0$ and $T\ge1$ such that if
$s_k\le \gd_1$ for every $k$, then the result \eqref{rolf}
holds for every $j$ with 
$t_j\ge T$.
We ignore temporarily the finite number of $j$ with $0<t_j<T$.

\pfcase{The general case.}
Let $\gd_1$ and $T\ge1$ be as just said in Case 1.
Let $\gL:=\set{\zeta_j}$  and consider the subsets
$\gLa:=\set{\zeta_j:s_j\le \gd_1}$ and
$\gLb:=\set{\zeta_j:s_j> \gd_1}$.
Define the corresponding sums
\begin{align}
  \ha(x):=\sum_{\zeta_j\in\gLa} a_je^{\zeta_jx},
&&&
  \hb(x):=\sum_{\zeta_j\in\gLb} a_je^{\zeta_jx}.
\end{align}
We may assume that $\set{\zeta_j\in\gLa:t_j>0}$ is infinite, 
since otherwise we may choose $\gd>0$ such that $s_j>\gd$ for all $j$ with
$t_j>0$, and the result is trivial.

Then Case 1 applies to $\ha$ (after relabelling $\zeta_j$). 
Hence, if $\gd\le\gd_1$, for every
$j$ with $s_j<\gd$ and $t_j\ge T$, there exists $x_j$ such that
$t_j/(2s_j)<x_j<2t_j/s_j$ and $|\ha(x_j)|>e^{-\CCfinal t_j}$.
Furthermore, recalling \eqref{csum}, if $\gd$ is small enough,
\begin{align}
  |\hb(x_j)| 
&\le \sum_{\zeta_j\in\gLb}|a_j|e^{-s_jx_j}
\le \Csum e^{-\gd_1 x_j}
\notag\\&
\le \Csum e^{-(\gd_1/2 s_j)t_j}
\le \Csum e^{-(\gd_1/2 \gd)t_j}
\le \tfrac12 e^{-\CCfinal t_j},
\end{align}
and consequently,
\begin{align}
  |h(x_j)| \ge |\ha(x_j)|-|\hb(x_j)|
\ge \tfrac12 e^{-\CCfinal t_j}
\ge e^{-(\CCfinal+1)t_j}.
\end{align}
This shows the result for any $j$ with $s_j<\gd$ and $t_j\ge T$.
By decreasing $\gd$, we may further assume that $s_j\ge\gd$ for each of the
finitely many $j$ with $0<t_j<T$, and the result then holds for every $j$
with $s_j<\gd$ and $t_j>0$.
\end{proof}

\begin{proof}[Proof of \refT{TC}]\CCreset
Let $r:=|\cA|$, the number of letters in the alphabet, and assume without
loss of generality that $\cA=\set{1,\dots,r}$.

By \cite[Lemma 6]{FlajoletRV}, $R(\gl)$ in \eqref{c1} can be written
$R(\gl)=R_1(\gl)+R_2(\gl)$ where 
for some small $\hh>0$,
$R_2(\gl)=O(\gl^{-\hh})$ 
and 
\begin{align}
  R_1(\gl)=\sum_k \frac{(1-z_k)\gG(-z_k)}{\rho'(z_k)}\gl^{z_k-1},
\end{align}
summing over the set \set{z_k} of roots of $\rho(z)=1$ satisfying
$1-\hh\le \Re z_k<1$.
Thus $h(x):=R_1(e^x)$ is a function of the type in \eqref{c4}, 
with $s_j=1-\Re z_j$ and $t_j=\Im z_j$, and \eqref{cs}--\eqref{cq} hold by
results in \cite{FlajoletRV}.

We assume that $\dA=0$; thus, for any fixed $k\in\cA$ at least one ratio 
$\ln p_\ell/\ln p_k$ is irrational. By \cite[Theorem 200]{HardyW}, there exist
infinitely many positive integers $q$ such that for some integers $\kk_\ell$
\begin{align}\label{cm}
\Bigabs{ q\frac{\ln p_\ell}{\ln p_k}-\kk_\ell} < q^{-1/(r-1)},
\qquad \ell=1,\dots,r.
\end{align}
(Note that the case $\ell=k$ is trivial, so we really consider a vector of
$r-1$ elements.) In the terminology of \cite{FlajoletRV} the
approximation function $f_k(q)$ of the vector $(\ln p_\ell/\ln p_k)_\ell$
satisfies 
$
  f_k(q)\ge q^{1/(r-1)}
$ 
for infinitely many $q$.
Let $(q_j)_1^\infty$ be an increasing sequence of positive integers such
that \eqref{cm} holds for each $q=q_j$.
Then the proof of \cite[Theorem 2(ii)]{FlajoletRV} shows that for each
sufficiently large  $q_j$, there exists a
root $z_j=1-s_j+\ii t_j$ of $\rho(z_j)=1$ with 
\begin{align}
  0<s_j&
\le \CC q_j^{-2/(r-1)},\label{c6}
\\
\CC q_j \le t_j&\le \CC q_j.\label{c7}
\end{align}

We apply \refL{LCC} to the function $h(x)$ and find,
for sufficiently large $j$, 
$x_j$ such that,
using \eqref{c6} and \eqref{c7},
\begin{align}
  x_j\ge \frac{t_j}{2s_j}
\ge \CC \frac{t_j}{q_j^{-2/(r-1)}}
\ge \CC \frac{t_j}{t_j^{-2/(r-1)}}
=\CCx t_j^{(r+1)/(r-1)}
\end{align}
and
\begin{align}\label{c8}
|h(x_j)|
\ge e^{-\CC t_j}
\ge e^{-\CC x_j^{(r-1)/(r+1)}}
.\end{align}
Let $\gl_j:=e^{x_j}$.  
Since \eqref{c6} implies $s_j\to0$ and thus $x_j\to\infty$, we have
$\gl_j\to\infty$. Furthermore,
by \eqref{c8}, 
\begin{align}
 | R_1(\gl_j)|=
|h(x_j)|
\ge e^{-\CCx x_j^{(r-1)/(r+1)}}  
=e^{-\CCx (\ln \gl_j)^{(r-1)/(r+1)}}  
.\end{align}
\end{proof}

\newcommand\AAP{\emph{Adv. Appl. Probab.} }
\newcommand\JAP{\emph{J. Appl. Probab.} }
\newcommand\JAMS{\emph{J. \AMS} }
\newcommand\MAMS{\emph{Memoirs \AMS} }
\newcommand\PAMS{\emph{Proc. \AMS} }
\newcommand\TAMS{\emph{Trans. \AMS} }
\newcommand\AnnMS{\emph{Ann. Math. Statist.} }
\newcommand\AnnPr{\emph{Ann. Probab.} }
\newcommand\CPC{\emph{Combin. Probab. Comput.} }
\newcommand\JMAA{\emph{J. Math. Anal. Appl.} }
\newcommand\RSA{\emph{Random Struct. Alg.} }
\newcommand\ZW{\emph{Z. Wahrsch. Verw. Gebiete} }
\newcommand\DMTCS{\jour{Discrete Math. Theor. Comput. Sci.} }
\newcommand\DMTCSproc{{Discrete Math. Theor. Comput. Sci. Proc.} }

\newcommand\AMS{Amer. Math. Soc.}
\newcommand\Springer{Springer-Verlag}
\newcommand\Wiley{Wiley}

\newcommand\vol{\textbf}
\newcommand\jour{\emph}
\newcommand\book{\emph}
\newcommand\inbook{\emph}
\def\no#1#2,{\unskip#2, no. #1,} 
\newcommand\toappear{\unskip, to appear}

\newcommand\webcite[1]{
\texttt{\def~{{\tiny$\sim$}}#1}\hfill\hfill}
\newcommand\webcitesvante{\webcite{http://www.math.uu.se/~svante/papers/}}
\newcommand\arxiv[1]{\webcite{arXiv:#1.}}

\def\nobibitem#1\par{}

\end{document}